\documentclass[11pt,a4paper,reqno]{amsart}
\usepackage[T1]{fontenc}
\usepackage{amsmath}
\usepackage{phaistos}
\usepackage{protosem}
\usepackage{amsthm}
\usepackage{amsfonts}
\usepackage{amssymb}
\usepackage{graphicx}
\usepackage{color}
\usepackage[colorlinks=true,linkcolor=black,citecolor=blue]{hyperref}
\usepackage{amsbsy}
\usepackage{mathrsfs}
\usepackage{bbm}
\usepackage{todonotes}

\newtheorem{theorem}{Theorem}[section]
\newtheorem{lemma}[theorem]{Lemma}

\newtheorem{proposition}[theorem]{Proposition}
\newtheorem{corollary}[theorem]{Corollary}

\theoremstyle{remark}
\newtheorem{remark}[theorem]{\it \bf{Remark}\/}

\numberwithin{equation}{section}
\catcode`@=11
\def\section{\@startsection{section}{1}%
  \z@{1.5\linespacing\@plus\linespacing}{.5\linespacing}%
  {\normalfont\bfseries\large\centering}}
\catcode`@=12
\newcommand{\be}{\begin{equation}}
\newcommand{\ee}{\end{equation}}
\newcommand{\bea}{\begin{eqnarray}}
\newcommand{\eea}{\end{eqnarray}}
\newcommand{\bee}{\begin{eqnarray*}}
\newcommand{\eee}{\end{eqnarray*}}
\newcommand{\norm}[1]{{\left\vert\kern-0.25ex\left\vert\kern-0.25ex\left\vert #1 
    \right\vert\kern-0.25ex\right\vert\kern-0.25ex\right\vert}}

\def\pa{\partial}

\def\RR{\mathbb{R}}

\def\TT{\mathcal{T}}

\def\wt{\tilde{w}}
\def\sigmat{\tilde{\sigma}}

\def\de{\delta}

\def\uh{\hat{u}}

\catcode`@=11
\def\supess{\mathop{\operator@font Sup\,ess}}
\catcode`@=12

\def\RR{\mathbb{R}}

\def\e{\varepsilon}

\def\bar#1{{\overline #1}}

\def\Wt{\tilde{W}}
\def\Sigmat{\tilde{\Sigma}}

\def\R2+{\RR ^2_+}

\def\dt{\tilde{d}}
\def\et{\tilde{e}}

\def\pa{\partial}

\def\lim{\mathop{\rm lim}}
\def\T{\mathcal T}

\def\sup{\mathop{\rm sup}}

\def\l{\lambda}

\def\th{{\rm th}}
\def\log{{\rm log}}

\def\rhoh{\hat{\rho}}

\def\T{\Theta}

\def\Phit{\widetilde{\Phi}}

\def\th{\tilde{H}}
\def\pa{\partial}

\def\psit{\tilde{\psi}}

\def\Psit{\tilde{\Psi}}

\def\pa{\partial}

\def\NL{\textrm{NL}}

\def\rhoh{\hat{\rho}}

\def\th{\tilde{h}}

\def\Et{\tilde{E}}

\def\mt{\tilde{m}}

\def\Dt{\tilde{D}}
\def\T{\mathcal T}

\def\NLt{\widetilde{\NL}}

\def\xit{\tilde{\xi}}
\def\th{\tilde{h}}

\def\Thetam{\Theta_{\rm main}}
\def\uh{\hat{u}}

\DeclareMathOperator{\peye}{\textproto{o}}

\def\Wte{\Wt_{\hskip -.1pc\peye}}
\def\Sigmate{\Sigmat_{\hskip -.1pc\peye}}
\def\wte{\wt_{\hskip -.1pc\peye}}
\def\sigmate{\sigmat_{\hskip -.1pc\peye}}
\def\psite{\psit_{\hskip -.1pc\peye}}

\title{On smooth holosphere solutions to compressible Euler equation}

\author{Jihoi Kim}
\address[Jihoi Kim]{University of Cambridge, United Kingdom.}
\email{rk614@cam.ac.uk}

\begin{document}
\maketitle

\begin{abstract} 
We consider the barotropic Euler equations in dimension $d\ge 2$ with decaying density at spatial infinity. The phase portrait of the nonlinear ODE governing the equation for spherically symmetric self-similar solutions has been introduced in  the pioneering work of Guderley \cite{guderley}. The existence of smooth decaying solutions through the sonic line has been obtained in \cite{MRRSprofile} in a suitable range of parameters. We work in this paper in a {\em different range} of parameters and show the existence of holosphere like solutions which are smooth through the sonic line, but display vanishing density at a sphere.
\end{abstract}

\section{Introduction}
\subsection{Setting of the problem}
In this paper, we consider the isentropic compressible Euler equations in dimension $d\ge 2$, $y\in \RR^d$,
\be
\label{eq: Compressible Euler}
\left|\begin{array}{l}\pa_t\rho+\nabla\cdot(\rho u)=0\\
\rho\pa_tu+\rho u\cdot\nabla u+\nabla p=0\\
p=\frac{\gamma-1}{\gamma}\rho^\gamma\\
\rho(t,y)>0.
\end{array}\right.
\ee 
We aim to construct a family of radially symmetric self-similar profiles and are specifically interested in solutions which decay at infinity
\be
\label{vneivenoenenevnove}
 \lim_{|y|\to +\infty}(\rho(t,y),u(t,y))=0.
\ee 
The existence of such solutions is a classical problem and the first step towards the description of singularity formation.

\subsection{Self-similar equation}
We introduce the self-similar renormalization 
\be
\label{eq: Self-similar Renormalization}
\left|\begin{array}{ll}
\rho(t,y)=\left(\frac{\l}{\nu}\right)^{\ell}\rhoh(\tau,Z)\\
u(t,y)=\ \frac{\lambda}{\nu}\uh(\tau,Z)\\
Z=\frac{y}{\l}, \ \ \frac{d\tau}{dt}=\frac{1}{\nu}\\
-\frac{\l_\tau}{\l}=1, \ \ -\frac{\nu_\tau}{\nu}=r
\end{array}\right.
\quad \text{for}\quad \ell=\frac{2}{\gamma-1},\quad r>1
\ee
maps \eqref{eq: Compressible Euler} on $[0,T)$ onto the global in time $\tau$ renormalized flow
\be
\label{eq: Renormalized Flow}
\left|\begin{array}{l}
\pa_\tau \rhoh+\ell(r-1)\rhoh+\Lambda\rhoh+\nabla \cdot(\rhoh\uh)=0\\
\pa_\tau \uh+(r-1)\uh+\Lambda \uh+\uh\cdot\nabla \uh+\nabla (\rhoh^{\gamma-1})=0\\
\Lambda =Z\cdot \nabla
\end{array}\right.       
\ee
A self-similar profile is a stationary solution to \eqref{eq: Renormalized Flow}:
\be
\label{eq: Self-similar Equation}
\left|\begin{array}{l}
\ell(r-1)\rhoh+\Lambda\rhoh+\nabla \cdot(\rhoh \uh)=0\\
(r-1)u+\Lambda \uh+\uh \cdot\nabla \uh+\nabla (\rhoh^{\gamma-1})=0\\
\end{array}\right.
\ee
which produces a blow up solution for \eqref{eq: Compressible Euler} with the rate of concentration 
$$
\l(t)=\l_0(T-t)^{\frac1r}, \ \ \nu(t)=r(T-t).
$$

\subsection{The Guderley phase portrait} 

In the pioneering work \cite{guderley,sedov}, all solutions to \eqref{eq: Self-similar Equation} with spherical symmetry are mapped through the Emden transform
\be
\label{eq: Emden Transform}
\left|\begin{array}{l}(\rhoh(Z))^{\frac{\gamma-1}{2}}=\sqrt{\frac \ell 2}Z\sigma(x)\\
\uh(Z)=- Zw(x)\\
Z=e^x
\end{array}\right.
\ee
onto the autonomous system of nonlinear ODE's:
\be
\label{eq: Autonomous System}
 \left|\begin{array}{l}
\Delta w'=-\Delta_1\\
\Delta \sigma'=-\Delta_2
\end{array}\right.
\ee
with the explicit non linearities
\be
\label{eq: Delta}
\left|\begin{array}{l}
\Delta=(w-1)^2-\sigma^2\\
\Delta_1=w(w-1)(w-r)-d\left(w-\frac{\ell(r-1)}{d}\right)\sigma^2\\
\Delta_2=\frac{\sigma}{\ell}\left[(\ell+d-1)w^2-w(\ell+d+\ell r-r)+\ell r-\ell \sigma^2\right].
\end{array}\right.
\ee
Here, 
$$\ell=\frac{2}{\gamma-1},\quad r>0$$
are {\em free parameter} and the blow up speed respectively. Essential canonical features of the phase portrait are the following:\\

\noindent\underline{Sonic lines} $w-1=\pm \sigma$. This is exactly the set $\Delta=0$ where the ODE degenerates.\\

\noindent\underline{$P_5$ point}. The point 
$$
P_5=\left(\sigma(P_5)=\frac{r\sqrt{d}}{d+\ell},\ w(P_5)=\frac{\ell r}{d+\ell}\right)
$$
is an endpoint of the dynamical system \eqref{eq: Autonomous System}, i.e.
$$
\left|\begin{array}{l}
\Delta_1(P_5)=\Delta_2(P_5)=0\\
\Delta(P_5)\neq 0
\end{array}\right.
$$
Integral curves end there.\\

\noindent\underline{Points $P_1, P_2, P_3$}. Trajectories can only cross the sonic line at the triple points, where $\Delta=\Delta_1=\Delta_2=0,$ which  are $(0,0), (r,0), P_1=(1,0)$ and two other points on the sonic line $w+\sigma=1$ which we refer to as $P_2,P_3$ and which exist thanks to the constraint \eqref{eq: Parameters}.\\

\noindent\underline{Point $P_6$}. It is the point at infinity $(+\infty,\frac{\ell(r-1)}{d})$. The unique separatrix curve coming out of $P_6$ is the unique (up to scaling) smooth radially symmetric solution at the origin.\\

\noindent\underline{$P_4$ point}. The point $P_4=(0,0)$ attracts solutions which vanish near $x\to \infty$.\\

\noindent\underline{Critical values}. Let \be
\label{eq: Critical r}
\left|\begin{array}{l}
r^*(d,\ell)=\frac{d+\ell}{\ell+\sqrt{d}},\\
r_+(d,\ell)=1+\frac{d-1}{(1+\sqrt{\ell})^2},
\end{array}\right.
\ee
then the relative position of $r$ with respect to these two critical values governs the relative position of the points $P_2,P_3,P_5$.

\begin{figure}
\centering
\includegraphics[width=13cm]{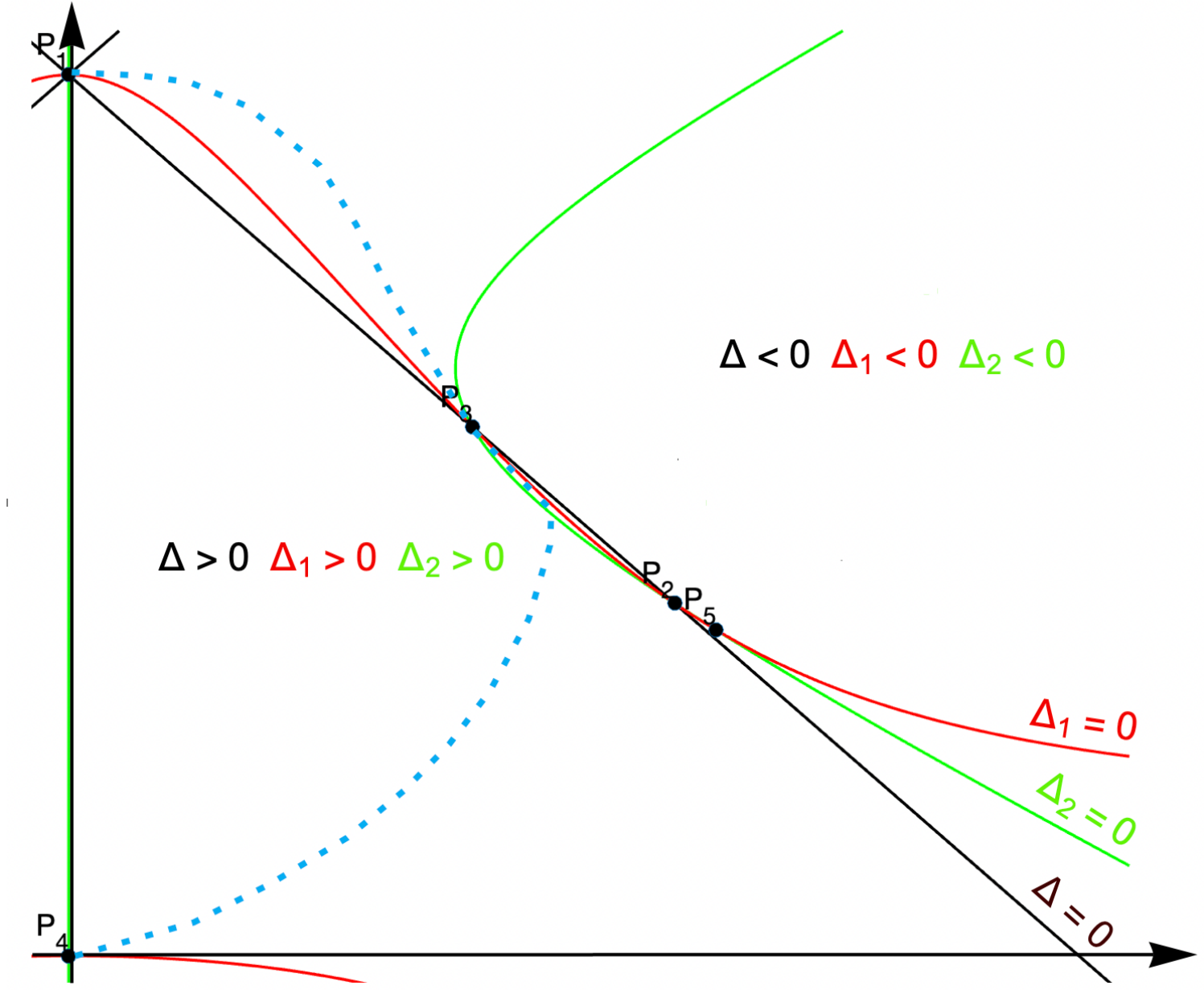}
\caption{Phase portrait in the range $r^*(d,\ell)<r<r_+(d,\ell)$. Dashed curve is the trajectory of the solution constructed in Theorem \ref{thmmain}.}
\label{fig: Phase Portrait}
\end{figure}

\subsection{Previous results} The study of globally smooth and decaying solutions at $+\infty$ becomes a connection problem bewteen the critical points $P_6,P_4$. In the range of parameters studied in \cite{MRRSprofile},
$$
d\ge 2,\quad \left|\begin{array}{ll}1<r<r^*(d,\ell) &\text{for } 0<\ell<d,\\
r^*(d,\ell)<r<r_+(d,\ell) &\text{for }  d<\ell.
\end{array}\right.
$$
 $P_5$ is at the left of $P_2$ which ensures that the unique separatrix coming out of $P_4$ will meet the sonic line at $P_2$, and may then continue to $P_4$. However the solution will canonically lose regularity when passing through the sonic line at $P_2$. The heart of the analysis in \cite{MRRSprofile} is to show that for a {\em discrete} choice of blow up speed $r$ near the critical value $r_*$, a connection can be constructed which coincides with the unique solution which passes through the sonic line at $P_2$ in a $\mathcal C^\infty$ way. This improved regularity is the heart of the proof of the linear, and then non linear dynamical stability of these profiles leading to the construction of blow up solutions for both the Navier-Stokes and Euler compressible problems, \cite{MRRSfluid}.\\
 
\subsection{New range of parameters and holosphere trajectory}  We are interested in this paper on the new range of parameters 
\be
\label{eq: Parameters}
d\ge 2, \ \ d-1<\ell<d, \ \ r^*(d,\ell)<r<r_+(d,\ell),
\ee
which forces $P_5$ at the right of $P_2$. Hence the smooth solution at the origin emerging from $P_4$ terminates in $P_5$ which means growth in space, and this dooms the plan for the construction of blow up solutions proposed in \cite{MRRSfluid}.\\ 
We thus focus our attention onto {\em another} trajectory. A classical analysis of the phase portrait ensures the uniqueness of a trajectory coming out of $P_1$. Under the additional restriction 
\be
\label{vneoneonoevnoeiv}
d-1<\ell<d,\quad 0<r_+(d,\ell)-r\ll1,
\ee
we can show that this trajectory will cross the sonic line at $P_3$ and may then reach $P_4$. The solution decays self similarly at infinity as it approaches $P_4$, but at $P_1$ the density vanishes at some finite radius: this is vaccuum in the data. Such kind of $P_1,P_3,P_4$ connection is called a {\em holosphere} trajectory. 

\subsection{Statement of the result} Our main claim in this paper is that we can adapt the regularity analysis of \cite{MRRSprofile} to prove for a large {\em discrete} set of parameters the existence of a holosphere trajectory which is $\mathcal C^\infty$ through the sonic line. The precise statement is the following.

\begin{theorem}[Existence of $C^\infty$ asymptotically vanishing self-similar profiles] 
\label{thmmain}
Let $d\ge 2$. Let the critical speed $r_+(d,\ell)$ be given by \eqref{eq: Critical r}. Then for all $d\ge2$, there exist a function 
$$
S_\infty(d,\ell):\Bbb N^*\backslash\{1\}\times \mathcal O_d\to \Bbb R
$$ such that for any $\ell\in \mathcal O_d$ obeying the condition
\be
\label{conditionell}
S_\infty(d,\ell)\neq 0,
\ee
there exists a discrete sequence $$
\left|\begin{array}{l}
r^*(d,\ell)<r_n<r_+(d,\ell), \ \ |r_n-r_+(d,\ell)|\ll1\\
\lim_{n\to +\infty} r_n=r_+(d,\ell)
\end{array}\right.
$$ such that the unique holosphere trajectory coming out of $P_1$ terminates in $P_4$ and passes through $P_3$ in a $\mathcal C^\infty$ way.
\end{theorem}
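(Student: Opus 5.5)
We follow the scheme of \cite{MRRSprofile}, transposed from the triple point $P_2$ to $P_3$, and with the connection now running $P_1\to P_3\to P_4$. The argument has four steps.

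\textbf{Step 1: local analysis at $P_3$.} We first locate $P_3$ explicitly on the sonic line $w+\sigma=1$ as a root of the quadratic obtained from $\Delta_1=\Delta_2=0$ there. We then linearize the desingularized field $(-\Delta_1,-\Delta_2)$, viewed as a vector field in the $(w,\sigma)$ plane, at $P_3$. Let $\lambda_\pm$ be its eigenvalues and set $k=\lambda_-/\lambda_+$.

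The key computation is that, as $r\uparrow r_+(d,\ell)$, the points $P_2$ and $P_3$ merge. The ratio $k$, or its inverse depending on the labelling, then tends to $+\infty$ and passes through every integer. We choose $r_n$ so that $k(r_n)$ lies in a small window around a large noninteger value. At such $r_n$, standard Poincar\'e--Dulac / Sternberg theory applies with no resonance obstruction up to order about $n$. It gives:
\begin{itemize}
\item one distinguished analytic curve through $P_3$, the $\mathcal C^\infty$ branch tangent to the "fast" eigendirection;
\item a family of other curves, each of which is only $\mathcal C^{\lfloor k\rfloor}$ across $P_3$, since it carries a term $|x|^{k}$.
\end{itemize}
So a trajectory crosses $P_3$ in a $\mathcal C^\infty$ way if and only if it arrives along the distinguished branch. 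We will parametrize everything in $\e:=r_+-r$ and track expansions of the branch in $\e$.

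\textbf{Step 2: the trajectory from $P_1$.} We analyze the unique trajectory leaving $P_1=(1,0)$, the vacuum point. Linearizing at $P_1$ gives uniqueness and analytic dependence on $(r,\ell)$. Under \eqref{eq: Parameters} and $d-1<\ell<d$, we show by a barrier argument in the region between the sonic line $w+\sigma=1$ and the curve $\Delta_2=0$ that this trajectory reaches the sonic line only at $P_3$; it can only cross at a triple point, and $P_2$ is excluded by its position relative to $P_5$.

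\textbf{Step 3: the shooting function.} Near $P_3$ we write the incoming trajectory in the normal form coordinates of Step 1. Its $\mathcal C^\infty$ character is measured by a single coefficient $c(r)$ multiplying the singular term $|x|^{k(r)}$. We must find zeros of $c$.

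Following \cite{MRRSprofile}, we compute the leading asymptotics of $c$ as $\e\to0$. As $k$ crosses an integer $n$, $c$ picks up a factor behaving like $1/\sin(\pi k)$ or $\Gamma$-function ratios, multiplied by a limiting constant that we name $S_\infty(d,\ell)$. Roughly, $S_\infty$ is the Taylor coefficient mismatch between the $P_1$ trajectory and the analytic branch, in the limit $r\to r_+$. The hypothesis \eqref{conditionell}, $S_\infty\neq0$, then ensures that $c(r)$ changes sign on each interval between consecutive resonances $k=n$ and $k=n+1$. The intermediate value theorem yields $r_n\to r_+$.

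\textbf{Step 4: from $P_3$ to $P_4$.} After crossing smoothly along the analytic branch, we continue past $P_3$ into the supersonic region. A trapping-region argument, bounded by the sonic line, the nullcline $\Delta_1=0$ and the axis $\sigma=0$, shows that the trajectory cannot approach $P_5$ or escape, and therefore converges to $P_4=(0,0)$. This uses that $P_5$ lies to the right of $P_2$ in this parameter range.

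\textbf{Main obstacle.} The hard step is Step 3: establishing uniform-in-$n$ control of the expansion of the $P_1$ trajectory near $P_3$ as $k\to\infty$. This needs quantitative bounds on the Taylor coefficients of the analytic branch, growing at most geometrically, so that the resonant term dominates and the sign of $c$ is dictated by $S_\infty$. We would first try to adapt the coefficient recurrence and majorant estimates of \cite{MRRSprofile} verbatim, changing only the base point from $P_2$ to $P_3$.
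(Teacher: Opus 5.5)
Your overall plan (shoot in $r$ across resonance windows, with nondegeneracy from a constant $S_\infty$) is in the right spirit, but two steps have genuine gaps.

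\textbf{Step 4 fails as stated.} To the right of $P_3$ the $\mathcal C^\infty$ branch enters the thin eye between the lower root $w_2^-(\sigma)$ of $\Delta_2$ and the middle root $w_2(\sigma)$ of $\Delta_1$, for $\sigma_3<\sigma<\sigma_2$. It can leave through either side.
\begin{itemize}
\item Only an exit through $\Delta_2=0$ feeds into Lemma~\ref{lem: Solutions Near P_4} and ends at $P_4$.
\item After an exit through $\Delta_1=0$, the curve enters the lens between $w_2$ and the sonic line. There $\Delta_1$, $\Delta_2$ and $\Delta$ have fixed signs, so the curve is monotone and runs into $P_2$ instead.
\end{itemize}
No trapping argument decides which exit occurs. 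In the paper this is the quantitative Lemma~\ref{lemmaexitleft}: the exit side is dictated by the sign of $(-1)^{K-1}S_\infty$, where $\gamma-1=K+\alpha_\gamma$. Hence only integers $K$ of one parity produce holosphere solutions, and your claim that every interval between consecutive resonances yields an $r_n$ is unfounded.

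Proving that lemma requires controlling the smooth solution up to $u\approx\frac12$ for essentially all $\alpha_\gamma\in(0,1)$. This uses the bounds on $M_j$ away from $u=0$, the bilinear estimates for $\mathcal T$, and the bootstrap on $r_\Theta/M_0$.

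A further correction to Step 1: the $\mathcal C^\infty$ branch is tangent to the weak eigendirection (slope $c_-$), not the fast one. The fast-direction curve (slope $c_+$) goes to $P_4$ on the left, so it can never be the $P_1$ trajectory.

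\textbf{Step 3 lacks its key mechanism.} The coefficient $c(r)$ of $|x|^{k}$ along the $P_1$ curve encodes where that global trajectory sits relative to the smooth branch, so local analysis at $P_3$ does not give its asymptotics. The form you suggest, $S_\infty/\sin(\pi k)$ times positive $\Gamma$-factors, has constant sign on $(n,n+1)$ and would give no zero there.

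Nor is $S_\infty$ a mismatch with the $P_1$ curve. It is $\lim_{b\to0}S_{K-1}$, a Borel-type sum $\frac1a\sum_j(-1)^ja^jg_j^\infty/\Gamma(\nu+j+3)$ built from the smooth branch alone. The relevant Taylor coefficients grow factorially, $|\psi_k|\lesssim w_{\gamma,\nu_b}(k)\approx\Gamma(k+\nu+2)(b/a)^k$ for $k\le K\sim 1/b$, not geometrically.

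The paper never computes $c(r)$. Instead it proceeds as follows.
\begin{itemize}
\item It truncates the smooth branch at the critical frequency: $\Theta=\sum_{k\le K-2}\theta_ku^k+(-1)^{K-1}S_{K-1}\Theta_{\rm main}-\mathcal T(r_{\mathcal G})$.
\item For $u<0$, the leading factor of $\Theta_{\rm main}$ is proportional to $(1-\alpha_\gamma)+(K+\nu_b+2)u$. At $u\approx-b\sqrt\delta$ this has opposite signs in the boundary layers $\alpha_\gamma\approx0$ and $\alpha_\gamma\approx1$.
\item So the smooth branch exits through $\Delta_2=0$ in one layer and through $\Delta_1=0$ in the other.
\item The $P_1$ trajectory is confined to $\{\Delta_1\le0\}$. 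Via $w_2^+(\sigma_3)>1$, it lies to the left of the smooth branch in the first layer.
\item Hence the gap $F(\alpha_\gamma)$ between the two curves at $u=-\frac34$ changes sign across the window, and continuity in $\alpha_\gamma$ gives the connection.
\end{itemize}
This sign information at the two ends of each $\alpha_\gamma$-window, read off from the exit behaviour of the smooth branch, is the missing idea in your Step 3.
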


The function $S_\infty(d,\ell)$ appears in the asymptotic analysis of the flow near $P_3$. It can be explicitly expressed as a normally convergent series 
$$S_\infty(d, \ell)=\sum_{n=0}^{+\infty} u_n(d,\ell), \ \ |u_n(d,\ell)|\leq\frac{c_{d,\ell}}{1+n^2}$$ 
where the series $u_n(d,\ell)$ satisfies an explicit though complicated non linear induction relation. The proof of convergence of the series yields the analyticity of the mapping $\ell\mapsto S_\infty(d,\ell)$ in a suitable open set of the complex plane.

\begin{lemma}[Holomorphic extension, \cite{MRRSprofile}]
\label{lemmaisolated} 
The function $\ell\mapsto S_\infty(d,\ell)$ extends holomorphically to an open neighbourhood $\Omega_d$ of $(0,d)$ in the complex plane.
\end{lemma}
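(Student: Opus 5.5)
The plan is to show that each term $u_n(d,\ell)$ of the series defining $S_\infty(d,\ell)$ is holomorphic in $\ell$ on a common complex neighbourhood of $(0,d)$. We then upgrade the real bound $|u_n|\le c_{d,\ell}/(1+n^2)$ to a bound that holds uniformly on compact subsets of that neighbourhood. Weierstrass' theorem then gives holomorphy of the sum. The construction of the $u_n$ follows \cite{MRRSprofile}. Near the sonic point we expand the trajectory as a formal power series $w=\sum_k w_k (\sigma-\sigma(P_3))^k$ (or in the local variable along the sonic line). The coefficients $w_k$ are determined recursively by substituting into $\Delta w'=-\Delta_1$, $\Delta\sigma'=-\Delta_2$ and matching powers. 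At order $k$, the coefficient $w_k$ is multiplied by a linear factor $(k-k_*(\ell,r))$ coming from the eigenvalue ratio at $P_3$. The right-hand side is a polynomial in $w_1,\dots,w_{k-1}$ whose coefficients are rational in $\ell$, $d$, $r=r_+(d,\ell)$ and $\sqrt{\ell}$, $\sqrt d$. $S_\infty$ arises as the suitably renormalized obstruction coefficient in the limit $r\to r_+$, where the resonance index tends to infinity.

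\textbf{Step 1 (term-wise holomorphy).} First we show that $P_3$, $r_+(d,\ell)=1+\frac{d-1}{(1+\sqrt\ell)^2}$ and the eigenvalue data at $P_3$ are holomorphic in $\ell$ near $(0,d)$. This requires choosing the branch of $\sqrt\ell$ on a neighbourhood avoiding $\ell\le 0$, and checking that the square root in the formula for $P_3$ (a discriminant) stays away from zero on $(0,d)$. Consequently every $u_n$, being a finite composition of sums, products and divisions by nonvanishing denominators (the factors $k-k_*$ and $\Delta$-type quantities at $P_3$), is holomorphic on a complex neighbourhood $\Omega_d^{(n)}$. The point is that the nonvanishing of these denominators is an open condition. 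On $(0,d)$ they are bounded below uniformly in $n$, since they grow like $n$. So a single neighbourhood $\Omega_d$ works for all $n$, provided we check that $|n-k_*(\ell)|\gtrsim n$ persists after a small complex perturbation of $\ell$, uniformly for $\ell$ in compacts.

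\textbf{Step 2 (uniform bound — main obstacle).} The real-variable proof of $|u_n|\le c/(1+n^2)$ in \cite{MRRSprofile} is an induction on a majorant: one assumes $|w_k|\le C^k/k^{2}$-type bounds and closes them using the $1/k$ gain from the factor $(k-k_*)^{-1}$ together with the convolution estimate $\sum_{j}\frac{1}{j^2(k-j)^2}\lesssim \frac{1}{k^2}$. I would rerun exactly this induction with absolute values of complex numbers. It uses only the triangle inequality, the lower bound $|k-k_*(\ell)|\ge c k$, and upper bounds on the finitely many structural coefficients. All of these hold uniformly for $\ell$ in a compact subset $K\subset\Omega_d$, after shrinking $\Omega_d$ to a thin strip around $(0,d)$. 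The delicate point is that the induction constants $C$ must be chosen uniformly on $K$; I would fix them from the real case at the endpoints of $K$ and use continuity of the finitely many structural constants. Obtaining $\sup_{\ell\in K}|u_n(d,\ell)|\le c_K/(1+n^2)$ gives normal convergence on compacts, and hence holomorphy of $S_\infty(d,\cdot)$ on $\Omega_d$. As a by-product, $S_\infty(d,\cdot)$ is real-analytic on $(0,d)$, so its zero set there is discrete unless $S_\infty\equiv0$.
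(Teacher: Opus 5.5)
Your outer architecture is the right one: holomorphy of each term, a summable bound uniform on compacts of a thin complex neighbourhood, then Weierstrass. That is what the paper means by ``the proof of convergence of the series yields the analyticity''. The gap is in what the $u_n$ are, and therefore in Step 2.

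In the paper, $S_\infty$ is attached to the $b=0$ formal limit problem. After conjugation it reads $x^2\Theta'+[a_\infty+(\nu_\infty+3)x]\Theta=G$, with $G=\mu_0+x^2\sum_j\mu_j\Theta^j+x^3(x\Theta')\sum_j\nu_j\Theta^j$, and has an irregular singular point at $x=0$. Its Taylor recursion is $a_\infty\theta_k=g_k-(k+\nu_\infty+2)\theta_{k-1}$. There is no divisor $k-k_*$ at all, and each step multiplies by a factor of size $k$ rather than dividing by one. Solving the recursion gives
\[
\theta_k=(-1)^k\frac{\Gamma(k+\nu_\infty+3)}{a_\infty^{k+1}}\sum_{j=0}^{k}\frac{(-1)^ja_\infty^jg^\infty_j}{\Gamma(j+\nu_\infty+3)},
\]
so $u_n=(-1)^na_\infty^{n-1}g_n^\infty/\Gamma(\nu_\infty+n+3)$. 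Thus $S_\infty$ is the constant governing the factorial divergence $\theta_k\approx(-1)^k\Gamma(k+\nu_\infty+3)a_\infty^{-k}S_\infty$.

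In particular, a majorant $|\theta_k|\le C^k/k^2$ of the kind you propose would force $S_\infty=0$, which is exactly the case the theorem must exclude. So the induction you describe cannot be the one controlling $u_n$. The correct estimate is the Gevrey-1 (Maillet-type) bound $|\phi_k|\lesssim\Gamma(k+\nu_\infty+2)|a_\infty|^{-k}$. It is closed with the continuity lemma for multiplication by holomorphic functions and convolution bounds on $\Gamma$-weights, not on $1/j^2$. The decay $|u_n|\lesssim(1+n)^{-2}$ comes from the $x^2$ and $x^3\cdot x\Theta'$ prefactors in $G$, which shift the factorial by two indices; it does not come from small divisors.

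Your divisor picture belongs to the $b>0$ recursion $(\gamma-k-2)\theta_k+(k+\nu_b+2)\theta_{k-1}=g_k/b$. Even there, $|k-k_*|\gtrsim k$ fails at $k\approx K$, where $|\gamma-k-2|$ is of size $\alpha_\gamma$, and that is exactly where the obstruction $S_{K-1}$ sits. You also cannot reach $S_\infty$ through finite-$b$ quantities holomorphically in $\ell$, since $K=\lfloor\gamma-1\rfloor$ jumps with $b$.

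To repair the argument, first check that $a_\infty$, $\nu_\infty$ and the conjugated coefficient functions $\mu_0,\mu_j,\nu_j$ extend holomorphically in $\ell$, jointly with $x$ on a disc of uniform radius, with $a_\infty\neq 0$. Then each $u_n$ is holomorphic, being a polynomial in finitely many Taylor coefficients of these data and in $a_\infty^{\pm1}$, times the entire function $1/\Gamma(\nu_\infty+n+3)$. Next, rerun the weighted induction with weight $|a_\infty|^k/|\Gamma(k+\nu_\infty+2)|$ for complex parameters. It only uses absolute values, so its constants are uniform on compacts.

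Your Step 1 also contains an error. On $r=r_+$ the discriminant $J(w_e)$ defining $P_3$ vanishes identically; this is the eye degeneracy $P_2=P_3$, so it is not bounded away from zero. Holomorphy of the limiting data must instead be read off the explicit renormalized formulas in $\sqrt\ell$ and $\ell^{1/4}$, such as $\sigma_3^\infty=1/(1+\sqrt\ell)$ and $\mu_+^\infty$. These formulas also explain why the neighbourhood only surrounds $(0,d)$: $\mu_+^\infty$ vanishes at $\ell=d$, so $a_\infty$ blows up there.
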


We do not know how to compute analytically the zeroes of $S_\infty(d,\ell)$. However, for $d=2,3$, Lemma \ref{lemmaisolated} ensures that, unless the function vanishes identically, the possible zeroes are isolated with possible accumulation points $(0,d,+\infty)$ only\footnote{Note that all the conclusions of Lemma \ref{lemmaisolated} can be extended to higher dimension $d\ge 4$.}.\\

Moreover, since $u_n(d,\ell)$ is given by an explicit induction relation on the coefficients, we can perform an elementary numerical computation of the series. We give the results in dimension $d=2$ and $d=3$ which will be used for the study of the compressible Euler and Navier-Stokes equations. The assertion in Lemma \ref{lemmaisolated} allows us to check the non-vanishing condition for small values 
of $\ell$ only.\\

\noindent{\bf Numerical claim} [Numerical study of the zeroes of $S_\infty(d,\ell)$, case $d=2,3$]
{\em In the case $d=2$ and $d=3$, we have  
\be
\label{estiamtesfisonbis}
\left|\begin{array}{l}
S_\infty(2,\ell)>0\  \ \mbox{for}\ \ \ell=0.1,\\
S_\infty(3,\ell)>0\  \ \mbox{for}\ \ \ell=0.1.
\end{array}\right.
\ee}

A major open problem after this work is to understand whether the holosphere can be achieved as the leading order blow up profile of a solution emerging from a smooth well localized data, either for the Euler or Navier-Stokes problem. This problem is deeply linked to vaccuum formation in compressible fluid dynamics and is known to be delicate. The existence of $\mathcal C^\infty$ at $P_3$ connections is a first {\em essential} step for the dynamical understanding of these solutions.\\

\subsection*{Aknowledgments} The author would like to thank his advisor P. Rapha\"el for his guidance and advice during the preparation of this work. It is supported by the ERC/UKRI advanced grant SWAT and Cambridge Commonwealth Trust.

\section{Geometry of the phase portrait}

We study in this section the geometry of the phase portrait in the range of parameters \eqref{eq: Parameters}.\\

\subsection{Roots of $\Delta,\Delta_1,\Delta_2$} $\Delta$ has been normalized to vanish on the sonic lines
$$
\{\Delta=0\}=\{w=1+\sigma\}\cup\{w=1-\sigma\}
$$
which are independent of the parameters. We study the roots of $\Delta_2$ and $\Delta_1$. The proof of the results in this section is purely algebraic and we will often recall results from \cite{MRRSprofile}. We introduce a constant
$$
w_e=\frac{\ell(r-1)}{d}
$$
which will appear frequently throughout this section.

\begin{lemma}[Roots of $\Delta_2$] 
\label{lem: Delta_2 Roots}
Assume \eqref{eq: Parameters}. There exists $\sigma_2^{(0)}(d,\ell)\in[0,\infty)$ such that the roots of $\Delta_2$ in the range $\sigma \ge 0$ are given by
\be
\label{eq: Delta_2 Roots}
\left|\begin{array}{l}
w^{\pm}_2(\sigma)=\frac{1}{2(\ell+d-1)}\left[2\ell+d-1-\frac{dw_e(1-\ell)}{\ell}\pm\sqrt{I(\sigma)}\right]\\
\sigma\ge \sigma_2^{(0)}
\end{array}\right.
,\quad w_e:=\frac{\ell(r-1)}{d}
\ee
where
\be
\label{eq: I and J}
\left|\begin{array}{lll}
J(w_e) &=& \displaystyle d^2\left(\frac{1-\ell}{\ell}\right)^2w_e^2-\frac{2d(d-1)(\ell+1)}{\ell}w_e+(d-1)^2,\\[3mm]
 I(\sigma) &=& J(w_e)-4dw_e+4\ell(\ell+d-1)\sigma^2.
 \end{array}\right.
 \ee 
 Moreover, 
\be
\label{eq: Monotonicity of w_2^pm}
\forall \sigma> \sigma_2^{(0)}, \ \  (w_2^-)'(\sigma)<0, \ \ (w_2^+)'(\sigma)>0.
\ee
\end{lemma}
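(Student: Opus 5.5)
The plan is a direct algebraic computation. Since $\Delta_2=\frac{\sigma}{\ell}\,Q(w,\sigma)$ with
$$Q(w,\sigma)=(\ell+d-1)w^2-w(\ell+d+\ell r-r)+\ell r-\ell\sigma^2,$$
the zero set of $\Delta_2$ in $\{\sigma\ge0\}$ consists of the trivial line $\{\sigma=0\}$ and the zero set of $Q$. The statement concerns the nontrivial branch, and for each fixed $\sigma$ the function $Q$ is a quadratic in $w$. Its leading coefficient $\ell+d-1$ is positive, because $\ell>d-1\ge1$ by \eqref{eq: Parameters}. The roots are therefore
$$w=\frac{B\pm\sqrt{B^2-4(\ell+d-1)(\ell r-\ell\sigma^2)}}{2(\ell+d-1)},\qquad B=\ell+d+(\ell-1)r.$$

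The first step is to eliminate $r$ in favour of $w_e$ through $r=1+\frac{d w_e}{\ell}$. This gives
$$B=2\ell+d-1-\frac{d w_e(1-\ell)}{\ell},\qquad \ell r=\ell+d w_e,$$
which is the numerator in \eqref{eq: Delta_2 Roots}. The second step is to expand the discriminant
$$B^2-4(\ell+d-1)(\ell+dw_e)+4\ell(\ell+d-1)\sigma^2$$
and sort it by powers of $w_e$:
\begin{itemize}
\item The constant term is $(2\ell+d-1)^2-4\ell(\ell+d-1)=(d-1)^2$.
\item The quadratic term is $d^2\frac{(1-\ell)^2}{\ell^2}w_e^2$.
\item The linear term is $-\frac{d w_e}{\ell}\bigl[2(2\ell+d-1)(1-\ell)+4\ell(\ell+d-1)\bigr]$. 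The bracket simplifies to $2(d-1)(\ell+1)+4\ell$, so the linear term equals $-\frac{2d(d-1)(\ell+1)}{\ell}w_e-4dw_e$.
\end{itemize}
Together these give exactly $I(\sigma)=J(w_e)-4dw_e+4\ell(\ell+d-1)\sigma^2$ as in \eqref{eq: I and J}. This bookkeeping is the only place where an error could creep in, and I expect it to be the main, if modest, obstacle.

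The third step is to define $\sigma_2^{(0)}$. Since $I$ is increasing in $\sigma^2$, I would set
$$\sigma_2^{(0)}=\sqrt{\frac{\max\bigl(0,\,4dw_e-J(w_e)\bigr)}{4\ell(\ell+d-1)}}\in[0,\infty).$$
Then real roots $w_2^\pm$ of $Q$ exist exactly when $\sigma\ge\sigma_2^{(0)}$, and for $\sigma>\sigma_2^{(0)}$ we have $I(\sigma)>0$. No use of the finer constraints $r^*<r<r_+$ is needed here, beyond $\ell+d-1>0$ and $\ell>0$.

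Finally, monotonicity follows by differentiating. For $\sigma>\sigma_2^{(0)}$,
$$(w_2^\pm)'(\sigma)=\pm\frac{1}{2(\ell+d-1)}\cdot\frac{4\ell(\ell+d-1)\sigma}{\sqrt{I(\sigma)}}=\pm\frac{2\ell\sigma}{\sqrt{I(\sigma)}}.$$
This is nonzero because $\sigma>\sigma_2^{(0)}\ge0$ and $\sqrt{I(\sigma)}>0$, so its sign is the sign in $\pm$. This yields \eqref{eq: Monotonicity of w_2^pm}.
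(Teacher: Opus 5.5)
Your proposal is correct and takes essentially the same route as the paper. The paper defers to Lemma 2.1 of \cite{MRRSprofile}, which is this direct computation: the quadratic formula in $w$, rewriting the discriminant as $I(\sigma)$ via $r=1+dw_e/\ell$, defining $\sigma_2^{(0)}$ as the threshold for $I(\sigma)\ge 0$, and differentiating to get $(w_2^\pm)'(\sigma)=\pm 2\ell\sigma/\sqrt{I(\sigma)}$. Your expansion of the discriminant (constant term $(d-1)^2$ and the linear coefficient $2(d-1)(\ell+1)+4\ell$) checks out.
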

\begin{proof}
The proof is verbatim the same as that of Lemma 2.1 in \cite{MRRSprofile}.
\end{proof}

\begin{lemma}[Roots of $\Delta _1$] 
\label{lem: Delta_1 Roots} 
Assume \eqref{eq: Parameters}. For all $\sigma\ge 0$, the equation $\Delta_1(w,\sigma)=0$ has exactly three distinct root branches $w_1(\sigma)<w_2(\sigma)<w_3(\sigma)$ which satisfy the following:\\\\
\underline{relative positions}: $\forall \sigma \ge 0$, 
\be
\label{eq: Relative Positions of w_i}
 -w_1(\sigma)\leq 0<w_e<w_2(\sigma)\leq 1<r\leq w_3(\sigma).
\ee
\underline{monotonicity}: $\forall \sigma>0$, \ \ 
\be
\label{eq: Monotonicity of w_i}
w_1'(\sigma)<0, \ \ w'_2(\sigma)<0, \ \ w_3'(\sigma)>0\ \ \mbox{for}\ \ \sigma>0.
\ee
\underline{asymptotics}:
\be
\label{eq: Asymptotes of w_i at 0}
\left|\begin{array}{lll}
\displaystyle w_1(\sigma)=-\frac{dw_e}{r}\sigma^2+\mathcal O(\sigma^3),\\
\displaystyle w_2(\sigma)=1-\frac{d(1-w_e)\sigma^2}{r-1}+\mathcal O(\sigma^3),\\
\displaystyle w_3(\sigma)=r+\frac{d(r-w_e)}{r(r-1)}\sigma^2+\mathcal O(\sigma^3),
\end{array}\right. \ \ \mbox{as}\ \ \sigma \to 0
\ee
and 
\be
\label{eq: Asymptotes of w_i at infty}
\left|\begin{array}{lll}
\displaystyle w_1(\sigma)=-\sqrt{d}\sigma +\mathcal O(1),\\[1mm]
\displaystyle w_2(\sigma)=w_e+\mathcal O(\sigma^{-2}),\\[1mm]
\displaystyle w_3(\sigma)=\sqrt{d}\sigma +\mathcal O(1)
\end{array}\right.\ \ \mbox{as}\ \ \sigma\to +\infty.
\ee
\end{lemma}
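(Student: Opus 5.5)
Since $\Delta_1$ is a cubic in $w$ with positive leading coefficient for each fixed $\sigma$, I will locate its three roots by sign changes at explicit test points. Write $w_e=\ell(r-1)/d$, so that
$$\Delta_1(w,\sigma)=w(w-1)(w-r)-d(w-w_e)\sigma^2 .$$
The sign computations are:
\begin{itemize}
\item at $w=0$: $\Delta_1(0,\sigma)=dw_e\sigma^2\ge 0$;
\item at $w=w_e$: $\Delta_1(w_e,\sigma)=w_e(w_e-1)(w_e-r)>0$, provided $0<w_e<1<r$;
\item at $w=1$: $\Delta_1(1,\sigma)=-d(1-w_e)\sigma^2\le 0$;
\item at $w=r$: $\Delta_1(r,\sigma)=-d(r-w_e)\sigma^2\le0$.
\end{itemize}
First I check $0<w_e<1$ under \eqref{eq: Parameters}. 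Since $r<r_+$ and $\ell<d$,
$$\ell(r-1)<\ell(d-1)/(1+\sqrt\ell)^2<d .$$
Combined with the limits $\pm\infty$ of $\Delta_1$ as $w\to\pm\infty$, this gives for $\sigma>0$ one root in each of $(-\infty,0]$, $(w_e,1]$ and $[r,\infty)$. Strictness follows since $\Delta_1(0,\sigma)>0$, $\Delta_1(1,\sigma)<0$, $\Delta_1(r,\sigma)<0$ for $\sigma>0$. A cubic has at most three roots, so these are all of them, and they are distinct. At $\sigma=0$ the roots are $0,1,r$, again distinct. This yields \eqref{eq: Relative Positions of w_i}.

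For monotonicity and regularity I use the implicit function theorem. Distinct roots are simple, so $\pa_w\Delta_1\neq0$ and each branch is smooth in $\sigma$, with
$$w_i'(\sigma)=2d\sigma(w_i-w_e)/\pa_w\Delta_1(w_i,\sigma).$$
For a cubic with positive leading coefficient and three simple roots, $\pa_w\Delta_1$ has sign $+,-,+$ at $w_1,w_2,w_3$. The numerator has sign $-,+,+$ because $w_1<0<w_e<w_2<w_3$. This gives $w_1'<0$, $w_2'<0$, $w_3'>0$ for $\sigma>0$, which is \eqref{eq: Monotonicity of w_i}.

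The asymptotics at $0$ come from a second-order Taylor expansion. I write $w_i=w_i(0)+a\sigma^2$ and match the $\sigma^2$ terms:
\begin{itemize}
\item near $w=0$: $r\,a=-dw_e$;
\item near $w=1$: $(1-r)a=d(1-w_e)$;
\item near $w=r$: $r(r-1)a=d(r-w_e)$.
\end{itemize}
These are the stated coefficients. Since $\Delta_1$ is a polynomial in $(w,\sigma^2)$ with simple roots at $\sigma=0$, each $w_i$ is in fact analytic in $\sigma^2$, so the remainder is $\mathcal O(\sigma^3)$ (indeed $\mathcal O(\sigma^4)$).

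At infinity I take $\sigma\to\infty$ and rescale $w=\sigma v$. Dividing by $\sigma^3$ gives
$$v^3-dv+\mathcal O(\sigma^{-1})=0,$$
whose roots are $v=\pm\sqrt d$ and $v=0$. Simple-root perturbation then gives $w_{1,3}=\mp\sqrt d\,\sigma+\mathcal O(1)$. For the middle branch, which stays bounded in $(w_e,1]$, I divide by $\sigma^2$:
$$w-w_e=w(w-1)(w-r)/(d\sigma^2)=\mathcal O(\sigma^{-2}).$$

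I expect no serious obstacle. The only point needing care is the verification $0<w_e<1$, i.e. $\ell(r-1)<d$, together with $r>1$, which holds since $r>r^*>1$ because $\ell<d$. Everything else is sign bookkeeping for the cubic.
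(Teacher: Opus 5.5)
Your proof is correct and takes essentially the same route as the argument the paper cites (Lemma 2.3 of \cite{MRRSprofile}): sign changes of the cubic at $w=0,w_e,1,r$ give three simple roots, implicit differentiation gives $w_i'=2d\sigma(w_i-w_e)/\pa_w\Delta_1(w_i,\sigma)$, the $+,-,+$ sign pattern of $\pa_w\Delta_1$ at ordered simple roots gives monotonicity, and direct expansions handle $\sigma\to0$ and $\sigma\to\infty$. Two cosmetic remarks: you correctly read the first inequality of \eqref{eq: Relative Positions of w_i} as $w_1(\sigma)\le 0$, since the printed $-w_1(\sigma)\le 0$ is a typo incompatible with $w_1\sim-\sqrt d\,\sigma$; and $r^*(d,\ell)>1$ holds because $d>\sqrt d$, i.e.\ because $d\ge 2$, not because $\ell<d$.
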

\begin{proof}
The proof is verbatim the same as that of Lemma 2.3 in \cite{MRRSprofile}.
\end{proof}

\begin{lemma}[Double roots]
\label{lem: Double Roots}
Assume \eqref{eq: Parameters}. The solutions to $\Delta_1=\Delta_2=0$ are:
\be
\label{eq: Double Roots}
\left|\begin{array}{lll} 
P_1=(0,1), \ \  P_2=(1-w_-,w_-)\ \ P_3=(1-w_+,w_+),\\
P_4=(0,0), \ \ P_5=(\sigma_5,w_5), \ \ P'_5=(0,r),
\end{array}\right.
\ee
where the points are defined as follows:\\\\
\underline{$P_5$ point}. 
\be
\label{eq: P_5}
P_5=\left(\sigma(P_5)=\frac{r\sqrt{d}}{d+\ell},\ w(P_5)=\frac{\ell r}{d+\ell}\right)
 \ee
\underline{$P_2$, $P_3$ points}. Let $J(w_e)$ as in \eqref{eq: I and J}. Then
\be
\label{eq: w_pm}
w_{\pm}=\frac{1}{2(d-1)}\left(dw_e+d-1-\frac{dw_e}{\ell}\pm\sqrt{J(w_e)}\right)
\ee 
and $P_2$, $P_3$ are on the phase portrait iff  
\be
\label{eq: Existence of P_2, P_3}
w_\pm  \ \ \mbox{real} \Leftrightarrow (w_e<w_{\ell}^- \ \ \mbox{or}\ \ w_e>w_\ell^+)
\ee
with 
\be
\label{eq: w_l^pm}
w_{\ell}^{\pm}=\frac{\ell(d-1)}{d(1-\ell)^2}\left[\ell+1\pm2\sqrt{\ell}\right].
\ee
If \eqref{eq: Parameters} then, $w_e<w_\ell^-$ holds so $P_2$, $P_3$ are indeed on the phase portrait.\\\\
\underline{Location}.  When defined, $P_2,P_3,P_5$ are located on the curve of the middle root $(\sigma, w_2(\sigma))$ of $\Delta_1$. Moreover, $P_2,P_5$ are on the curve of the lower root $w_2^-$ of $\Delta_2$.\\\\
\underline{Position of the middle root}. Let $w_e<w_\ell^-$ and $w_2(\sigma)$ be the middle root of $\Delta _1$, then the relative position of the middle root with respect to the sonic line is:
\be
\label{eq: Position of Middle Root}
\sigma+w_2(\sigma)\left|\begin{array}{l}
>1\ \ \mbox{for}\ \ 0<\sigma<\sigma(P_3)\\
<1\ \ \mbox{for}\ \ \sigma(P_3)<\sigma<\sigma(P_2)\\
>1\ \ \mbox{for}\ \ \sigma>\sigma(P_2).
\end{array}\right.
\ee
\end{lemma}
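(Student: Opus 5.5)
The plan is a direct algebraic computation, following the structure of the corresponding lemma in \cite{MRRSprofile}. Note that in \eqref{eq: Double Roots} the points are written as $(\sigma,w)$.

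\textbf{Step 1: the branch $\sigma=0$.} Here $\Delta_2=0$ holds automatically, since $\Delta_2$ carries the factor $\sigma$. The equation $\Delta_1(w,0)=w(w-1)(w-r)=0$ then gives the three points $P_4=(0,0)$, $P_1=(0,1)$ and $P'_5=(0,r)$.

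\textbf{Step 2: the branch $\sigma\neq0$.} Now $\Delta_2=0$ reduces to
$$\ell\sigma^2=(\ell+d-1)w^2-w(\ell+d+\ell r-r)+\ell r.$$
I substitute this expression for $\sigma^2$ into $\Delta_1$, which leaves a cubic in $w$. The points $P_2,P_3$ lie on $\Delta=0$, so I test the line $\sigma=1-w$ directly. On this line the relation $\Delta_2=0$ becomes
$$(d-1)w^2-w\Big(d-1+\ell r-r+2\ell-\ell\Big)\ldots$$
Rather than expanding blindly, I use the fact that on $\sigma=1-w$ one has $\Delta=0$. The claim that every triple point is a double root is then a statement about the factorization of the cubic. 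I expect it to factor as
$$\big((d+\ell)w-\ell r\big)\cdot Q(w),$$
with $Q$ a quadratic whose roots are $w_\pm$. The first factor produces $P_5$: setting $w=\ell r/(d+\ell)$ and plugging back in gives $\sigma^2=dr^2/(d+\ell)^2$. For the quadratic $Q$, I rewrite $r=1+dw_e/\ell$ and compute its discriminant, which should come out proportional to $J(w_e)$. That yields formula \eqref{eq: w_pm}, with $\sigma=1-w_\pm$ checked to satisfy $\Delta_2=0$.

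\textbf{Step 3: existence of $P_2,P_3$.} $J$ is a quadratic in $w_e$, and its roots are $w_\ell^\pm$; its discriminant simplifies to give $\ell+1\pm2\sqrt\ell=(1\pm\sqrt\ell)^2$. The condition $J\ge0$ is therefore exactly \eqref{eq: Existence of P_2, P_3}. Under \eqref{eq: Parameters} I then verify
$$w_e=\frac{\ell(r-1)}{d}<\frac{\ell(d-1)}{d(1+\sqrt\ell)^2}=w_\ell^-.$$
This holds because $r-1<r_+-1=(d-1)/(1+\sqrt\ell)^2$, together with the identity $(1-\ell)^2=(1-\sqrt\ell)^2(1+\sqrt\ell)^2$.

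\textbf{Step 4: location.} Every solution with $\sigma>0$ lies on $\Delta_1=0$, so it lies on one of the branches $w_1,w_2,w_3$. The relative positions \eqref{eq: Relative Positions of w_i}, combined with $0<w<1$ at $P_2,P_3$ and $w_e<w(P_5)<1$, exclude $w_1$ and $w_3$. For the $\Delta_2$ branches, I compare $w$ at $P_2$ and $P_5$ with the midpoint $\frac{1}{2(\ell+d-1)}[\cdots]$ of \eqref{eq: Delta_2 Roots}; both fall below it, so both lie on $w_2^-$.

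\textbf{Step 5: position of the middle root.} The function $\sigma+w_2(\sigma)-1$ vanishes exactly at those points of the middle branch lying on the sonic line $w=1-\sigma$. By Step 2 these are only $P_2$ and $P_3$, because the other sonic line $w=1+\sigma>1$ is never met by $w_2\le1$. At $\sigma=0$ the sign follows from \eqref{eq: Asymptotes of w_i at 0}: $\sigma+w_2\approx1+\sigma$, which is $>1$. As $\sigma\to\infty$, $w_2\to w_e$, so the expression is again $>1$. The sign must therefore change at each of the two simple zeros, which gives \eqref{eq: Position of Middle Root}, ordering $\sigma(P_3)<\sigma(P_2)$ via $w_+>w_-$.

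\textbf{Main obstacle.} The hardest step is the factorization in Step 2 and showing that the zeros in Step 5 are simple (transversal crossings). Simplicity follows if $w_+\neq w_-$, i.e.\ $J>0$ strictly, which holds since $w_e<w_\ell^-$ strictly. Should the factorization resist, I would fall back on verifying \eqref{eq: Double Roots} by direct substitution of each point and counting roots, using Bezout-type degree bounds on the resultant in $w$.
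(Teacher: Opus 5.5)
Your approach is the direct algebraic computation the paper delegates to Lemma 2.4 of \cite{MRRSprofile}, and Steps 1--4 hold up. The factorization you anticipate is correct: on $\{\Delta_2=0,\ \sigma\neq0\}$ one finds $\ell\Delta_1=-\big((d+\ell)w-\ell r\big)Q(w)$ with $Q(w)=(d-1)w^2-\big(d-\ell+(\ell-1)r\big)w+\ell(r-1)$. Using $\ell(r-1)=dw_e$, the discriminant of $Q$ is exactly $J(w_e)$. Your abandoned line for $\Delta_2$ on $\sigma=1-w$ has the wrong middle coefficient; the bracket there is exactly $Q(w)$. In Step 4 you use $0<w_-<w_+<1$ without proof. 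It follows from $Q(0)=\ell(r-1)>0$, $Q(1)=r-1>0$ and the vertex $\frac{d-\ell+(\ell-1)r}{2(d-1)}\in(0,1)$. The upper bound on the vertex holds because $r<r_+<1+\frac{d-1}{\ell-1}$.

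The real gap is in Step 5. The zeros of $\sigma+w_2(\sigma)-1$ are the points where $\Delta_1$ \emph{alone} vanishes on the line $w=1-\sigma$. Step 2 only identifies the \emph{common} zeros of $\Delta_1$ and $\Delta_2$. So ``by Step 2 these are only $P_2$ and $P_3$'' does not follow unless you show that $\Delta_1=0$ on the sonic line forces $\Delta_2=0$. The missing identity is that, restricted to $w=1-\sigma$,
\[
\Delta_1=\sigma\,Q(1-\sigma),\qquad \Delta_2=\frac{\sigma}{\ell}\,Q(1-\sigma),
\]
so $\Delta_1=\ell\Delta_2$ there.

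This identity also replaces your vague simplicity argument. For fixed $\sigma>0$, $\Delta_1$ is a monic cubic in $w$: positive on $(w_1,w_2)$ and negative on $(w_2,w_3)$. The point $1-\sigma$ lies in $(w_1,w_3)$. Indeed $1-\sigma<1<r\le w_3$. And $1-\sigma\le w_1\le 0$ would make $\Delta_1\le 0$ at $w=1-\sigma$, contradicting $\sigma Q(1-\sigma)>0$ for $1-\sigma<w_-$. Hence $\mathrm{sign}\big(\sigma+w_2(\sigma)-1\big)=\mathrm{sign}\,Q(1-\sigma)$. This is negative exactly for $1-w_+<\sigma<1-w_-$, which is \eqref{eq: Position of Middle Root}, with no need for the endpoint asymptotics.
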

\begin{proof}
The proof is verbatim the same as that of Lemma 2.4 in \cite{MRRSprofile}.
\end{proof}

\begin{lemma}[Geometry of the roots as $r\uparrow r_+$]
\label{phasperotrai}
Suppose \eqref{eq: Parameters} holds. Then\\\\
\underline{Relative positions}.
$$
\sigma(P_3)<\sigma(P_2)<\sigma(P_5),
$$
\underline{Convergence}.  For $r=r_+(d,\ell)$, $P_2=P_3$ and
\be
\label{eq: Eye Property}
\lim_{r\uparrow r_+} |P_2-P_3|=0
\ee
\underline{Positions of $P_2,P_3$ on $\Delta_2=0$ curve}.  For $0<r_+(d,\ell)-r\ll1$ sufficiently small, $P_2$ and $P_3$ lie on the lower root curve $(\sigma,w_2^-(\sigma))$ of $\Delta_2$. Also, we have
\be
\label{eq: P_3^hat}
w_2^+(\sigma_3)>1.
\ee
\end{lemma}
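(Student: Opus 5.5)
The whole lemma reduces to explicit algebra on the formulas of Lemma \ref{lem: Double Roots} and Lemma \ref{lem: Delta_2 Roots}. Everything is organized around the discriminant $J(w_e)$: the points $P_2,P_3$ are real exactly when $J(w_e)\ge 0$, and they coincide exactly when $J(w_e)=0$, i.e. at $w_e=w_\ell^-$. So the first step is to check that the endpoint $r=r_+(d,\ell)$ corresponds precisely to $w_e=w_\ell^-$. With $w_e=\ell(r-1)/d$ and $r_+-1=(d-1)/(1+\sqrt\ell)^2$ we get
$$w_e(r_+)=\frac{\ell(d-1)}{d(1+\sqrt\ell)^2}.$$
Using $(1-\ell)^2=(1-\sqrt\ell)^2(1+\sqrt\ell)^2$ and $\ell+1-2\sqrt\ell=(1-\sqrt\ell)^2$, this equals $w_\ell^-$ in \eqref{eq: w_l^pm}. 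Hence $J(w_e)=0$ and $w_+=w_-$ at $r=r_+$. Since $w_\pm$ depend continuously on $r$ and the square root is continuous at $0^+$, we obtain \eqref{eq: Eye Property}; indeed $|P_2-P_3|=\sqrt2\,|w_+-w_-|\lesssim\sqrt{J(w_e)}\to0$.

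For the relative positions, both points lie on $w+\sigma=1$, so $\sigma(P_3)=1-w_+<1-w_-=\sigma(P_2)$ whenever $J>0$; this holds because $w_e<w_\ell^-$ strictly for $r<r_+$. For $\sigma(P_2)<\sigma(P_5)$, I would argue along the middle root curve $w_2(\sigma)$, on which $P_2,P_3,P_5$ all lie. By \eqref{eq: Position of Middle Root}, $\sigma+w_2(\sigma)>1$ exactly for $\sigma>\sigma(P_2)$ beyond the eye. So it suffices to check that $\sigma_5+w_5>1$ and that $\sigma_5$ is not below $\sigma(P_3)$. The inequality $\sigma_5+w_5=r(\sqrt d+\ell)/(d+\ell)>1$ is equivalent to $r>r^*$, which is part of \eqref{eq: Parameters}. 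To exclude the branch $\sigma<\sigma(P_3)$, I would show $\sigma_5>1-w_e\ge\sigma(P_3)$, or compare directly $\sigma_5$ with $1-w_-$ using explicit formulas. A simpler alternative is to note that $P_5$ lies on $w_2^-$ together with $P_2$, and use the monotonicity \eqref{eq: Monotonicity of w_2^pm}. This is where I expect some fiddly inequality work, and I would handle it first at $r=r_+$, where everything is explicit in $\sqrt\ell$, and then extend to nearby $r$ by continuity.

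For the last part, at $r=r_+$ the double point $P_2=P_3$ lies on $w_2^-$, since $P_2$ always does by Lemma \ref{lem: Double Roots}. I would check that it is strictly separated from $w_2^+$, i.e. $I(\sigma_3)>0$ there, so that the two branches of $\Delta_2$ are distinct at that $\sigma$. Continuity in $r$ then keeps $P_3$ on $w_2^-$ for $0<r_+-r\ll1$. For \eqref{eq: P_3^hat}, I would evaluate $w_2^+(\sigma_3)$ at $r=r_+$ using \eqref{eq: Delta_2 Roots} with $w_e=w_\ell^-$ and $\sigma_3=1-w_+$. The expected inequality $w_2^+>1$ reduces to an inequality in $\sqrt\ell$ and $d$ under $d-1<\ell<d$, and strictness propagates to $r$ near $r_+$. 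Verifying this explicit algebraic inequality, and the positivity of $I(\sigma_3)$, is the main computational obstacle. I would first try substituting $s=\sqrt\ell$ and factoring the resulting polynomials.
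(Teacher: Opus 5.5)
Your treatment of the convergence claim and of the third item follows the paper's route: compute at $r=r_+$, where $w_e=w_\ell^-$ and $\sigma_3^\infty=\frac{1}{1+\sqrt\ell}$, then propagate by continuity. The computation you flag as the main obstacle is short. One has $I(\sigma)-\bigl(d-1-(\ell-1)(r-1)\bigr)^2=4(\ell+d-1)\bigl[\ell\sigma^2-(r-1)\bigr]$. At $(\sigma_3^\infty,r_+)$ this is positive precisely because $\ell>d-1$. Also $d-1-(\ell-1)(r_+-1)=\frac{2(d-1)}{1+\sqrt\ell}>0$. Together these give $I(\sigma_3^\infty)>0$ and $w_2^+(\sigma_3^\infty)>1$ at once.

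The gap is in $\sigma(P_2)<\sigma(P_5)$. You correctly reduce it to $\sigma_5+w_5=r/r^*>1$ together with $\sigma_5>\sigma_3$, but none of your three routes to $\sigma_5>\sigma_3$ proves the statement as written.
\begin{itemize}
\item The bound $\sigma_5>1-w_e$ is false. For $d=2$, $\ell=\frac32$ and $r$ near $r_+\approx 1.202$, one has $\sigma_5\approx 0.486$ but $1-w_e\approx 0.848$. In general $w_5=\frac{\ell r}{d+\ell}$ lies far above $w_e$, while $\sigma_5+w_5$ is barely above $1$.
\item Monotonicity of $w_2^-$ only rewrites $\sigma_5>\sigma_2$ as $w_5<w_-$, which is the same inequality involving $\sqrt{J}$.
\item Checking at $r=r_+$ and invoking continuity gives the claim only for $r$ close to $r_+$. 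The lemma asserts it on all of $r^*<r<r_+$; only the third item is restricted to $0<r_+-r\ll 1$.
\end{itemize}

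The missing idea is an exact formula valid on the whole range. From \eqref{eq: w_pm}, $\sigma_3=1-w_+=\frac{(d-1)+(1-\ell)(r-1)-\sqrt{J}}{2(d-1)}$, hence
$$\sigma_5-\sigma_3=\frac{\sqrt{J}+\kappa(r-r_0)}{2(d-1)},\qquad \kappa=\frac{2(d-1)\sqrt d}{d+\ell}-1+\ell>0,\qquad r_0=\frac{d+\ell-2}{\kappa}.$$
Since $\sqrt J\ge 0$ and $\kappa(r-r_0)$ is increasing in $r$, it suffices to have $r^*>r_0$. This follows from the identity
$$r^*-r_0=\frac{(\sqrt d-1)(d+\ell)(d-\ell)}{(\sqrt d+\ell)\left(2(d-1)\sqrt d+(\ell-1)(d+\ell)\right)}>0.$$
Then \eqref{eq: Position of Middle Root} upgrades $\sigma_5>\sigma_3$ to $\sigma_5>\sigma_2$.

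If you only need $r$ near $r_+$, which is all the rest of the paper uses, your continuity route does work. There the check $\sigma_5(r_+)>\frac{1}{1+\sqrt\ell}$ reduces to $(\sqrt d-\sqrt\ell)(d+\ell-\sqrt d+\sqrt\ell)>0$.
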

\begin{proof}
\underline{Relative positions}.  In view of \eqref{eq: Double Roots}, we infer from \eqref{eq: Parameters} that
$$
\sigma(P_2)-\sigma(P_3)=\frac{\sqrt{J(w_e)}}{d-1}>0.
$$
Note also that
$$
\sigma(P_5)-\sigma(P_3)=\frac{\sqrt{J(w_e)}+A}{2(d-1)},\quad \left|
\begin{array}{l}
A=\left(\frac{2(d-1)\sqrt d}{d+\ell}-1+\ell\right)(r-r_0(\ell))\\
r_0(\ell)=\frac{d+\ell-2}{\frac{2(d-1)\sqrt d}{d+\ell}-1+\ell}>0.
\end{array}\right.
$$
Note that
$$
\frac{2(d-1)\sqrt d}{d+\ell}-1+\ell>0
$$
and
$$
r^*(d,\ell)-r_0=\frac{(\sqrt d-1)(d+\ell)(d-\ell)}{(\sqrt d+\ell)(2(d-1)\sqrt d+(\ell-1)(d+\ell))},
$$
so for $\ell<d$, we have $r^*(d,\ell)>r_0$. Thus, by \eqref{eq: Parameters}, we infer $A>0$ i.e. $\sigma(P_3)<\sigma(P_5)$. Moreover,
$$
w(P_5)+\sigma(P_5)=\frac{\ell r}{d+\ell}+\frac{r\sqrt d}{d+\ell}=\frac{r}{r^*(d,\ell)}>1.
$$
Thus, in view of \eqref{eq: Position of Middle Root}, since $\sigma(P_3)<\sigma(P_5)$ it follows that $\sigma(P_2)<\sigma(P_5)$.\\\\
\underline{Convergence}.  This is an immediate consequence of $J(w_e(r=r_+))=0$.\\\\
\underline{Positions of $P_2,P_3$ on $\Delta_2=0$ curve}.  Recall from Lemma \ref{lem: Double Roots} that $P_2$ lies on the lower root curve $(\sigma,w_2^-(\sigma))$ of $\Delta_2$. Also, for $r=r_+(d,\ell)$, we know that $P_2=P_3$. By continuous dependence of the point $P_3$ and the curves $(\sigma,w_2^-(\sigma))$ in $r$, we infer that $P_3$ lies on the curve $(\sigma,w_2^-(\sigma))$ for $0<r_+(d,\ell)-r\ll1$. We now prove \eqref{eq: P_3^hat}. In view of \eqref{eq: Slopes at P_3} and \eqref{eq: Parameters}, we have in the limit $r\uparrow r_+$ that
$$
\ell(\sigma_3^\infty)^2=\frac{\ell}{(1+\sqrt \ell)^2}> \frac{d-1}{(1+\sqrt \ell)^2}=\frac{d}{\ell} w_e(r_+)
$$
which implies
\bee
&&I(\sigma_3^\infty)-\left(-\frac{d(\ell-1)}{\ell} w_e(r_+)+d-1\right)^2\\
&=&-4\frac{d(d-1)}{\ell} w_e(r_+)-4dw_e(r_+)+4\ell(\ell+d-1)(\sigma_3^\infty)^2>0\\
&\Rightarrow&\sqrt{I(\sigma_3^\infty)}>-\frac{d(\ell-1)}{\ell} w_e(r_+)+d-1\\
&\Rightarrow& w_2^+(\sigma_3^\infty)>1.
\eee
This, together with the continuous dependence of $\sigma_3$ and the curve $w_2^+$ in $r$, \eqref{eq: P_3^hat} follows.
\end{proof}

\subsection{Slopes at $P_3$}
For $(\sigma_3,w_3)=(\sigma(P_3),w(P_3))$, we define the first order derivatives of $\Delta_1,\Delta_2$ at $P_3$:
\be
\label{eq: Slopes}
\left|\begin{array}{llll}
c_1=\pa_w\Delta_1(P_3)=3w_3^2-2(r+1)w_3+r-d\sigma_3^2\\
c_2=\pa_w\Delta_2(P_3)=\frac{\sigma_3}{\ell}[2w_3(\ell+d-1)-(\ell+d+\ell r-r)]\\
c_3=\pa_\sigma \Delta_1(P_3)=-2d\sigma_3w_3+2\ell(r-1)\sigma_3\\
c_4=\pa_\sigma\Delta_2(P_3)=-2\sigma_3^2,
\end{array}\right.
\ee

\begin{lemma}[Sign of the slopes] 
\label{lem: Sign of Slopes}
Assume \eqref{eq: Parameters}. Then
\be
\left|\begin{array}{l}
c_i<0, \ \ 1\le i\le 4\\
c_2c_3-c_1c_4<0.
\end{array}\right.
\ee
\end{lemma}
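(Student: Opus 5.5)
The plan is to express each $c_i$ through the root branches of Lemmas \ref{lem: Delta_2 Roots} and \ref{lem: Delta_1 Roots}, then read off the signs from the ordering of those roots. Throughout, $P_3$ is a triple point with $\sigma_3>0$, and $w_3=w_2(\sigma_3)$ is the middle root of $\Delta_1$ (Lemma \ref{lem: Double Roots}). By Lemma \ref{phasperotrai}, in the regime $r\uparrow r_+$ where it is used, $P_3$ also lies on the lower branch $w_2^-$ of $\Delta_2$.

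\emph{The sign of $c_4$} is immediate: $c_4=-2\sigma_3^2<0$.

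\emph{For $c_3$,} write $c_3=2\sigma_3(\ell(r-1)-dw_3)=2d\sigma_3(w_e-w_3)$. By \eqref{eq: Relative Positions of w_i}, $w_3=w_2(\sigma_3)>w_e$, so $c_3<0$.

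\emph{For $c_1$,} at fixed $\sigma$, $\Delta_1$ is a monic cubic in $w$ with three distinct roots $w_1<w_2<w_3$. Hence at the middle root
$$
\pa_w\Delta_1=(w_2-w_1)(w_2-w_3)<0 .
$$

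\emph{For $c_2$,} write $\Delta_2=\frac{\sigma}{\ell}q(w,\sigma)$ with $q$ quadratic in $w$, leading coefficient $\ell+d-1$, and roots $w_2^\pm$. Then
$$
\pa_w q=2(\ell+d-1)w-(\ell+d+\ell r-r),
$$
which equals $-\sqrt{I(\sigma)}$ at $w=w_2^-$. Hence $c_2=-\frac{\sigma_3}{\ell}\sqrt{I(\sigma_3)}$. Strictness requires $I(\sigma_3)>0$. This holds because $w_2^+(\sigma_3)>1>w_3=w_2^-(\sigma_3)$ by \eqref{eq: P_3^hat}, so the two roots of $q$ are distinct.

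\emph{For the determinant,} I would interpret it through slopes of the two zero sets. Along $\Delta_1=0$ the slope at $P_3$ is $w_2'(\sigma_3)=-c_3/c_1$. Along $\Delta_2=0$ it is $(w_2^-)'(\sigma_3)=-c_4/c_2$. Therefore
$$
c_2c_3-c_1c_4=c_1c_2\big((w_2^-)'(\sigma_3)-w_2'(\sigma_3)\big),
$$
and since $c_1c_2>0$ the claim is equivalent to $f'(\sigma_3)>0$, where $f:=w_2-w_2^-$. I would get this from a global sign count for $f$ on $\sigma>0$:
\begin{itemize}
\item By Lemma \ref{lem: Double Roots}, the only common zeros of $\Delta_1,\Delta_2$ with $\sigma>0$ on the branch $w_2$ are $P_3,P_2,P_5$, with $\sigma_3<\sigma_2<\sigma_5$ (Lemma \ref{phasperotrai}). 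So $f$ vanishes exactly at these three points.
\item As $\sigma\to\infty$, $w_2\to w_e$ while $w_2^-\sim-c\sigma\to-\infty$, since $\sqrt{I}\sim2\sqrt{\ell(\ell+d-1)}\,\sigma$. Hence $f>0$ for $\sigma>\sigma_5$.
\item Provided each zero is simple, the signs then alternate: $f<0$ on $(\sigma_2,\sigma_5)$, $f>0$ on $(\sigma_3,\sigma_2)$, and $f<0$ on $(0,\sigma_3)$. This gives $f'(\sigma_3)\ge0$.
\item One can cross-check the sign near $0$: $w_2(0)=1$, and $w_2^-(0)$ compared with $1$ follows from \eqref{eq: Delta_2 Roots}.
\end{itemize}

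The main obstacle is strictness, i.e.\ showing the zeros of $f$ are simple, or at least simple at $\sigma_3$ with odd multiplicity at $\sigma_2,\sigma_5$. My first attempt would be a direct algebraic computation. Substitute $w_3=1-\sigma_3$ and the explicit $w_\pm$ from \eqref{eq: w_pm} into $c_2c_3-c_1c_4$, and use $\Delta_1(P_3)=\Delta_2(P_3)=0$ to reduce it to an expression linear in $\sqrt{J(w_e)}$. I would then check that it is a nonzero multiple of $\sqrt{J(w_e)}$ with the right sign.

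As a fallback, use the limit $r\uparrow r_+$, where $P_2$ and $P_3$ merge into a double intersection of the two curves. The sign of $f'(\sigma_3)$ is then governed by the sign of $f''$ at the merged point, which can be checked there with the explicit data $\ell\sigma_3^2=\ell/(1+\sqrt\ell)^2$. The result extends to $0<r_+-r\ll1$ by continuity.
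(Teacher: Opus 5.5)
\textbf{There is a genuine gap: the strict inequality $c_2c_3-c_1c_4<0$ is not actually proved.}

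What you do prove is correct. The four signs $c_i<0$ are established. For $c_2$ you need $P_3$ on the lower branch $w_2^-$, which the paper only provides for $0<r_+-r\ll1$ via Lemma~\ref{phasperotrai}; your argument therefore covers that regime, which is the one where the lemma is used. The reduction $c_2c_3-c_1c_4=-c_1c_2\,f'(\sigma_3)$ with $f=w_2-w_2^-$ is also right.

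The gap is in the last step. Your alternation argument depends entirely on the multiplicities of the zeros of $f$.
\begin{itemize}
\item Without knowing that $\sigma_2$ and $\sigma_5$ are odd-order zeros, you do not even get $f'(\sigma_3)\ge 0$. A tangential contact at $P_5$ would flip the whole sign pattern.
\item Simplicity at $\sigma_3$ is literally the statement $c_2c_3-c_1c_4\neq 0$.
\item At $r=r_+$ one has $c_1^\infty=c_3^\infty$ and $c_2^\infty=c_4^\infty$, so the determinant vanishes there. This nondegeneracy is therefore the whole content of the claim, and no soft continuity argument can supply it.
\end{itemize}
Neither of your two routes (the reduction to an expression in $\sqrt{J(w_e)}$, or the sign of $f''$ at the merged point) is carried out. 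A minor point: your cross-check as $\sigma\to0$ is unavailable. Near $r_+$ one has $I(0)=J(w_e)-4dw_e<0$, so $w_2^-$ exists only for $\sigma\ge\sigma_2^{(0)}>0$, and the alternation must be run on $(\sigma_2^{(0)},\infty)$. This is harmless, since $I(\sigma_3)>0$.

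\textbf{Closing the gap with Bezout's theorem.} This makes your sign count rigorous.
\begin{itemize}
\item The cubic $\Delta_1$ and the conic $q=\ell\Delta_2/\sigma$ are both even in $\sigma$.
\item They have no common component: $\{\Delta_1=\Delta_2=0\}$ is finite, while every component of the indefinite real conic $q=0$ carries infinitely many real points.
\item They share the six points $(\pm\sigma_i,w(P_i))$, $i=2,3,5$. These are distinct because $0<\sigma_3<\sigma_2<\sigma_5$.
\item Bezout caps the total intersection number at $3\cdot 2=6$, so every one of these intersections is transversal.
\end{itemize}
At $P_3$ transversality reads $\frac{\sigma_3}{\ell}\left(\pa_w\Delta_1\,\pa_\sigma q-\pa_\sigma\Delta_1\,\pa_w q\right)=c_1c_4-c_2c_3\neq 0$. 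Likewise $f'(\sigma_2)\neq0$ and $f'(\sigma_5)\neq 0$. Your alternation, started from $f>0$ on $(\sigma_5,\infty)$, then gives $f>0$ on $(\sigma_3,\sigma_2)$ and hence $f'(\sigma_3)>0$.

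\textbf{Alternative in the regime $r\uparrow r_+$.} This is the quantitative input your fallback would need.
\begin{itemize}
\item Since $c_2c_3>0$, the eigenvalues $\lambda_\pm$ are real, and $c_1c_4-c_2c_3=\lambda_+\lambda_-=b\,\mu_+\lambda_-$.
\item Lemma~\ref{lem: Slopes at P_3} gives $\lambda_-^\infty<0$, and the directly computed value \eqref{eq: mu_+^infty} gives $\mu_+^\infty<0$. This is where $\ell<d$ enters: at $\ell=d$, the point $P_5$ merges with $P_2=P_3$ at $r_+$ and $\mu_+^\infty=0$.
\item These two signs give the result for small $b$.
\end{itemize}
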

\begin{proof}
The proof is verbatim the same as that of Lemma 2.8 in \cite{MRRSprofile}.
\end{proof}

\begin{remark}[Slopes and eigenvalues]
We will also compute the slopes of any integral curve passing through $P_3$. it turns out that there are only two possible values:  
\be
\label{eq: c_pm}
c_\pm=\frac{c_4-c_1\pm\sqrt{(c_1-c_4)^2+4c_2c_3}}{2|c_2|}.
\ee 
This follows since $c_\pm$ are the solutions of the equation
\be
\label{eq: c_pm 2}
c_{\pm}=\frac{c_1c_\pm+c_3}{c_2c_\pm+c_4}.
\ee
The characteristic matrix
$$
\mathcal A(P_3)=\begin{pmatrix} c_1 &c_3\\ c_2&c_4\end{pmatrix}
$$ 
possesses the following eigenvalues:  
\be
\label{eq: lambda_pm}
\l_{\pm}=\frac{c_1+c_4\pm \sqrt{(c_1-c_4)^2+4c_2c_3}}{2}
\ee 
It may be diagonalized as follows:
$$
P^{-1}\left[\mathcal A(P_3)\right]P=\left(\begin{array}{ll} \l_+&0\\ 0&\l_-\end{array}\right)
$$
with
\be
\label{eq: P}
P=\left(\begin{array}{ll} c_-&c_+\\ 1&1\end{array}\right), \ \ P^{-1}=\frac{1}{c_+-c_-}\left(\begin{array}{ll} -1&c_+\\1&-c_-\end{array}\right).
\ee
\end{remark}
\begin{lemma}[Estimates on the slopes]
\label{lem: Estimate Slopes}
Assume \eqref{eq: Parameters} and let 
\be
\label{eq: A}
 A =\frac{\l_-}{\l_+}= \frac{c_1c_4-c_2c_3}{(c_4+c_2c_-)^2},
\ee
Then,
\be
\label{eq: Sign of Eigenvalues}
\left|\begin{array}{l}
c_-<0<c_+\\
c_4+c_2c_- <0\\
A>1\\
-\frac{c_4}{c_2}< c_- < -\frac{c_3}{c_1}<-1\\
\l_-<\l_+<0
\end{array}\right.
\ee
\end{lemma}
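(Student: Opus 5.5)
Everything here follows from Lemma \ref{lem: Sign of Slopes}, i.e. $c_i<0$ for $1\le i\le 4$ and $c_2c_3-c_1c_4<0$, together with elementary manipulations of the quadratic \eqref{eq: c_pm 2}. The plan is to derive the five inequalities of \eqref{eq: Sign of Eigenvalues} in the order they are listed. The one step that needs more than signs, the bound $c_-<-c_3/c_1<-1$, I would settle by a direct computation at $P_3$ in the regime $r$ close to $r_+$.

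\emph{Eigenvalues.} The discriminant is
$$D=(c_1-c_4)^2+4c_2c_3.$$
Since $c_2c_3>0$, $D>0$, so $\l_\pm$ are real and distinct. Their product is $\l_+\l_-=c_1c_4-c_2c_3>0$ and their sum is $c_1+c_4<0$, hence $\l_-<\l_+<0$.

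\emph{Signs of $c_\pm$.} Since $|c_2|=-c_2$, the numbers $c_\pm$ are the roots of $c_2c^2+(c_4-c_1)c-c_3=0$, which is exactly \eqref{eq: c_pm 2} after clearing denominators. The product of the roots is $-c_3/c_2<0$, so $c_-<0<c_+$.

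\emph{The quantity $c_4+c_2c_-$ and the ratio $A$.} By \eqref{eq: c_pm 2}, $c_2c_\pm+c_4$ is an eigenvalue of $\mathcal A(P_3)$: the vector $(c_\pm,1)$ is an eigenvector with eigenvalue $c_2c_\pm+c_4$, consistent with the matrix $P$ in \eqref{eq: P}. The map $c\mapsto c_2c+c_4$ is decreasing, since $c_2<0$, and $c_-<c_+$. Hence $c_2c_-+c_4$ is the larger eigenvalue, $\l_+$, so $c_4+c_2c_-=\l_+<0$. Then
$$A=\frac{\l_-}{\l_+}=\frac{\l_+\l_-}{\l_+^2}=\frac{c_1c_4-c_2c_3}{(c_4+c_2c_-)^2},$$
and $A>1$ because $\l_-<\l_+<0$.

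\emph{Chain of inequalities.} The lower bound $-c_4/c_2<c_-$ is equivalent to $c_2c_-+c_4<0$, using $c_2<0$. For the remaining bounds, set $f(c)=c_2c^2+(c_4-c_1)c-c_3$, a downward parabola with roots $c_-<0<c_+$. A negative number $x$ satisfies $x>c_-$ iff $f(x)>0$. Evaluating at $x=-c_3/c_1<0$ gives
$$f(-c_3/c_1)=\frac{c_3}{c_1^2}\bigl(c_2c_3-c_1c_4\bigr).$$
This is positive, since $c_3<0$ and $c_2c_3-c_1c_4<0$, so $c_-<-c_3/c_1$.

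\emph{Main obstacle: $-c_3/c_1<-1$.} This bound is equivalent to $c_3<c_1$, i.e. $c_1-c_3>0$, and it is not a sign consequence. I would verify it directly from \eqref{eq: Slopes}. Write $c_1-c_3$ as a polynomial in $(\sigma_3,w_3)$, use $w_3=1-\sigma_3$ and $\Delta_1(P_3)=0$ to reduce it, and then evaluate in the limit $r\uparrow r_+$ using the explicit $P_2=P_3$ point from Lemma \ref{phasperotrai}, where $J(w_e)=0$ and $\sigma_3^\infty=1/(1+\sqrt\ell)$. The goal is an expression such as a positive multiple of $\sqrt\ell$ times $(d-\ell)$, or something similar that is positive for $d-1<\ell<d$. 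Continuity in $r$ then extends the strict inequality to the whole range \eqref{eq: Parameters}, or at least to $0<r_+-r\ll1$, in the same spirit as the argument in \cite{MRRSprofile}, Lemma 2.8.

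If the limiting computation turns out degenerate, the fallback is to mirror the verbatim algebra of \cite{MRRSprofile} for the corresponding slope estimate at $P_2$, with $w_-$ replaced by $w_+$.
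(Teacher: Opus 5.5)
Your derivations of $\lambda_-<\lambda_+<0$, $c_-<0<c_+$, $c_4+c_2c_-=\lambda_+<0$, $A>1$ and $-c_4/c_2<c_-<-c_3/c_1$ are correct. Your evaluation $f(-c_3/c_1)=\frac{c_3}{c_1^2}(c_2c_3-c_1c_4)>0$ is an equivalent, somewhat cleaner, version of the paper's squaring computation.

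The gap is the last link, $-c_3/c_1<-1$, i.e.\ $c_1>c_3$. Your primary plan is to evaluate $c_1-c_3$ at $r=r_+$ and conclude by continuity, and this fails outright. By \eqref{eq: Slopes at P_3} one has $c_1^\infty=c_3^\infty$, and also $c_2^\infty=c_4^\infty$, $c_-^\infty=-1$, $\lambda_+^\infty=0$. So the whole chain $-c_4/c_2<c_-<-c_3/c_1<-1$ collapses to $-1$ at $r_+$. This is exactly the eye degeneracy $P_2=P_3$. The inequality is an $O(b)$ effect, so it must be proved at fixed $r<r_+$; no continuity argument from the limit can produce it, let alone on the whole range \eqref{eq: Parameters}. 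Your fallback does not help either. At $P_2$ the middle root $w_2$ crosses the sonic line in the opposite direction (see \eqref{eq: Position of Middle Root}), so transcribing the $P_2$ computation would give the reversed inequality.

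The missing idea is that $-c_3/c_1=w_2'(\sigma_3)$ is the slope of the middle root of $\Delta_1$ at $P_3$. By \eqref{eq: Position of Middle Root}, $w_2$ passes from above the line $w=1-\sigma$ to below it at $\sigma=\sigma_3$; this is the paper's argument.

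To get strictness and an explicit formula, restrict $\Delta_1$ to the sonic line: $\Delta_1(\sigma,1-\sigma)=-\sigma q(\sigma)$ with $q(\sigma)=(1-d)\sigma^2+(r-2+d-dw_e)\sigma+1-r$. The discriminant of $q$ is exactly $J(w_e)$ from \eqref{eq: I and J}, and its smaller root is $\sigma_3$. Since $\frac{d}{d\sigma}\Delta_1(\sigma,1-\sigma)=\pa_\sigma\Delta_1-\pa_w\Delta_1$, evaluating at $\sigma_3$ gives $c_3-c_1=-\sigma_3q'(\sigma_3)$, that is
\[
c_1-c_3=\sigma_3\sqrt{J(w_e)}>0\qquad\text{for all } r<r_+ ,
\]
which vanishes precisely at $r=r_+$. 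Together with $c_1<0$ this gives $c_3/c_1>1$, hence $-c_3/c_1<-1$ on the whole range. This is what your suggested reduction via $w_3=1-\sigma_3$ and $\Delta_1(P_3)=0$ produces if you carry it out at fixed $r$ rather than passing to the limit.
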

\begin{proof}
We prove $-\frac{c_4}{c_2}< c_- < -\frac{c_3}{c_1}<-1$. The rest of the proof is verbatim the same as that of Lemma 2.9 in \cite{MRRSprofile}. Since $\{\Delta_1=0\}$ intersects $w=1-\sigma$ at $P_2$ and $P_3$, $(\sigma, w_2(\sigma))$ is below $w=1-\sigma$ for $\sigma>1-w_+$ and above for $\sigma<1-w_+$. Since $-c_3/c_1$ is the slope of $w=w_2(\sigma)$ at $P_3$, we infer 
$$
-\frac{c_3}{c_1}<-1.
$$
It remains to compare $c_-$ to $-c_3/c_1$. We compute
$$
\begin{aligned}
c_-+\frac{c_3}{c_1} =&\ c_-+\frac{|c_3|}{|c_1|}=\ \frac{|c_1|(c_4-c_1 -\sqrt{(c_1-c_4)^2+4c_2c_3})+2c_2c_3}{2|c_1||c_2|}\\ 
=&\ \frac{c_1(c_1-c_4)+2c_2c_3 -|c_1|\sqrt{(c_1-c_4)^2+4c_2c_3}}{2|c_1||c_2|}. 
\end{aligned}
$$
Now, we have
$$
\begin{aligned}
&\ \Big(c_1(c_1-c_4)+2c_2c_3\Big)^2 - \Big(|c_1|\sqrt{(c_1-c_4)^2+4c_2c_3}\Big)^2\\
=&\  c_1^2(c_1-c_4)^2+4c_2c_3c_1(c_1-c_4)+4c_2^2c_3^2 -c_1^2(c_1-c_4)^2-4c_1^2c_2c_3\\
=&\ 4c_2c_3(c_2c_3-c_1c_4) < 0
\end{aligned}
$$
and hence $c_-+\frac{c_3}{c_1}< 0.$ Thus, we infer $c_-<-\frac{c_3}{c_1}<-1$ so done by $c_4+c_2c_-<0$.
\end{proof}

\section{General properties of the dynamical system \eqref{eq: Autonomous System}}

In this section we establish the general properties of Lemma \ref{vnioneneno} for the dynamical system \eqref{eq: Autonomous System}. Assume that \eqref{eq: Parameters} hold so that the shape of the phase portrait is given by Figure \ref{fig: Phase Portrait}. We recall that  $w_2(\sigma)$ is the middle root of $\Delta_1$ and $w_2^{-}(\sigma)$ is the smallest root of $\Delta_2$ given by \eqref{eq: Delta_2 Roots}. The arguments in this section are classical and are given for the reader's convenience.

\subsection{The solution emerging from $P_1$}

We first claim the existence and uniqueness (up to the scaling symmetry) of a spherically symmetric solution to \eqref{eq: Autonomous System} which exists on the interval $[Z_1,Z_3]$ and, in the variables of Emden 
transform, corresponds to the integral curve $P_1-P_3$.

\begin{lemma}[The solution emerging from $P_1$]
\label{lem: Solution from P_1}
Assume \eqref{eq: Parameters}, $r_+(d,\ell)-r\ll 1$ and recall from \eqref{eq: Delta_2 Roots} the definition of $w_e$.
\begin{enumerate}
\item Existence: There is $\sigma_0>0$ small enough and a unique curve solution $w(\sigma)$ to \eqref{eq: Autonomous System} on $[0,\sigma_0]$ with $w(0)=1$. It admits the asymptotic expansion:
\be
\label{eq: Asymptote at P_1}
w(\sigma)=1+\frac{\ell^2(w_e-1)}{(\ell+2)w_e}\sigma^2+\mathcal O_{\sigma\to 0}(\sigma^4).
\ee
\item Original variables: The curve corresponds to a spherically symmetric solution of \eqref{eq: Autonomous System} defined on the interval $|Z|\in [Z_1,Z_3]$. This solution belongs to 
$C^\infty(Z_1<|Z|< Z_3)$.\\
\vskip .3pc
\item Reaching $P_3$: The solution curve is invertible with inverse $\sigma(w)$ near $\sigma=0$. Then, we have $\sigma(w)\in C^\infty((w(P_3),1))$ with $\sigma(w(P_3))=\sigma(P_3)$ (see Figure \ref{fig: Phase Portrait}). Furthermore,
\be
\label{eq: Bound on Sigma}
\forall w\in (w(P_3),1),\quad \sigma(w)<\sigma(P_3).
\ee
\end{enumerate}
\end{lemma}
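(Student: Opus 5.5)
We work with the curve equation $\frac{dw}{d\sigma}=\frac{\Delta_1}{\Delta_2}$, obtained by dividing the two lines of \eqref{eq: Autonomous System}. Near $P_1=(0,1)$ we have $\Delta_2=\frac{\sigma}{\ell}\left[(\ell+d-1)-(\ell+d+\ell r-r)+\ell r+O(|w-1|+\sigma^2)\right]$. Since $(\ell+d-1)-(\ell+d+\ell r-r)+\ell r=r-1$, this gives $\Delta_2=\frac{\sigma(r-1)}{\ell}(1+o(1))$. Likewise $\Delta_1=(w-1)(1-r)+d(w_e-1)\sigma^2+\dots$.

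Hence $P_1$ is a singular point of the form
$$\sigma\frac{dw}{d\sigma}=F(\sigma^2,w-1),\qquad \partial_{w}F=-\ell .$$
\emph{Existence and uniqueness.} We look for an even solution $w=1+a\sigma^2+\dots$. Matching the $\sigma^2$ coefficients gives
$$2a\,\frac{r-1}{\ell}=a(1-r)+d(w_e-1)+\text{(corrections from }\Delta_2\text{)}.$$
Carrying out this algebra should reproduce the coefficient $\frac{\ell^2(w_e-1)}{(\ell+2)w_e}$ in \eqref{eq: Asymptote at P_1}.

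To make this rigorous, we set $s=\sigma^2$ and $v=w-1$. The system becomes $2s\,v_s=-\ell v+\alpha s+$ higher order terms, with $-\ell<0$. A standard Briot--Bouquet / Poincaré analysis at this node then applies. Because the eigenvalue ratio is negative (not a positive integer resonance), there is a unique analytic solution with $v(0)=0$. Every other solution behaves like $s^{-\ell/2}$ and does not satisfy $w(0)=1$.

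I would prove existence by a fixed point in $C^k([0,s_0])$ on the integral equation
$$v(s)=s^{-\ell/2}\int_0^s \tau^{\ell/2-1}G(\tau,v)\,\frac{d\tau}{2}.$$
Uniqueness comes from the fact that the homogeneous part blows up at $s=0$. Evenness in $\sigma$ yields the $O(\sigma^4)$ remainder.

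\emph{Original variables.} Along the curve, $\Delta\neq 0$ as long as we stay strictly between the sonic points. We recover $x$ from $\frac{dx}{d\sigma}=-\frac{\Delta}{\Delta_2}$. Near $\sigma=0$ this is $\sim -\frac{\ell}{(r-1)\sigma}\cdot(\Delta/\dots)$, so $x$ stays finite as $\sigma\to 0$ and defines $Z_1=e^{x(0)}$, the radius where $\rho=0$. The upper endpoint $Z_3$ corresponds to reaching $P_3$. Smoothness on $(Z_1,Z_3)$ is automatic, since the right-hand sides are smooth there and $\Delta\neq0$.

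\emph{Reaching $P_3$.} This is the main obstacle: it is a trapping-region argument. The curve leaves $P_1$ with $w<1$, because $w_e<1$ makes the coefficient of $\sigma^2$ negative. It enters the region bounded by $\sigma=0$, the sonic line $w=1-\sigma$, and the middle root curve $w=w_2(\sigma)$. By \eqref{eq: Position of Middle Root}, on $0<\sigma<\sigma(P_3)$ this curve lies above the sonic line.

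In this region I would determine the signs. We have $\Delta>0$ (above $w=1-\sigma$ and below $w=1+\sigma$). Also $\Delta_2>0$ near $P_1$, and it persists provided we remain above $w_2^-$ and below $w_2^+$; here \eqref{eq: P_3^hat} and Lemma \ref{phasperotrai} are used for $r$ close to $r_+$. Finally $\Delta_1$ has a fixed sign relative to $w_2$. This gives $\sigma'<0$ in $x$, i.e. $w$ is monotone in $\sigma$, so the inverse $\sigma(w)$ is defined.

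Then I check the flow direction on each boundary piece. On $w=w_2(\sigma)$ we have $\Delta_1=0$, so trajectories cross horizontally, inward. The sonic line can only be met at triple points, and the only one on the boundary is $P_3$. Trajectories cannot exit through $\sigma=0$ except at $P_1$, $P_4$, or $P_5'$, which are excluded by monotonicity of $w$.

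Together these show that the curve converges to the corner $P_3$. This gives $\sigma(w)<\sigma(P_3)$ for $w\in(w(P_3),1)$ and the limit $\sigma(w(P_3))=\sigma(P_3)$. Verifying the sign of $\Delta_2$ throughout the region is the delicate algebraic step, and it is exactly where $r_+-r\ll1$ is needed.
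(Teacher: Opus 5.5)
Parts (1) and (2) are fine and match the paper. The paper's integral equation $(\sigma^\ell(w-1))'=\sigma^{\ell-1}\frac{((w-1)^2-\sigma^2)P(w)}{Q(w)-\sigma^2}$ is your Briot--Bouquet fixed point, written in $\sigma$ instead of $s=\sigma^2$.

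The gap is in part (3): your trapping region does not contain the curve. We have $w_2(\sigma)=1-\frac{\ell(1-w_e)}{w_e}\sigma^2+O(\sigma^3)$, while the $P_1$ curve is $1-\frac{\ell}{\ell+2}\cdot\frac{\ell(1-w_e)}{w_e}\sigma^2+O(\sigma^4)$. So the curve leaves $P_1$ \emph{above} the middle root, where $\Delta_1<0$. The lens between $w=1-\sigma$ and $w_2$ has $\Delta_1>0$ and is never visited. Your signs are also off. Between the sonic lines, $\Delta=(w-1)^2-\sigma^2<0$, and $\Delta_2<0$ (not $>0$) between $w_2^-$ and $w_2^+$. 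Finally, ``the sonic line can only be met at triple points'' does not dispose of that boundary. An $x$-trajectory can run into $\Delta=0$ at a non-triple point, and that loss of smoothness is exactly what must be excluded.

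The missing idea is the correct region and the correct use of the hypotheses. The paper works in the triangle $\{w_2(\sigma)<w<1,\ 0<\sigma\le\sigma(P_3)\}$. There $\Delta<0$ and $\Delta_1<0$, so $w$ decreases strictly along the flow and the curve is a graph $\sigma(w)$; no monotonicity of $\sigma$ is needed.

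$\Delta_2$ actually changes sign in this triangle. The curve $\Delta_2=0$ enters at $P_3$ along $w_2^-$ and leaves through the top side at $\sigma=\sqrt{(r-1)/\ell}<\sigma(P_3)$. So any plan to prove $\Delta_2>0$ throughout the region cannot work.

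Instead one checks the two boundary pieces:
\begin{itemize}
\item On the arc of $w_2$ from $P_1$ to $P_3$ there are no double roots, so $\Delta_2>0$ there. Hence $\frac{dw}{d\sigma}=0$ and $\frac{d\sigma}{dx}>0$ on that arc, and the curve cannot reach it from inside.
\item On the wall $\sigma=\sigma(P_3)$, $w\in(w(P_3),1]$, one has $\Delta_2<0$ because $w_2^+(\sigma_3)>1$ by \eqref{eq: P_3^hat}. So $\sigma$ is decreasing in $x$ there and the curve cannot leave through the wall. This is where $d-1<\ell$ and $r_+-r\ll1$ enter, not in a global sign of $\Delta_2$.
\end{itemize}
Now take $w_0$ minimal such that the curve stays in the triangle on $(w_0,1)$. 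Both exits are excluded unless $w_0=w(P_3)$. Since $w_2^{-1}(w(P_3))=\sigma(P_3)$, this gives $\sigma(w(P_3))=\sigma(P_3)$ and \eqref{eq: Bound on Sigma}.
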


\begin{proof}[Proof of Lemma \ref{lem: Solution from P_1}] This follows from the asymptotic behavior of the polynomials $\Delta_1,\Delta_2$.\\

\noindent{\bf step 1} Flow near $P_1$. Note that $\widehat{w}=w-1$ satisfies $\widehat{w}(0)=0$ and
\bee
\widehat{w}' +\frac{\ell}{\sigma}\widehat{w} &=& \frac{\widehat{w}w(w-r)-d(w-w_e)\sigma^2}{\frac{\sigma}{\ell}\Big[(\ell+d-1)w^2-w(\ell+d+\ell r-r)+\ell r-\ell \sigma^2\Big]} +\frac{\ell}{\sigma}\widehat{w}\\
&=& \frac{1}{\sigma}\cdot\frac{(\widehat{w}^2-\sigma^2)((\ell+d)w-\ell r)}{\frac{1}{\ell}((\ell+d-1)w^2-(\ell+d+r(\ell-1))w+\ell r)-\sigma^2}=: \frac{1}{\sigma}\cdot\frac{(\widehat{w}^2-\sigma^2)P(w)}{Q(w)-\sigma^2}
\eee
for some polynomials $P$, $Q$ non-vanishing at $1$, so
\be
\label{eq: Self-similar Equation Near P_1}
\left(\sigma^\ell\widehat{w}\right)' = \sigma^\ell\left(\widehat{w}' +\frac{\ell}{\sigma}\widehat{w}\right)= \sigma^{\ell-1}\frac{(\widehat{w}^2-\sigma^2)P(w)}{Q(w)-\sigma^2}.
\ee
\underline{Existence}. We solve
\be
\label{eq: Integral Equation Near P_1}
\widehat{w} =\sigma^{-\ell}\int_0^\sigma\frac{(\widehat{w}^2-\bar\sigma^2)P(w)}{Q(w)-\bar\sigma^2}\bar\sigma^{\ell-1}d\bar\sigma=: \mathcal F[\widehat{w}]
\ee
using an elementary fixed point argument, which yields the existence and uniqueness on $\sigma \in[0,\sigma_0]$ of $\widehat{w}$ of solution with
\bee
\label{eq: Priori Bound Near P_1}
|\widehat{w}| \lesssim \sigma^2\textrm{ for }\sigma\le\sigma_0
\eee
for some $\sigma_0>0$. \\\\
\noindent\underline{Asymptotics as $\sigma\to 0$}. The integral equation \eqref{eq: Integral Equation Near P_1} for $\widehat{w}$ and the fact that
\bee
\frac{(\widehat{w}^2-\sigma^2)((\ell+d)w-\ell r)}{\frac{1}{\ell}((\ell+d-1)w^2-(\ell+d+r(\ell-1))w+\ell r)-\sigma^2}=\ell^2\left(1-\frac{1}{w_e}\right)\sigma^2+\mathcal O_{\sigma\to0}(\sigma^4)
\eee
and 
\bee
\widehat{w} =\sigma^{-\ell}\int_0^\sigma\frac{(\widehat{w}^2-\bar\sigma^2)P(w)}{Q(w)-\bar\sigma^2}\bar\sigma^{\ell-1}d\bar\sigma=\ell^2\left(1-\frac{1}{w_e}\right)\sigma^{-\ell}\int_0^\sigma\bar\sigma^{\ell+1}d\bar\sigma+\mathcal O_{\sigma\to0}(\sigma^4)
\eee
yield \eqref{eq: Asymptote at P_1}.\\
\vskip .3pc
\noindent{\bf step 2} Reaching $P_3$. Let $w(\sigma)$ be the unique curve entering $P_3$ constructed in step 1. In view of the asymptotic behaviour \eqref{eq: Asymptote at P_1} we have that $w(\sigma)\in (w_2(\sigma),1)$ which implies that $\Delta_2> 0$, $\Delta_1< 0$ near $\sigma=0$ along our solution curve. Then, $\sigma(w)$ is monotone decreasing hence, is invertible near $\sigma=0$ i.e. there exists $0<1-w_0\ll1$ and smooth inverse $\sigma(w)$ on $(w_0,1)$.\\\\
Let $w_0\in [w(P_3),1]$ be the minimal value such that $\sigma(w)\in [w_2^{-1}(w),\sigma(P_3)]$ for all $w\in (w_0,1)$. We claim that for $d-1<\ell$ and $r_+-r\ll1$ that $w_0=w(P_3)$ so that $\sigma(w(P_3))=\sigma(P_3)$. Since $\Delta<0$, $\Delta_1<0$ on 
$$
\Big\{(\sigma,w)\,\Big|\,w\in (w(P_3),1),\,\sigma\in(w_2^{-1}(w),\sigma(P_3)]\Big\}
$$
which is a set of ordinary points for the system \eqref{eq: Autonomous System} i.e. $\Delta$ is non-vanishing, if $\sigma(w_0)\in (w_2^{-1}(w_0),\sigma(P_3))$, then we can extend the curve $\sigma(w)$ to $(w_0-\varepsilon, 1)$ with $\sigma(w)\in [w_2^{-1}(w),\sigma(P_3)]$ for some $\varepsilon>0$, contradicting minimality of $w_0$. Thus, either $\sigma(w_0)=w_2^{-1}(w_0)$ or $\sigma(w_0)=\sigma(P_3)$.\\\\
If $\sigma(w_0)=w_2^{-1}(w_0)$ at $x=x_0$, then 
$$
\frac{dw}{d\sigma}\bigg|_{x=x_0}=0,\quad \frac{d\sigma}{dx}\bigg|_{x=x_0}>0.
$$
Then, $\sigma(w(x))<w_2^{-1}(w(x))$ for $0<x_0-x\ll1$, a contradiction since $\sigma(w(x))\in [w_2^{-1}(w(x)),\sigma(P_3)]$ for $x<x_0$.\\\\
Then, $\sigma(w_0)=\sigma(P_3)=1-w_+$. Let 
$$
\widehat{P}_3=\left(\sigma_3, w_2^+(\sigma_3)\right)
$$
so that $\Delta_2(\widehat P_3)=0$. Recall from \eqref{eq: P_3^hat} that for $d-1< \ell$ and $r_+-r\ll1$, it holds that $w(\widehat P_3)>1$ so we have for $\sigma=\sigma(P_3)$, $w\in(w(P_3),1]$ that $\Delta_1<0$, $\Delta_2<0$. Then, we can extend the curve $\sigma(w)$ to $(w_0-\varepsilon,1)$ with $\sigma(w)\in [w_2^{-1}(w),\sigma(P_3)]$ for some $\varepsilon>0$, contradicting the minimality of $w_0$. Thus, $w_0=w(P_3)$ and the claim follows.
\end{proof}

\subsection{Solutions crossing red between $P_2$ and $P_3$}

We now analyze trajectories that cross the middle root $w_2(\sigma)$ of $\Delta_1$ between $P_2$ and $P_3$.

\begin{lemma}[Solutions crossing green between $P_2$ and $P_3$]
\label{lem: Solutions Near P_4}
 Assume \eqref{eq: Parameters}. Let $\sigma_3<\sigma^*<\sigma_2$ and $w_u(\sigma)$ and $w_d(\sigma)$ be the upward and downward flow solutions respectively to \eqref{eq: Autonomous System} with the data $w(\sigma^*)=w_2^-(\sigma^*)$. Then:\\
 \vskip .3pc
\noindent{\em 1. Downward flow}: $w_d\in C^\infty((0,\sigma^*])$ and
 $$
 \lim_{\sigma\downarrow 0}w_d(\sigma)=0.
 $$
 Moreover, $\sigma\to 0$ corresponds to $x\to +\infty$ and there exist $(w_\infty,\sigma_\infty)\in \Bbb R\times \Bbb R_+^*$ such that
\be
\label{eq: Asymptote at P_4}
\left|\begin{array}{l} 
\sigma_d(x) = \sigma_\infty e^{-rx}\left(1+\mathcal O_{x\to\infty}(e^{-rx})\right),\\
w_d(x) = w_\infty e^{-rx}\left(1+\mathcal O_{x\to\infty}(e^{-rx})\right).
\end{array}\right.
\ee
{\em 2. Upward flow}: $w_u\in C^\infty((\sigma_3,\sigma^*])$ and 
\be
\label{eq: Behaviour Near P_3}
\left|\begin{array}{l}
\forall \sigma_3<\sigma<\sigma^*, \ \ w^-_2(\sigma)<w_u(\sigma)<w_2(\sigma)\\
\lim_{\sigma\downarrow  \sigma_3}w_u(\sigma)=w_3,
\end{array}\right.
\ee
see Figure \ref{fig: Phase Portrait}.
\end{lemma}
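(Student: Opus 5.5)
The plan is a trapping-region argument in the $(\sigma,w)$ plane, using the sign information from Lemmas \ref{lem: Delta_2 Roots}, \ref{lem: Delta_1 Roots}, \ref{lem: Double Roots} and \ref{phasperotrai}, followed by a linearization at $P_4$ for the asymptotics. At the starting point $(\sigma^*,w_2^-(\sigma^*))$ with $\sigma_3<\sigma^*<\sigma_2$, \eqref{eq: Position of Middle Root} gives $w_2(\sigma^*)<1-\sigma^*$. Hence $\Delta\neq0$ there, and by Lemma \ref{phasperotrai} the point lies below the middle root $w_2$ of $\Delta_1$. So $\Delta_2=0$ and $\Delta_1\neq0$ at the initial point, and the trajectory starts with horizontal tangent $dw/d\sigma=0$. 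We view the curve as a graph $w(\sigma)$ solving $dw/d\sigma=\Delta_1/\Delta_2$ away from $\Delta_2=0$, or as $\sigma(w)$ where $\Delta_2=0$. Near $\sigma^*$ the trajectory is parametrized by $\sigma$ on both sides, lying above $w_2^-$. First I fix signs in the region $\mathcal R=\{w_2^-(\sigma)<w<w_2(\sigma),\ w<1-\sigma\}$: there $\Delta>0$ (below the lower sonic line, $w<1$), $\Delta_1>0$ (between the roots $w_1$ and $w_2$, since $w_2^-$ lies above $w_1$), and $\Delta_2<0$ (between $w_2^-$ and $w_2^+$). Thus $w'=-\Delta_1/\Delta>0\ldots$ must be checked in $x$, but in graph form $dw/d\sigma=\Delta_1/\Delta_2<0$.

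\emph{Upward flow.} As $\sigma$ decreases from $\sigma^*$, $w$ increases and stays in $\mathcal R$. The trajectory cannot exit through $w_2^-$ because $w_2^-$ is decreasing by \eqref{eq: Monotonicity of w_2^pm} and, where the curve would touch it, it has horizontal tangent, which yields the same contradiction as in step 2 of Lemma \ref{lem: Solution from P_1}. It cannot exit through $w=w_2(\sigma)$ because there $dw/d\sigma=0$ while $w_2'<0$, so the curve crosses $w_2$ in the wrong direction. The sonic line $w=1-\sigma$ can only be reached at a triple point, and on this arc the only one available is $P_3$, since $\mathcal R$ between $\sigma_3$ and $\sigma^*$ is bounded by $w_2^-$, $w_2$ and the sonic segment, which pinch together at $P_3$ (Lemma \ref{phasperotrai}). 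The solution is monotone and bounded, so it has a limit as $\sigma\downarrow\sigma_*\ge\sigma_3$. If $\sigma_*>\sigma_3$ the limit is an ordinary point, and the solution extends, a contradiction. Therefore $\sigma_*=\sigma_3$ and the limit is $w_3$, which gives \eqref{eq: Behaviour Near P_3}. Smoothness on $(\sigma_3,\sigma^*]$ follows because every point of the arc is ordinary.

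\emph{Downward flow.} For $\sigma<\sigma^*$ below $w_2^-$, we have $\Delta_2>0$, $\Delta>0$, and $\Delta_1$ has the sign of $w-w_1$. So $w$ decreases as $\sigma$ decreases, toward the region between $w_1$ and $w_2^-$, bounded by $\sigma\ge0$. I would show that the region $\{0<\sigma<\sigma^*,\ w_1(\sigma)<w<w_2^-(\sigma)\}$ is invariant in the direction of increasing $x$. Along $w_1$ the vector field is vertical. Along $w_2^-$ it is horizontal and points inward, using the monotonicity \eqref{eq: Monotonicity of w_i}. The only rest points in the closure are $P_4$, and possibly the intersection of $w_2^-$ with $\sigma=0$, which must be excluded using $\Delta_2/\sigma$ at $\sigma=0$, whose roots are $w=1$ and $w=\ell r/(\ell+d-1)$. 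Since $\sigma$ is monotone in $x$, the trajectory tends to $P_4$. Near $P_4$ we have $\Delta\approx1$, $\Delta_1\approx rw-dw_e\sigma^2$ and $\Delta_2\approx r\sigma$, so the linearization is $w_x=-rw$, $\sigma_x=-r\sigma$: a stable star node with double eigenvalue $-r$ and quadratic nonlinearities. This gives \eqref{eq: Asymptote at P_4}, with any resonant $xe^{-2rx}$ terms absorbed since the nonlinearity is at least quadratic. It also gives $x\to+\infty$.

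\emph{Main obstacle.} The hardest step is controlling the boundary behaviour near $P_3$ in the upward flow. The three curves $w_2^-$, $w_2$ and the sonic line meet there, and I must ensure the trajectory cannot leave $\mathcal R$ through a corner. To handle this, I would first try to use the slope ordering $-c_4/c_2<c_-<-c_3/c_1<-1$ of Lemma \ref{lem: Estimate Slopes}, which orders the tangent directions at $P_3$ so that $\mathcal R$ is a nondegenerate sector.
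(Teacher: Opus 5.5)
\textbf{The gap: your trapping region for the downward flow is not invariant.} On $w=w_1(\sigma)$ you have $\Delta_1=0$, so $w_x=0$. There $\sigma_x=-\Delta_2/\Delta<0$, since for $w\le 0$ and $0<\sigma<\sigma^*$ one checks $\Delta>0$ and $\Delta_2>0$. Because $w_1'<0$ by \eqref{eq: Monotonicity of w_i}, on that curve
$\frac{d}{dx}\big(w-w_1(\sigma)\big)=-w_1'(\sigma)\,\sigma_x<0$.
So orbits cross $w_1$ from above to below. The set $\{w_1<w<w_2^-\}$ is therefore not forward invariant; it is $\{w<w_1\}$ that is.

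This is not a technicality. The curve $w_1=-\frac{dw_e}{r}\sigma^2+\mathcal O(\sigma^3)$ is tangent to the $\sigma$-axis at $P_4$. Any orbit arriving at $P_4$ with $w_\infty<0$ therefore ends below $w_1$, and the statement does not exclude $w_\infty<0$.

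The upper edge of your region also fails near $\sigma=0$, because it does not reach $\sigma=0$. By \eqref{eq: I and J}, $I(0)=J(w_e)-4dw_e$, and $J(w_e)\to 0$ as $r\uparrow r_+$. Hence $I(0)<0$ and $\sigma_2^{(0)}>0$, and $\Delta_2>0$ on the whole strip $0<\sigma<\sigma_2^{(0)}$. Your candidate roots of $\Delta_2/\sigma$ at $\sigma=0$ are also wrong: at $w=1$ its value is $(r-1)/\ell\neq 0$.

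A region that works is the box $B=\{0<\sigma<\sigma^*,\ w_1(\sigma^*)\le w\le w^*\}$ with $w^*=w_2^-(\sigma^*)$.
\begin{itemize}
\item Since $w_2^-$ is decreasing, $w^*<w_2^-(\sigma)$ wherever $w_2^-$ is defined, so $\Delta_2>0$ in $B$.
\item The bound $1-w-\sigma\ge 1-w^*-\sigma^*>0$ keeps $\Delta$ bounded below.
\item $w_x<0$ on the top edge and $w_x>0$ on the bottom edge.
\end{itemize}
Hence $B$ is forward invariant and $\sigma$ decreases strictly, so the $\omega$-limit set lies in $\{\sigma=0\}$. There $w_x=-w(w-r)/(w-1)$ drives all of $[w_1(\sigma^*),w^*]$ to $w=0$, which gives convergence to $P_4$. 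The paper itself leaves this trapping part to the reader.

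\textbf{A smaller slip in the upward flow, and the asymptotics.} At the initial point $\Delta_2=0\neq\Delta_1$, so it is $d\sigma/dw$ that vanishes, not $dw/d\sigma$. Thus $\sigma^*$ is a local maximum of $\sigma$ along the orbit, and the two branches lie on opposite sides of $w_2^-$, not both above it.

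Your barrier argument at $w_2^-$ needs exactly this. At a first contact $\sigma_x=0$ and $w_x=-\Delta_1/\Delta<0$, so $w-w_2^-(\sigma)$ changes sign there. That contradicts the orbit lying in $\mathcal R$ on the $\sigma^*$ side, i.e.\ for larger $x$. With a horizontal tangent there would be no contradiction, since a graph with $dw/d\sigma=0$ can touch the decreasing curve $w_2^-$ from above.

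The barrier at $w_2$ is fine as written. The corner at $P_3$ is not an obstacle either. On $(\sigma_3,\sigma^*)$, \eqref{eq: Position of Middle Root} gives $w_2<1-\sigma$, so $\mathcal R$ is just the lens between $w_2^-$ and $w_2$. In that lens $\Delta_2<0$ because $w_2\le 1<w_2^+$, using \eqref{eq: P_3^hat} and monotonicity. Then $w_u\to w_3$ by squeezing, since both curves pass through $P_3$ by Lemma \ref{phasperotrai}.

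Your asymptotics take a different but valid route from the paper's. The paper sets $\varphi=w/\sigma$, treats its equation as regular at $\sigma=0$, and integrates $dx/d\sigma=-1/(r\sigma)+\mathcal O(1)$. You linearize at $P_4$ instead. With the double eigenvalue $-r$ there are no resonances at all ($-r\neq -kr$ for $k\ge 2$), so Poincar\'e's theorem linearizes analytically. The corrections are then exactly $\mathcal O(e^{-2rx})$, with no $xe^{-2rx}$ terms to absorb. Positivity $\sigma_\infty>0$ follows from invariance of $\{\sigma=0\}$. This route also shows directly that $w/\sigma$ has a finite limit, which the paper's fixed-point remark takes for granted.
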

\begin{remark}
The above Lemma shows that all such solutions provide admissible $P_3-P_4$ connections.
\end{remark}
\begin{proof}[Proof of Lemma \ref{lem: Solutions Near P_4}] The fact that the solution generates a $P_3-P_4$ connection with forward flow trapped in the region \eqref{eq: Behaviour Near P_3} follows again directly from the phase portrait of Figure \ref{fig: Phase Portrait} and the monotonicity \eqref{eq: Monotonicity of w_2^pm} and \eqref{eq: Monotonicity of w_i}. We leave that to the reader, while we focus on the proof of the asymptotic expansion \eqref{eq: Asymptote at P_4} near $\sigma=0$.\\

\noindent{\bf step 1} Behavior of $w(\sigma)$. Let $\varphi=\frac{w}{\sigma}$
so that from \eqref{eq: Autonomous System},
\bee
\frac{dw}{d\sigma}=\frac{\varphi(\sigma\varphi-1)(\sigma\varphi-r)-d(\sigma\varphi-w_e)\sigma}{r -\frac{\ell+d+\ell r-r}{\ell}\sigma\varphi+\frac{\ell+d-1}{\ell}\sigma^2\varphi^2- \sigma^2}.
\eee
Since
\bee
\frac{dw}{d\sigma}=\sigma\frac{d\varphi}{d\sigma}+\varphi,
\eee
$\varphi$ solves
\bee
\sigma\frac{d\varphi}{d\sigma} &=& \frac{\varphi(\sigma\varphi-1)(\sigma\varphi-r)-d(\sigma\varphi-w_e)\sigma}{r -\frac{\ell+d+\ell r-r}{\ell}\sigma\varphi+\frac{\ell+d-1}{\ell}\sigma^2\varphi^2- \sigma^2}-\varphi\\
&=& \frac{ -(1+r) \sigma\varphi^2 +dw_e\sigma +\frac{\ell+d+\ell r-r}{\ell}\sigma\varphi^2 +\sigma^2\varphi^3 -d\sigma^2\varphi  -\frac{\ell+d-1}{\ell}\sigma^2\varphi^3+ \sigma^2\varphi}{r -\frac{\ell+d+\ell r-r}{\ell}\sigma\varphi+\frac{\ell+d-1}{\ell}\sigma^2\varphi^2- \sigma^2}
\eee
i.e.
\bea
\frac{d\varphi}{d\sigma} = \frac{ -(1+r)\varphi^2 +dw_e +\frac{\ell+d+\ell r-r}{\ell}\varphi^2 +\sigma\varphi^3 -d\sigma\varphi  -\frac{\ell+d-1}{\ell}\sigma\varphi^3+ \sigma\varphi}{r -\frac{\ell+d+\ell r-r}{\ell}\sigma\varphi+\frac{\ell+d-1}{\ell}\sigma^2\varphi^2- \sigma^2}.
\eea
This is a regular ODE at $\sigma=0$ and an elementary fixed point argument ensures the behaviour 
\bee
\varphi(\sigma)=\varphi(0)+\mathcal O_{\sigma\to0}(\sigma)
\eee
where $\varphi(0)$ is a real constant. Hence
\be
\label{eq: Asymptote of phi at P_4}
w(\sigma)=\sigma\varphi=\varphi(0)\sigma+\mathcal O_{\sigma\to 0}(\sigma^2).
\ee
The fact that the solution $w_d(\sigma)$ belongs to $ C^\infty((0,\sigma^*))$ follows from above since 
$P_4$ is the only singular point on this interval.\\

\noindent{\bf step 2} Behavior in $x$ and $Z$.  From \eqref{eq: Asymptote of phi at P_4}:
\bee
\frac{dx}{d\sigma}&=& -\frac{\Delta(\sigma, w_d(\sigma))}{\Delta_2(\sigma, w_d(\sigma))}= -\frac{(w_d(\sigma)-1)^2-\sigma^2}{\frac{\sigma}{\ell}\Big[(\ell+d-1)w_d(\sigma)^2-(\ell+d+\ell r-r)w_d(\sigma)+\ell r-\ell \sigma^2\Big]}\\
&=& -\frac{1}{r\sigma} + \mathcal O_{\sigma\to0}(1)
\eee
and hence
\bee
x &=& -\frac{1}{r}\log \sigma + x_2+ \mathcal O_{\sigma\to0}(\sigma)\quad \Rightarrow \quad e^x=\frac{e^{x_2}}{\sigma^{\frac{1}{r}}}\left(1+\mathcal O_{\sigma\to0}(\sigma)\right).
\eee
for some real constant $x_2$. This yields the following expansion for $\sigma$ as $x\to \infty$
\bee
\sigma(x) &=& \sigma_\infty e^{-rx}\left(1+\mathcal O_{x\to\infty}(e^{-rx})\right)
\eee
for some real constant $\sigma_\infty>0$. Plugging in the expansion for $w(\sigma)$, we infer
\bee
w_b(x) &=& w_\infty e^{-rx}\left(1+\mathcal O_{x\to\infty}(e^{-rx})\right)
\eee
for some real constant $w_\infty$, and \eqref{eq: Asymptote at P_4} is proved.
\end{proof}
The results of the previous two sections provide the proof of all the statements of the following lemma:

\begin{lemma}[General structure of spherically symmetric self-similar solutions]
\label{vnioneneno}
Assume \eqref{eq: Parameters}. Then for $0<r_+(d,\ell)-r\ll1$,
\begin{enumerate}
\item Solutions near the vacuum sphere: there exists a unique trajectory of \eqref{eq: Autonomous System} which connects $P_1$ to $P_3$. This trajectory corresponds to the unique (up to scaling)  (local) spherically symmetric solution to \eqref{eq: Autonomous System} which is $C^\infty$ on the interval $|Z|\in (Z_1,Z_3)$ with $Z_1$ corresponding 
to $P_1$ and $Z_3$ to $P_3$.
\vskip .4pc
\item Solutions near infinity: there exists a one parameter family of trajectories $P_3-P_4$. These curves correspond to the spherically symmetric solution to \eqref{eq: Autonomous System} and are in $C^\infty$
of the interval $|Z|\in (Z_3,\infty)$ with $\infty$ corresponding to $P_4$.
\vskip .4pc
\item Connection at $P_3$: in both cases, $P_3$ is reached in finite time, i.e., $0<Z_3<\infty$, 
and the solutions constructed in (1) and (2) can be glued continuously to each other.
\end{enumerate}
\end{lemma}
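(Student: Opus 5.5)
The plan is to assemble Lemma \ref{vnioneneno} from Lemmas \ref{lem: Solution from P_1} and \ref{lem: Solutions Near P_4}. The only genuinely new ingredient is the finiteness of $Z_3$, which I will obtain from the local analysis of the flow near the triple point $P_3$ in the variable $x=\log Z$.

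For item (1), I invoke Lemma \ref{lem: Solution from P_1}. It gives existence and uniqueness of the curve with $w(0)=1$, its smoothness as a graph $\sigma(w)$ on $(w(P_3),1)$, and the fact that it reaches $P_3$ while staying strictly inside the region where $\Delta\neq0$. Along this curve I recover $x$ through $dx/d\sigma=-\Delta/\Delta_2$, which is smooth wherever $\Delta_2\neq0$. Equivalently, I use $dx/dw=-\Delta/\Delta_1$ with $\Delta_1<0$ on the open arc. This gives a strictly monotone $C^\infty$ map $x(w)$, hence $Z=e^x$, and undoing the Emden transform \eqref{eq: Emden Transform} yields a $C^\infty$ spherically symmetric solution on $(Z_1,Z_3)$. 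Finiteness of $Z_1$ at $P_1$ follows from the expansion \eqref{eq: Asymptote at P_1}. Near $\sigma=0$ one has $\Delta\to0$ while $\Delta_2\sim\frac{\sigma}{\ell}Q(1)\ell$, and $\Delta=\widehat w^2-\sigma^2=-\sigma^2+O(\sigma^4)$. Therefore $dx/d\sigma=O(\sigma)$ is integrable and $x$ has a finite limit $x_1$. This limit corresponds to $\rho=0$ at the finite radius $Z_1=e^{x_1}$, which is the vacuum sphere.

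Item (2) comes from Lemma \ref{lem: Solutions Near P_4}. Each $\sigma^*\in(\sigma_3,\sigma_2)$ produces a trajectory whose downward flow reaches $P_4$ as $x\to+\infty$, with the asymptotics \eqref{eq: Asymptote at P_4}, and whose upward flow converges to $P_3$ by \eqref{eq: Behaviour Near P_3}. Distinct $\sigma^*$ give distinct trajectories because a trajectory crosses $w_2^-$ only once in that region, by the monotonicity \eqref{eq: Monotonicity of w_2^pm}. This yields the one-parameter family, and smoothness on $(Z_3,\infty)$ follows exactly as in item (1) since $P_4$ is the only singular point there.

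The main obstacle is item (3): showing that $P_3$ is reached at finite $x$ along both trajectories. Near $P_3$, I will linearise with the slopes \eqref{eq: Slopes}. Along any curve $w(\sigma)$ entering $P_3$ with slope $c$, one has $\Delta_2\approx(c_2c+c_4)(\sigma-\sigma_3)$. Also $\Delta=(w-1-\sigma)(w-1+\sigma)$, and the first factor tends to $2(w_3-1)=-2\sigma_3\neq0$, while $w-1+\sigma\approx(c+1)(\sigma-\sigma_3)$. Hence
\[
\frac{dx}{d\sigma}=-\frac{\Delta}{\Delta_2}\longrightarrow \frac{2\sigma_3(c+1)}{c_2c+c_4},
\]
which is finite provided $c_2c+c_4\neq0$. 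For the admissible slopes $c_\pm$ this denominator is nonzero: for $c_-$ by Lemma \ref{lem: Estimate Slopes}, and for $c_+>0$ because $c_2,c_4<0$. So $dx/d\sigma$ is bounded near $P_3$ and $x_3=\lim x$ is finite from both sides. To make this rigorous I first need that each curve actually enters $P_3$ with a well-defined slope $c_+$ or $c_-$. I will derive this from the node structure: $\lambda_-<\lambda_+<0$ by Lemma \ref{lem: Estimate Slopes}, and after the desingularising time change $ds=dx/\Delta$ the field $(-\Delta_2,-\Delta_1)$ has a nondegenerate node at $P_3$. Standard node theory then forces every orbit entering $P_3$ to be tangent to an eigendirection. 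Finally, I glue the two branches by translating $x$, using the scaling invariance $Z\mapsto\lambda Z$, so that both reach $P_3$ at the same $x_3$. Both $\sigma$ and $w$ are continuous at $x_3$ with common value $P_3$, which gives the continuous gluing.
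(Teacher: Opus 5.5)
Your proposal is correct and follows the paper's route. Items (1)--(2) are read off from Lemmas \ref{lem: Solution from P_1} and \ref{lem: Solutions Near P_4}, and the paper proves finiteness of $Z_3$ (step 2 of the proof of Theorem \ref{thmmain}) by the same local computation, $d\sigma/dx=-\Delta_2/\Delta\to\lambda_+/(2\sigma_3(1+c_-))\neq0$ along the slope-$c_-$ curve, which is the reciprocal of your limit for $dx/d\sigma$.

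Your extra checks (finiteness of $Z_1$ from $dx/d\sigma=O(\sigma)$, and the slope-$c_+$ case) are sound. The only loose point is your justification that distinct $\sigma^*$ give distinct $P_3$--$P_4$ trajectories: this follows from the fixed sign $dw/dx=-\Delta_1/\Delta<0$ on the arc of $w_2^-$ between $P_3$ and $P_2$, so every crossing goes downward (or directly from the graph description in Lemma \ref{lem: Solutions Near P_4}), not from the monotonicity of $w_2^-$ alone.
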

Our goal is to construct $C^\infty$ solution on $\{|Z|> Z_1\}$. At this point it is already clear that the crux of the matter is the point $P_3$.

\subsection{Diagonalized system at $P_3$}

The point $P_3$ will play an essential role in the proof of Theorem \ref{thmmain}. The dynamical properties of this point in the regime \eqref{eq: Parameters} can only be seen after passing to the diagonalized variables \eqref{eq: P}. We recall the values of the slopes \eqref{eq: Slopes}, \eqref{eq: c_pm}, \eqref{eq: lambda_pm}, the diagonalization matrices \eqref{eq: P} and the non degeneracy properties of Lemma \ref{lem: Sign of Slopes} and Lemma \ref{lem: Estimate Slopes} in the range \eqref{eq: Parameters}. We rewrite the system in coordinates which diagonalize its linear part. We will also introduce a time variable and recast the system as a dynamical flow approaching the point $P_3$ (from either side) as $t\to\infty$.

\begin{lemma}[Equations in the diagonal form]
\label{lem: Diagonalization}
Assume \eqref{eq: Parameters}. Let 
\be
\label{eq: Diagonalization}
\left|\begin{array}{l}
w=w_3+W\\
\sigma=\sigma_3+\Sigma\\
\frac{dt}{dx}=-\frac{1}{\Delta}
\end{array}\right., \ \ X=\left|\begin{array}{l} W\\\Sigma\end{array}\right., \ \ Y=P^{-1}X=\left|\begin{array}{ll} \Wt\\\Sigmat \end{array}\right.
\ee
then \eqref{eq: Autonomous System} becomes:
\be
\label{eq: Diagonalized Autonomous System}
\frac{dY}{dt}=\frac{1}{c_+-c_-}\left|\begin{array}{l}
\mathcal G_1\\
\mathcal G_2
\end{array}\right.
\ee
with
\bea
\label{eq: G_1}
\nonumber \mathcal G_1&=&(c_+-c_-)\l_+\Wt+\dt_{20}\Wt^2+\dt_{11}\Wt\Sigmat+\dt_{02}\Sigmat^2+\dt_{30}\Wt^3+\dt_{21}\Wt^2\Sigmat+\dt_{12}\Wt\Sigmat^2+\dt_{03}\Sigmat^3\\
& = & -\Delta_1+c_+\Delta_2,
\eea
\bea
\label{eq: G_2}
\nonumber \mathcal G_2&=&(c_+-c_-)\l_-\Sigmat+\et_{20}\Wt^2+\et_{11}\Wt\Sigmat+\et_{02}\Sigmat^2+\et_{30}\Wt^3+\et_{21}\Wt^2\Sigmat+\et_{12}\Wt\Sigmat^2+\et_{03}\Sigmat^3\\
& = & \Delta_1-c_-\Delta_2
\eea
and where the values of the coefficients are collected in \eqref{eq: Coefficients of Autonomous System}, \eqref{eq: Coefficients of Autonomous System 2}.
\end{lemma}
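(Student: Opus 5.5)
We change the time variable and write the system as a polynomial vector field. With $\frac{dt}{dx}=-\frac{1}{\Delta}$ we have $\frac{dw}{dt}=\frac{dw}{dx}\frac{dx}{dt}=-\Delta w'$. By \eqref{eq: Autonomous System}, $\Delta w'=-\Delta_1$ and $\Delta\sigma'=-\Delta_2$. Hence
$$
\frac{dw}{dt}=\Delta_1,\qquad \frac{d\sigma}{dt}=\Delta_2,
$$
which is smooth (polynomial) and no longer degenerate on the sonic line. Since $w_3,\sigma_3$ are constants, $X=(W,\Sigma)^T$ satisfies $\frac{dX}{dt}=(\Delta_1,\Delta_2)^T$, evaluated at $(w_3+W,\sigma_3+\Sigma)$. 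The matrix $P$ in \eqref{eq: P} is constant, so $Y=P^{-1}X$ satisfies $\frac{dY}{dt}=P^{-1}(\Delta_1,\Delta_2)^T$. Using the explicit inverse
$$
P^{-1}=\frac{1}{c_+-c_-}\begin{pmatrix} -1&c_+\\1&-c_-\end{pmatrix},
$$
we get $\frac{dY}{dt}=\frac{1}{c_+-c_-}\big(-\Delta_1+c_+\Delta_2,\ \Delta_1-c_-\Delta_2\big)^T$. These are exactly the second lines of \eqref{eq: G_1}, \eqref{eq: G_2}.

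Next we get the polynomial form in $(\Wt,\Sigmat)$. Since $\Delta_1,\Delta_2$ are cubic in $(w,\sigma)$ and $X=PY$ is linear, with $W=c_-\Wt+c_+\Sigmat$ and $\Sigma=\Wt+\Sigmat$, both $\mathcal G_1$ and $\mathcal G_2$ are cubic polynomials in $(\Wt,\Sigmat)$. Their constant terms vanish because $P_3$ is a double root: $\Delta_1(P_3)=\Delta_2(P_3)=0$ by Lemma \ref{lem: Double Roots}. The quadratic and cubic coefficients $\dt_{ij},\et_{ij}$ are defined by Taylor expanding at $P_3$ and substituting. This is a routine computation, and we would just record the results as \eqref{eq: Coefficients of Autonomous System}, \eqref{eq: Coefficients of Autonomous System 2}.

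The one substantive check is the linear part. The Jacobian of $(\Delta_1,\Delta_2)$ with respect to $(W,\Sigma)$ at $P_3$ is $M=\begin{pmatrix} c_1&c_3\\ c_2&c_4\end{pmatrix}^{T}$, with rows $(c_1,c_3)$ and $(c_2,c_4)$ in the orientation of \eqref{eq: Slopes}. We must verify $P^{-1}MP=\mathrm{diag}(\l_+,\l_-)$, i.e. that $(c_-,1)^T$ and $(c_+,1)^T$ are eigenvectors with eigenvalues $\l_+$ and $\l_-$. For $(c,1)$ with $c\in\{c_\pm\}$, $M(c,1)^T=(c_1c+c_3,\ c_2c+c_4)^T$. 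By \eqref{eq: c_pm 2}, $c_1c+c_3=c(c_2c+c_4)$, so $(c,1)^T$ is an eigenvector with eigenvalue $c_2c+c_4$.

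It remains to match these eigenvalues with \eqref{eq: lambda_pm}. Recall $c_2<0$, so $|c_2|=-c_2$. From \eqref{eq: c_pm},
$$
c_2c_\pm+c_4=-\frac{c_4-c_1\pm\sqrt{\cdot}}{2}+c_4=\frac{c_1+c_4\mp\sqrt{\cdot}}{2}=\l_\mp .
$$
So $c_+$ carries $\l_-$ and $c_-$ carries $\l_+$, consistent with the column order in $P$. Therefore the linear parts are $(c_+-c_-)\l_+\Wt$ in $\mathcal G_1$ and $(c_+-c_-)\l_-\Sigmat$ in $\mathcal G_2$, with no cross terms. This sign bookkeeping with $|c_2|$ is the only delicate point. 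Everything else is mechanical expansion, and no obstacle is expected beyond keeping the coefficient algebra correct.
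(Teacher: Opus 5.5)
Your proposal is correct and follows essentially the same route as the paper: the time change gives $\frac{dw}{dt}=\Delta_1$, $\frac{d\sigma}{dt}=\Delta_2$, and applying $P^{-1}$ yields $\mathcal G_1=-\Delta_1+c_+\Delta_2$, $\mathcal G_2=\Delta_1-c_-\Delta_2$. The relation \eqref{eq: c_pm 2} then kills the off-diagonal linear terms and identifies $c_2c_-+c_4=\l_+$, $c_2c_++c_4=\l_-$, exactly as in \eqref{eq: lambda_pm in c}; the only slip is the stray transpose on $M$, since the Jacobian is $\mathcal A(P_3)$ itself (rows $(c_1,c_3)$ and $(c_2,c_4)$), which is the matrix you actually compute with.
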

\begin{remark}
Now that we have the same set up as in \cite{MRRSprofile}, the analysis of the ODE near $P_3$ will be identical to that of \cite{MRRSprofile} at least the algebraic level (although the values of the coefficients are different).
\end{remark}
\begin{proof}[Proof of Lemma \ref{lem: Diagonalization}] This is a direct computation.\\\\
\noindent{\bf step 1} Reexpressing $\Delta,\Delta_1,\Delta_2$. Let $w=w_3+W$, $\sigma=\sigma_3+\Sigma,$ we compute the nonlinear terms
\bea
\label{eq: Delta_1 in W, Sigma}
\nonumber &&\Delta_1=w^3-(r+1)w^2+rw-dw\sigma^2+\ell(r-1)\sigma^2\\
\nonumber& = & w_3^3+3w_3^2W+3w_3W^2+W^3-(r+1)(w_3^2+2w_3W+W^2)+r(w_3+W)\\
\nonumber&-&d(w_3+W)(\sigma^2_3+2\sigma_3\Sigma+\Sigma^2)+\ell(r-1)(\sigma_3^2+2\sigma_3\Sigma+\Sigma^2)\\
& = & c_1W+c_3\Sigma+d_{20}W^2+d_{11}W\Sigma+d_{02}\Sigma^2+W^3-dW\Sigma^2
\eea
and 
\bea
\nonumber &&\Delta_2=\frac{\sigma}{\ell}\Big[(\ell+d-1)w^2-w(\ell+d+\ell r-r)+\ell r-\ell \sigma^2\Big]\\
\nonumber & = & \frac{\sigma_3+\Sigma}{\ell}\left[(\ell+d-1)(w_3^2+2w_3W+W^2)-(\ell+d+\ell r-r)(w_3+W)+\ell r-\ell(\sigma_3^2+2\sigma_3\Sigma+\Sigma^2)\right]\\
& = & c_2W+c_4\Sigma+e_{20}W^2+e_{11}W\Sigma+e_{02}\Sigma^2+e_{21}W^2\Sigma-\Sigma^3
\eea
where the parameters are given by \eqref{eq: Coefficients of Autonomous System}. Here, we have used that $\Delta_1=0$, $\Delta_2=0$ at $P_3$.\\

\noindent{\bf step 2} $Y$ variable. We now pass to the $Y$ variable: from {\bf step 1} and \eqref{eq: P}, it follows that 
\bee
&&\Delta_1(X)= c_1(c_-\Wt+c_+\Sigmat )+c_3(\Wt+\Sigmat)+d_{20}(c_-\Wt+c_+\Sigmat )^2+d_{11}(c_-\Wt+c_+\Sigmat )(\Wt+\Sigmat)\\
&+& d_{02}(\Wt+\Sigmat)^2+(c_-\Wt+c_+\Sigmat )^3-d(c_-\Wt+c_+\Sigmat )(\Wt+\Sigmat)^2\\
& = & (c_1c_-+c_3)\Wt+(c_1c_++c_3)\Sigmat\\
& + & (d_{20}c_-^2+d_{11}c_-+d_{02})\Wt^2+(2c_-c_+d_{20}+(c_-+c_+)d_{11}+2d_{02})\Wt\Sigmat+(d_{20}c_+^2+d_{11}c_++d_{02})\Sigmat^2\\
& + & (c_-^3-dc_-)\Wt^3+(3c_-^2c_+-2dc_--dc_+)\Wt^2\Sigmat+(3c_-c_+^2-dc_--2dc_+)\Wt\Sigmat^2+(c_+^3-dc_+)\Sigmat^3
\eee
and 
\bee
&& \Delta_2(X)= c_2(c_-\Wt+c_+\Sigmat )+c_4(\Wt+\Sigmat)\\
&+& e_{20}(c_-\Wt+c_+\Sigmat )^2+e_{11}(c_-\Wt+c_+\Sigmat )(\Wt+\Sigmat)+e_{02}(\Wt+\Sigmat)^2\\
& + & e_{21}(c_-\Wt+c_+\Sigmat )^2(\Wt+\Sigmat)-(\Wt+\Sigmat)^3\\
& = & (c_2c_-+c_4)\Wt+(c_2c_++c_4)\Sigmat\\
& + & (e_{20}c_-^2+e_{11}c_-+e_{02})\Wt^2+(2e_{20}c_-c_++e_{11}(c_-+c_+)+2e_{02})\Wt\Sigmat+  (e_{20}c_+^2+e_{11}c_++e_{02})\Sigmat^2\\
& + & (e_{21}c_-^2-1)\Wt^3+(e_{21}(c_-^2+2c_-c_+)-3)\Wt^2\Sigmat+(e_{21}(2c_-c_++c_+^2)-3)\Wt\Sigmat^2+(e_{21}c_+^2-1)\Sigmat^3.
\eee
Linear terms on the right hand side of \eqref{eq: Coefficients of Autonomous System 2} yield
\bee
&&\frac{1}{c_+-c_-}\left|\begin{array}{l}-((c_1c_-+c_3)\Wt+(c_1c_++c_3)\Sigmat)+c_+[(c_2c_-+c_4)\Wt+(c_2c_++c_4)\Sigmat]\\ 
(c_1c_-+c_3)\Wt+(c_1c_++c_3)\Sigmat-c_-[(c_2c_-+c_4)\Wt+(c_2c_++c_4)\Sigmat]
\end{array}\right.\\
& = &\frac{1}{c_+-c_-}\left|\begin{array}{l} 
[-(c_1c_-+c_3)+c_+(c_2c_-+c_4)]\Wt+[-(c_1c_++c_3)+c_+(c_2c_++c_4)]\Sigmat\\
((c_1c_-+c_3)-c_-(c_2c_-+c_4))\tilde{W}+[(c_1c_++c_3)-c_-(c_2c_++c_4)]\Sigmat
\end{array}\right.
\eee
We then recall the $c_\pm$ equation \eqref{eq: c_pm 2} which kills the off diagonal term
$$\left|\begin{array}{l}
-(c_1c_++c_3)+c_+(c_2c_++c_4)=0\\
(c_1c_-+c_3)-c_-(c_2c_-+c_4)=0
\end{array}\right.
$$ 
and compute:
\be
\label{eq: lambda_pm in c}
\left|\begin{array}{ll}
\frac{-(c_1c_-+c_3)+c_+(c_2c_-+c_4)}{c_+-c_-}=\frac{-c_-(c_2c_-+c_4)+c_+(c_2c_-+c_4)}{c_+-c_-}=c_2c_-+c_4=\l_+\\
\frac{(c_1c_++c_3)-c_-(c_2c_++c_4)}{c_+-c_-}=\frac{c_+(c_2c_++c_4)-c_-(c_2c_++c_4)}{c_+-c_-}=c_2c_++c_4=\l_-\\
\end{array}\right.
\ee
Introducing the time variable $t$ (we note that $t\to \infty$ corresponds to both $Z\uparrow\downarrow Z_3$): 
$$\frac{dt}{dx}={-}\frac{1}{\Delta}$$
yields from \eqref{eq: P},
\bea
\label{eq: Coefficients of Autonomous System 2}
&&(c_+-c_-) \frac{dY}{dt}=\begin{pmatrix}-1 & c_+\\1 &-c_-\end{pmatrix} \left|\begin{array}{l}\Delta_1\\\Delta_2\end{array}\right.\\
\nonumber&=&\left|\begin{array}{l}
(c_+-c_-)\l_+\Wt+\dt_{20}\Wt^2+\dt_{11}\Wt\Sigmat+\dt_{02}\Sigmat^2+\dt_{30}\Wt^3+\dt_{21}\Wt^2\Sigmat+\dt_{12}\Wt\Sigmat^2+\dt_{03}\Sigmat^3\\
(c_+-c_-)\l_-\Sigmat+\et_{20}\Wt^2+\et_{11}\Wt\Sigmat+\et_{02}\Sigmat^2+\et_{30}\Wt^3+\et_{21}\Wt^2\Sigmat+\et_{12}\Wt\Sigmat^2+\et_{03}\Sigmat^3
\end{array}\right.
\eea
where the coefficients $\dt_{ij}$ and $\et_{ij}$ can be computed explicitly using the above expression for $\Delta_1$ and $\Delta_2$ in terms of $\Wt$ and $\Sigmat$.
\end{proof}

\subsection{Integral curves passing through $P_3$}

\begin{lemma}[Slope of the curves converging to $P_3$]
\label{lem: Solutions Near P_3}
Assume \eqref{eq: Parameters}. Let $c_+$, $c_-$, $A$ be given by \eqref{eq: c_pm}, \eqref{eq: A}.  Then, 
\begin{enumerate}
\item all  integral curves of \eqref{eq: Autonomous System} converging to $P_3$ have slope given either by $c_+$ or $c_-$, 
 
\item there is only one curve with slope $c_+$, while all the others curves have slope $c_-$,

\item the unique curve converging to $P_3$ with the slope $c_+$ exists on $0<\sigma<\sigma_3$ and converges to $P_4$ as $\sigma \to 0$.
\end{enumerate}
\end{lemma}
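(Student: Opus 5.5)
We work throughout in the diagonal variables of Lemma \ref{lem: Diagonalization}, where $P_3$ is the origin $Y=0$. By Lemma \ref{lem: Estimate Slopes} we have $\l_-<\l_+<0$ and $A=\l_-/\l_+>1$. In the time $t$ defined by $\frac{dt}{dx}=-\frac1\Delta$, the point $P_3$ is therefore a hyperbolic stable node of \eqref{eq: Diagonalized Autonomous System}. The linearized flow is $\Wt=a e^{\l_+ t}$, $\Sigmat=b e^{\l_- t}$. Since $|\l_-|>|\l_+|$, the direction $\Sigmat$ is the strong stable direction and the direction $\Wt$ is the weak one. In the original variables $X=PY$ the $\Wt$-axis has slope $dW/d\Sigma=c_-$ and the $\Sigmat$-axis has slope $c_+$, by the columns of $P$ in \eqref{eq: P}.

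\textbf{Item (1).} Take an integral curve converging to $P_3$. Reparametrize it by $t$, with $t\to+\infty$ as the curve approaches $P_3$ (one may reverse time if needed; the node structure is symmetric in that sense). Apply the standard classification of trajectories near a hyperbolic node, e.g.\ Hartman's theorem, or by hand: form the ratio $\Sigmat/\Wt$ and derive the Riccati-type equation
$$\frac{d}{dt}\frac{\Sigmat}{\Wt}=(\l_--\l_+)\frac{\Sigmat}{\Wt}+\mathcal O(|Y|).$$
This shows that either $\Sigmat/\Wt\to0$ or $|\Sigmat/\Wt|\to\infty$. Undoing $P$, the slope $dW/d\Sigma$ tends to $c_-$ in the first case and to $c_+$ in the second, which proves (1).

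\textbf{Item (2).} The curves tangent to the strong direction form the strong stable manifold. It is one-dimensional because the eigenvalue $\l_-$ is simple. Its existence, uniqueness and smoothness up to order $\lfloor A\rfloor$ follow from the stable manifold theorem for the strong subspace, which applies since $\l_-<\l_+$. I would construct it explicitly by a fixed point on the graph $\Wt=g(\Sigmat)$, with $g(0)=g'(0)=0$, solving
$$g'(\Sigmat)\,\mathcal G_2=\mathcal G_1 .$$
Integrating this along the flow, contraction holds because $e^{(\l_+-\l_-)t}$ grows, so the unstable mode $\Wt$ must be killed. This branch gives exactly one curve, consisting of two half-branches, $\Sigmat>0$ and $\Sigmat<0$. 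Every other trajectory has $a\neq0$ asymptotically and is therefore tangent to $c_-$.

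I expect the main obstacle to be the counting. One must check that "only one curve" means one half-branch on the relevant side. The side is fixed by the direction of the slope $c_+>0$ relative to the sonic line $w=1-\sigma$, whose slope is $-1$, and by the sign of $\Delta$.

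\textbf{Item (3).} Take the half-branch with $\Sigma<0$, i.e.\ $\sigma<\sigma_3$. Because $c_+>0$, it starts into the region below $P_3$ with $w<w_3$. This is the region of Lemma \ref{lem: Solutions Near P_4} between the curves $w_2^-$ and $w_2$ near $P_3$, or just below. We then follow it as $\sigma$ decreases using the sign structure of Lemmas \ref{lem: Delta_2 Roots}, \ref{lem: Delta_1 Roots} and \ref{lem: Double Roots}. Below the sonic line we have $\Delta>0$, and the monotonicity \eqref{eq: Monotonicity of w_2^pm}, \eqref{eq: Monotonicity of w_i} gives trapping: the curve cannot return to the sonic line except at a triple point, and none lies in $0<\sigma<\sigma_3$. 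Hence $w(\sigma)$ is defined and smooth for $0<\sigma<\sigma_3$. As $\sigma\to0$ the only available attractor in that strip is $P_4$; the alternatives $P_1$ and $P_5'$ are excluded because $w$ stays below $w_2(\sigma)<1$. Convergence to $P_4$ then follows exactly as in Step 1 of the proof of Lemma \ref{lem: Solutions Near P_4}.
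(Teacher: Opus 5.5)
Your proposal follows the same route as the paper. In the diagonal variables $P_3$ is a stable node with $\l_-<\l_+<0$; the strong direction $\Sigmat$ (slope $c_+$) carries a unique curve, every other trajectory is tangent to the weak direction $\Wt$ (slope $c_-$), and item (3) is read off the phase portrait, which you usefully make explicit with a monotonicity/trapping argument as $\sigma$ decreases.

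There are two small slips, and neither affects the argument:
\begin{enumerate}
\item The half-branch with $\Sigma<0$ is not in the region of Lemma \ref{lem: Solutions Near P_4}, which lies to the right of $P_3$. Near $P_3$ it lies below $w_2$, below $w_2^-$ and below the sonic line, where $\Delta,\Delta_1,\Delta_2>0$.
\item The strong stable curve has one half-branch on each side of $P_3$ and is in fact analytic, since $k\l_--\l_+\neq0$ for all $k\ge1$. The $C^{K+\alpha}$ limitation of Lemma \ref{lem: Regularity at P_3} concerns only the curves of slope $c_-$.
\end{enumerate}
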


\begin{proof}
The Jacobian matrix of the autonomous system \eqref{eq: Diagonalized Autonomous System} at the equilibrium $(\Wt, \Sigmat)=(0,0)$ is diagonal with two negative eigenvalues $\l_-<\l_+<0$. Thus, standard results imply that $(0,0)$ is an asymptotically stable node, and the following holds for the  trajectories converging to $(0,0)$ as $t\to \infty$
\begin{enumerate}
\item there exists a unique  trajectory tangent to the eigenvector of the Jacobian matrix corresponding to the smallest eigenvalue $\l_-$, i.e. these trajectories satisfy
\bee
\lim_{t\to \infty}\frac{\Wt}{\Sigmat}=0,
\eee
\item all the other trajectories are tangent to the eigenvector of the Jacobian matrix corresponding to the largest eigenvalue $\l_+$, i.e. this trajectory satisfies 
\bee
\lim_{t\to \infty}\frac{\Sigmat}{\Wt}=0.
\eee
\end{enumerate}
Coming back to $(W, \Sigma)$, we infer that the slope of any curve converging to $P_3$ is either $c_+$ or $c_-$, and there is a unique one with slope $c_+$, while all the others have slope $c_-$.\\\\
The fact that the unique curve converging to $P_3$ with slope $c_+$ is defined all the way to $\sigma=0$ and attracted to $P_4$ is a straightforward consequence of the phase portrait in Figure \ref{fig: Phase Portrait}.
\end{proof}

\begin{lemma}[Regularity at $P_3$, \cite{MRRSprofile}]
\label{lem: Regularity at P_3}
Assume \eqref{eq: Parameters}. Let $c_-$, $A$ be given by \eqref{eq: c_pm}, \eqref{eq: A} and assume that 
$$
A=K+\alpha, \ \ K\in \Bbb N\backslash\{0,1\}, \ \ 0<\alpha<1.
$$ 
Then, there exists a unique solution curve which is $C^\infty$ at $P_3$ with slope $c_-$. Furthermore, all the other curves converging to $P_3$ with slope $c_-$ (see Lemma \ref{lem: Solutions Near P_3}), are  $C^{K+\alpha}$ both on the right and on the left of $P_3$.
\end{lemma}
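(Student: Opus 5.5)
We work in the diagonal variables of Lemma \ref{lem: Diagonalization}, where $P_3$ is the origin and the linearization is $\mathrm{diag}(\l_+,\l_-)$ with $\l_-<\l_+<0$ and $A=\l_-/\l_+=K+\alpha>1$ by Lemma \ref{lem: Estimate Slopes}. A curve through $P_3$ with slope $c_-$ corresponds to a trajectory tangent to the $\Wt$-axis, i.e. $\Sigmat=o(\Wt)$. We therefore write it as a graph $\Sigmat=F(\Wt)$, which by \eqref{eq: Diagonalized Autonomous System} must solve
\be
\label{eq: graph eq plan}
F'(\Wt)\,\mathcal G_1(\Wt,F)=\mathcal G_2(\Wt,F),\qquad F(0)=0,\ F'(0)=0 .
\ee
Since $c_+-c_-\neq0$, the linear parts are $(c_+-c_-)\l_+\Wt$ and $(c_+-c_-)\l_-F$. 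Dividing by $(c_+-c_-)\l_+\Wt$, the equation takes the form $\Wt F'-AF=N(\Wt,F,F')$, with $N$ at least quadratic.

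\textbf{Step 1: formal power series.} We plug in $F=\sum_{k\ge2}f_k\Wt^k$. Matching the coefficient of $\Wt^k$ gives
\[
(k-A)f_k=R_k(f_2,\dots,f_{k-1}),
\]
where $R_k$ is a polynomial in the earlier coefficients and the $\dt_{ij},\et_{ij}$. Because $A\notin\Bbb N$, this recursion has a unique solution for all $k$. In particular every $C^\infty$ curve with slope $c_-$ must have this Taylor series, which gives uniqueness.

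\textbf{Step 2: the smooth solution.} We fix $N\gg A$ and truncate, $F_N=\sum_{k=2}^{N}f_k\Wt^k$. The remainder $G=F-F_N$ then satisfies
\[
\Wt G'-AG=\mathcal O(\Wt^{N+1})+\mathcal N(G),
\]
where $\mathcal N(G)$ is small in the weighted norm $\sup|\Wt|^{-N}|G|$. Inverting the Euler operator, we set
\[
G=\Wt^A\int_0^{\Wt}s^{-A-1}\,\mathrm{RHS}(s)\,ds .
\]
The integral converges at $0$ since $N>A$, and no homogeneous term is added. A contraction in this space gives a solution with $G=\mathcal O(\Wt^{N+1})$ on each side of $0$.

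This construction is consistent across different values of $N$, since the two side-solutions glue to a single solution whose expansion agrees with the formal series to every order. Hence $F$ is $C^N$ for all $N$, and we also need $\partial^jG=\mathcal O(\Wt^{N+1-j})$, obtained by differentiating the equation. So $F\in C^\infty$, and uniqueness follows from Step 1 together with uniqueness of the fixed point.

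\textbf{Step 3: all other curves.} Any other solution tangent with slope $c_-$ differs from the smooth one. Linearizing the difference $H$ gives $\Wt H'-AH\approx0$, so the general solution is
\[
F=F_\infty+\mu_\pm|\Wt|^{A}\big(1+o(1)\big),
\]
with independent constants $\mu_\pm$ on each side of $P_3$. To get this we look at the equation satisfied by the difference, $H\,(\Wt^{-A})$ up to a convergent integrating factor, and show that $H\Wt^{-A}$ has a limit. This yields exactly $C^{K+\alpha}$ regularity on each side, and no better when $\mu_\pm\ne0$. Mapping back through $P$ then gives the statement for curves $w(\sigma)$.

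\textbf{Main obstacle.} The delicate point is the smoothness and regularity bookkeeping in Step 2: showing that the fixed-point solution has derivatives with the right weighted decay, so that $F\in C^\infty$ and not merely $F=\mathcal O(\Wt^{N+1})$. I would handle this by commuting $\Wt\partial_{\Wt}$ with the equation and iterating the same integral inversion.
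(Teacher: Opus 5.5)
Your argument is correct and is essentially the standard one behind this lemma, which the paper does not reprove but quotes from \cite{MRRSprofile}. The ingredients are the formal series with $(k-A)f_k=R_k$, inversion of $\Wt\partial_{\Wt}-A$ with no homogeneous part to get the smooth curve, and a free multiple of $|\Wt|^{A}$ on each side for all the other curves; the paper later relies on exactly this structure in Lemma \ref{fromulationcingty}, whose footnote points back to this lemma for existence, uniqueness and smoothness.

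There are two small points to tidy. First, set up the contraction on $\Wt F'=\Wt\,\mathcal G_2/\mathcal G_1$ rather than on $\Wt F'-AF=N(\Wt,F,F')$. This is legitimate since $\mathcal G_1=(c_+-c_-)\l_+\Wt\,(1+\mathcal O(\Wt))$ when $F=\mathcal O(\Wt^2)$, and it removes $G'$ from the right-hand side, so the norm $\sup|\Wt|^{-N}|G|$ genuinely closes. Second, in Step 3 differentiate the equation for $H$ to obtain $\partial^jH=\mathcal O(|\Wt|^{A-j})$ for $j\le K+1$, because $\mu_\pm|\Wt|^A(1+o(1))$ alone does not give $C^{K+\alpha}$.
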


\section{Semi classical renormalization of the flow near $P_3$}

Our aim in this section is to start the proof of Theorem \ref{thmmain} with the renormalization of the flow \eqref{eq: Autonomous System} for $0<r_+-r\ll 1$ near $P_3$. For the rest of this paper, we assume 
$$
d-1<\ell<d,\quad 0<r_+-r\ll1
$$
so that the conclusion of Lemma \ref{vnioneneno} holds.

\begin{remark}[Notation for the parameters]
\label{rem: Parameters}
 From now on and for the rest of this paper we adopt the following notation: all slopes, characteristic eigenvalues and geometrical parameters involved in the renormalization of the flow, Lemma \ref{lem: Diagonalization}, depend on $r$, and will be noted with an $\infty$ superscript when evaluated at $r=r_+$. All variables will be noted with an $\peye$ subscript when evaluated at $P_2$. The non degeneracy and signs of some of these limiting values will be crucial in the forthcoming analysis, and all relevant values are collected in Appendix \ref{app: Slopes and Eigenvalues}.
\end{remark}

\subsection{Strategy of the proof of Theorem \ref{thmmain}}
\label{stegerguerproof}
%%%%%%%%%%%%%%%%%%%%%%%%%%%%%%%%%%%%%%%%%%%

We describe the main steps of the proof of Theorem \ref{thmmain}.\\

\noindent{\bf step 1} Renormalization. We introduce a suitable semi classical parameter $b>0$, see \eqref{eq: b} and use the geometry of the ``eye'' $P_3,P_2$ to produce a suitable renormalization. Here we use a fundamental degeneracy of the phase portrait, see Lemma \ref{phasperotrai}, which gives the eye property \be
\label{neioenenoevno}
\lim_{r\uparrow r_+}|P_3(r)-P_2(r)|=0,
\ee
{\em and} the slopes of $w_2(\sigma)$ and $w_2^-(\sigma)$ converge to the same value, see Figure \ref{fig: Phase Portrait}. Passing to the diagonalized variables \eqref{eq: Diagonalization} and after explicit suitable reductions, this degeneracy leads to a quadratic cancellation, \eqref{eq: P_3 in tilde Variables}. Solving for $w(\sigma)$, we are left with the study of a problem of the form:
 \be
 \label{nneionienoene}
 u(1-u)\Theta' -\left[\gamma-2+(\nu_b+3)u\right]\Theta=-\frac{\mathcal G}{b}.
 \ee
 Here we introduced the parameters which appear in the renormalization process
 \be\label{gara}
 \left|\begin{array}{l}
 \gamma=\frac{c_\infty(d,\ell)(1+o_{b\to 0}(1))}{b}, \ \ c_\infty>0\\
 \nu_b=\nu_\infty(d,\ell)+o_{b\to 0}(1), \ \ \nu_\infty\neq 0
\end{array}\right.
\ee
The trajectory is $\Theta(u)$ with $u=0$ at $\sigma_3$, $u=1$ at $\sigma_{\hskip -.1pc\peye}$.  $\mathcal G$ is an explicit nonlinear term. An analogous reduction can be performed to the left of $P_3$.\\
  
\noindent{\bf step 2} Main part of the solution. The nonlinear ODE \eqref{nneionienoene} has a regular singular point at the origin. Therefore, it admits a $\mathcal C^\infty$ solution with an holomorphic expansion at the origin $$\Theta(u)=\sum_{k=0}^{+\infty}\theta_k(b,d,\ell) u^k$$ where $\theta_k(b,d,\ell)$ is given by an explicit $b$ dependent induction relation. We let 
\be
\label{veiovneoneenoe}
\gamma-1=K+\alpha_\gamma, \ \ 0<\alpha_\gamma<1
\ee
and truncate the holomorphic expansion at the critical frequency\footnote{which corresponds to the limit of regularity of a generic solutions, see Lemma \ref{lem: Regularity at P_3}.}:
\be
\label{fmeneneneoe}
\Theta(u) =\sum_{k=0}^{K-2}\theta_k u^k+(-1)^{K-1}S_{K-1}\Theta_{\rm main}(u)-\T(r_\mathcal G)
\ee 
where $\Theta_{\rm main}=O(u^K)$ is an explicit integral, and $\T(r_\mathcal G)$ is a remainder which is of higher order. Our first fundamental observation is that there exists a strong limit 
\be
\label{limitexists}
\lim_{b\to 0}S_{K-1}= S_\infty(d,\ell).
\ee
The proof relies on bounding the formal series solution to a limiting problem with $b=0$ first. This is done in
 Proposition \ref{prop: Bound for Formal Limit Problem}, which belongs to the realm of nonlinear Maillet theorems, \cite{malgrange, sibuya}. 
 The original problem \eqref{nneionienoene} can be thought of as a $b$-deformation of the limiting problem. 
 The challenge however is that we need uniform estimates for all frequencies up to the critical value $K$ {\em which itself is of size $\sim \frac 1b$}\\

\noindent{\bf step 3} Non vanishing of $S_\infty(d,\ell)$. The proof of finiteness of $S_\infty(d,\ell)$ implies the analycity of the mapping $\ell\mapsto S_\infty(d,\ell)$ away from the critical point i.e., $\ell<d$. This number can be reexpressed as an explicit normally convergent series, but we do not know how to prove analytically that it is non zero. We therefore perform a numerical study of this convergent series which allows us to provide windows of parameters $(d,\ell)$ for which 
\be
\label{veioeonveonoe}
S_\infty(d,\ell)\ne 0.
\ee

\noindent{\bf step 4} No oscillation at the right of $P_3$. In the variable $\Theta$, it is easily seen that the $P_3-P_{\hskip -.1pc\peye}$ separatrix satisfies $|\Theta|\lesssim_{\ell,d} 1$. Hence we pick a large enough (in absolute value) constant $\Theta^*\gg1 $ and aim at reaching the value 
\be
\label{ceioneoneon}
\Theta(u^*)=\Theta^*, \ \ 0<u^*<1.
\ee 
The second fundamental observation is that the function $\Theta_{\rm main}$ can be analyzed explicitly near $u=0$ 
\bea
\label{formalurlajrioje}
\Theta_{\rm main}&=& \Gamma(\alpha_\gamma)\Gamma(1-\alpha_\gamma)K^{\nu_b+3-\alpha_\gamma}u^{K-1}\\
\nonumber &\times & \left\{\left[1+o_{b\to 0}(1)\right]\left[\frac{1}{\Gamma(1-\alpha_\gamma)}+\frac{K+\nu_b+2}{\Gamma(2-\alpha_\gamma)}u\right]+{\rm lot}\right\}.
\eea
In a boundary layer close to the integer values
\be
\label{neoineneoen}
\alpha_\gamma\in (\e_1,2\e_1)\cup (1-2\e_2,1-\e_2), \ \ \e_i=o_{b\to 0}(1),
\ee 
we can ensure that \eqref{ceioneoneon} happens for a small $0<u^*(\alpha_\gamma)\ll 1$.  For the  interval $2\e_1<\alpha_\gamma<1-2\e_2$, we need to understand $\Theta_{\rm main}$ away from $u=0$ where the truncated Taylor expansion no longer dominates, and here we use the {\em explicit integral representation of $\Theta_{\rm main}$} to show that \eqref{ceioneoneon} happens for $u^*(\alpha_\gamma)<\frac 12$.
The conclusion is that for every $\alpha_\gamma\in(0,1)$ except maybe a very small $b$ dependent boundary layer around the integer values, the solution to \eqref{fmeneneneoe} reaches \eqref{ceioneoneon} in time $0<u^*<\frac 12$. This means that we are leaving a large neighborhood around the $P_1-P_3$ separatrix with a prescribed sign $\Theta^*\gg 1$. A further use of monotonicity properties of the flow \eqref{eq: Autonomous System} allows us to conclude that the integral curve will intersect 
either the root branch $w_2^-(\sigma)$ or $w_2(\sigma)$ for some $\sigma_3<\sigma^*<\sigma_2$. 
The $(-1)^K$ prefactor in \eqref{fmeneneneoe} dictates that the former happens when $K$ is even while the latter holds 
when $K$ is odd (if $S_\infty>0$ and the other way around if $S_\infty<0$.) Once the trajectory reaches $w_2^-(\sigma)$, by Lemma \ref{lem: Solutions Near P_4}, it then continues on to $P_4$, as desired.\\

\noindent{\bf step 5} Oscillations at the left of $P_3$. The analysis of the flow to the right of $P_3$  produces the same decomposition \eqref{formalurlajrioje} but with $u<0$. We then observe since $\lim_{\alpha_\gamma\uparrow 1} \Gamma(1-\alpha_\gamma)=+\infty $ that by choosing $\alpha_\gamma$ in a boundary layer close to respectively $0$ or $1$, the sign $u<0$ allows us to reach 
$$\Theta(u^*)= \left|\begin{array}{l}
\Theta^*\ \ \mbox{for}\ \ \e_1<\alpha_\gamma<2\e_1\\
-\Theta^*\ \ \mbox{for}\ \ \e_2<1-\alpha_\gamma<2\e_2
\end{array}\right., \ \ u^*<0, \ \ |u^*|\ll1.$$ 
Then the curve will exit through $w_2^-(\sigma)$ in the first case, and through the branch 
$w_2(\sigma)$ in the second case. This holds for $S_\infty>0$.
For $S_\infty<0$ the picture is reversed. \\

\noindent{\bf step 6} Conclusion by continuity. Given $S_\infty(d,\ell)\ne 0$ with the suitable sign, we vary the parameter $\alpha_\gamma\in(\e_1,1-\e_2)$ continuously and conclude that the $\mathcal C^\infty$ curve through $P_3$ crosses the green at the right of $P_3$ for all $\alpha_\gamma\in(\e_1,1-\e_2)$, and is located on the right of the $P_3-P_4$ trajectory given by Lemma \ref{lem: Solution from P_1} at the beginning of the $\alpha_\gamma$ interval and is located on the left of the $P_3-P_4$ trajectory at the end of this interval. Hence an elementary continuity argument implies the existence of at least one value $\alpha_\gamma^*\in(\e_1,1-\e_2)$ such that the solution curve intersects the $P_3-P_4$ trajectory. The two curves intersect each other away 
from singular points and thus, by uniqueness, must coincide.  It follows easily that this constructed solution satisfies the conclusions of Theorem \ref{thmmain}. In other words, as long as the non degeneracy condition \eqref{veioeonveonoe} is satisfied, the integer interval $\gamma\in [K+1,K+2]$, with $K$ given by \eqref{veiovneoneenoe} large enough and of suitable parity, contains at least one $\mathcal C^\infty$ $P_1-P_4$ solution. Since $\gamma$ is related to $b$ through 
\eqref{gara} and $b=o_{r\uparrow r_+}(1)$, the above construction produces an infinite family of global $\mathcal C^\infty$ solutions 
parametrized by the speeds $r_n\uparrow r_+(d,\ell)$ for each $(d,\ell)$ such that $S_\infty(d,\ell)\ne 0$.

%%%%%%%%%%%%%%%%%%%%%%%%%%%%%%%%%%%%%%%%%%%%%%%%%%%%%%%

\subsection{Degeneracy of the geometry at $r_+(d,\ell)$}

We use Lemma \ref{lem: Diagonalization} and the $Y$ variable \eqref{eq: Diagonalization} to map \eqref{eq: Autonomous System} onto \eqref{eq: Diagonalized Autonomous System}. The starting point of the renormalization procedure is the following fundamental degeneracy property as $r\uparrow r_+(d,\ell)$.

\begin{lemma}[Degeneracy in the diagonalized system]
Let 
\be
\label{eq: b}
b=\sqrt{r_+-r},\quad \mu_+=\frac{\l_+}{b}
\ee
then
\be
\label{eq: P_3 in tilde Variables}
\left|\begin{array}{l}
\Wte=-b\frac{(c_+-c_-)\mu_+}{\dt_{20}}+\mathcal O(b^2)\\
\Sigmate=-\frac{\et_{20}\Wte^2}{(c_+-c_-)\l_-}+\mathcal O(b^3)\\
\mu_+=\mu_+^\infty+\mathcal O(b)<0
\end{array}\right.
\ee
where the non degenerate limiting values are computed in Appendix \ref{app: Slopes and Eigenvalues}.
\end{lemma}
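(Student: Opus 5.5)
The plan is to view $P_2$ as a zero of the diagonalized vector field \eqref{eq: Diagonalized Autonomous System}, since $P_2$ is a triple point and hence $\Delta_1(P_2)=\Delta_2(P_2)=0$. In the $Y$ coordinates centred at $P_3$, $P_2$ becomes $Y_{\peye}=(\Wte,\Sigmate)$, and it solves $\mathcal G_1=\mathcal G_2=0$. By Lemma \ref{phasperotrai}, $|P_2-P_3|\to0$, so $|Y_{\peye}|\to 0$ as $b\to0$. The task is to read off the leading order of $Y_{\peye}$ from these two equations, which requires showing that $\l_+$ vanishes linearly in $b$ while $\l_-$, $\dt_{20}$ and $\et_{20}$ stay nondegenerate.

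\textbf{Step 1: eigenvalues.} I first show $\l_+=O(b)$ with $\mu_+^\infty<0$. At $r=r_+$ the points coincide, $P_2=P_3$, and the curves $w_2$ and $w_2^-$ are tangent there. This gives $\det\mathcal A(P_3)=c_1c_4-c_2c_3=0$, so one eigenvalue vanishes. The vanishing one must be $\l_+$, since $\l_-<\l_+<0$ for $r<r_+$ and $c_1+c_4<0$.

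To get the rate, I use the explicit form of $J(w_e)$. It has a simple zero at $w_e(r_+)$, so $J\sim C(r_+-r)$ and $\sqrt{J}\sim b$. The quantities $w_\pm$ and $\sigma_3$ are smooth in $\sqrt J$, hence smooth in $b$. Expanding $\det\mathcal A$ in $\sqrt J$ should give
\[
\det\mathcal A=\l_+\l_- = \kappa\sqrt J+O(J),
\]
and hence $\l_+=b\,\mu_+^\infty+O(b^2)$. The sign $\mu_+^\infty<0$ is inherited from $\l_+<0$ once $\kappa\ne0$ is checked; that is one of the explicit limiting values in the appendix. The other eigenvalue $\l_-\to\l_-^\infty=c_1^\infty+c_4^\infty<0$ stays nondegenerate.

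\textbf{Step 2: solving the system at $P_2$.} The second equation $\mathcal G_2(Y_{\peye})=0$ reads
\[
(c_+-c_-)\l_-\Sigmate=-\et_{20}\Wte^2-\et_{11}\Wte\Sigmate-\cdots.
\]
Since $\l_-$ is bounded away from $0$, the implicit function theorem gives $\Sigmate=-\et_{20}\Wte^2/((c_+-c_-)\l_-)+O(|\Wte|^3)$.

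Substituting into $\mathcal G_1=0$ gives
\[
(c_+-c_-)\l_+\Wte+\dt_{20}\Wte^2+O(|\Wte|^3)=0,
\]
because the $\Wt\Sigmat$ and $\Sigmat^2$ terms are $O(\Wte^3)$. Since $P_2\ne P_3$, we have $\Wte\ne0$ (if $\Wte=0$ then $\Sigmate=0$), so dividing by $\Wte$ yields
\[
\Wte=-(c_+-c_-)\l_+/\dt_{20}+O(\Wte^2).
\]
This requires $\dt_{20}^\infty\ne0$, again an appendix value. With $\l_+=b\mu_+$ this gives $\Wte=O(b)$, and therefore the stated formula for $\Wte$ with an $O(b^2)$ error. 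Feeding this back gives the $O(b^3)$ error for $\Sigmate$.

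\textbf{Main obstacle.} The difficulty is the nondegeneracy bookkeeping: verifying $\dt_{20}^\infty\ne0$, $\kappa\ne0$ (so $\mu_+^\infty\ne0$), and that $c_\pm$, $\et_{20}$ have finite limits. This needs the explicit limiting values of $\sigma_3^\infty$, $w_3^\infty$ and $c_i^\infty$ at $r=r_+$ for $d-1<\ell<d$. I would compute these symbolically, using $\sigma_3^\infty=1/(1+\sqrt\ell)$ and $w_e(r_+)=(d-1)\ell/(d(1+\sqrt\ell)^2)$, and collect them in the appendix.
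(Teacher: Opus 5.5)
Your proposal is correct and follows essentially the paper's route. You use the smoothness in $b$ of $\sigma_3$, $c_i$, $c_\pm$, $\l_\pm$ coming from the simple root of $J$ at $r_+$, the nondegeneracy of $\l_-^\infty$ and $\dt_{20}^\infty$, and the vanishing of $\mathcal G_1,\mathcal G_2$ at $P_2$ to force the expansions, with the sign of $\mu_+$ inherited from $\l_+<0$. The only difference is that the paper first gets $|\Wte|+|\Sigmate|\lesssim b$ from the quantified eye property, whereas you need only $|P_2-P_3|\to0$ and extract the $O(b)$ size from the equations themselves, via the graph $\Sigmat=O(\Wt^2)$ and division by $\Wte\neq0$; this makes explicit a step the paper leaves implicit.
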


\begin{proof} The value of $\sigma_3(r)$ is computed from \eqref{eq: w_pm} and hence $\sigma_3\in C^\infty(1,r_+)$. Moreover, $J(r)$ is from \eqref{eq: I and J} a second order polynomial with roots $r_+=1+\frac{d-1}{(1+\sqrt{\ell})^2}<r_-=1+\frac{d-1}{(1-\sqrt{\ell})^2}$ and hence the root $r_+$ is simple. Since $r^*<r_+$, we conclude that with the definition \eqref{eq: b}, $\sigma_3(r)$ and the slopes coefficients $c_i(r)$ given by \eqref{eq: Slopes} are smooth functions of $b$ on $[0,b^*]$, $0<b^*(d,\ell)\ll1$ universal small enough. We now explicitly check that the determinant $(c_1-c_4)^2+4c_2c_3$ which appears in the definition of the slopes and eigenfunctions \eqref{eq: c_pm}, \eqref{eq: lambda_pm} is non degenerate at $r_+$, see limiting values in Appendix \ref{app: Slopes and Eigenvalues} and the non degeneracy of $\l_-$, which ensures that $c_\pm,\l_\pm$ are smooth functions of $b$ all the way to $b=0$. The eye property \eqref{eq: Eye Property} thus implies $$|W_{\hskip -.1pc\peye}|+|\Sigma_{{\hskip -.1pc\peye}}|=|w_3-w_2|+|\sigma_3-\sigma_2|\lesssim Cb.$$
Since the coefficients of the matrix $P^{-1}$ given by \eqref{eq: P} do not degenerate at $r_+$ from direct check, we conclude
$$|\Wte|+|\Sigmate|\le Cb.$$
The coefficients $\dt_{ij},\et_{ij}$ of the polynomials of the RHS of \eqref{eq: Diagonalized Autonomous System} are computed from \eqref{eq: Coefficients of Autonomous System 2} and are $\mathcal O(1)$ at $r_+$. Moreover $\mathcal G_1, \mathcal G_2$  vanish at $P_3$ and this forces \eqref{eq: P_3 in tilde Variables}. Note also, that the sign of $\mu_+$ follows from \eqref{eq: Sign of Eigenvalues}.
\end{proof}

\subsection{Renormalization}

We now proceed to the renormalization of \eqref{eq: Diagonalized Autonomous System} for $0<b\ll1$.

\begin{lemma}[Renormalization and quasilinear formulation]
\label{lem: Quasilinear Formulation}
Let 
\be
\label{eq: Quasilinear Variables}
\left|\begin{array}{l}
\Wt=-b\wt\\
\Sigmat=b^2\sigmat
\end{array}\right., \ \ 
\left|\begin{array}{l}
\psit=\frac{\sigmat}{\wt}=\psit_{\hskip -.1pc\peye} \phi\\
\wt=\wt_{\hskip -.1pc\peye} u
\end{array}\right., \ \ \phi(u)=u+(1-u)\Psi(u)
\ee
then \eqref{eq: Diagonalized Autonomous System} is mapped to the quasilinear problem
\bea
\label{eq: Quasilinear Equation}
 &&\left[1+H_2+G_2\Psi+\NLt_2\right]u(1-u)\frac{d\Psi}{du}\\
\nonumber&+& \left[(1-2u)(1+H_2+G_2\Psi+\NLt_2)-\gamma(1+G_1)+2uG_2\right]\Psi\\
\nonumber& = &u\left[\gamma bH_1-2(1+H_2)+\frac{\gamma b\NLt_1}{x}-2\NLt_2\right]
\eea
where $H_1,H_2,G_1,G_2$ are explicit polynomials in $(b,u)$ given by \eqref{eq: H_i}, \eqref{eq: G_i}, and the nonlinear terms $(\NLt_{i})_{i=1,2}$ are given by \eqref{eq: NL_i}.
Moreover, 
\be
\label{eq: w, psi at P_3}
\left|\begin{array}{l}
\wt_{\hskip -.1pc\peye}=\frac{(c^\infty_+-c^\infty_-)\mu_+^\infty}{\dt_{20}^\infty}+\mathcal O(b)\\
\psit_{\hskip -.1pc\peye}=-\frac{\et^\infty_{20}\mu^\infty_+}{\dt^\infty_{20}\l^\infty_-}+\mathcal O(b).
\end{array}\right.
\ee
\end{lemma}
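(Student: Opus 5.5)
The plan is a direct computation in three stages: rescale the diagonalized system, eliminate time to get a first order ODE for the slope $\psit$ along the trajectory, and then use the degeneracy lemma to identify the linear coefficients and the value $\gamma$.

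\emph{Stage 1: rescaling.} Substitute $\Wt=-b\wt$ and $\Sigmat=b^2\sigmat$ into \eqref{eq: Diagonalized Autonomous System}. Using $\l_+=b\mu_+$, the first equation becomes
$$-(c_+-c_-)\frac{d\wt}{dt}=-b(c_+-c_-)\mu_+\wt+b\,\dt_{20}\wt^2+O(b^2).$$
After factoring out $b$, its leading part is the logistic-type expression $\wt\,[(c_+-c_-)\mu_+-\dt_{20}\wt]$. This vanishes at $\wt=0$ and at $\wt=\wt_{\hskip -.1pc\peye}$, which gives the first line of \eqref{eq: w, psi at P_3} by comparison with \eqref{eq: P_3 in tilde Variables}. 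Setting $\wt=\wt_{\hskip -.1pc\peye}u$ turns this leading part into $u(1-u)$ up to a constant.

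The second equation reads
$$(c_+-c_-)b^2\frac{d\sigmat}{dt}=(c_+-c_-)\l_-b^2\sigmat+\et_{20}b^2\wt^2+O(b^3).$$
Its leading terms are therefore $\l_-\sigmat+\et_{20}\wt^2/(c_+-c_-)$. The point where this vanishes at $\wt=\wt_{\hskip -.1pc\peye}$ gives $\sigmate$, and hence $\psite=\sigmate/\wte$. This is the second line of \eqref{eq: w, psi at P_3}, matching \eqref{eq: P_3 in tilde Variables}.

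\emph{Stage 2: reduction to a first order ODE in $u$.} Form the equation for $\psit=\sigmat/\wt$ by eliminating $t$:
$$\frac{d\psit}{du}=\frac{\dot\sigmat\,\wt-\sigmat\,\dot{\wt}}{\wt^2\,\dot u}.$$
The time derivatives have different orders: $\dot\sigmat=O(1)$, while $\dot{\wt}=O(b)$. Consequently $\wt^2\dot u\sim b\,u(1-u)$, and the relative size of the two derivatives produces a large parameter
$$\gamma=\frac{\l_-}{b\,\mu_+}\bigl(1+O(b)\bigr)=\frac{c_\infty}{b}.$$
Here $\gamma>0$ because $\l_-<0$ and $\mu_+<0$. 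This is consistent with $A=\l_-/\l_+$ from Lemma \ref{lem: Estimate Slopes}, so that $\gamma$ is essentially $A$.

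Next write $\psit=\psite\phi$ with $\phi=u+(1-u)\Psi$. The affine part $u$ is chosen so that $\phi(0)$ and $\phi(1)=1$ encode the two fixed points $P_3$ and $P_{\hskip -.1pc\peye}$. Substituting, the $u$-independent inhomogeneities cancel precisely because of the algebraic relations defining $\wte$ and $\psite$. What remains is a linear operator
$$u(1-u)\Psi'+\bigl[(1-2u)-\gamma\bigr]\Psi$$
plus corrections. All remaining terms of order $b$ times polynomials in $(u,\Psi)$ are collected into $H_i$ and $G_i$ (linear in $\Psi$), and the higher degree remainders into $\NLt_i$.

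\emph{Stage 3: normalization.} Multiply through by the common factor $1+H_2+G_2\Psi+\NLt_2$ so that the equation takes the quasilinear form \eqref{eq: Quasilinear Equation}. The term $\gamma bH_1$ on the right-hand side then arises from the $\gamma$-weighted source $u\,\gamma(\dots)$, where $\gamma b=O(1)$.

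\emph{Main obstacle.} The difficulty is bookkeeping rather than analysis. One must check that every term in the $\dt_{ij},\et_{ij}$ expansion of the required order in $b$ lands in $H_i$ or $G_i$ with the stated structure. In particular, the factor $\gamma$ must multiply exactly $(1+G_1)\Psi$ and $u\,bH_1$, and no $O(1)$ term of the form $\gamma\cdot\Psi^2$ may appear. I would first verify this at the linear level, which pins down $\gamma$ and the coefficient $1-2u$. I would then expand the cubic terms using $\Sigmat=O(b^2)$, which shows they contribute only $O(b)$ relative corrections.
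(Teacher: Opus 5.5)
Your plan is essentially the paper's proof, which defers to Lemma 4.3 of \cite{MRRSprofile}. Eliminating $t$ is the same as forming $d\Sigmat/d\Wt=\mathcal G_2/\mathcal G_1$, and the paper's key step is exactly the factorization of $\mathcal G_1,\mathcal G_2$ as $u$ times brackets in $(u,(1-u)\Psi)$ that your Stages 2--3 describe. Two things to tighten:
\begin{itemize}
\item $\gamma$ must be exactly $\frac{|\l_-|}{b|\mu_+|}=\frac{\l_-}{\l_+}=A$, with no $(1+O(b))$ slack. Any relative error of size $O(b)$ in $\gamma$ becomes $O(1)$ after multiplying by $\gamma$, and it would change the coefficient $1-\gamma$ at $u=0$, where $H_2,G_1,G_2,\NLt_2$ all vanish.
\item The cancellation that lets you divide by $1-u$ must use the exact vanishing of $\mathcal G_1,\mathcal G_2$ at $P_2$, not the $O(b)$-accurate expansions \eqref{eq: w, psi at P_3}. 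Otherwise an $O(1)$ residual that does not vanish at $u=1$ survives the multiplication by $\gamma$.
\end{itemize}
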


\begin{proof} 
The proof is verbatim the same as that of Lemma 4.3 in \cite{MRRSprofile} where it is also proved that $\mathcal G_1$, $\mathcal G_2$ appearing in Lemma \ref{lem: Diagonalization} can be expressed in terms of
\be
\label{eq: Psi}
\Phit=(1-u)\Psi,\quad u=\frac{x}{b}
\ee
as
\bea
\label{eq: G_1 reexpressed}&& \Delta_1-c_-\Delta_2=\mathcal G_2\\
\nonumber & = & -b^2|\l_-|\psite\wte(c_+-c_-)u\left[u(1-u)bH_1(b,u)+(1+G_1(bu))\Phit+\NL_1(u,\Phit)\right]
 \eea
and
\bea
\label{eq: G_2 reexpressed} &&-\Delta_1+c_+\Delta_2= \mathcal G_1\\
\nonumber& = & b^2\wte(c_+-c_-)|\mu_+|u\left[(1-u)\left[1+H_2(b,u)\right]+G_2(bu)\Phit+\NL_2(u,\Phit)\right]
\eea
where 
\be
\label{eq: Nonlinear Terms in F_1}
\left|\begin{array}{l}
H_1(b,u)=-(\Et_{11}+\Et_{30})+b(\Et_{02}+\Et_{21})(1+u)-b^2\Et_{12}(1+u+u^2)+b^3\Et_{03}(1+u+u^2+u^3)\\
G_1(x)=\Et_{11}x-(2\Et_{02}+\Et_{21})x^2+2\Et_{12}x^3-3\Et_{03}x^4\\
\NL_1(u,\Phit)= b\Phit^2\left[-\Et_{02}bu+\Et_{12}b^2u^2-3\Et_{03}b^3u^3\right]- b^2\Phit^3\left[\Et_{03}b^2u^2\right]
\end{array}\right.
\ee
and
\be
\label{eq: Nonlinear Terms in F_2}
\left|\begin{array}{l} \begin{aligned}H_2(b,u)=&b(\Dt_{11}+\Dt_{30})u-b^2(\Dt_{02}+\Dt_{21})u(1+u)+b^3\Dt_{12}u(1+u+u^2)\\
&-b^4\Dt_{03}u(1+u+u^2+u^3)\end{aligned}\\
G_2(x)=-\Dt_{11}x+(2\Dt_{02}+\Dt_{21})x^2-2\Dt_{12}x^3+3\Dt_{03}x^4\\
\NL_2(u,\Phit)=b\Phit^2\left[\Dt_{02}bu-\Dt_{12}b^2u^2+3\Dt_{03}b^3u^3\right]+b^2\Phit^3\left[\Dt_{03}b^2u^2\right]
\end{array}\right.
\ee
for $\Dt_{ij}$ and $\Et_{ij}$ defined in \eqref{eq: D and E}.

\end{proof}

\section{Bounding the Taylor series of the formal limit problem}
\label{sec: Formal Limit Problem}
We now start the analysis of the non linear ODE \eqref{eq: Quasilinear Equation} for $0<u<1$. It has a regular singular point at the origin and our first task is to estimate the growth of the Taylor coefficients of solutions' expansions at $u=0$. This will be done in two steps. First, in this section we estimate the growth of the coefficients for a formal $b=0$ limiting system, Proposition \ref{prop: Bound for Formal Limit Problem}. This will make appear the function $S_\infty(d,\ell)$. Then, in section \ref{sec: General Problem} we will obtain uniform bounds in $b$ for the Taylor coefficients associated to \eqref{eq: Quasilinear Equation} {\em for frequencies $k\lesssim \frac{1}{b}$}.

\subsection{Formal limit $b=0$} Recall \eqref{eq: Quasilinear Equation} and let $$\Psi(u)=\Psit(x), \quad x=bu$$ then 
\bea
\label{eq: Quasilinear Equation in x}
&& \left[1+H_2+G_2\Psit+\NLt_2\right]x(b-x)\Psit'\\
\nonumber&+& \left[(b-2x)(1+H_2+G_2\Psit+\NLt_2)-b\gamma(1+G_1)+2xG_2\right]\Psit\\
\nonumber& = &x\left[\gamma bH_1-2(1+H_2)+\frac{\gamma b\NLt_1}{x}-2\NLt_2\right].
\eea
We introduce the parameters
\be
\label{eq: a and nu}
\left|\begin{array}{l}
a=\gamma b=\frac{|\l_-|}{|\mu_+|}>0\\
\nu=-\gamma b(\Dt_{11}+\Dt_{30}-\Et_{11}),
\end{array}\right.
\ee
which have a well defined limit as $b\to 0$ noted $a_\infty,\nu_\infty$,
and assume 
\be
\label{eq: Positivity of nu_infty}
\nu_\infty(d,\ell)>0.
\ee
In view of the explicit formulas of Appendix \ref{app: Slopes and Eigenvalues}, the {\em formal} limit $b\to 0$ is:
\bea
\label{eq: Formal Limit Equation}
&&\left[1+H^\infty_{20}+G^\infty_2\Psit+\NLt^\infty_{20}\right](-x^2)\Psit'\\
\nonumber&+& \left[-2x(1+H^\infty_{20}+G^\infty_2\Psit+\NLt^\infty_{20})-a_\infty(1+G^\infty_1)+2xG^\infty_2\right]\Psit\\
\nonumber& = &x\left[a_\infty H^\infty_{10}-2(1+H^\infty_{20})+\frac{a_\infty\NLt^\infty_{10}}{x}-2\NLt^\infty_{20}\right]
\eea
where we recall that the superscript $^\infty$ means that we compute all parameters $(\Dt_{ij},\Et_{ij})$ given by \eqref{eq: D and E} in their well defined limit $b=0$. 
Let us stress the fact that this is {\em not a dynamical limit}, since the change variables $x=bu$ maps the 
original flow on the set $x=0$. Our claim is that for fixed order $k$, the  Taylor coefficients associated to \eqref{eq: Formal Limit Equation} are the $b=0$ limit of the Taylor coefficients associated to \eqref{eq: Quasilinear Equation} up to a suitable renormalization, see \eqref{eq: psi_k Limit}.

\subsection{Boundedness of the limiting series}
The condition \eqref{eq: Positivity of nu_infty} holds, by a direct examination, at $r^+$ and $\ell<d$ from \eqref{eq: nu_infty at r^+}.\\

Our aim in this section is to prove the following bound.

\begin{proposition}[Boundedness for \eqref{eq: Formal Limit Equation}]
\label{prop: Bound for Formal Limit Problem} 
Assume \eqref{eq: Positivity of nu_infty}. Then there exists $c_{\nu_\infty,a_\infty}>0$ such that the following holds. Let $\Psit$ be the unique $C^\infty$ local solution to \eqref{eq: Formal Limit Equation} on $[0,x_0]$, then the sequence $$\psit_k=\frac{\Psit^{(k)}(0)}{k!}$$ satisfies 
\be
\label{eq: psit Bound}
|\psit_k|\leq c_{\nu,a}\frac{\Gamma(k+\nu_\infty+2)}{a_\infty^k}.
\ee
\end{proposition}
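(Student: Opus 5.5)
We expand $\Psit(x)=\sum_{k\ge0}\psit_k x^k$ in \eqref{eq: Formal Limit Equation}, identify the Taylor coefficients order by order, and prove the Gevrey-type bound \eqref{eq: psit Bound} by strong induction. This is a nonlinear Maillet-type argument: the linear part has an irregular singular point at $x=0$, and the quadratic and cubic nonlinearities must be controlled by a convolution inequality adapted to the weights $\Gamma(k+\nu_\infty+2)a_\infty^{-k}$.

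\textbf{Induction relation.} At order $x^k$, the dominant terms are the multiplication $-a_\infty\Psit$ and the derivative term $-x^2\Psit'-2x\Psit$. Since $-x^2\Psit'-2x\Psit=-(x^2\Psit)'$, its coefficient at $x^k$ is $-(k+1)\psit_{k-1}$. Collecting the linear contributions of $H^\infty_{20},G_1^\infty$ (polynomials in $x$ vanishing at $0$), we obtain
$$
a_\infty\psit_k=-(k+1+\nu_\infty')\psit_{k-1}+\sum_{j\ge2}\beta_j(k)\psit_{k-j}+\mathcal N_k,
$$
with $|\beta_j(k)|\lesssim k$ and only finitely many $j$. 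Here $\mathcal N_k$ collects quadratic and cubic convolutions from $G_2^\infty\Psit$, $\NLt^\infty_{10}$ and $\NLt^\infty_{20}$, each carrying at most one factor $(k-i)\psit_{k-i}$ from the derivative. The $x^1$ coefficient of $G_2^\infty,H_{20}^\infty,G_1^\infty$ shifts $k+1$ into $k+1+\nu_\infty$ after dividing by $a_\infty$. The first step is to check from \eqref{eq: Nonlinear Terms in F_1}, \eqref{eq: Nonlinear Terms in F_2} and \eqref{eq: a and nu} that this shift is exactly $\nu_\infty$ with $\nu_\infty>0$ by \eqref{eq: Positivity of nu_infty}. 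This is consistent with the weight $\Gamma(k+\nu_\infty+2)/a_\infty^k$, since the ratio of consecutive weights is $(k+\nu_\infty+1)/a_\infty$.

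\textbf{Linear part.} We set $\psit_k=\Gamma(k+\nu_\infty+2)a_\infty^{-k}q_k$. The main term becomes $q_k=-q_{k-1}+\dots$, so $|q_k|\le|q_{k-1}|+\text{error}$. Terms $\psit_{k-j}$ with $j\ge2$ gain a factor $k\,\Gamma(k-j+\nu+2)/\Gamma(k+\nu+2)\lesssim k^{1-j}\le k^{-1}$. This is not summable alone, so we refine the ansatz to $|q_k|\le C\prod_{i\le k}(1+c\,i^{-2})$. To get an $i^{-2}$ gain, the $j=2$ term needs to be examined. Its coefficient is $\mathcal O(1)$ rather than $\mathcal O(k)$, because the $x^2$ coefficients of $H_{20},G_1$ multiply $\psit_{k-2}$ without any derivative, except for the $G_2$ contribution. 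A term $\mathcal O(k)\psit_{k-2}$ would give $\mathcal O(k^{-1})$ relative size, which is fatal. The plan is to absorb such contributions into the main term via the factor $x^2\Psit'$, or, if they genuinely appear, to replace the induction by $|q_k|\le C k^{\epsilon}$-type bounds. I expect the first option to work, since $x^2$ times a polynomial of order $x^j$ still costs $k^{1-j}$ with $j\ge2$.

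\textbf{Nonlinear convolutions (main obstacle).} For $\mathcal N_k$ we use the standard Gamma convolution estimate
$$
\sum_{i=1}^{k-1}\Gamma(i+\nu+2)\Gamma(k-i+\nu+2)\lesssim\Gamma(k+\nu+2)\cdot\Gamma(\nu+3)\,k^{-1},
$$
valid for $\nu>-1$ since the sum is dominated by the endpoints. We need one extra power of $x$ in front of every nonlinear term, which holds since $\NLt_{10}^\infty/x$ and $\NLt_{20}^\infty$ carry factors $u$ or $x$. With that, products of the weights with one derivative factor yield relative size $\mathcal O(k^{-2})\,C^2$ times constants. The bound then closes with $\prod(1+c(1+C)^2 i^{-2})$, but the constant $C$ appears inside. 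We handle this by the usual device: we fix $k_0$ large so that the tail constants $c(1+C)^2\sum_{i>k_0}i^{-2}$ are small, and we take $C$ from the finitely many initial coefficients. This uses that $\psit_0$ is determined by the $x^0$ equation $-a_\infty\psit_0=a_\infty\NLt^\infty_{10}/x|_{0}$, whose solution exists and is unique. The delicate points are exactly these: tracking that no term produces only a $k^{-1}$ gain, and that the cubic terms with two small factors are also handled by iterated convolution.
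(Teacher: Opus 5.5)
\textbf{Gap in the linear step.} Your linear step fails exactly where you suspected, and your reason for dismissing the bad term is incorrect.

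In \eqref{eq: Formal Limit Equation} the linear part is $-(1+H^\infty_{20})(x^2\Psit)'-a_\infty(1+G^\infty_1)\Psit+2xG^\infty_2\Psit$. So it is $H^\infty_{20}$, not $G^\infty_2$, that multiplies the derivative linearly. $G^\infty_2$ meets $x^2\Psit'$ only through the nonlinear factor $G^\infty_2\Psit$, and its linear contribution $2xG^\infty_2\Psit$ costs only $\mathcal O(k^{-2})$.

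Since $H^\infty_{20}=(\Dt^\infty_{11}+\Dt^\infty_{30})x+\mathcal O(x^2)$ starts at order $x$, the coefficient of $x^k$ contains $-(\Dt^\infty_{11}+\Dt^\infty_{30})\,k\,\psit_{k-2}$. Relative to $a_\infty\psit_k$ this has size about $|\Dt^\infty_{11}+\Dt^\infty_{30}|\,a_\infty/k$, which is not summable.

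This term cannot be argued away, because it is part of the exponent. The raw recursion has $k+1+a_\infty\Et^\infty_{11}$ in front of $\psit_{k-1}$, whereas $\nu_\infty=a_\infty(\Et^\infty_{11}-\Dt^\infty_{11}-\Dt^\infty_{30})$. The missing $-a_\infty(\Dt^\infty_{11}+\Dt^\infty_{30})$ is produced by this $k\,\psit_{k-2}$ term only after you substitute the recursion at step $k-1$. In your normalization the bad terms take the form $\frac{C}{k}(q_{k-1}-q_{k-2})$, and they cancel only because $q_{k-1}-q_{k-2}$ is itself small. If you estimate term by term in absolute value, you get $|q_k|\le(1+Ck^{-1})\max(|q_{k-1}|,|q_{k-2}|)$, which loses a factor $k^{C}$. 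Your fallback of $k^{\epsilon}$-type bounds would therefore prove a strictly weaker statement than \eqref{eq: psit Bound}. The sharp exponent $\nu_\infty$ is what later makes $S_\infty$ converge.

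\textbf{Missing step and the paper's route.} You need to divide by $1+H^\infty_{20}$ before reading off coefficients, which is what Lemma \ref{lem: Conjugation} does. Expanding $a_\infty(1+G^\infty_1)/(x^2(1+H^\infty_{20}))$ turns the linear operator into $x^2\Psit'+[a_\infty+(\nu_\infty+2)x+x^2\xi(x)]\Psit$ with $\xi$ holomorphic. The recursion then reads $a_\infty\psit_k+(k+1+\nu_\infty)\psit_{k-1}=\sum_{j\ge2}\beta_j\psit_{k-j}+\dots$, with $|\beta_j|\le C^j$ independent of $k$. Every remaining linear term then costs $\mathcal O(k^{-2})$, as your product ansatz requires.

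The paper goes further. It removes $\xi$ entirely with the gauge $M(x)=e^{-\int_0^x\xi}$, which gives an exactly solvable two-term recursion for $\Theta=\Phi/x$ whose nonlinearity carries an $x^2$ prefactor. It then transfers the bound back to $\Psit=M\Phi$ via Lemma \ref{cneinveineonevni}.

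\textbf{Two further points in your nonlinear step.} First, you must use $\psit_0=0$, which holds because the source is $\mathcal O(x)$ and $\NLt^\infty_{10}=\mathcal O(x^2)$. Otherwise $G^\infty_2\Psit\,x^2\Psit'$ contains a linear piece proportional to $\Dt^\infty_{11}\psit_0\,x^3\Psit'$, and the same non-summable $k\,\psit_{k-2}$ term reappears from the nonlinearity. Second, your closing argument is circular as written: $C$ is read off from the first $k_0$ coefficients, while $k_0$ is chosen in terms of $C$. The nonlinear induction still needs a non-circular bootstrap.
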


\subsection{Conjugation formula}
We start by conjugating \eqref{eq: Formal Limit Equation} to an explicitly solvable (at the linear level) problem.

\begin{lemma}[Conjugation]
\label{lem: Conjugation}
There exist functions $\xi(x), (\mu_j(x),\nu_j(x))_{1\le j\le 4}$, holomorphic in a neighborhood of the $x=0$,
dependent on $(d,\ell,r)$,  such that the change of variables
\be
\label{eq: Conjugation Variables}
\left|\begin{array}{l}
\Psit(x)=M(x)\Phi(x), \ \ M(x)=e^{-\int_0^x \xi(y)dy}\\
\Theta(x)=\frac{\Phi(x)}{x}
\end{array}\right.
\ee 
maps \eqref{eq: Formal Limit Equation} to 
\be
\label{eq: Conjugated Limit Problem}
x^2\Theta'+[a_\infty+(\nu_\infty+3)x]\Theta=\mu_0+x\left[x\sum_{j=1}^4\mu_j\Theta^j+x^2(x\Theta')\sum_{j=1}^{4}\nu_j\Theta^j\right].
\ee
\end{lemma}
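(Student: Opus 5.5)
The plan is a direct computation. I first expand the limiting equation \eqref{eq: Formal Limit Equation} as a polynomial in $\Psit$ with coefficients holomorphic in $x$, and then choose $\xi$ so that, after the substitution $\Psit=M\Phi$, the linear part takes the normal form $x^2\Theta'+[a_\infty+(\nu_\infty+3)x]\Theta$ exactly. The nonlinear and source terms are then relegated to the right-hand side.

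Concretely, I divide \eqref{eq: Formal Limit Equation} by the unit $1+H^\infty_{20}(x)$. This is legitimate since $H^\infty_{20}=O(x)$: the $b\to0$ limit of $H_2$ with $bu=x$ is a polynomial vanishing at $0$. The resulting equation has the form
$$
-x^2\Psit'-\big[2x+a_\infty m(x)\big]\Psit=x\,f(x)+\mathcal N(x,\Psit,x^2\Psit'),
$$
with $m(x)=(1+G_1^\infty)/(1+H_{20}^\infty)=1+m_1x+O(x^2)$ holomorphic. Here $f$ collects the source $a_\infty H^\infty_{10}-2(1+H^\infty_{20})$ together with the constant part of $a_\infty\NLt^\infty_{10}/x$, and $\mathcal N$ collects the terms quadratic and higher in $\Psit$. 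The cubic and quartic terms arise from $G_2\Psit\cdot x^2\Psit'$ and from the $\Phit^2,\Phit^3$ pieces of $\NLt_i$; these last are polynomial in $\Psit$ because $\Phit=(1-u)\Psi$ and $bu=x$. Next I substitute $\Psit=M\Phi$, so that $x^2\Psit'=M(x^2\Phi'-x^2\xi\Phi)$, and then $\Phi=x\Theta$, so that $x^2\Phi'=x(x^2\Theta'+x\Theta)$. After changing the overall sign, the linear part becomes
$$
xM\Big[x^2\Theta'+\big(x+2x+a_\infty m(x)-x^2\xi\big)\Theta\Big].
$$
I want this bracket to equal $x^2\Theta'+[a_\infty+(\nu_\infty+3)x]\Theta$. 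This forces $x^2\xi(x)=a_\infty(m(x)-1)-\nu_\infty x$. The required consistency is that the order-$x$ coefficient vanishes, i.e.\ $a_\infty m_1=\nu_\infty$, and the point is to check this from \eqref{eq: a and nu}: the linear-in-$x$ coefficient of $G_1-H_2$ at fixed $u$ is $(\Et_{11}-\Dt_{11}-\Dt_{30})$, since $H_2=b(\Dt_{11}+\Dt_{30})u+\dots$ and $G_1=\Et_{11}x+\dots$. Hence $a_\infty m_1=\nu_\infty$ with the sign convention $\nu=-\gamma b(\Dt_{11}+\Dt_{30}-\Et_{11})$. Once this holds, $\xi=(a_\infty(m-1)-\nu_\infty x)/x^2$ is holomorphic near $0$, and $M=e^{-\int_0^x\xi}$ is a holomorphic unit.

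Finally I divide the whole equation by $xM(x)$. The source term $xf$ becomes $f/M$; its value at $x=0$ is $\mu_0$, and its remainder is $O(x)$, which I absorb into the $j=\dots$ structure. This absorption is the delicate bookkeeping step. If the target form has no $x\cdot(\text{holomorphic})$ zero-order term, I would instead choose $\Theta=\Phi/x-\Theta_0(x)$ with a polynomial correction, or take $\mu_0$ to be holomorphic in $x$. I expect the statement intends $\mu_0=\mu_0(x)$, as the $\mu_j$ and $\nu_j$ are. A nonlinear term $\Psit^j=M^jx^j\Theta^j$, divided by $xM$, gives $x^{j-1}M^{j-1}\Theta^j$. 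For $j\ge2$ this is $x\cdot x\,(\dots)\Theta^j$ once I also use the extra factor of $x$ present in the nonlinear terms: every nonlinearity carries at least one explicit $x$, since $\NLt$ has prefactor $bu=x$ and $G_2^\infty(x)=O(x)$. The terms with $x^2\Psit'$ multiplied by $\Psit^{j}$ become $x^2(x\Theta')\nu_j\Theta^j$ after the same bookkeeping, the extra $x\Theta$ pieces from differentiating $\Phi=x\Theta$ and $M$ being moved into the $\mu_j$. The main obstacle is the order-counting: verifying that each nonlinear monomial indeed carries the powers $x^2$ (respectively $x^3$ in front of $\Theta'$) demanded by \eqref{eq: Conjugated Limit Problem}. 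This rests on $G_2^\infty(0)=0$ and on the explicit $x$-prefactors in $\NL_1,\NL_2$ from \eqref{eq: Nonlinear Terms in F_1}–\eqref{eq: Nonlinear Terms in F_2}, and I would check it monomial by monomial, degree $\le4$ in $\Theta$.
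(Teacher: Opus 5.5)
Your proposal is correct and follows the paper's proof. The paper likewise removes the $O(1)$ part of the linear coefficient beyond $\frac{a_\infty}{x^2}+\frac{\nu_\infty+2}{x}$ with the integrating factor $M=e^{-\int_0^x\xi}$. It uses $\nu_\infty=a_\infty(\Et^\infty_{11}-\Dt^\infty_{11}-\Dt^\infty_{30})$ to identify the $1/x$ coefficient, exactly your consistency check $a_\infty m_1=\nu_\infty$. It then sets $\Theta=\Phi/x$, which is legitimate since the equation forces $a_\infty\Phi(0)=0$. The source becomes $\mu_0(x)=-F_0/(M(1+H^\infty_{20}))$, which is indeed a holomorphic function, as you guessed.

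The one slip is that your normal form drops the linear term $2xG^\infty_2\Psit$. Since $G^\infty_2=O(x)$, this is harmless: it is simply absorbed into $\xi$, as the paper does, or alternatively into $\mu_1$.
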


\begin{proof}[Proof of Lemma \ref{lem: Conjugation}] This is an explicit computation.\\

\noindent{\bf step 1} Linear conjugation. We solve the linear problem
\be
\label{eq: Linear Problem}
\left[x+xH^\infty_{20}\right](-x)\Psit'+ \left[-2x(1+H^\infty_{20})-a_\infty(1+G^\infty_1)+2xG^\infty_2\right]\Psit =xF.
\ee
This is equivalent to
$$
\Psit'+\left[\frac{2}{x}+\frac{a_\infty(1+\Et^\infty_{11}x)}{x^2(1+H^\infty_{20})}+\frac{a_\infty(G^\infty_1-\Et^\infty_{11}{x})-2xG^\infty_2}{x^2(1+H^\infty_{20})}\right]\Psit=-\frac{F}{x(1+H^\infty_{20})}.
$$
We have
\bee
&&\frac{a_\infty(1+\Et^\infty_{11}x)}{x^2(1+H^\infty_{20})}=\frac{a_\infty}{x^2}(1+\Et^\infty_{11}x)\left[1-(\Dt^\infty_{11}+\Dt^\infty_{30})x+\frac{1}{1+H^\infty_{20}}-1+(\Dt^\infty_{11}+\Dt^\infty_{30}){x}\right]\\
& = & \frac{a_\infty}{x^2}(1+\Et^\infty_{11}x)\left[\frac{1}{1+H^\infty_{20}}-1+(\Dt^\infty_{11}+\Dt^\infty_{30}){x}\right]-a_\infty\Et^\infty_{11}(\Dt^\infty_{11}+\Dt^\infty_{30})\\
& + & \frac{a_\infty}{x^2}+\frac{a_\infty[\Et^\infty_{11}-(\Dt^\infty_{11}+\Dt^\infty_{30}{)}]}{x}
\eee
Let 
\bee
\xi(x)&=&\frac{a_\infty}{x^2}(1+\Et^\infty_{11}x)\left[\frac{1}{1+H^\infty_{20}}-1+(\Dt^\infty_{11}+\Dt^\infty_{30}){x}\right]-a_\infty\Et^\infty_{11}(\Dt^\infty_{11}+\Dt^\infty_{30})\\
&+ & \frac{a_\infty(G^\infty_1-\Et^\infty_{11}{x})-2xG^\infty_2}{x^2(1+H^\infty_{20})}
\eee
Observe that
$H^\infty_{20}-(\Dt^\infty_{11}+\Dt^\infty_{30}){x}$ and $G^\infty_1-\Et^\infty_{11}{x}$ are polynomials in $x$  starting with $x^2$, while $G^\infty_2$ 
is a polynomial beginning with $x$. The function $\xi(x)$ is then holomorphic in a neighborhood of $x=0$. We have
$$
\eqref{eq: Linear Problem}\Leftrightarrow \Psit'+\left[\frac{a_\infty}{x^2}+\frac{\nu_\infty+2}{x}+\xi(x)\right]\Psit=-\frac{F}{x(1+H^\infty_{20})}.
$$
Define
$$
M(x)=e^{-\int_0^x \xi(y)dy}
$$
a holomorphic function in a neighborhood of $x=0$, and introduce the change of variables $$\Psit(x)=M(x)\Phi(x), \ \ G(x)=-\frac{F(x)}{M(x)(1+H^\infty_{20}(x))}.$$
We have obtained the conjugation formula:
\be
\label{eq: Conjugated Linear Problem}
\eqref{eq: Linear Problem}\Leftrightarrow \Phi'+\left[\frac{a_\infty}{x^2}+\frac{\nu_\infty+2}{x}\right]\Phi=\frac{G}{x}.
\ee

\noindent{\bf step 2} Conjugation for the nonlinear problem. The equation \eqref{eq: Formal Limit Equation} is in the form \eqref{eq: Linear Problem}
$$\left[x+xH^\infty_{20}\right](-x)\Psit'+ \left[-2x(1+H^\infty_{20})-a_\infty(1+G^\infty_1)+2xG^\infty_2\right]\Psit =x(F_0+F)$$ 
for the source term
$$F_0=a_\infty H^\infty_{10}-2(1+H^\infty_{20})$$ and the nonlinear term
\bee
F& = & \frac{a_\infty\NLt^\infty_{10}}{x}-2\NLt^\infty_{20}+\left[G^\infty_2\Psi+\NLt^\infty_{20}\right](2\Psi+x\Psi')
\eee
We conjugate this using \eqref{eq: Conjugated Linear Problem}:
$$
\left|\begin{array}{l}
\Psit(x)=M(x)\Phi(x), \ \ G(x)=-\frac{F(x)+F_0}{M(x)(1+H^\infty_{20}(x))} \\
x^2\Phi'+\left[a_\infty+(\nu_\infty+2)x\right]\Phi= xG
\end{array}\right.
$$
We express the nonlinearity in terms of $\Phi$ and obtain a representation:
$$
G=\xi_0+{\sum_{j=2}^4\xi_j\Phi^j+(x\Phi')\sum_{j=1}^3(\xit_j\Phi^j)}
$$
where $(\xi_j,\xit_j)$ are explicit holomorphic functions in a neighborhood of the origin. We have obtained the equivalent nonlinear problem:
\be
\label{eq: Conjugated Nonlinear Problem}
x^2\Phi'+\left[a_\infty+(\nu_\infty+2)x\right]\Phi=x\left[\xi_0+{\sum_{j=2}^4\xi_j\Phi^j+(x\Phi')\sum_{j=1}^3(\xit_j\Phi^j)}\right].
\ee
A direct computation shows $\Phi(0)=0$ so for $\Theta=\frac{\Phi}{x}$,
\bee
\nonumber \eqref{eq: Conjugated Nonlinear Problem} &\Leftrightarrow &x^2\Theta'+[a_\infty+(\nu_\infty+3)x]\Theta=\xi_0+{\sum_{j=2}^4\xi_jx^j\Theta^j+x(x\Theta')\sum_{j=1}^3(\xit_jx^j\Theta^j)+\sum_{j=1}^3\xit_j(x\Theta)^{j+1}}\\
&\Leftrightarrow &x^2\Theta'+[a_\infty+(\nu_\infty+3)x]\Theta=\mu_0+x\left[x\sum_{j=1}^4\mu_j\Theta^j+x^2(x\Theta')\sum_{j=1}^{4}\nu_j\Theta^j\right]
\eee
where $\mu_{{j}},\nu_j$ are holomorphic functions of $x$ in a neighbourhood of $0$, and \eqref{eq: Conjugated Limit Problem} is proved.
\end{proof}

\subsection{The nonlinear induction relation}

The uniqueness of a local $C^\infty$ solution to \eqref{eq: Conjugated Limit Problem}, and thus \eqref{eq: Formal Limit Equation}, follows from an elementary fixed point argument which is left to the reader. We let $$\mu_{jk}=\frac{\mu_j^{(k)}(0)}{k!}, \ \ \nu_{jk}=\frac{\nu_j^{(k)}(0)}{k!},\ \ \theta_k=\frac{\Theta^{(k)}(0)}{k!}, \ \ \phi_k=\frac{\Phi^{(k)}(0)}{k!}$$ and claim the following fundamental nonlinear bound.

\begin{lemma}[Bound on the limiting sequence]
For some large enough universal constant $c(\nu_\infty,a_\infty)>0$:
\be
\label{eq: phi_k Bound}
|\phi_k|\leq c(\nu_\infty,a_\infty)\frac{\Gamma(k+\nu_\infty+2)}{a_\infty^k}, \ \ \forall k\ge 1.
\ee
Also, we have
\be
\label{eq: Nonlinear Bound on Limiting Sequence}
\sum_{k=0}^{+\infty}\left|\frac{a_\infty^kg_k}{\Gamma(\nu_\infty+k+3)}\right|\leq c(\nu_\infty,a_\infty). 
\ee
where $g_k$ is defined as
\be
\label{eq: Expansion of Theta and G}
\left|\begin{array}{l}
\Theta=\sum_{k=0}^{+\infty}\theta_kx^k\\
 G=\mu_0+x\left[x\sum_{j=1}^4\mu_j\Theta^j+x^2(x\Theta')\sum_{j=1}^4\nu_j\Theta^j\right]=\sum_{k=0}^{+\infty}g_kx^k.
 \end{array}\right.
 \ee
\end{lemma}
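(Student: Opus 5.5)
We expand \eqref{eq: Conjugated Limit Problem} in powers of $x$. Write $\Theta=\sum_k\theta_kx^k$ and $G=\sum_k g_kx^k$ as in \eqref{eq: Expansion of Theta and G}. The coefficient of $x^k$ gives the linear recursion
$$
a_\infty\theta_k=g_k-(k-1+\nu_\infty+3)\theta_{k-1}=g_k-(k+\nu_\infty+2)\theta_{k-1},
$$
with $\theta_0=\mu_0/a_\infty$. The structure of $G$ is crucial here. Every nonlinear term carries at least a factor $x^2$, and the term containing $\Theta'$ carries $x^3\Theta'$. Hence $g_k$ depends only on $\theta_0,\dots,\theta_{k-2}$ (resp.\ on $j\theta_j$ with $j\le k-3$), and the recursion is explicit.

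\textbf{Renormalization.} Set
$$
\beta_k=\frac{a_\infty^k\theta_k}{\Gamma(k+\nu_\infty+3)}\,(-1)^k.
$$
Dividing the recursion by $\Gamma(k+\nu_\infty+3)/a_\infty^{k+1}$ turns it into a telescoping identity
$$
\beta_k=\beta_{k-1}+(-1)^k\frac{a_\infty^{k}g_k}{a_\infty\Gamma(k+\nu_\infty+3)},
$$
so that
$$
\beta_k=\beta_0+\sum_{m\le k}(-1)^m\frac{a_\infty^m g_m}{a_\infty\Gamma(m+\nu_\infty+3)}.
$$
Both claims then reduce to \eqref{eq: Nonlinear Bound on Limiting Sequence}: summability of $a_\infty^kg_k/\Gamma(k+\nu_\infty+3)$ gives $|\theta_k|\lesssim\Gamma(k+\nu_\infty+3)/a_\infty^k$. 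Since $\phi_k=\theta_{k-1}$, this yields $|\phi_k|\lesssim\Gamma(k+\nu_\infty+2)/a_\infty^{k-1}$, i.e.\ \eqref{eq: phi_k Bound} after absorbing $a_\infty$ into the constant. The limit $\lim\beta_k$ is precisely the quantity that becomes $S_\infty$.

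\textbf{Bootstrap.} We argue by strong induction on the hypothesis $|\theta_j|\le C\,\Gamma(j+\nu_\infty+3)a_\infty^{-j}$ for $j<k$, and estimate $g_k$. A product $\mu_{j,m_0}\theta_{m_1}\cdots\theta_{m_j}$ with $m_0+\dots+m_j=k-2$ is bounded by
$$
C^j\rho^{-m_0}\prod_i\Gamma(m_i+\nu_\infty+3)\,a_\infty^{-(k-2-m_0)},
$$
where $\rho$ is a radius of holomorphy of the $\mu_j,\nu_j$. The key combinatorial fact, as in nonlinear Maillet-type theorems, is log-convexity of $\Gamma$: for $\sum m_i=N$,
$$
\prod_i\Gamma(m_i+c)\lesssim\Gamma(N+c)\,\Gamma(c)^{j-1}.
$$
Moreover, the sum over decompositions, with weights $\Gamma(m_i+c)/\Gamma(N+c)$, is bounded by a convergent series dominated by the terms in which one index carries almost everything. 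Since the total degree is $k-2$ (or $k-3$ with an extra factor $j\le k$ from $\Theta'$ in the $\nu_j$ terms), we gain
$$
\frac{\Gamma(k-2+\nu_\infty+3)\,k}{\Gamma(k+\nu_\infty+3)}\sim k^{-1}.
$$
This gives
$$
\frac{a_\infty^k|g_k|}{\Gamma(k+\nu_\infty+3)}\lesssim \frac{C'(C)}{k^2}
$$
once $k$ is large. The $\nu_j$ term gives $k\cdot\Gamma(k+\nu_\infty+2)/\Gamma(k+\nu_\infty+3)\cdot k^{-2}$ overall, still summable as $k^{-2}$ after accounting for the $x^3$ factor.

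\textbf{Main obstacle: closing the bootstrap.} The difficulty is that $C'$ depends polynomially on $C$ through the powers $C^j$. To close, I would use the extra smallness: the terms with $j\ge2$ involve at least two nontrivial $\theta$'s and are only large when one index is near $k-2$. I would split into the terms where one index carries $\ge k-2-k_0$ (controlled linearly by $C\cdot\epsilon(k_0)$-type factors, with the other factors fixed low-order coefficients) and the rest (gaining $k^{-1}$ powers beyond $k^{-2}$). Then, for $k\ge k_0$ large, the increment in $\beta_k$ is $\le \delta C/k^2$. Choosing $C$ larger than the finitely many initial $|\beta_k|$, $k\le k_0$, plus $\sum_m\delta C/m^2\le C/2$, closes the induction and yields \eqref{eq: Nonlinear Bound on Limiting Sequence}.
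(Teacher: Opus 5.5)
\textbf{There is a genuine gap in how you close the bootstrap.} Your skeleton is right and matches the argument the paper imports from \cite{MRRSprofile}: the recursion $a_\infty\theta_k+(k+\nu_\infty+2)\theta_{k-1}=g_k$ with $g_k$ depending only on $\theta_0,\dots,\theta_{k-2}$; the renormalisation by $\Gamma(k+\nu_\infty+3)/a_\infty^k$ that makes it telescope (this is where the series for $S_\infty$ comes from); and the $k^{-2}$ gain from the $x^2$ prefactor via $\Gamma$-convolution bounds.

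The failure is at the step you flag yourself. Under the hypothesis $|\beta_m|\le C$ for $m<k$, the increment $|\beta_k-\beta_{k-1}|$ has two parts:
\begin{itemize}
\item a linear part $k^{-2}(A+BC)$, where $B=O(1)$ comes from $\theta_0$ and is not small, though that is harmless after summation;
\item superlinear contributions, of size roughly $C_1(C^2+\dots+C^5)\,k^{-3}$. These come from configurations with at least two non-trivial factors, and from your ``fixed low-order'' factors, which you only know to be $\le C$; their actual size as a function of $k_0$ is exactly what is unknown.
\end{itemize}
For the increment to be $\le\delta C k^{-2}$ you need $k\ge k_0$ with $k_0\gtrsim C^4$, so $k_0$ depends on $C$. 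But you also need $C\ge 2\max_{k\le k_0}|\beta_k|$, and the recursion says nothing about how this maximum grows with $k_0$. The two choices cannot both be made.

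Refining the split does not remove the problem. With a larger cutoff the high--high part gains $\Gamma(L+\nu_\infty+3)k^{-L}$, or even $\binom{2k_0}{k_0}^{-1}$. This only weakens the requirement to $\max_{k\le k_0}|\beta_k|\lesssim k_0^{p}$ or $\lesssim q_*^{k_0}$ for some explicit $p,q_*$. The absolute-value inequality you derive remains compatible with super-polynomial growth once the first $k_0$ coefficients exceed such a threshold.

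\textbf{What is missing} is an a priori input that does not come from the triangle-inequality recursion. One way to supply it:
\begin{itemize}
\item First prove a non-sharp Maillet bound with the weight $W_k=\Gamma(k+\nu_\infty+3)\rho^{-k}$, $\rho\ll a_\infty$. For $t_k=\theta_k/W_k$ the recursion becomes $a_\infty t_k=g_k/W_k-\rho\,t_{k-1}$, which is contracting. The $x^2$ prefactor puts a factor $\rho^2$ in front of every nonlinear term, so this bootstrap closes with $k$-independent constants.
\item Then raise $\rho$ towards $a_\infty$ in finitely many steps, with ratio less than, say, $2$ per step. At each step, split $\Theta$ into its exact Taylor polynomial of degree $k_0$ plus a tail. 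High--high $\Gamma$-convolutions of tail terms gain about $\binom{2k_0}{k_0}^{-1}\approx 4^{-k_0}$, which beats the geometric size $(\rho_{\mathrm{new}}/\rho_{\mathrm{old}})^{k_0}$ of the low modes.
\item Once you know $|\beta_k|\le C_0q^k$ with $q$ close enough to $1$, your final argument does close. Choose $k_0$ from $C_0$, $q$ and $B$; then take $C\approx 2C_0q^{k_0}$. The low factors in the linear part now have $C$-independent bounds, and the high--high terms are $O(C^5 4^{-k_0})$ up to polynomial factors, which is much smaller than $C$ for $k_0$ large.
\end{itemize}
Alternatively, pass to $\hat\Theta(\xi)=\sum_k\theta_k\xi^k/\Gamma(k+\nu_\infty+3)$. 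In this variable $x^2\partial_x+a_\infty+(\nu_\infty+3)x$ becomes multiplication by $a_\infty+\xi$, and you can obtain analyticity of $\hat\Theta$ in $|\xi|<a_\infty$ from a convolution fixed point. Either way, some such global step is needed before the telescoping argument yields the summability of $a_\infty^k g_k/\Gamma(k+\nu_\infty+3)$.
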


\begin{proof}
The proof is verbatim the same as that of Lemma 5.4 in \cite{MRRSprofile}.
\end{proof}

\subsection{Proof of Proposition \ref{prop: Bound for Formal Limit Problem}}

In view of \eqref{eq: phi_k Bound} and the conjugation formula \eqref{eq: Conjugation Variables}, the bound \eqref{eq: psit Bound} directly follows from the following continuity lemma.

\begin{lemma}[Continuity]
\label{cneinveineonevni}
Let $M(x)$ be holomorphic in a neighborhood of $0$, then there exists $c_M>0$ such that for all functions $\Phi$ which are $C^\infty$ at the origin, 
\be
\label{vnoenvonveneovn}
\sup_{k\ge 0}\frac{a_\infty^k|(M\Phi)_k|}{\Gamma(k+\nu_\infty+2)}\leq c_M\sup_{k\ge 0}\frac{a_\infty^k|\phi_k|}{\Gamma(k+\nu_\infty+2)}.
\ee
\end{lemma}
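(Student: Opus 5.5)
The plan is to prove the estimate directly from the Cauchy product formula. Write $m_j=\frac{M^{(j)}(0)}{j!}$, so that
$$(M\Phi)_k=\sum_{j=0}^k m_j\phi_{k-j}.$$
Since $M$ is holomorphic on some disc $|x|<R_M$, Cauchy's estimates give $|m_j|\le C_M \rho^{-j}$ for any fixed $\rho<R_M$. Set
$$N:=\sup_{k\ge0}\frac{a_\infty^k|\phi_k|}{\Gamma(k+\nu_\infty+2)}.$$
We may assume $N<\infty$; otherwise there is nothing to prove. Then
$$
\frac{a_\infty^k|(M\Phi)_k|}{\Gamma(k+\nu_\infty+2)}\le N\, C_M\sum_{j=0}^k \left(\frac{a_\infty}{\rho}\right)^j\frac{\Gamma(k-j+\nu_\infty+2)}{\Gamma(k+\nu_\infty+2)}.
$$

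The key step is to bound the Gamma ratio uniformly in $k$. Since $\nu_\infty+2>1$ and $\Gamma$ is increasing on $[2,\infty)$, we can compare term by term, as long as all arguments stay in the monotone region. The cleanest route is
$$\frac{\Gamma(k-j+\nu_\infty+2)}{\Gamma(k+\nu_\infty+2)}=\prod_{i=1}^{j}\frac{1}{k-j+i+\nu_\infty+1}\le \frac{1}{\Gamma(j+\nu_\infty+2)/\Gamma(\nu_\infty+2)}\le \frac{C_{\nu_\infty}}{j!}.$$
Here we use that each factor $k-j+i+\nu_\infty+1\ge i+\nu_\infty+1\ge i$, which holds because $\nu_\infty>0$ by \eqref{eq: Positivity of nu_infty}.

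Combining the two bounds, the sum is at most
$$C_{\nu_\infty}\sum_{j\ge0}\frac{(a_\infty/\rho)^j}{j!}=C_{\nu_\infty}e^{a_\infty/\rho},$$
which is finite for any fixed $\rho>0$ and independent of $k$. Taking the supremum over $k$ then gives \eqref{vnoenvonveneovn} with $c_M=C_M C_{\nu_\infty}e^{a_\infty/\rho}$.

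The only mildly delicate point is the Gamma-ratio bound, and it is resolved by the factorial decay $1/j!$: this decay beats any geometric growth $(a_\infty/\rho)^j$ coming from the finite radius of holomorphy of $M$. So no smallness of $a_\infty$ relative to $R_M$ is needed. Finally, we apply the lemma with $M(x)=e^{-\int_0^x\xi}$ from Lemma \ref{lem: Conjugation}, together with \eqref{eq: phi_k Bound}, to obtain \eqref{eq: psit Bound}. The finitely many low-order terms, including $k=0$, are absorbed into the constant.
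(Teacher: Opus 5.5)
Your proof is correct. It is essentially the argument the paper defers to (Lemma 5.5 of \cite{MRRSprofile}): expand $(M\Phi)_k$ as a Cauchy product, bound the coefficients of $M$ by Cauchy estimates $|m_j|\le C_M\rho^{-j}$, and bound the ratio $\Gamma(k-j+\nu_\infty+2)/\Gamma(k+\nu_\infty+2)$ uniformly in $k$. Your exact product identity gives this ratio $\le 1/j!$; it only needs $\nu_\infty\ge -1$, so the remark about monotonicity of $\Gamma$ is superfluous, and it closes the sum directly with constant $C_M e^{a_\infty/\rho}$.
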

\begin{proof}
The proof is verbatim the same as that of Lemma 5.5 in \cite{MRRSprofile}.
\end{proof}

\section{Bounding the Taylor series for \eqref{eq: Quasilinear Equation}}
\label{sec: General Problem}

We now start the study of the full problem \eqref{eq: Quasilinear Equation}. We first aim at obtaining uniform in $b$ bounds on the coefficients of the Taylor series as well as the convergence to the limiting problem as $b\to 0$. We recall the notation $$\gamma-1=K+\alpha_\gamma, \ \ K\in \Bbb N^*, \ \ 0<\alpha_\gamma<1.$$

\subsection{$b$ dependent conjugation}

We conjugate the $b$ dependent problem \eqref{eq: Quasilinear Equation} to an explicitly solvable (at the linear level) $b$-dependent equation.

\begin{lemma}[Linear conjugation]
\label{vbjbvbvebi} Let $\xi(x)$ be as in \eqref{eq: xi}. There exist a function $\xit_b(x)$ which is holomorphic in a $b$-independent neighborhood of $x=0$ such that the conjugation
\be
\label{defphi}
\Psi(u)=M_b(x)\Phi(u),\ \ G=-\frac{F}{M_b(1+H_{20})}
\ee
with 
\be
\label{deflkenrnal}
M_b(x)=e^{\xit_b(x)}
\ee
and 
\be
\label{defnub}
\left|\begin{array}{l}
\nu_b=\nu+\xi(b)\\
\nu=-\gamma b(\Dt_{11}+\Dt_{30}-\Et_{11})
\end{array}\right.
\ee
maps 
\be
\label{odetoinvertbis}\left[1+H_{20}\right]u(1-u)\Psi'+\left[(1-2u)(1+H_{20})-\gamma(1+G_1)+2uG_2\right]\Psi=uF
\ee
to
\be
\label{conjuguatedflrow}
\Phi'-\left[\frac{\gamma-1}{u}+\frac{\gamma+\nu_b+1}{1-u}\right]\Phi=-\frac{G}{1-u}.
\ee
\end{lemma}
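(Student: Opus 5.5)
The plan is to divide \eqref{odetoinvertbis} by the leading coefficient and match the singular part of the zeroth-order coefficient with the partial fractions appearing in \eqref{conjuguatedflrow}. Dividing by $(1+H_{20})u(1-u)$ gives
$$
\Psi'+\left[\frac{1-2u}{u(1-u)}-\frac{\gamma(1+G_1)-2uG_2}{u(1-u)(1+H_{20})}\right]\Psi=\frac{F}{(1-u)(1+H_{20})}.
$$
We write $\frac{1-2u}{u(1-u)}=\frac1u-\frac1{1-u}$ and expand the second term in partial fractions in $u$. The point is that $G_1,G_2$ are functions of $x=bu$ vanishing at $x=0$. From \eqref{eq: Nonlinear Terms in F_2}, $H_{20}$ (the $\Phit$-independent part, i.e. $H_2$) is $b$ times a polynomial in $u$ vanishing at $u=0$, and it is naturally a function of $(b,x)$. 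Hence the coefficient has the structure
$$
-\frac{\gamma-1}{u}-\frac{A_b}{1-u}+R_b(u),
$$
where $A_b$ is computed by evaluating the bracket at $u=1$, i.e. at $x=b$. Since $\gamma\sim 1/b$, we have $\gamma G_1(bu)$ and $\gamma H_{20}$ of size $O(1)$, and the residue at $u=1$ picks up $\gamma+1$ plus an $O(1)$ correction.

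The key step is to show that $A_b=\gamma+\nu_b+1$ with $\nu_b=\nu+\xi(b)$, and that the regular remainder is exactly $R_b=-\partial_u \xit_b(bu)$ for some $\xit_b$ holomorphic in $x$ on a $b$-independent disk. To do this, the bracket should be rewritten, as in step 1 of Lemma \ref{lem: Conjugation}, as
$$
\frac{\gamma(1+\Et_{11}x)}{1+H_{20}}+\frac{\gamma(G_1-\Et_{11}x)-2xG_2}{1+H_{20}},
$$
and then expanded around $1-(\Dt_{11}+\Dt_{30})x$. The linear-in-$x$ part produces $\gamma b(\Et_{11}-\Dt_{11}-\Dt_{30})=\nu$ once we use $x=bu$ and $\frac{u}{u(1-u)}=\frac1{1-u}$. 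The higher-order pieces are $\gamma x^2\cdot(\text{holomorphic})=a\,b\,u^2(\dots)$, and dividing by $u(1-u)$ leaves a term $\frac{bu\,f(bu)}{1-u}$. Writing $bu f(bu)=b f(b)+[buf(bu)-bf(b)]$, the first part gives the extra residue, which is the function $\xi$ of \eqref{eq: xi} evaluated at $b$ (up to the normalization $a=\gamma b$). The second part is divisible by $(1-u)$ and yields $b\,g(b,bu)$, a holomorphic function of $x$. We then define $\xit_b(x)=-\int_0^x g(b,y)\,dy$ (with the appropriate sign convention), set $M_b=e^{\xit_b}$, substitute $\Psi=M_b\Phi$, and divide the right-hand side by $M_b(1+H_{20})$. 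This produces \eqref{conjuguatedflrow} with $G=-F/(M_b(1+H_{20}))$, after multiplying through by $-1$ to match the stated sign convention.

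The main obstacle is the uniformity in $b$. We must verify that $g(b,\cdot)$ is holomorphic on a disk $|x|<\delta$ with $\delta$ independent of $b$, and that it is uniformly bounded, even though $\gamma\sim b^{-1}$. This reduces to two facts: every occurrence of $\gamma$ is paired with a factor $x$ or $b$ (as $\gamma b=a$ is bounded), and $1+H_{20}$ stays away from $0$ for $|x|<\delta$. We also need the coefficients $\Dt_{ij},\Et_{ij}$ to be smooth in $b$ up to $b=0$, which holds by Appendix \ref{app: Slopes and Eigenvalues}. With these in hand, the identity $\nu_b=\nu+\xi(b)$ with $\xi$ from the formal-limit conjugation is a direct bookkeeping check.
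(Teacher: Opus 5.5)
Your proposal is correct and follows essentially the paper's route: partial fractions extract $\frac{\gamma-1}{u}$ and $\frac{\gamma+\nu+1}{1-u}$ (with $\nu$ coming from the linear coefficient $\Et_{11}-\Dt_{11}-\Dt_{30}$), the remaining regular part is $\frac{\xi(x)}{1-u}$ with $\xi(0)=0$, and the splitting $\frac{\xi(b)}{1-u}+\frac{\xi(x)-\xi(b)}{1-u}$ produces both $\nu_b$ and a holomorphic term whose primitive is $\tilde{\xi}_b$. Your divided difference $-b\,\frac{\xi(x)-\xi(b)}{x-b}$ is exactly what the paper expands as $-\sum_k \xi_k b^k\sum_{j<k}u^j$ to obtain $|\tilde{\xi}_{b,j}|\le C^j/j$ uniformly in $b$.

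A few slips do not affect the argument:
\begin{itemize}
\item The $\xi$ of \eqref{eq: xi} is precisely this $b$-dependent remainder, not the $\xi$ of Lemma \ref{lem: Conjugation}, which is normalized differently in $x$.
\item The term in \eqref{odetoinvertbis} is $2uG_2$, not $2xG_2$.
\item $H_{20}$ denotes $H_{2,0}(x)$, not the full $H_2$.
\item No multiplication by $-1$ is needed, since the sign is absorbed into the definition of $G$.
\end{itemize}
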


\begin{remark} We see immediately the fundamental difference between \eqref{conjuguatedflrow} and \eqref{eq: Conjugated Linear Problem}: for the $b$-dependent problem, the point $u=0$ is a regular singular point, while it is a singular singular point for $b=0$. 
As a result, we will see a change in the behavior of the Taylor series for large frequencies, which will be reflected in the nature  of the weight $w_{\gamma,\nu}$, see Lemma \ref{lemmanvnowvowv}.
\end{remark}

\begin{proof}[Proof of Lemma \ref{vbjbvbvebi}] This is a direct computation.\\

\noindent{\bf step 1} Linear conjugation. We rewrite \eqref{odetoinvertbis}
\be
\label{vennoenoneo}
\Psi'-\frac{\zeta'_b}{\zeta_b}\Psi=\frac{ F}{(1-u)(1+H_{2,0})}
\ee 
with
\bee
&&\frac{\zeta'_b}{\zeta_b}=\frac{\gamma(1+G_1)}{u(1-u)(1+H_{2,0})}-\frac{2G_2}{(1-u)(1+H_{2,0})}-\frac{1-2u}{u(1-u)}\\
& = & \frac{\gamma(1+G_1)}{u(1-u)(1+H_{2,0})}-\frac{2G_2}{(1-u)(1+H_{2,0})}-\frac{1}{u}+\frac{1}{1-u}
\eee
From \eqref{eq: Nonlinear Terms in F_1}:
$$
\left|\begin{array}{l}
G_1(x)=\Et_{11}x+x\tilde{G}_1\\
 \tilde{G}_1=-(2\Et_{02}+\Et_{21})x+2\Et_{12}x^2-3\Et_{03}x^3
 \end{array}\right.
$$
and 
$$\frac{\gamma(1+G_1)}{u(1-u)}=\frac{\gamma(1+\Et_{11}bu+bu\tilde{G}_1)}{u(1-u)}=\frac{\gamma}{u(1-u)}+\frac{\Et_{11}b\gamma}{1-u}+\frac{\gamma b \tilde{G}_1}{1-u}$$ yields
$$\frac{\zeta'_b}{\zeta_b}=\left[\frac{\gamma}{u(1-u)}+\frac{\Et_{11}b\gamma}{1-u}\right]\frac{1}{1+H_{2,0}}+\frac{\gamma b \tilde{G}_1-2G_2}{(1-u)(1+H_{2,0})}-\frac{1}{u}+\frac{1}{1-u}
.$$
We recall that $x=bu$ and rewrite
\bee
&&\frac{\gamma}{u(1-u)(1+H_{2,0})}=\frac{\gamma}{u(1-u)}\left[1-(\Dt_{11}+\Dt_{30})bu+\left(\frac{1}{1+H_{2,0}}-1+(\Dt_{11}+\Dt_{30})x\right)\right]\\
& =& \gamma\left(\frac{1}{u}+\frac{1}{1-u}\right)-\frac{\gamma b(\Dt_{11}+\Dt_{30})}{1-u}+ \frac{\gamma b}{1-u}\left[\frac{1}{x}\left(\frac{1}{1+H_{2,0}}-1+(\Dt_{11}+\Dt_{30})x\right)\right]
\eee
and 
\bee
\frac{\Et_{11}b\gamma}{(1-u)(1+H_{2,0})}=\frac{\Et_{11}b\gamma}{1-u}+\frac{\Et_{11}b\gamma}{1-u}\left[\frac{1}{1+H_{2,0}}-1\right].
\eee
We have therefore obtained the formula:
$$
\frac{\zeta'_b}{\zeta_b}= \gamma\left(\frac{1}{u}+\frac{1}{1-u}\right)-\frac{1}{u}+\frac{1}{1-u}
-\frac{\gamma b(\Dt_{11}+\Dt_{30})}{1-u}+\frac{\Et_{11}b\gamma}{1-u}+\frac{\xi(x)}{1-u}
$$
with
\be
\label{eq: xi}
\xi(x)=\frac{\gamma b \tilde{G}_1-2G_2}{{1+H_{2,0}}}+   \frac{\gamma b}{x}\left(\frac{1}{1+H_{2,0}}-1+(\Dt_{11}+\Dt_{30})x\right)+\gamma b\Et_{11}\left(\frac{1}{1+H_{2,0}}-1\right).
\ee
Then, from  \eqref{eq: a and nu}, we rewrite:
\be
\label{expressionzetab}
\frac{\zeta'_b}{\zeta_b}= \frac{\gamma-1}{u}+\frac{\gamma+\nu+1}{1-u}+\frac{\xi(x)}{1-u}.
\ee

\noindent{\bf step 2} Computation of the kernel. We now use the analyticity of $\xi$ in $|x|<\frac{1}{C}$ and $\xi(0)=0$ to compute:
\bee
&&\frac{\xi(x)}{1-u}=\frac{1}{1-u}\sum_{k=1}^{+\infty} \xi_kx^k=\frac{1}{1-u}\sum_{k=1}^{+\infty} \xi_kb^ku^k=  \frac{\sum_{k=1}^{+\infty} \xi_kb^k}{1-u}-\sum_{k=1}^{+\infty} \xi_kb^k\frac{1-u^k}{1-u}\\
& = & \frac{\xi(b)}{1-u}-\sum_{k=1}^{+\infty} \xi_kb^k\sum_{j=0}^{k-1}u^j.
\eee
Let $\nu_b$ be given by \eqref{defnub}, we have obtained:
$$\frac{\zeta'_b}{\zeta_b}=\frac{\gamma-1}{u}+\frac{\gamma+\nu_b+1}{1-u}-\sum_{k=1}^{+\infty} \xi_kb^k\sum_{j=0}^{k-1}u^j.$$
We compute a primitive of the remaining term:
\bee
\sum_{k=1}^{+\infty}\xi_kb^k\sum_{j=0}^{k-1}\frac{u^{j+1}}{j+1}=\sum_{j=0}^{+\infty}\frac{u^{j+1}}{j+1}\sum_{k=j+1}^{+\infty}\xi_kb^k=\sum_{j=1}^{+\infty}\frac{u^{j}}{j}\sum_{k=j}^{+\infty}\xi_kb^k=\sum_{j=1}^{+\infty}\tilde{\xi}_{b,j}b^ju^j=\sum_{j=1}^{+\infty}\tilde{\xi}_{b,j}x^j
\eee
with $$\tilde{\xi}_{b,j}=\frac{1}{j}\sum_{k=j}^{+\infty}\xi_kb^{k-j}=\frac{1}{j}\sum_{k=0}^{+\infty}\xi_{k+j}b^k.$$ The holomorphic bound $|\xi_j|\le C^j$ ensures
\be
\label{boundxit}
|\xit_{b,j}|\leq\frac1j\sum_{k=0}^{+\infty}b^kC^{k+j}\leq \frac{C^j}{j}\frac{1}{1-bC}\leq\frac{C^j}{j}
\ee
for $0<b<b^*$ universal small enough. We have therefore obtained the formula
\be
\label{reformulationkernela}
\frac{\zeta'_b}{\zeta_b}=\frac{\gamma-1}{u}+\frac{\gamma+\nu_b+1}{1-u}+\frac{d}{du}\xit_b(x)
\ee
where 
\be
\label{vnovoeonven}
\xit_b(x)={-}\sum_{j=1}^{+\infty}\tilde{\xi}_{b,j}x^j, \ \ |\xit_{b,j}|\le C^j
\ee
 is holomorphic in a neighborhood of $x=0$ independent of $b$.\\

\noindent{\bf step 3} Conclusion. From \eqref{vennoenoneo}, \eqref{reformulationkernela}:
$$\Psi'-\left[\frac{\gamma-1}{u}+\frac{\gamma+\nu_b+1}{1-u}+{\frac{d}{du}}\xit_b(x)\right]\Psi=\frac{ F}{(1-u)(1+H_{20})}$$ 
and \eqref{conjuguatedflrow} follows.
\end{proof}

\subsection{The $b$-dependent discrete weight}

We study the discrete weight associated to \eqref{conjuguatedflrow}.

\begin{lemma}[Properties of the weight]
\label{lemmanvnowvowv}
Let 
\be
\label{defweight}
w_{\gamma,\nu}(k)=\frac{\Gamma(\gamma-1-k)\Gamma(\nu+k+2)}{\Gamma(\gamma-1)},\ \ k\in \Bbb N.
\ee
then:\\ 
\begin{enumerate}
\item  value for $k\ge K$: $\forall j\ge 0$, 
\be
\label{cnineneonevnev:0}
w_{\gamma,\nu_b}(K+j)=(-1)^j\Gamma(\alpha_\gamma)\Gamma(1-\alpha_\gamma)\frac{\Gamma(K+j+\nu_b+2)}{\Gamma(K+\alpha_\gamma)\Gamma(j+1-\alpha_\gamma)}.
\ee
\item $\nu$ dependence:
\be
\label{cneioneineonoen}
\forall  k\ge 0, \ \ \frac{\gamma w_{\gamma,\nu_b}(k+1)}{w_{\gamma-1,\nu_b+1}(k)}=\frac{\gamma}{\gamma-2}
\ee
\item induction property:
\be
\label{inucnoitnwrmwjaggmamk}
\forall k\ge 1, \ \ (\gamma-k-2)w_{\gamma-1,\nu_b+1}(k)-(k+\nu_b+2)w_{\gamma-1,\nu_b+1}(k-1)=0.
\ee
\end{enumerate}
\end{lemma}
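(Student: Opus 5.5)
The three identities are purely algebraic consequences of the functional equation $\Gamma(z+1)=z\Gamma(z)$ and the reflection formula $\Gamma(z)\Gamma(1-z)=\pi/\sin(\pi z)$. The plan is to substitute the definition \eqref{defweight} and simplify, after first checking that every Gamma factor involved is finite.

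For well-definedness, recall $\gamma-1=K+\alpha_\gamma$ with $0<\alpha_\gamma<1$. Hence $\gamma-1-k$, $\gamma-2-k$ and $\gamma-1$ are never nonpositive integers. Also $\nu_b=\nu_\infty+o_{b\to0}(1)$ with $\nu_\infty>0$ by \eqref{eq: Positivity of nu_infty}, so $\nu_b+k+2>0$ for $0<b\ll1$. Thus no argument of a Gamma function in \eqref{defweight}, or in the shifted weight $w_{\gamma-1,\nu_b+1}$, hits a pole. The same facts show that the factors $\gamma-2$, $\gamma-k-2$ and $k+\nu_b+2$ that we will cancel or divide by are nonzero (or, for $\gamma-k-2$, are used only through the functional equation).

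For (1), put $k=K+j$, so that $\gamma-1-k=\alpha_\gamma-j$. Apply the reflection formula to both $z=\alpha_\gamma-j$ and $z=\alpha_\gamma$, and use $\sin(\pi(\alpha_\gamma-j))=(-1)^j\sin(\pi\alpha_\gamma)$. This gives
$$\Gamma(\alpha_\gamma-j)=(-1)^j\frac{\Gamma(\alpha_\gamma)\Gamma(1-\alpha_\gamma)}{\Gamma(j+1-\alpha_\gamma)}.$$
Substituting into \eqref{defweight} together with $\Gamma(\gamma-1)=\Gamma(K+\alpha_\gamma)$ yields \eqref{cnineneonevnev:0}.

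For (2), write out both weights:
$$w_{\gamma,\nu_b}(k+1)=\frac{\Gamma(\gamma-2-k)\Gamma(\nu_b+k+3)}{\Gamma(\gamma-1)},\qquad w_{\gamma-1,\nu_b+1}(k)=\frac{\Gamma(\gamma-2-k)\Gamma(\nu_b+k+3)}{\Gamma(\gamma-2)}.$$
The numerators agree, so the quotient is $\Gamma(\gamma-2)/\Gamma(\gamma-1)=1/(\gamma-2)$. Multiplying by $\gamma$ gives \eqref{cneioneineonoen}.

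For (3), write
$$w_{\gamma-1,\nu_b+1}(k-1)=\frac{\Gamma(\gamma-1-k)\Gamma(\nu_b+k+2)}{\Gamma(\gamma-2)}.$$
Use $(\gamma-k-2)\Gamma(\gamma-2-k)=\Gamma(\gamma-1-k)$ in the first term and $(k+\nu_b+2)\Gamma(\nu_b+k+2)=\Gamma(\nu_b+k+3)$ in the second. Both terms in \eqref{inucnoitnwrmwjaggmamk} then equal $\Gamma(\gamma-1-k)\Gamma(\nu_b+k+3)/\Gamma(\gamma-2)$, so their difference vanishes.

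There is no genuine obstacle here. The only point requiring care is the sign bookkeeping $(-1)^j$ in the reflection step for (1).
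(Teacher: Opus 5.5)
Your proof is correct and follows the paper's: (2) and (3) are the same direct Gamma-function cancellations, and (1) is the substitution $\gamma-1-K-j=\alpha_\gamma-j$ into \eqref{defweight}. The only difference is in (1): the paper gets $\Gamma(\alpha_\gamma-j)=(-1)^j\Gamma(\alpha_\gamma)\Gamma(1-\alpha_\gamma)/\Gamma(j+1-\alpha_\gamma)$ from the appendix identity \eqref{formulafdebasenegatif}, proved by iterating $\Gamma(z+1)=z\Gamma(z)$, while you use the reflection formula, and either route is fine.
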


\begin{proof} {\eqref{cnineneonevnev:0}} directly follows from \eqref{defweight}, \eqref{formulafdebasenegatif}.
\\
We then compute for {$k\ge 0$}:
\bee
\frac{\gamma w_{\gamma,\nu_b}(k+1)}{w_{\gamma-1,\nu_b+1}(k)}=\gamma\frac{\Gamma(\gamma-1-(k+1))\Gamma(k+1+\nu_b+2)\Gamma(\gamma-2)}{\Gamma(\gamma-1)\Gamma(\gamma-2-k)\Gamma(k+\nu_b+3)}{=\frac{\gamma \Gamma(\gamma-2)}{\Gamma(\gamma-1)}}=\frac{\gamma}{\gamma-2}
\eee
and \eqref{cneioneineonoen} is proved. We now turn to the proof of the induction formula \eqref{inucnoitnwrmwjaggmamk}: for {$k\ge 1$}, 
\bee
&&(\gamma-k-2)w_{\gamma-1,\nu_b+1}(k)-(k+\nu_b+2)w_{\gamma-1,\nu_b+1}(k-1)\\
& = & (\gamma-k-2)\frac{\Gamma(\gamma-2-k)\Gamma(k+\nu_b+1+2)}{\Gamma(\gamma-2)}\\
&-&(k+\nu_b+2)\frac{\Gamma(\gamma-2-(k-1))\Gamma(k-1+\nu_b+1+2)}{\Gamma(\gamma-2)}=0.
\eee
\end{proof}

\subsection{Boundedness of the sequence}

We claim the following $b$-dependent nonlinear bound which, in a certain sense,  is a deformation of
 \eqref{eq: psit Bound}.

\begin{proposition}[$b$-dependent boundedness]
\label{propboundbdependent}
There exist universal constants $c_{\nu,a}>0$ and $0<b^*\ll 1$ such that the following holds for all $0<b<b^*$.
Let $\Psi$ be a solutions of  \eqref{eq: Quasilinear Equation} and define $$\psi_k=\frac{1}{k!}\frac{d^k\Psi}{du^k}(0),$$ then 
\be
\label{bounduniformb}
\forall \,0\le k\le K, \ \ |\psi_k|\leq c_{\nu,a}w_{\gamma,\nu_b}(k).
\ee
Moreover, let $\Psit^\infty$ be the unique local $C^\infty$ solution to the limiting problem \eqref{eq: Formal Limit Equation} and 
$$\psit_k^\infty=\frac{1}{k!}\frac{d^k{\Psit}^\infty}{dx^k}(0),$$ 
then 
\be
\label{eq: psi_k Limit}
\forall k\ge 0, \ \ \lim_{b\to 0 }\frac{\psi_k}{b^k}=\psit_k^\infty,\quad 
\lim_{b\to 0}\frac{g_k}{b^k} = g_k^\infty
\ee
where $g_k$ is the holomorphic expansion \eqref{eq: Holomorphic expansion G} of the nonlinearity $\mathcal G$ as in \eqref{vneivnoneoneonnee} and $g_k^\infty$ from \eqref{eq: Expansion of Theta and G} is the corresponding values from the limiting problem.
\end{proposition}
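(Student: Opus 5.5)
We work in the conjugated variables of Lemma \ref{vbjbvbvebi}, so that the problem becomes the linear operator of \eqref{conjuguatedflrow} with a nonlinear right hand side $G$, and we derive the Taylor induction for $\phi_k=\Phi^{(k)}(0)/k!$. Multiplying \eqref{conjuguatedflrow} by $u(1-u)$ and identifying the coefficients of $u^k$ gives
\[
(k-\gamma+1)\phi_k-(k-1+\nu_b+2)\phi_{k-1}=-g_{k-1},
\]
with the $u^0$ coefficient fixing $\phi_0$ (equivalently $\psi_0$) through $(1-\gamma)\phi_0=0$ up to the source. Hence
\[
(\gamma-1-k)\phi_k=(k+\nu_b+1)\phi_{k-1}+g_{k-1}.
\]
By the induction identity \eqref{inucnoitnwrmwjaggmamk}, the homogeneous part of this recursion is solved exactly by the weight $w_{\gamma,\nu_b}(k)$, up to the index shift encoded in \eqref{cneioneineonoen}. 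Dividing by the weight therefore telescopes:
\[
\frac{\phi_k}{w_{\gamma,\nu_b}(k)}=\frac{\phi_0}{w_{\gamma,\nu_b}(0)}+\sum_{j<k}\frac{c\,g_j}{(\gamma-2-j)\,w_{\gamma,\nu_b}(j+1)}.
\]
Since $k\le K<\gamma-1$, all the factors $\gamma-1-k$ are positive and no resonance occurs. The bound \eqref{bounduniformb} for $\Phi$ thus reduces to a summability estimate $\sum_{j\le K}|g_j|/w_{\gamma-1,\nu_b+1}(j)\lesssim 1$. This is the $b$-analogue of \eqref{eq: Nonlinear Bound on Limiting Sequence}, since $w_{\gamma,\nu}(k)\approx \Gamma(k+\nu+2)\gamma^{-k}=\Gamma(k+\nu+2)b^k/a^k$ when $k\ll\gamma$.

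The main step, and the one we expect to be the main obstacle, is a bootstrap on $k\le K$. We assume $|\phi_j|\le C^* w_{\gamma,\nu_b}(j)$ for $j<k$ and must control $g_{k-1}$, which is a polynomial expression in $\Phi$ and $u\Phi'$ with holomorphic coefficients in $x=bu$, i.e. coefficients $\lesssim C^mb^m$. This requires an algebra property of the weighted norm: convolution estimates of the form
\[
\sum_{j_1+j_2=k}w(j_1)w(j_2)\le C\,\frac{w(k)}{k},
\]
uniformly for $k\le K\sim a/b$. We would prove these through the Gamma-function formulas, using $\Gamma(\gamma-1-j_1)\Gamma(\gamma-1-j_2)/\Gamma(\gamma-1)\ge$ the $\Gamma(\gamma-1-k)$ factor. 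This mirrors the proof of Lemma 5.4 in \cite{MRRSprofile}, with the extra difficulty that near $k\sim K$ the factor $\Gamma(\gamma-1-k)$ ceases to behave like $\gamma^{-k}$. The gain of one power of $k$ is needed to absorb the derivative term $u\Phi'$, and it comes from the explicit factors $x^2$, i.e. $b^2u^2$, in the nonlinearity, exactly as in \eqref{eq: Conjugated Limit Problem}. Multiplication by $M_b$ is then handled by the $b$-uniform analogue of Lemma \ref{cneinveineonevni}, which is valid because $|\xit_{b,j}|\le C^j$ by \eqref{vnovoeonven}. This transfers the bound from $\phi_k$ to $\psi_k$.

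For the limits \eqref{eq: psi_k Limit}, we fix $k$ and argue by induction. The coefficients of \eqref{eq: Quasilinear Equation in x} depend smoothly on $b$, and $\gamma b=a\to a_\infty$, $\nu_b\to\nu_\infty$. Dividing the $u^k$ recursion by $b^k$, i.e. working in $x$, gives a recursion for $\psi_k/b^k$ whose coefficients converge to those of the recursion for $\psit^\infty_k$ derived from \eqref{eq: Formal Limit Equation}. Since for fixed $k$ only finitely many terms enter, the convergence $\psi_k/b^k\to\psit_k^\infty$ follows by induction on $k$. The convergence $g_k/b^k\to g_k^\infty$ follows in the same way, since $g_k$ is a finite polynomial expression in $(\psi_j)_{j\le k}$ with convergent coefficients.
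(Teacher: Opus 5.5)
Your architecture is the paper's. You conjugate by $M_b$, telescope the first-order recursion against the weight, bootstrap a pointwise bound on the Taylor coefficients of the nonlinearity, pass from $\Phi$ to $\Psi$ with Lemma \ref{lemmamultiplication}, and get the limits by induction at fixed $k$. That last part is fine: in the $x$-form of the equation the coefficient of $\tilde\psi_k$ is $b(k+1)-a\to-a_\infty\neq0$. Working with $\phi_k$ instead of the paper's $\theta_k$ is also harmless. The $u^0$ equation has no source, so $\phi_0=0$ exactly and $\phi_k=b\theta_{k-1}$, cf. \eqref{cneovnenvonveoinven}.

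However, the two estimates you put at the heart of the argument are misstated, and as written the bootstrap does not close.

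(i) The reduction. Your telescoped denominator is
$(\gamma-2-j)w_{\gamma,\nu_b}(j+1)=(j+\nu_b+2)w_{\gamma,\nu_b}(j)=\frac{\gamma-2-j}{\gamma-2}\,w_{\gamma-1,\nu_b+1}(j)$
by \eqref{cneioneineonoen}, not $w_{\gamma-1,\nu_b+1}(j)$. The factor you dropped equals $(\gamma-2)/\alpha_\gamma$ at $j=K-1$. It is unbounded exactly where uniformity in $\alpha_\gamma\in(0,1)$ is the whole point. What you actually need is $\sum_{j}|g_j|/\big((j+\nu_b+2)w_{\gamma,\nu_b}(j)\big)\lesssim1$, which is what the paper's pointwise bound \eqref{boundednessgk} delivers. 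There is also a sign slip: the recursion is $(\gamma-1-k)\phi_k=-(k+\nu_b+1)\phi_{k-1}+g_{k-1}$. The homogeneous solution is therefore $(-1)^kw_{\gamma,\nu_b}(k)$, as in \eqref{enkvnevenenoenoivne}; this is irrelevant for absolute bounds.

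(ii) The convolution estimate. The inequality $\sum_{j_1+j_2=k}w(j_1)w(j_2)\le C\,w(k)/k$ is false. The single term $j_1=0$ already equals $w_{\gamma,\nu_b}(0)w_{\gamma,\nu_b}(k)=\Gamma(\nu_b+2)\,w_{\gamma,\nu_b}(k)$. Your Gamma comparison also points the wrong way: log-convexity gives $\Gamma(\gamma-1-j_1)\Gamma(\gamma-1-j_2)\le\Gamma(\gamma-1)\Gamma(\gamma-1-k)$. Even that termwise bound loses a factor $k+1$ when summed.

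The correct algebra property is Lemma \ref{lemamconvoutitio}, which has no gain. The $1/k$ gain has to come from $\phi_0=0$ and the explicit powers of $x=bu$ in $\mathcal G$. From $(\gamma-1-k)w_{\gamma,\nu_b}(k)=(k+\nu_b+1)w_{\gamma,\nu_b}(k-1)$ and $\gamma b=a$, you get $b\,w_{\gamma,\nu_b}(k-1)\le\frac{a}{k+\nu_b+1}\,w_{\gamma,\nu_b}(k)$ for all $k\le K$. So every index shift by a factor $x$ buys $a/k$, uniformly up to $k=K$.

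This is precisely what the paper's substitution $\Phi=x\Theta$ encodes, together with the pair of bounds \eqref{esitmaitmitot}, \eqref{boundednessgk}: $|\theta_k|\lesssim w_{\gamma-1,\nu_b+1}(k)$ is your $|\phi_{k+1}|\lesssim w_{\gamma,\nu_b}(k+1)$, and the gain is the $1/(1+k)$ in \eqref{boundednessgk}. With (i) and (ii) corrected, your scheme coincides with the argument the paper imports from Lemma 6.6 of \cite{MRRSprofile}.
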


The rest of this section is devoted to the proof of Proposition \ref{propboundbdependent}.

\subsection{Nonlinear conjugation}

The proof of \eqref{bounduniformb} now requires a careful track of the $b$-dependencies  in the full problem  \eqref{eq: Quasilinear Equation}. The first step is to use Lemma \ref{vbjbvbvebi} and analyze the nonlinear conjugated problem.\\

\noindent{\bf step 1} Nonlinear conjugation. Recall \eqref{eq: Quasilinear Equation}:
$$
\label{nenvneonoenvi}
 \left[1+H_{20}\right]u(1-u)\Psi'+\left[(1-2u)(1+H_{20})-\gamma(1+G_1)+2uG_2\right]\Psi= u\mathcal F
$$
with recalling $\frac{1}{u}=\frac{b}{x}$: 
\bea
\label{defmahtclf}
&& \mathcal F= \gamma bH_1-2(1+H_2)\\
\nonumber &+& \frac{\gamma b\NLt_1}{x}-2\NLt_2-(1-u)\left[\sum_{j=1}^3b^jH_{2j}(x)+G_2\Psi+\NLt_2\right]\Psi'\\
\nonumber& + &\left\{-\frac{b}{x}(G_2\Psi+\NLt_2)+2(G_2\Psi+\NLt_2)-\frac{b(1-2u)}{x}\sum_{j=1}^3b^jH_{2j}(x)\right\}\Psi
\eea
From \eqref{defphi}, \eqref{conjuguatedflrow} we now obtain the nonlinear conjugated problem:
\be
\label{cneineionoeno}
\Phi'-\left[\frac{\gamma-1}{u}+\frac{\gamma+\nu_b+1}{1-u}\right]\Phi=-\frac{\mathcal G}{1-u}
\ee

\noindent{\bf step 2} Computation of $\mathcal G$. We plug \eqref{defphi} into \eqref{defmahtclf} and decompose 
\be
\label{cneineinveoneonnkenpe}
\mathcal G=\mathcal G_0+\mathcal L(\Phi)+\NL(\Phi)
\ee 
as follows.\\

\noindent\underline{Source term}. We have 
$$\mathcal G_0(b,x){=-\frac{\gamma b H_1-2(1+H_2)}{M_b(1+H_{20})}}$$ 
which, from \eqref{vnovoeonven}, admits  a holomorphic expansion in a neighborhood (independent of $b$) of $x=0$  i.e.
\be
\label{neinineonenoenv:avoidmultipledefinedlabel}
\mathcal G_{{0}}(x)=\sum_{k=0}^{+\infty}g_{0k}x^k
\ee 
for some $b$-dependent coefficients $g_{0k}$ satisfying  
\be
\label{neinineonenoenv}
|g_{0k}|\leq C^k
\ee
for some $C>0$ independent of $b$.

\noindent\underline{Small linear term}. It is given explicitly by
\bea
\label{formulalphi}
\nonumber \mathcal L(\Phi)&=&\frac{1}{M_b(1+H_{20})}\left\{(1-u)\left[\sum_{j=1}^3b^jH_{2j}(x)\right]\frac{d}{du}(M_b(x)\Phi(u))+\frac{b(1-2u)}{x}\sum_{j=1}^3b^jH_{2j}(x)M_b\Phi\right\}\\
\nonumber& = & \frac{1}{M_b(1+H_{20})}\left\{\left[M_{{b}}'(x)(b-x)\sum_{j=1}^3b^jH_{2j}(x)+\sum_{j=1}^3\left(\frac{b^{j+1}H_{2j}(x)}{x}-2b^jH_{2j}\right)\right]\Phi\right.\\
&+& \left.M_b(x)\sum_{j=1}^3\left(b^{j+1}\frac{H_{2j}(x)}{x}-b^jH_{2j}(x)\right)u\Phi'\right\}
\eea
Therefore,
$$\mathcal L(\Phi)=b\left[(bh_1(x)+xh_2(x))\Phi+(bh_3{(x)}+xh_4{(x)})u\Phi'\right]$$ where $$h_j(x)=\sum_{k=0}^{+\infty}h_{jk}x^k, \ \ |h_{jk}|\leq C^k$$ for some $C>0$ independent of $b$. 

\noindent\underline{Nonlinear term}. We have 
\bea
\label{formaulnonlienaterm}
\nonumber\NL(\Phi) &=& -\frac{1}{M_b(1+H_{20})}\bigg\{\frac{\gamma b\NLt_1}{x}-2\NLt_2-\left(\frac{b}{x}-1\right)\left[G_2\Psi+\NLt_2\right]u\Psi'\\
&+&(G_2\Psi+\NLt_2)\left(2-\frac bx\right)\Psi\bigg\}
\eea
and thus
 $\NL(\Phi)$ is given, structurally, by $$\NL(\Phi)=x\sum_{j=2}^4m^{(1)}_j\Phi^j+b\sum_{j=2}^4m^{(2)}_j\Phi^j+\left[x\sum_{j=1}^3m^{(3)}_j\Phi^j+b\sum_{j=1}^3m^{(4)}_j\Phi^j\right]u\Phi'$$
with $$m_j^{(\ell)}(x)=\sum_{k=0}^{+\infty}m_{jk}^{(\ell)}x^k, \ \ |m_{jk}^{(\ell)}|\le C^k.$$

\noindent\underline{Conclusion}. We have obtained the conjugated nonlinear problem \eqref{cneineionoeno} with
\bea
\label{vneivnoneoneonnee}
 \mathcal G& = & \mathcal G_0+b\left[(bh_1(x)+xh_2(x))\Phi+(bh_3{(x)}+xh_4{(x)})u\Phi'\right]\\
\nonumber&+& x\sum_{j=2}^4m^{(1)}_j\Phi^j+b\sum_{j=2}^4m^{(2)}_j\Phi^j+ \left[x\sum_{j=1}^3m^{(3)}_j\Phi^j+b\sum_{j=1}^3m^{(4)}_j\Phi^j\right]u\Phi'.
\eea

\noindent{\bf step 3} Final change of variables. We now let 
\be
\label{suihfoenioeneoi}
\Phi=bu\Theta=x\Theta
\ee
 so that \eqref{cneineionoeno} becomes:
 \bea
 \label{thetaeqaiotjoihs}
 \nonumber &&\Phi'-\left[\frac{\gamma-1}{u}+\frac{\gamma+\nu_b+1}{1-u}\right]\Phi=-\frac{\mathcal G}{1-u}\\
  &\Leftrightarrow& u(1-u)\Theta' -\left[\gamma-2+(\nu_b+3)u\right]\Theta=-\frac{\mathcal G}{b}.
 \eea
 We now express $\mathcal G$ in terms of $\Theta$ and track the orders of vanishing in $x$. We compute
 $$u\Phi'=u[bu\Theta'+b\Theta]=x(u\Theta'+\Theta).$$ 
 Then,
 \bee
 && b\left[(bh_1(x)+xh_2(x))\Phi+(bh_3{(x)}+xh_4{(x)})u\Phi'\right]\\
 &=&\left[b^2h_1+bxh_2\right]x\Theta+(b^2h_3+bxh_4)[ux\Theta'+x\Theta]\\
  & = & \left[b^2x\th_1+bx^2\th_2\right]\Theta+\left[b^2x\th_3+bx^2\th_4\right]u\Theta'.
   \eee
 
 For the nonlinear term:
 \bee
 && x\sum_{j=2}^4m^{(1)}_j\Phi^j+b\sum_{j=2}^4m^{(2)}_j\Phi^j+ \left[x\sum_{j=1}^3m^{(3)}_j\Phi^j+b\sum_{j=1}^3m^{(4)}_j\Phi^j\right]u\Phi'\\
 & = & x\sum_{j=2}^4x^jm^{(1)}_j\Theta^j+b\sum_{j=2}^4m^{(2)}_jx^j\Theta^j+\left[x\sum_{j=1}^3m^{(3)}_jx^j\Theta^j+b\sum_{j=1}^3m^{(4)}_jx^j\Theta^j\right](xu\Theta'+x\Theta)\\
 & = & \sum_{j={2}}^{{4}}x^{j+{1}}\mt^{(1)}_j\Theta^{{j}}+b\sum_{j=2}^4m^{(2)}_jx^j\Theta^j+  \left[\sum_{j=1}^3\mt^{(3)}_jx^{j+2}\Theta^j+b\sum_{j={1}}^{{3}}\mt^{(4)}_jx^{j{+1}}\Theta^j\right](u\Theta').
 \eee
 We now rewrite
 \bea
 \label{fialfrmaulg}
 &&\mathcal G= \mathcal G_0+\left[b^2x\th_1+bx^2\th_2\right]\Theta+\left[b^2x\th_3+bx^2\th_4\right]u\Theta'\\
 \nonumber & + &  \sum_{j=2}^4x^{j+1}\mt^{(1)}_j\Theta^j+b\sum_{j=2}^4m^{(2)}_jx^j\Theta^j+  \left[\sum_{j=1}^3\mt^{(3)}_jx^{j+2}\Theta^j+b\sum_{j={1}}^{{3}}\mt^{(4)}_jx^{j{+1}}\Theta^j\right](u\Theta')
 \eea
 where, for some large enough universal constant $C=C_{\nu,a}>0$ independent of $b<b^*$ and all $k\ge 0$,
 \be
 \label{nveioneionoenv|}
{|(\mathcal{G}_0)_k|+|(\th_l)_k|+|(\mt^{(l)}_j)_k|}\le C^k b^k
 \ee

\subsection{Bounding the sequence $\theta_k$ and proof of Proposition \ref{propboundbdependent}}

We let 
\be
\label{eq: Holomorphic expansion G}
\theta_k=\frac{1}{k!}\frac{d^k\Theta}{du^k}(0), \ \ g_k=\frac{1}{k!}\frac{d^k\mathcal G}{du^k}(0)
\ee
so that from \eqref{suihfoenioeneoi}: 
\be
\label{cneovnenvonveoinven}
\left|\begin{array}{l}\phi_0=0\\
\phi_k=b\theta_{k-1}, \ \ k\ge 1.
\end{array}\right.
\ee

\begin{lemma}[Boundedness for the $\theta_k$ sequence]
\label{lemmaboundthetak}
There exists $c_{\nu,a}>0$ and $b^*(\nu,a)$ such that for all $0<b<b^*$, for all $0\le k\le K-1$,
\be
\label{esitmaitmitot}
|\theta_k|\leq c_{\nu,a}|w_{\gamma-1,\nu_b+1}(k)|
\ee
(This implies \eqref{bounduniformb}.) Moreover, 
\be
\label{boundednessgk}
\forall 0\le k\le K, \ \ |g_k|\le c_{\nu,a}\frac{|w_{\gamma,\nu_ b}(k)|}{1+k}.
\ee
\end{lemma}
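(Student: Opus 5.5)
We will prove \eqref{esitmaitmitot} and \eqref{boundednessgk} together by strong induction on $k$, in the spirit of the proof of Lemma 5.4 in \cite{MRRSprofile} for the limiting problem, carrying the $b$-dependent weight $w_{\gamma-1,\nu_b+1}$ in place of $\Gamma(k+\nu_\infty+2)/a_\infty^k$.

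\textbf{Induction relation.} Expanding \eqref{thetaeqaiotjoihs} in powers of $u$ and identifying coefficients gives, for $k\ge 0$,
$$(k-\gamma+2)\theta_k-(k-1+\nu_b+3)\theta_{k-1}=-\frac{g_k}{b}$$
(with $\theta_{-1}=0$), that is,
$$(\gamma-k-2)\theta_k=(k+\nu_b+2)\theta_{k-1}+\frac{g_k}{b}.$$
We divide by $(\gamma-k-2)w_{\gamma-1,\nu_b+1}(k)$ and use the induction property \eqref{inucnoitnwrmwjaggmamk}. The homogeneous part then telescopes, and we obtain
$$\frac{\theta_k}{w_{\gamma-1,\nu_b+1}(k)}=\frac{\theta_0}{w_{\gamma-1,\nu_b+1}(0)}+\sum_{j=1}^k\frac{g_j}{b(\gamma-j-2)w_{\gamma-1,\nu_b+1}(j)}.$$
For $k\le K-1$ we have $\gamma-j-2\ge \alpha_\gamma+1>1$, so no small divisor appears before the critical frequency. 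The bound on $\theta_k$ therefore reduces to the summability estimate
$$\sum_{j\le K-1}\frac{|g_j|}{b(\gamma-j-2)|w_{\gamma-1,\nu_b+1}(j)|}\lesssim 1.$$
This estimate follows once $|g_j|\lesssim |w_{\gamma,\nu_b}(j)|/(1+j)$. Indeed, by \eqref{cneioneineonoen} we have $w_{\gamma,\nu_b}(j)\sim w_{\gamma-1,\nu_b+1}(j-1)/\gamma$, so the ratio in the $j$-th term is of size $(b\gamma)^{-1}(1+j)^{-1}(j+\nu_b+2)/((\gamma-j-2)(\gamma-j-1))$. Since $b\gamma=a\sim a_\infty$, the terms are summable: for small $j$ they are $O(\gamma^{-2})$, and near $j\sim K$ the factor $1/(\gamma-j-2)^2$ is summable. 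The constant $\theta_0$ is fixed by $-(\gamma-2)\theta_0=g_0/b$, and $g_0=(\mathcal G_0)_0=O(1)\cdot b^0$ involves only the source term, so it is consistent with the weight $w(0)=\Gamma(\nu_b+3)/(\gamma-2)$.

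\textbf{Bounding $g_k$.} We use the explicit structure \eqref{fialfrmaulg} of $\mathcal G$ together with the bounds \eqref{nveioneionoenv|}. Every term of $\mathcal G$ beyond $\mathcal G_0$ carries a factor $x=bu$ or $b$, and therefore loses at least one power of $b$ per power of $u$ gained. The source coefficients satisfy $|(\mathcal G_0)_k|\le C^kb^k$. Since $w_{\gamma,\nu_b}(k)\gtrsim \Gamma(k+\nu_b+2)\gamma^{-k}$ for $k\le K$ and $b\gamma\sim a$, this gives $C^kb^k\lesssim w_{\gamma,\nu_b}(k)/(1+k)$ after possibly shrinking $b^*$. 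The linear terms $b^2x\th_1\Theta$, $bx^2\th_2\Theta$ and so on are convolutions of $\theta_{j}$, $j<k$, with coefficients of size $C^{m}b^{m}$ times an extra power of $b$. The nonlinear terms are multiple convolutions $\sum\theta_{j_1}\cdots\theta_{j_p}$, weighted by $x^{p+1}$ or $bx^p$, with $u\Theta'$ contributing a factor $j$. Using the induction hypothesis on $\theta_j$ for $j<k$, we reduce everything to convolution inequalities for the weights:
$$\sum_{j_1+\dots+j_p+m=k-q}C^mb^{m+q}\prod_i|w_{\gamma-1,\nu_b+1}(j_i)|\lesssim \frac{b}{1+k}|w_{\gamma,\nu_b}(k)|.$$
Here $q\ge1$ is the number of explicit $x$ factors. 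We prove this using log-convexity of Gamma: the product of weights is dominated by its extremal configuration, in which one index is large. The lost powers $b^q$ compensate the ratio $w(k)/w(k-q)\sim (k/\gamma)^q$, which is bounded since $k\le K\sim\gamma$. The gain of $1/(1+k)$ comes from the explicit extra $b$ together with $k b\lesssim 1$.

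\textbf{Main obstacle and the limit \eqref{eq: psi_k Limit}.} The main obstacle is this uniform-in-$b$ convolution estimate near the top frequencies $k\sim K\sim a/b$. There the weight $\Gamma(\gamma-1-k)$ is no longer comparable to $\gamma^{-k}\Gamma(\gamma)$, so the Maillet-type bound of the limiting problem must be redone with the exact Gamma ratios. I would split each convolution sum into a range where all indices are at most $k/2$, where there is crude geometric decay, and a range where one index exceeds $k/2$. In the latter range the ratios $\Gamma(\gamma-1-j)/\Gamma(\gamma-1-k)$ are controlled using \eqref{cnineneonevnev:0}-type formulas.

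Finally, \eqref{eq: psi_k Limit} follows at fixed $k$ by induction from the same recursion. Dividing by $b^k$ and using $\gamma b\to a_\infty$, $\nu_b\to\nu_\infty$, and the fact that $M_b$ converges to $M$ coefficientwise, the recursion converges to the one for the limiting problem \eqref{eq: Conjugated Limit Problem}. The uniqueness of the formal solution of that limiting problem then identifies the limits.
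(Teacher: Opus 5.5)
\paragraph{The gap: your induction does not close.}
Your scheme is the same kind of induction as Lemma 6.6 of \cite{MRRSprofile}, to which the paper defers: the recursion from \eqref{thetaeqaiotjoihs}, normalization by $w_{\gamma-1,\nu_b+1}$ through \eqref{inucnoitnwrmwjaggmamk}, and control of $g_k$ from the structure \eqref{fialfrmaulg}. As you set it up, however, the induction does not close.

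Assume $|\theta_j|\le c\,w_{\gamma-1,\nu_b+1}(j)$ for $j<k$. The nonlinear terms of $\mathcal G$ then produce contributions of order $c^p$, $2\le p\le 4$, with coefficients of order one already at bounded frequencies. There is no smallness in $b$ there, because at fixed $k$ everything scales like $b^k$.

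For example, the $u^3$ coefficient of $x^3\tilde m^{(1)}_2\Theta^2$ is $b^3(\tilde m^{(1)}_2)_0\theta_0^2$. Its contribution to $\zeta_3=\theta_3/w_{\gamma-1,\nu_b+1}(3)$ is bounded only by $C(a,\nu)\,c^2$. So your ``$\lesssim 1$'' is really $|\zeta_0|+P(c)$, with $P$ a degree-$4$ polynomial whose coefficients are not small, and $|\zeta_0|+P(c)\le c$ need not have any solution.

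The missing ingredient is a separate treatment of low frequencies, with a bound independent of the cutoff.
\begin{itemize}
\item For $k\le k^*$, combine three facts: the fixed-$k$ convergence $\theta_k/b^k\to\theta_k^\infty$ (which you only invoke at the end, for \eqref{eq: psi_k Limit}); the limiting bound \eqref{eq: phi_k Bound}; and $b^{-k}w_{\gamma-1,\nu_b+1}(k)\to\Gamma(k+\nu_\infty+3)a_\infty^{-k}$. Together they give $|\zeta_k|\le c_0+1$ for $b<b^*(k^*)$, with $c_0$ independent of $k^*$.
\item Then fix $c=2c_0+2$. For $k^*<k\le K-1$ the increments of $|\zeta_k|$ are $\lesssim C(c)/(1+k)^2$, so they sum to at most $C(c)/k^*\le c/4$ once $k^*$ is large.
\end{itemize}
The order of choices ($c$, then $k^*$, then $b^*$) is what makes the induction close.

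\paragraph{Your displayed weight inequality is false as written.}
Take $p=2$, $q=3$, $k=3$, $m=j_1=j_2=0$. The left side is $b^3\Gamma(\nu_b+3)^2$, while $\frac b4 w_{\gamma,\nu_b}(3)\sim b^4$. An extra factor $b$ exists only for terms with an explicit $b$ prefactor, so the bookkeeping has to be done term by term in \eqref{fialfrmaulg}.

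Since $w_{\gamma,\nu_b}(k)=w_{\gamma-1,\nu_b+1}(k-1)/(\gamma-2)$ and $w_{\gamma-1,\nu_b+1}(i-1)\le \frac{\gamma}{i}\,w_{\gamma-1,\nu_b+1}(i)$, a shift by $x^q$ costs
\[
b^q w_{\gamma-1,\nu_b+1}(k-q)\lesssim a^q k^{1-q}\, w_{\gamma,\nu_b}(k).
\]
Consequently:
\begin{itemize}
\item The factor $1/(1+k)$ comes from $q\ge 2$ for the $\tilde h_2$, $\tilde m^{(1)}$ and $m^{(2)}$ terms. For $b^2x\tilde h_1\Theta$ it comes from the explicit $b$, using $bk\lesssim a$.
\item The factor $j$ carried by $u\Theta'$ is absorbed by $q\ge 3$ for the $\tilde m^{(3)}$ terms. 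For the $\tilde h_3$, $\tilde h_4$ and $\tilde m^{(4)}$ terms it is absorbed by the explicit powers of $b$.
\end{itemize}
The convolution estimate near $k\sim K$, which you single out as the main obstacle, is already supplied by Lemma \ref{lemamconvoutitio} together with Lemma \ref{lemmamultiplication}.

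\paragraph{Smaller slips.}
\begin{itemize}
\item The $u^k$ coefficient gives $(\gamma-k-2)\theta_k=-(k+\nu_b+2)\theta_{k-1}+g_k/b$. The sum therefore alternates as in \eqref{enkvnevenenoenoivnedldnl}; this is harmless for absolute bounds.
\item The $j$-th term of your sum is exactly $|g_j|/\bigl(b(\gamma-2)(j+\nu_b+2)w_{\gamma,\nu_b}(j)\bigr)$, not the expression you wrote.
\item At $j=K-1$ one has $\gamma-j-2=\alpha_\gamma$. This small divisor is harmless only because it is absorbed into the weight.
\end{itemize}
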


\begin{proof}[Proof of Lemma \ref{lemmaboundthetak}]  This is a direct consequence of the form of $\mathcal G$ given in \eqref{fialfrmaulg} \eqref{nveioneionoenv|} and the proof is verbatim the same as that of Lemma 6.6 in \cite{MRRSprofile}.
\end{proof}

{Proposition \ref{propboundbdependent} follows immediately from \eqref{cneovnenvonveoinven}, \eqref{esitmaitmitot}, \eqref{cneioneineonoen}, \eqref{defphi}, and the following lemma:

\begin{lemma}[Stability by multiplication]
\label{lemmamultiplication}
 Let $h(u)=\sum_{k=0}^{+\infty}b^kh_ku^k$ with the holomorphic bound $$|h_k|\leq C^k.$$ Then there exists $C_h$ and $0<b^*(C_h)\ll 1$ such that for all {$0<b<b^*(C)$ and any $0\le k^*\le K$}, 
 \be
 \label{eoneonveonoe}\max_{0\le k{\le}k^*}\frac{|(h\phi)_k|}{w_{\gamma, {\nu_b}}(k)}\leq C_h \max_{0\le k{\le}k^*}\frac{|\phi_k|}{w_{\gamma, {\nu_b}}(k)}.
 \ee
\end{lemma}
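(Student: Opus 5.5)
We expand the product by the Cauchy formula, $(h\phi)_k=\sum_{j=0}^k b^{k-j}h_{k-j}\phi_j$, and bound each term by the weight at $j$. This gives
$$
\frac{|(h\phi)_k|}{w_{\gamma,\nu_b}(k)}\le \Big(\max_{0\le j\le k^*}\frac{|\phi_j|}{w_{\gamma,\nu_b}(j)}\Big)\sum_{j=0}^{k}(Cb)^{k-j}\frac{w_{\gamma,\nu_b}(j)}{w_{\gamma,\nu_b}(k)} .
$$
It then suffices to bound the sum uniformly in $0\le k\le K$ and $b<b^*$. Here $w_{\gamma,\nu_b}(k)>0$ for $k\le K$, since $\gamma-1-k\ge\alpha_\gamma>0$.

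The main computation is the ratio of consecutive weights. From \eqref{defweight},
$$
\frac{w_{\gamma,\nu_b}(j)}{w_{\gamma,\nu_b}(j+1)}=\frac{\gamma-2-j}{j+\nu_b+2}.
$$
Recall from \eqref{gara} that $\gamma=a/b$ with $a=a_\infty+o(1)>0$, and that $\nu_b=\nu_\infty+o(1)>0$ by \eqref{eq: Positivity of nu_infty}. Hence for $0\le j\le K-1$ the ratio is at most $\frac{\gamma}{j+\nu_b+2}\le \frac{2a_\infty}{b(j+2)}$, and telescoping gives
$$
\frac{w_{\gamma,\nu_b}(j)}{w_{\gamma,\nu_b}(k)}\le \prod_{i=j}^{k-1}\frac{2a_\infty}{b(i+2)}.
$$
Multiplying by $(Cb)^{k-j}$ cancels the powers of $b$ exactly, leaving
$$
(Cb)^{k-j}\frac{w_{\gamma,\nu_b}(j)}{w_{\gamma,\nu_b}(k)}\le \prod_{i=j}^{k-1}\frac{2Ca_\infty}{i+2}.
$$

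The obstacle is that this product is not uniformly small. When $i+2<2Ca_\infty$ each factor exceeds $1$, so the product is not bounded by a geometric term. It is nevertheless bounded by a constant depending only on $C$ and $a_\infty$. Indeed, the factors with $i+2\le N:=\lceil 4Ca_\infty\rceil$ contribute at most $\prod_{i<N}\max(1,\tfrac{2Ca_\infty}{i+2})\le (2Ca_\infty)^N$. Every factor with $i+2>N$ is at most $1/2$.

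Putting these together, each term of the sum is bounded by $(2Ca_\infty)^N\, 2^{-\#\{i\in[j,k-1]:\,i+2>N\}}$. The number of indices $j$ with at most $m$ such factors is at most $N+1$ for each $m$. Summing over $j$ therefore gives at most $(N+1)(2Ca_\infty)^N\sum_m 2^{-m}$, which is a constant $C_h$ independent of $b$, $k$ and $K$.

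Two minor checks remain. The limiting values must be uniform for $b<b^*$, and we replace $a_\infty$ by $2a_\infty$ to absorb the $o(1)$ corrections. The bound is only needed for $k\le k^*\le K$; this matters because the weight ratio changes sign beyond $K$, so we never extrapolate past $K$. This yields \eqref{eoneonveonoe}.
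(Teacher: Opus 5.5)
Your proof is correct. The paper gives no argument of its own here; it defers verbatim to Lemma 6.7 of \cite{MRRSprofile}, so I cannot say whether that proof runs the same way. Your route is the natural direct one: Cauchy product, then the exact one-step ratio $w_{\gamma,\nu_b}(j)/w_{\gamma,\nu_b}(j+1)=(\gamma-2-j)/(j+\nu_b+2)$ for $j\le K-1$, then the cancellation $(Cb)^{k-j}\gamma^{k-j}=(Ca)^{k-j}$. It uses only the definition of the weight, and none of the Stirling-type estimates on $w_{\gamma,\nu_b}$ from the appendix.

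There are two small slips in your final counting step:
\begin{itemize}
\item The factors with $i+2\le N$ contribute at most $\max(1,2Ca_\infty)^{N}$, not $(2Ca_\infty)^N$. The latter is smaller than $1$ when $2Ca_\infty<1$.
\item What you need is that at most $N+1$ indices $j$ have \emph{exactly} $m$ factors with $i+2>N$, not ``at most $m$''.
\end{itemize}

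You can avoid the counting altogether. Set $A=2Ca_\infty$. Then
$$\prod_{i=j}^{k-1}\frac{A}{i+2}=A^{k-j}\,\frac{(j+1)!}{(k+1)!}\le \frac{A^{k-j}}{(k-j)!},$$
because $\binom{k+1}{k-j}\ge 1$. Hence the sum over $j$ is bounded by $e^{A}$, uniformly in $b$, $k$ and $K$.
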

\begin{proof}
The proof is verbatim the same as that of Lemma 6.7 in \cite{MRRSprofile}.
\end{proof}

\section{Quantitative study of the $C^\infty$ solution}
\label{sec:studyCinftysolution}
\label{cinftysolution}

We now turn to the qualitative of the $C^\infty$ solution of \eqref{eq: Quasilinear Equation}. 
Understanding of the Taylor expansion at $u=0$ is not sufficient to analyze the solution away from $u=0$. 
Our main goal  is to show that truncating the Taylor series at $k=K$ yields the dominant terms in the solution
which, together with a remainder, can be computed and estimated thanks to an explicit integral representation.\\

We study the $C^\infty$ solution. We define the operator 
\be
\label{defopeterator}
\mathcal T(\mathcal G)=\frac{u^{\gamma-2}}{(1-u)^{\gamma+\nu_b+1}}\int_0^u\frac{(1-v)^{\gamma+\nu_b}}{v^{\gamma-1}}\frac{\mathcal G}{b}dv.
\ee

\subsection{Remainder function}

We introduce several special functions defined via the integral operator \eqref{defopeterator}. These will be fundamental  in understanding the leading order terms which appear when the Taylor expansion no longer dominates.

\begin{lemma}[Holomorphic representation and bounds]
Let $j\ge 0$ and 
\be
\label{Pdefmj}
M_j(u)=(K+j+\nu_b+2)(-1)^{j}w_{\gamma-1,\nu_b+1}(K-1+j)\mathcal T(bu^{K+j}),
\ee
then we have the following convergent series representation for $|u|<1$:
\be
\label{formualmj}
M_j(u)=\sum_{m=j+1}^{+\infty}(-1)^{m}w_{\gamma-1,\nu_b+1}(K-1+m)u^{K-1+m}.
\ee
Moreover, let $1\le j\le {5}$, then there exist universal constants {$0<c_{\nu,1}<c_{\nu,2}$} such that:\\ \noindent\underline{behavior for small u}:  for $0\le u\le  b$,
\be
\label{lowerobundzerobis}
{c_{\nu,1}}\le \frac{M_j(u)}{\Gamma(\alpha_\gamma)\Gamma(1-\alpha_\gamma)K^{\nu_b+j+4-\alpha_\gamma}u^{K+j}}\le {c_{\nu,2}}\ee
\noindent\underline{behavior for large u}: for $b\le u< \frac 12$:
\be
\label{lowerobundzerobisbis}
{c_{\nu,1}}\le \frac{M_j(u)}{\Gamma(\alpha_\gamma)\Gamma(1-\alpha_\gamma)K^{\nu_b+3}\left(\frac{u}{1-u}\right)^{K-1}u^{\alpha_\gamma}}\leq {c_{\nu,2}}.
\ee
\noindent\underline{control of the iterate}:  let $1\le j\le 5$, then
\be
\label{bounditerate}
\left\|\frac{\T (u^jM_0)}{M_0}\right\|_{L^\infty(u\leq \frac 12)}\le \frac{c_\nu}{b}.
\ee

\noindent\underline{control of the derivative}: 
\be
\label{vniovnioneneneo}
\forall 0\le u\le \frac 12, \ \ \frac{|uM_0'|}{M_0}\leq \frac{c_\nu}b. 
\ee 
\end{lemma}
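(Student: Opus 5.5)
The plan is to compute everything from the explicit hypergeometric-type structure of $\mathcal T$ acting on monomials, and then to estimate the resulting series by comparison with the weights $w_{\gamma-1,\nu_b+1}$ of Lemma \ref{lemmanvnowvowv}.

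\textbf{Series representation.} I would expand $(1-v)^{\gamma+\nu_b}$ and $(1-u)^{-\gamma-\nu_b-1}$ in power series, or more economically check directly that the right-hand side of \eqref{formualmj} solves the linear ODE
$$
u(1-u)M'-[\gamma-2+(\nu_b+3)u]M=-c\,u^{K+j}
$$
with the correct constant $c$ and vanishing to order $K+j$ at $u=0$. The integral $\mathcal T(bu^{K+j})$ is exactly the unique solution of this ODE that vanishes faster than $u^{\gamma-2}$; this uses $\gamma-1-(K+j)<1$, so that the integrand is integrable at $0$ when $j\ge 0$.

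Plugging $\sum_m a_m u^{K-1+m}$ into the ODE, the coefficient of $u^{K-1+m}$ gives the relation
$$
(K-1+m-\gamma+2)a_m-(K-2+m+\nu_b+3)a_{m-1}=0,
$$
which is precisely the induction property \eqref{inucnoitnwrmwjaggmamk} with alternating signs absorbed. The first coefficient $m=j+1$ fixes the normalization $(K+j+\nu_b+2)(-1)^j w(K-1+j)$. Convergence for $|u|<1$ follows from the ratio of consecutive weights tending to $-1$.

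\textbf{Small $u$ bounds.} Using \eqref{cnineneonevnev:0} I would write
$$
(-1)^m w_{\gamma-1,\nu_b+1}(K-1+m)=\Gamma(\alpha_\gamma)\Gamma(1-\alpha_\gamma)\frac{\Gamma(K+m+\nu_b+2)}{\Gamma(K-1+\alpha_\gamma)\Gamma(m+1-\alpha_\gamma)}\times(\text{sign}),
$$
so that all terms have the same sign and are positive after normalization. Stirling then gives
$$
\frac{\Gamma(K+m+\nu_b+2)}{\Gamma(K-1+\alpha_\gamma)}\sim K^{m+\nu_b+3-\alpha_\gamma}\quad\text{for } m\ll K.
$$
For $u\le b\sim 1/K$, the ratio of successive terms is about $Ku/(m+1-\alpha_\gamma)\lesssim c$. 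The leading term $m=j+1$ therefore dominates up to a geometric-like sum, giving \eqref{lowerobundzerobis} with the power $K^{\nu_b+j+4-\alpha_\gamma}$. The only delicate point is that the constants must be uniform in $\alpha_\gamma$; I would keep $\Gamma(m+1-\alpha_\gamma)$ explicit, since it is bounded above and below uniformly for $m\ge 2$.

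\textbf{Large $u$ bounds.} This is the main obstacle. For $b\le u<\frac12$ the sum is no longer dominated by its first term, and I would instead use the integral representation. After the substitution $v=u s$,
$$
\mathcal T(bu^{K+j})=\frac{u^{K+j}}{(1-u)^{\gamma+\nu_b+1}}\int_0^1 s^{K+j+1-\gamma}(1-us)^{\gamma+\nu_b}\,ds .
$$
Here $s^{K+j+1-\gamma}=s^{j-\alpha_\gamma}$, while $(1-us)^{\gamma}$ concentrates near $s\lesssim 1/(Ku)$. Laplace-type estimates give an integral of order $(Ku)^{-(j+1-\alpha_\gamma)}\Gamma(j+1-\alpha_\gamma)$, uniformly in $\alpha_\gamma$ up to the factor $\Gamma(1-\alpha_\gamma)$ that appears for $j=0$. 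Combining this with the prefactor and Stirling for $w(K-1+j)$ yields the claimed form $K^{\nu_b+3}(u/(1-u))^{K-1}u^{\alpha_\gamma}$. I would also check that the bounds from the two regimes match at $u=b$.

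\textbf{Iterate and derivative.} For the derivative bound \eqref{vniovnioneneneo}, I would use the ODE satisfied by $M_0$: $uM_0'/M_0$ equals $(\gamma-2+(\nu_b+3)u)/(1-u)$ minus a source ratio. The source ratio is controlled by $u^{K}/M_0$, which is handled by the two-sided bounds in both regimes, so the whole expression is $\lesssim \gamma\lesssim 1/b$.

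For the iterate bound \eqref{bounditerate}, I would write $\mathcal T(u^jM_0)$ as an integral and apply the two-sided bounds on $M_0$ inside it. In the small-$u$ range the integrand is a power $v^{K+j+\cdots-\gamma}$, whose integral gains a factor $1/(j+\cdots)$ times the $1/b$ coming from $\mathcal G/b$. In the large-$u$ range the kernel times $(v/(1-v))^{K-1}$ is essentially $v^{\alpha_\gamma-1}(1-v)^{\nu_b+1}$ times a bounded function, so the integral is comparable to $M_0(u)/b$ up to constants.
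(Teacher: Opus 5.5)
Your proposal is correct. The series representation is obtained essentially as in the paper. The coefficients are forced by the recursion coming from the weight identity \eqref{inucnoitnwrmwjaggmamk}, and the lowest coefficient fixes the normalization. Your remark that $\mathcal T(bu^{K+j})$ is the unique solution that is $o(u^{\gamma-2})$ supplies the identification step the paper leaves implicit; it works because the homogeneous solution is $u^{\gamma-2}(1-u)^{-(\gamma+\nu_b+1)}$.

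For the four estimates the paper gives no argument of its own and only refers to Lemmas 7.2--7.3 of \cite{MRRSprofile}. Your outline is therefore a self-contained reconstruction, and it goes through:
\begin{itemize}
\item for $u\le b$, positivity of all terms plus ratio domination, with the ratio of consecutive terms at most $a/(m+1-\alpha_\gamma)+C_\nu b$;
\item for $u\ge b$, the substitution $v=us$ followed by an incomplete-Gamma estimate, with uniform constants because $Ku\gtrsim a$;
\item for the derivative bound, the ODE for $uM_0'/M_0$, where the source ratio is in fact $O(1)$;
\item for the iterate bound, inserting the two-sided bounds on $M_0$ into the integral.
\end{itemize}

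Two points should be tightened.

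First, the derivative and iterate bounds need the two-sided bounds for $M_0$ itself, i.e. the case $j=0$, whereas the statement lists them only for $1\le j\le 5$. Your arguments do cover $j=0$: the factor $\Gamma(1-\alpha_\gamma)$ produced by the integral cancels the $1/\Gamma(1-\alpha_\gamma)$ inside $w_{\gamma-1,\nu_b+1}(K-1)$. Say this explicitly.

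Second, in the iterate, for $b\le v\le u$ the kernel times the large-$u$ profile of $v^jM_0$ is $\asymp b^{-1}\Gamma(\alpha_\gamma)\Gamma(1-\alpha_\gamma)K^{\nu_b+3}v^{j-1}(1-v)^{\alpha_\gamma+\nu_b+2}$. It is not $v^{\alpha_\gamma-1}(1-v)^{\nu_b+1}$ as you wrote. You must also split off $[0,b]$, where the small-$u$ form of $M_0$ applies. What comes out is the upper bound $\mathcal T(u^jM_0)\lesssim b^{-1}u^jM_0$ rather than comparability, which is all that is needed. The restriction $j\ge 1$ is what prevents a $\log(u/b)$ loss there.
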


\begin{proof}
\noindent{\bf step 1} Holomorphic representation. Given a $C^\infty$ function with $\mathcal G=O_{u\to 0}(u^K)$, $\Theta=\mathcal T(\mathcal G)$ satisfies the linear equation
\be
\label{vneoinvenvoe}
u(1-u)\Theta' -\left[\gamma-2+(\nu_b+3)u\right]\Theta=\frac{\mathcal G}{b}.
\ee
We formally expand $$\Theta=\sum_{k=0}^{+\infty}\theta_ku^k, \ \ \mathcal G=\sum_{k=0}^{+\infty}g_ku^k$$ and obtain from \eqref{thetaeqaiotjoihs}:
\bea
\label{vneineionvioenvionoe}
\nonumber &&\sum_{k=1}^{+\infty}(u-u^2)k\theta_ku^{k-1}-(\gamma-2)\sum_{k=0}^{+\infty}\theta_ku^k-(\nu_b+3)\sum_{k=0}^{+\infty}\theta_ku^{k+1}=-\frac{1}{b}\sum_{k=0}^{+\infty}g_ku^{k}\\
\nonumber& \Leftrightarrow& \sum_{k=1}^{+\infty}k\theta_ku^k-\sum_{k=2}^{+\infty}(k-1)\theta_{k-1}u^k-(\gamma-2)\sum_{k=0}^{+\infty}\theta_ku^k-(\nu_b+3)\sum_{k=1}^{+\infty}\theta_{k-1}u^{k}=-\frac{1}{b}\sum_{k=0}^{+\infty}g_{k}u^{k}\\
& \Leftrightarrow&
\left|\begin{array}{l}
\theta_0=\frac{\gamma}{a(\gamma-2)}g_0\\
(\gamma-k-2)\theta_k+(k+\nu_b+2)\theta_{k-1}=\frac{g_{k}}{b}, \ \ k\ge 1.
\end{array}\right.
\eea
Let $$\zeta_k=\frac{\theta_k}{w_{\gamma-1,\nu_b+1}(k)},$$ then \eqref{inucnoitnwrmwjaggmamk},  \eqref{cneioneineonoen} yield:

\bea
\label{eboebveiboebeo}
\nonumber && (\gamma-k-2)w_{\gamma-1,\nu_b+1}(k)\zeta_k+(k+\nu_b+2)w_{\gamma-1,\nu_b+1}(k-1)\zeta_{k-1}=\frac{g_k}{b}\\
\nonumber &\Leftrightarrow& (k+\nu_b+2)w_{\gamma-1,\nu_b+1}(k-1)(\zeta_k+\zeta_{k-1})=\frac{\gamma g_k}{a}\\
&\Leftrightarrow& \zeta_k+\zeta_{k-1}=\frac{\gamma}{a(\gamma-2)}\frac{g_k}{(k+\nu_b+2)w_{\gamma,\nu_b}(k)}
\eea
which yields 
\bee
\zeta_k=-\frac{\gamma}{a(\gamma-2)}(-1)^k\sum_{j=0}^k\frac{(-1)^jg_j}{(j+\nu_b+2)w_{\gamma,\nu_b}(j)}, \ \  k\ge 1
\eee
and thus
\be
\label{enkvnevenenoenoivnedldnl}
\left|\begin{array}{l}
\theta_k=(-1)^kw_{\gamma-1,\nu_b+1}(k) S_k, \ \ k\ge 0\\
S_k=-\frac{\gamma}{a(\gamma-2)}\sum_{j=0}^k\frac{(-1)^jg_j}{(j+\nu_b+2)w_{\gamma,\nu_b}(j)}.
\end{array}\right.
\ee
Given $j\ge 0$ and $$\mathcal G=u^{K+j}$$ this yields:
$$\theta_k=\left|\begin{array}{l} 0\ \ \mbox{for}\ \ k\le K+j-1\\
(-1)^kw_{\gamma-1,\nu_b+1}(k) S_{K+j}\ \ \mbox{for}\ \ k\ge K+j
\end{array}\right.
$$ Therefore, the representation is a normally convergent series\footnote{{From \eqref{cnineneonevnev:0} 
\eqref{cneioneineonoen} and \eqref{aymptoticratio}, we have 
$$w_{\gamma-1,\nu_b+1}(k)=O(k^{K+\nu_b+3+\alpha_\gamma})\textrm{ as }k\to +\infty$$
which implies that the series converges for $|u|<1$.}} for $|u|<1$. Thus, from \eqref{cneioneineonoen}:
\bee
\mathcal T(u^{K+j})&=&-\frac{\gamma}{a(\gamma-2)}\frac{(-1)^{K+j}}{(K+j+\nu_b+2)w_{\gamma,\nu_b}(K+j)}\sum_{k=K+j}^{+\infty}(-1)^kw_{\gamma-1,\nu_b+1}(k) u^k\\
& = & \frac{1}{b}\frac{(-1)^{K-1+j}}{(K+j+\nu_b+2)w_{\gamma-1,\nu_b+1}(K+j-1)}\sum_{k=K+j}^{+\infty}(-1)^kw_{\gamma-1,\nu_b+1}(k) u^k
\eee
gives 
$$M_j(u)=(-1)^{K-1}\sum_{k=K+j}^{+\infty}(-1)^{k}w_{\gamma-1,\nu_b+1}(k)u^k=\sum_{m=j+1}^{+\infty}(-1)^{m}w_{\gamma-1,\nu_b+1}(K-1+m)u^{K-1+m}$$
and \eqref{formualmj} is proved. We now assume $j\ge 1$.\\

\noindent{\bf step 2} Proof of the bounds \eqref{lowerobundzerobis}, \eqref{lowerobundzerobisbis}, \eqref{bounditerate} and \eqref{vniovnioneneneo} can be found in Lemma 7.2 and Lemma 7.3 in \cite{MRRSprofile} and we conclude the proof.
\end{proof}

\subsection{Fixed point formulation of the $C^\infty$ solution}

For a given function $F$ with sufficient regularity at the origin, we denote
\be
\label{definitonioperatorramineder}
r_F(u)=F(u)-\sum_{k=0}^{K-1}f_ku^k, \ \ f_k=\frac{f^{(k)}(0)}{k!}.
\ee

\begin{lemma}[Fixed point formulation for the $C^\infty$ solution]
\label{fromulationcingty}
Let $\mathcal G$ be given by \eqref{fialfrmaulg} and consider the decomposition 
\be
\label{cnekoneonveonoevn}
\left|\begin{array}{l}
\mathcal G=\sum_{k=0}^{K-1}g_k u^k+r_{\mathcal G},\\
\Theta=\sum_{k={0}}^{K-1}\theta_k u^k+r_\Theta,
\end{array}\right.
\ee
where
\be
\label{enkvnevenenoenoivne}
\left|\begin{array}{l}
\theta_k=(-1)^kw_{\gamma-1,\nu_b+1}(k) S_k, \ \ 0\le k\le K-1,\\
S_k=\frac{\gamma}{a(\gamma-2)}\sum_{j=0}^k\frac{(-1)^jg_j}{(j+\nu_b+2)w_{\gamma,\nu_b}(j)},
\end{array}\right.
\ee
then the unique solution to the fixed point problem
\be
\label{fineioneoineeogn}
r_\Theta=(-1)^{K-1}S_{K-1}M_0(u)-\T(r_\mathcal G).
\ee
generates the unique solution $\Theta$ to \eqref{thetaeqaiotjoihs} which satisfies:
\be
\label{neoneneonovenoev}
\forall k\ge 0, \ \ \lim_{u\downarrow 0} \frac{\Theta^{(k)}(u)}{k!}=\theta_k
\ee
where $(\theta_k)_{k\ge 0}$ is computed by induction from \eqref{vneineionvioenvionoe}.
\end{lemma}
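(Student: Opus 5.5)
The proof has two parts. First, the fixed point problem \eqref{fineioneoineeogn} is solvable and unique. Second, its solution is exactly the $C^\infty$ solution of \eqref{thetaeqaiotjoihs}, with Taylor coefficients $\theta_k$. The structural input is the explicit linear theory. The coefficient relations \eqref{vneineionvioenvionoe}, solved in the form \eqref{enkvnevenenoenoivnedldnl}, determine $\theta_0,\dots,\theta_{K-1}$ uniquely from $g_0,\dots,g_{K-1}$. These $g_k$ depend only on the $\theta_j$ with $j<k$, by the vanishing orders in $x=bu$ of the nonlinear terms in \eqref{fialfrmaulg}: every nonlinear or $\Theta$-dependent term carries at least a factor $x$. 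The operator $\mathcal T$ of \eqref{defopeterator} inverts $u(1-u)\partial_u-[\gamma-2+(\nu_b+3)u]$ on sources vanishing to order $K$ at the origin. Since $\gamma-2=K-1+\alpha_\gamma$, the kernel $u^{\gamma-2}(1-u)^{-\gamma-\nu_b-1}$ vanishes only to the non-integer order $K-1+\alpha_\gamma$. Hence it is not $C^\infty$ at $0$, and adding it is precisely what is excluded.

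\textbf{Step 1: algebraic reduction.} Substitute the decomposition \eqref{cnekoneonveonoevn} into \eqref{thetaeqaiotjoihs}. Because the polynomial part satisfies the recursion up to degree $K-1$, the residual equation reads
\[
u(1-u)r_\Theta'-[\gamma-2+(\nu_b+3)u]r_\Theta=-\frac{r_\mathcal G}{b}+(\text{boundary term at order }u^K).
\]
The boundary term comes from the $-u^2\Theta'$ and $-(\nu_b+3)u\Theta$ contributions of $\theta_{K-1}u^{K-1}$. It equals $(K+\nu_b+2)\theta_{K-1}u^K$. By \eqref{Pdefmj} with $j=0$ and \eqref{enkvnevenenoenoivne}, its $\mathcal T$-preimage is exactly $(-1)^{K-1}S_{K-1}M_0$. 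Then $\mathcal T$ of the residual source, with the sign convention $\mathcal T(\mathcal G)$ solving \eqref{vneoinvenvoe}, gives \eqref{fineioneoineeogn}. Since $r_\mathcal G=O(u^K)$, the integral in \eqref{defopeterator} converges, and $\mathcal T(r_\mathcal G)=O(u^K)$.

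\textbf{Step 2: contraction.} Here $r_\mathcal G$ depends on $r_\Theta$ through \eqref{fialfrmaulg}, including the derivative term $u\Theta'$. I would set up a contraction on a small interval $[0,u_0]$ in a weighted space $\sup_u |r_\Theta|u^{-K}+|u r_\Theta'|u^{-K}$. The derivative is handled by substituting the equation itself, $u\Theta'=(1-u)^{-1}\{[\gamma-2+(\nu_b+3)u]\Theta-\mathcal G/b\}$. Since $\mathcal G$ depends on $u\Theta'$ only with a prefactor $O(x)$, this yields a quasilinear system solvable for $u\Theta'$ for small $u$. The Lipschitz constant of $r_\Theta\mapsto\mathcal T(r_\mathcal G)$ is $O(u_0)$ by \eqref{nveioneionoenv|} and the kernel estimate $\int_0^u v^{K-\gamma+1}\,dv\sim u^{2-\alpha_\gamma}$. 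For fixed $b$ this gives local existence and uniqueness, and the solution extends by standard ODE theory away from $u=0$.

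\textbf{Step 3: smoothness and identification.} This is the step I expect to be the main obstacle. One must show the fixed point is $C^\infty$ at $u=0$ with coefficients $\theta_k$, i.e.\ \eqref{neoneneonovenoev}, and in particular that no $u^{K-1+\alpha_\gamma}$ term is generated. Apply $\mathcal T$ termwise: $\mathcal T(u^m)$ for $m\ge K$ is given by the normally convergent power series computed in step 1 of the previous lemma, free of non-integer powers. The map $\mathcal T$ therefore preserves functions with a convergent, or at least asymptotic, Taylor expansion starting at order $K$. A bootstrap then follows: if $r_\Theta$ has an asymptotic expansion to order $N$, then so do $r_\mathcal G$ and $\mathcal T(r_\mathcal G)$, to order $N+1$, as does $M_0$ by \eqref{formualmj}. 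This gives $C^\infty$ at $0$. Uniqueness of formal coefficients under \eqref{vneineionvioenvionoe} then identifies them with $\theta_k$. The only alternative solution would differ by a multiple of the homogeneous kernel, which is non-smooth, so the $C^\infty$ solution is unique.
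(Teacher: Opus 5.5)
Your Step 1 coincides with the paper's proof. The paper likewise shows that the truncated polynomial leaves only the residual $-(K+\nu_b+2)\theta_{K-1}u^K$. It then integrates the remainder equation from $u=0$, with the homogeneous solution $u^{\gamma-2}(1-u)^{-\gamma-\nu_b-1}$ excluded for a $C^\infty$ solution. Finally it identifies $\mathcal T\big((K+\nu_b+2)\theta_{K-1}bu^K\big)=(-1)^{K-1}S_{K-1}M_0$ through \eqref{Pdefmj}.

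The genuine difference is in Steps 2--3. The paper proves nothing here about existence, uniqueness or smoothness of the fixed point. It defers all of this, in a footnote, to Lemma~\ref{lem: Regularity at P_3}, which is imported from \cite{MRRSprofile} and stated for curves through $P_3$ in the original $(\sigma,w)$ plane. You instead argue directly on \eqref{thetaeqaiotjoihs} at fixed $b$, in two stages:
\begin{itemize}
\item a contraction in a $u^{-K}$-weighted space, which works because every $\Theta$-dependent term of \eqref{fialfrmaulg} carries a factor $x=bu$;
\item a Taylor bootstrap based on the convergent series for $\mathcal T(u^m)$, $m\ge K$.
\end{itemize}
The paper's route is shorter and reuses a phase-plane result that also classifies the non-smooth competitors as $C^{K+\alpha}$. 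Yours is self-contained in the renormalized variables. It also makes explicit that the only obstruction is the non-integer exponent $\gamma-2=K-1+\alpha_\gamma$ of the kernel.

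Two small repairs are needed.

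In Step 2, the gain comes from a source of order $u^{K+1}$, i.e.\ $\int_0^u v^{K+2-\gamma}\,dv\sim u^{2-\alpha_\gamma}$. Your integrand $v^{K+1-\gamma}=v^{-\alpha_\gamma}$ only gives $u^{1-\alpha_\gamma}$.

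In Step 3, asymptotic expansions of $r_\Theta$ to all orders do not by themselves yield $C^\infty$. You also need expansions of $u\Theta'$ and of higher derivatives; these can be propagated through the equation. A cleaner closure is available. Since the $u\Theta'$-terms in $\mathcal G$ carry a factor $x$, you can solve for $u\Theta'$ and write the equation as $u\Theta'=F(u,\Theta)$. Here $F$ is holomorphic near $(0,\theta_0)$, with $F(0,\theta_0)=0$ and $\partial_\Theta F(0,\theta_0)=\gamma-2$, which is not an integer. The Briot--Bouquet theorem then gives a holomorphic solution with $\Theta(0)=\theta_0$. By the uniqueness in your Step 2, it is the fixed point of \eqref{fineioneoineeogn}.
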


\begin{proof}[Proof of Lemma \ref{fromulationcingty}]
{\bf step 1} Polynomial cancellations. Recall \eqref{thetaeqaiotjoihs} and write \eqref{cnekoneonveonoevn}. We compute:
\bee
&&u(1-u)\left(\sum_{k=0}^{K-1}\theta_ku^k\right)'-(\gamma-2+(\nu_b+3)u)\sum_{k=0}^{K-1}\theta_ku^k\\
&=& \sum_{k=1}^{K-1}(k\theta_ku^k-k\theta_ku^{k+1})-\sum_{k=0}^{K-1}(\gamma-2)\theta_ku^k-(\nu_b+3)\sum_{k=0}^{K-1}\theta_ku^{k+1}\\
& = &  -(\gamma-2)\theta_0+\sum_{k=1}^{K-1}\left[(k-\gamma+2)\theta_k-(k+\nu_b+2)\theta_{k-1}\right]u^k-(K+\nu_b+2)\theta_{K-1}u^{K}
\eee
Therefore,
\bee
&&u(1-u)\left(\sum_{k=0}^{K-1}\theta_ku^k\right)'-(\gamma-2+(\nu_b+3)u)\sum_{k=0}^{K-1}\theta_ku^k+\sum_{k=0}^{K-1}\frac{g_k}{b}u^k\\
& = & \sum_{k=1}^{K-1}\left[(k-\gamma+2)\theta_k-(k+\nu_b+2)\theta_{k-1}+\frac{g_k}{b}\right]u^k\\
&-& (\gamma-2)\theta_0+\frac{g_0}{b}-(K+\nu_b+2)\theta_{K-1}u^{K}=-(K+\nu_b+2)\theta_{K-1}u^{K}
\eee
where the final equality follows from the induction relation
$$
\left|\begin{array}{l}
\theta_0=\frac{g_0}{b(\gamma-2)}\\
(\gamma-k-2)\theta_k+(k+\nu_b+2)\theta_{k-1}=\frac{g_k}{b}, \ \ 1\le k\le K-1
\end{array}\right.
$$
which equivalent to \eqref{enkvnevenenoenoivne}.\\

\noindent{\bf step 2} Equation for the remainder. We infer from \eqref{thetaeqaiotjoihs} and \eqref{cnekoneonveonoevn},
\bee
&&-(K+\nu_b+2)\theta_{K-1}u^{K}+u(1-u)r_\Theta ' -\left[\gamma-2+(\nu_b+3)u\right]r_\Theta=-\frac{r_\mathcal G}{b}\\
&\Leftrightarrow& r_\Theta'-\left[\frac{\gamma-2}{u}+\frac{\gamma+\nu_b+1}{1-u}\right]r_\Theta=\frac{1}{u(1-u)}\left[(K+\nu_b+2)\theta_{K-1}u^{K}-\frac{r_\mathcal G}{b}\right]\\
&\Leftrightarrow&\frac{d}{du}\left(\frac{(1-u)^{\gamma+\nu_b+1}}{u^{\gamma-2}}r_\Theta\right)'=\frac{(1-u)^{\gamma+\nu_b+1}}{u^{\gamma-2}}\frac{1}{u(1-u)}\left[(K+\nu_b+2)\theta_{K-1}u^{K}-\frac{r_\mathcal G}{b}\right]\\
\eee
and thus, any $C^\infty$  solution must be the unique solution to the fixed point equation:
$$
r_\Theta=\T\left[(K+\nu_b+2)\theta_{K-1}bu^{K}-r_\mathcal G\right]
$$
with \eqref{neoneneonovenoev} forced by the Taylor expansion\footnote{The statement on existence and uniqueness of the fixed point, the fact that the corresponding solution is smooth, and the fact that \eqref{neoneneonovenoev} holds has in fact already been proved in a more general case in Lemma \ref{lem: Regularity at P_3}.}. 
We now recall the definition of $M_0(u)$ in {\eqref{Pdefmj}} from which we infer
$$r_\Theta=\frac{\theta_{K-1}}{w_{\gamma-1,\nu_b+1}(K-1)}M_0(u)-\T(r_\mathcal G)=(-1)^{K-1}S_{K-1}M_0(u)-\T(r_\mathcal G),$$ this is \eqref{fineioneoineeogn}. 
\end{proof}

\subsection{Convergence of the leading order Taylor coefficient}

The truncation of the Taylor series produces the leading order term, provided the last Taylor coefficient is non zero. This is 
the $S_\infty(d,\ell)\neq 0$ condition.

\begin{lemma}
\label{lenkenopoeje}
We have
\bea\label{behaviorSKasBconvergesto0}
S_{K-1}=(1+o_{b\to 0}(1))S_\infty
\eea
where $S_\infty$ is given by 
\be\label{Sinf}
S_\infty(d,\ell) :=  \frac{1}{a}\sum_{j=0}^{+\infty}\frac{(-1)^ja^jg_j^\infty}{\Gamma(\nu+j+3)}
\ee
and where $g_j^\infty$ corresponds to the limiting problem \eqref{eq: Formal Limit Equation} and is given by  \eqref{eq: Expansion of Theta and G}. 
\end{lemma}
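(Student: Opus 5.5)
The plan is to write $S_{K-1}$ from \eqref{enkvnevenenoenoivne} as a finite sum and pass to the limit $b\to0$ term by term. A uniform summable majorant, coming from the bound \eqref{boundednessgk} of Lemma \ref{lemmaboundthetak}, will justify this (discrete dominated convergence). First I normalize each term. The prefactor $\frac{\gamma}{a(\gamma-2)}$ tends to $\frac1{a_\infty}$, since $\gamma\sim c_\infty/b\to\infty$ and $a=\gamma b\to a_\infty$. For fixed $j$ I rewrite the weight using \eqref{defweight}:
$$
(j+\nu_b+2)w_{\gamma,\nu_b}(j)=\Gamma(j+\nu_b+3)\frac{\Gamma(\gamma-1-j)}{\Gamma(\gamma-1)}.
$$
The ratio of Gamma functions equals $\prod_{i=1}^{j}(\gamma-1-i)^{-1}=\gamma^{-j}(1+O(j^2/\gamma))$. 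Hence
$$
\frac{(-1)^jg_j}{(j+\nu_b+2)w_{\gamma,\nu_b}(j)}=\frac{(-1)^j\gamma^jg_j}{\Gamma(j+\nu_b+3)}\,(1+O(j^2b)) = \frac{(-1)^ja^j(g_j/b^j)}{\Gamma(j+\nu_b+3)}(1+O(j^2b)).
$$

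\textbf{Termwise convergence.} By \eqref{eq: psi_k Limit} of Proposition \ref{propboundbdependent}, $g_j/b^j\to g_j^\infty$. Also $\nu_b\to\nu_\infty$ and $a\to a_\infty$. So each fixed-$j$ term converges to the corresponding term of \eqref{Sinf}.

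\textbf{Uniform domination.} Next I need a bound on the $j$-th term that is uniform in $b$, for $0\le j\le K-1$. From \eqref{boundednessgk},
$$
\left|\frac{g_j}{(j+\nu_b+2)w_{\gamma,\nu_b}(j)}\right|\le \frac{c_{\nu,a}}{(1+j)(j+\nu_b+2)}\lesssim\frac{1}{1+j^2},
$$
uniformly in $b<b^*$. This is summable. Extending the terms with $j\ge K$ by zero, dominated convergence for series gives
$$
S_{K-1}\to \frac{1}{a_\infty}\sum_{j\ge0}\frac{(-1)^ja_\infty^jg_j^\infty}{\Gamma(\nu_\infty+j+3)}=S_\infty .
$$
The limiting series converges absolutely by the same majorant, or equivalently by \eqref{eq: Nonlinear Bound on Limiting Sequence}. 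This also gives the claimed bound $|u_n|\lesssim (1+n^2)^{-1}$. Here $K\sim c_\infty/b\to\infty$, so the truncation index runs to infinity.

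\textbf{Main obstacle.} The delicate point is making sure \eqref{boundednessgk} really holds uniformly up to $j=K-1$. Near the top end $\gamma-1-j$ is only of order $1$, so the approximation $\Gamma(\gamma-1-j)/\Gamma(\gamma-1)\approx\gamma^{-j}$ fails there. That is why I plan to use the exact weight bound rather than the asymptotic one for the domination step, and only use asymptotics for fixed $j$. The statement $S_{K-1}=(1+o(1))S_\infty$ is to be read as $S_{K-1}-S_\infty=o_{b\to0}(1)$; the multiplicative form then holds whenever $S_\infty\neq0$, which is exactly the case used in Theorem \ref{thmmain}.
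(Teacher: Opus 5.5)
Your proposal is correct and follows essentially the same route as the paper. The paper also uses a uniform summable majorant $c/(1+j^2)$ from \eqref{boundednessgk}, together with \eqref{eq: Nonlinear Bound on Limiting Sequence} for the limiting series, to reduce the claim to termwise convergence. That termwise convergence then follows from $g_j/b^j\to g_j^\infty$ in \eqref{eq: psi_k Limit} and from $(j+\nu_b+2)w_{\gamma,\nu_b}(j)=\Gamma(\nu_b+j+3)\Gamma(\gamma-1-j)/\Gamma(\gamma-1)\sim \Gamma(\nu+j+3)\,b^j/a^j$. Your remarks on the prefactor $\gamma/(a(\gamma-2))\to 1/a_\infty$, and that the multiplicative form presupposes $S_\infty\neq 0$, only make explicit what the paper leaves implicit.
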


{\begin{proof}[Proof of Lemma \ref{lenkenopoeje}] Since we have from \eqref{eq: Nonlinear Bound on Limiting Sequence},
\bee
\sum_{j=0}^{+\infty}\left|\frac{(-1)^ja^jg_j^\infty}{\Gamma(\nu+j+3)}\right| \leq c_\nu\sum_{j=0}^{+\infty}\frac{1}{1+j^2}\leq c_\nu<+\infty,
\eee 
and from \eqref{boundednessgk}, 
\bee
\sum_{j=0}^{+\infty}\left|\frac{(-1)^jg_j}{(j+\nu_b+2)w_{\gamma,\nu_b}(j)}\right| \leq c_\nu\sum_{j=0}^{+\infty}\frac{1}{1+j^2}\leq c_\nu<+\infty,
\eee 
its suffices to prove the convergence term by term as $b\to 0$. Now, recall \eqref{eq: psi_k Limit},
\bee
\lim_{b\to 0}\frac{g_j}{b^j} = g_j^\infty.
\eee
Since from the definition of $w_{\gamma,\nu_b}(j)$, 
\bee
(j+\nu_b+2)w_{\gamma,\nu_b}(j) &=& \Gamma(\nu_b+j+3)\frac{\Gamma(\gamma-1-j)}{\Gamma(\gamma-1)}= \frac{\Gamma(\nu+j+3)}{\gamma^j}(1+o_{b\to 0}(1))\\
&=& \Gamma(\nu+j+3)\frac{b^j}{a^j}(1+o_{b\to 0}(1))
\eee
we obtain 
\bee
\lim_{b\to 0}(j+\nu_b+2)\frac{w_{\gamma,\nu_b}(j)}{b^j} &=& \frac{\Gamma(\nu+j+3)}{a^j}
\eee
from which we deduce the convergence term by term when $b\to 0$, as desired. 
\end{proof}}

\subsection{The $\Thetam$ leading order term}

We may now extract the leading order terms in $\Theta$. From {\eqref{cnekoneonveonoevn}, \eqref{enkvnevenenoenoivne}, \eqref{fineioneoineeogn}} {and \eqref{behaviorSKasBconvergesto0}}, 
\bea
\label{expressionnonlnieanrtmer}
\nonumber \Theta(u) &=&\sum_{k=0}^{K-2}\theta_k u^k+\theta_{K-1}u^{K-1}+(-1)^{K-1}S_{K-1}M_0(u)-\T(r_\mathcal G)\\
\nonumber & = & \sum_{k=0}^{K-2}\theta_k u^k+(-1)^{K-1}S_{K-1}\left[w_{\gamma-1,\nu_b+1}(K-1)u^{K-1}+M_0(u)\right]-\T(r_\mathcal G)\\
& = & \sum_{k=0}^{K-2}\theta_k u^k+(-1)^{K-1}S_{\infty}\left[1+o_{b\to 0}(1)\right]\Thetam(u)-\T(r_\mathcal G)
\eea
and from \eqref{formualmj}
\bea
\label{deftehtemain}
\nonumber \Thetam(u)&=&w_{\gamma-1,\nu_b+1}(K-1)u^{K-1}+M_0(u)\\
& = &\sum_{j=0}^{+\infty}(-1)^{j}w_{\gamma-1,\nu_b+1}(K-1+j)u^{K-1+j}\nonumber\\&=&{\Gamma(\alpha_\gamma)\Gamma(1-\alpha_\gamma)\sum_{j=0}^{+\infty}\frac{\Gamma(K+j+\nu_b+2)}{\Gamma(K-1+\alpha_\gamma)\Gamma(j+1-\alpha_\gamma)}u^{K-1+j}.}
\eea
 Since we have from \eqref{cnineneonevnev:0} 
\eqref{cneioneineonoen} and \eqref{aymptoticratio} that
\bea
\nonumber&&(-1)^{j}w_{\gamma-1,\nu_b+1}(K-1+j)=\Gamma(\alpha_\gamma)\Gamma(1-\alpha_\gamma)\frac{\Gamma(K+j+\nu_b+2)}{\Gamma(K-1+\alpha_\gamma)\Gamma(j+1-\alpha_\gamma)}\\
\label{nekvneneonene} & = & \frac{\Gamma(\alpha_\gamma)\Gamma(1-\alpha_\gamma)}{\Gamma(j+1-\alpha_\gamma)}\left[1+o_{b\to 0}(1)\right]K^{\nu_b+j+3-\alpha_\gamma},
\eea
using the definition \eqref{formualmj} of $M_j(u)$, \eqref{deftehtemain} is equivalent to
\bea
\label{cneiovnenvenenne}
\nonumber &&\Thetam{(u)} =\Gamma(\alpha_\gamma)\Gamma(1-\alpha_\gamma)\frac{\Gamma(K+\nu_b+2)}{\Gamma(K-1+\alpha_\gamma)\Gamma(1-\alpha_\gamma)}u^{K-1}\\
\nonumber &+& \Gamma(\alpha_\gamma)\Gamma(1-\alpha_\gamma)\frac{\Gamma(K+\nu_b+3)}{\Gamma(K-1+\alpha_\gamma)\Gamma(2-\alpha_\gamma)}u^K+M_1(u) =  \Gamma(\alpha_\gamma)\Gamma(1-\alpha_\gamma)K^{\nu_b+3-\alpha_\gamma}u^{K-1}\\
\nonumber &\times & \left\{\left[1+o_{b\to 0}(1)\right]\left[\frac{1}{\Gamma(1-\alpha_\gamma)}+\frac{K+\nu_b+2}{\Gamma(2-\alpha_\gamma)}u\right]+(u K)^{\alpha_\gamma}\frac{M_1(u)}{u^{K-1}\Gamma(\alpha_\gamma)\Gamma(1-\alpha_\gamma)K^{\nu_b+3}u^{\alpha_\gamma}}\right\}.
\eea
We now turn to the study of $\Thetam$.

\begin{lemma}[Properties of $\Thetam$]
\label{lemmanekvneneoneon}
The function $\Thetam$ is {positive and strictly increasing} on $[0,\frac 12]$. Moreover, pick universal constants $\frac{1}{\delta},\Theta^*\gg1$ {then for all} $0<b<b^*(\Theta^*,\delta)$, the unique solution to 
\be
\label{neknvonenneudatsgeeaa}
\Thetam(u^*(\alpha_\gamma))=\Theta^*,  \ \ u^*(\alpha_\gamma)\in \left(0,\frac 12\right)
\ee 
satisfies the following bounds:\\

\underline{first integer boundary layer}.  {If $\alpha_\gamma$ is such that}
 $${\alpha_\gamma}=\frac{K^{\nu_b+3}(\sigma b)^{K-1}}{\Theta^*}, \ \textrm{{ with} }\delta<\sigma<\frac{1}{\delta}$$ then 
\be
\label{valueapprocimate}
\Gamma(\alpha_\gamma)\Gamma(1-\alpha_\gamma)K^{\nu_b+3-\alpha_\gamma}(u^*(\alpha_\gamma))^{K-1}=\Theta^*e^{O_{\nu}(\sigma)}
\ee
and
\be
\label{firsteataimteboundary}
u^*({\alpha_\gamma})=\sigma b(1+o_{b\to 0}(1)).
\ee

\noindent\underline{second {integer} boundary layer}.  {If $\alpha_\gamma$ is such that}
$${\alpha_\gamma}=1-\frac{K^{\nu_b+3}(\sigma b)^{K-1}}{\Theta^*}, \ \textrm{ {with} } \delta<\sigma<\frac 1{\delta},$$ then 
 \eqref{firsteataimteboundary} holds {and} 
 {\be
\label{valueapprocimate:2ndcase}
\Gamma(\alpha_\gamma)\Gamma(1-\alpha_\gamma)K^{\nu_b+3-\alpha_\gamma}\frac{K+\nu_b+2}{\Gamma(2-\alpha_\gamma)}(u^*(\alpha_\gamma))^{K}=\Theta^*e^{O_{\nu}(\sigma)}
\ee}
and
$$
u^*({\alpha_\gamma})=\sigma b(1+o_{b\to 0}(1)).
$$
\noindent\underline{away from the integer boundary layer $\alpha_\gamma$}.   {If $\alpha_\gamma$ is such that}
$$\frac{K^{\nu_b+3}\left(\frac{b}{\delta}\right)^{K-1}}{\Theta^*}<\alpha_\gamma<1-\frac{K^{\nu_b+3}\left(\frac{b}{\delta}\right)^{K-1}}{\Theta^*}$$
then 
\be
\label{jvheoeonenoenenv}
\Gamma(\alpha_\gamma)\Gamma(1-\alpha_\gamma)K^{\nu_b+3}\left(\frac{u^*(\alpha_\gamma)}{1-u^*(\alpha_\gamma)}\right)^{K-1}(u^*(\alpha_\gamma))^{\alpha_\gamma}=\Theta^*e^{O_\nu(1)}
\ee and 
\be\label{vnieneoneonevnonen}
\frac{b}{2\delta}<u^*(\alpha_\gamma)<\frac 12.
\ee
\end{lemma}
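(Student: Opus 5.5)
Positivity and monotonicity come directly from the series representation \eqref{deftehtemain}. For $0<\alpha_\gamma<1$ every coefficient
$$\Gamma(\alpha_\gamma)\Gamma(1-\alpha_\gamma)\frac{\Gamma(K+j+\nu_b+2)}{\Gamma(K-1+\alpha_\gamma)\Gamma(j+1-\alpha_\gamma)}$$
is strictly positive, and $\nu_b>0$ for $b$ small by \eqref{eq: Positivity of nu_infty}. The series converges for $|u|<1$, so $\Thetam$ is a power series with positive coefficients beginning at $u^{K-1}$. Hence it is positive and strictly increasing on $(0,\frac12]$, vanishes at $0$, and is continuous. The value $\Thetam(\frac12)$ is at least of order $\Gamma(\alpha_\gamma)\Gamma(1-\alpha_\gamma)K^{\nu_b+3}$ by \eqref{lowerobundzerobisbis}, which exceeds $\Theta^*$ for $b<b^*$ in every regime considered below. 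So $u^*(\alpha_\gamma)$ exists and is unique by the intermediate value theorem.

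For the quantitative bounds I use the decomposition \eqref{cneiovnenvenenne}: $\Thetam$ equals the two explicit monomials in $u^{K-1},u^K$ plus $M_1(u)$. I control $M_1$ by \eqref{lowerobundzerobis} for $u\le b$ and by \eqref{lowerobundzerobisbis} for $b\le u<\frac12$.

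In the first boundary layer, $\alpha_\gamma\to0$, so $\Gamma(\alpha_\gamma)\Gamma(1-\alpha_\gamma)\sim 1/\alpha_\gamma=\Theta^*/(K^{\nu_b+3}(\sigma b)^{K-1})$. For $u=\tau b$ the leading monomial is
$$\Theta^*\,\tau^{K-1}\sigma^{-(K-1)}K^{-\alpha_\gamma}\bigl(1+O(\alpha_\gamma)\bigr).$$
Since $K\sim c_\infty/b$, we have $K^{-\alpha_\gamma}=1+o(1)$. The $u^K$ term, relative to the leading one, is $(K+\nu_b+2)u/\Gamma(2-\alpha_\gamma)\sim K\tau b=O(\tau)$. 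By \eqref{lowerobundzerobis} with $j=1$, $M_1$ relative to the leading term is $O((Ku)^2)=O(\tau^2)$. So on $u=\tau b$ with $\tau$ bounded, $\Thetam(u)=\Theta^*(\tau/\sigma)^{K-1}e^{O(\tau)}$. Because $K-1\to\infty$, solving $=\Theta^*$ forces $(\tau/\sigma)^{K-1}=e^{O(\sigma)}$, that is $\tau=\sigma(1+O(\sigma/K))=\sigma(1+o(1))$. This gives \eqref{firsteataimteboundary} and \eqref{valueapprocimate}.

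In the second boundary layer, $1-\alpha_\gamma\to0$. Now $1/\Gamma(1-\alpha_\gamma)\approx 1-\alpha_\gamma$ is tiny, so the $u^{K-1}$ term drops out. The dominant term is the $u^K$ one, of size $\frac{1}{1-\alpha_\gamma}K^{\nu_b+3-\alpha_\gamma}(K+\nu_b+2)u^K$. The same scaling $u=\tau b$ and the same $(K-1)$-th root argument yield \eqref{valueapprocimate:2ndcase} and $u^*=\sigma b(1+o(1))$. The $u^{K-1}$ term contributes only a relative $O((1-\alpha_\gamma)/(K\tau b))=O(1)$ factor, which is absorbed in $e^{O(\sigma)}$. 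The $M_1$ term is again $O(\tau)$ relative.

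Away from the boundary layers, I first show $u^*>b/(2\delta)$ by evaluating at $u=b/(2\delta)$. There \eqref{lowerobundzerobis}/\eqref{lowerobundzerobisbis} together with the explicit monomials give $\Thetam\lesssim\Gamma(\alpha_\gamma)\Gamma(1-\alpha_\gamma)K^{\nu_b+3}(b/2\delta)^{K-1}$. Using $\Gamma(\alpha_\gamma)\Gamma(1-\alpha_\gamma)\lesssim 1/\min(\alpha_\gamma,1-\alpha_\gamma)$ and the assumed lower bound on $\min(\alpha_\gamma,1-\alpha_\gamma)$, this is at most $\Theta^*2^{-(K-1)}\cdot O(K)<\Theta^*$.

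For $u\in[b,\frac12)$, summing the monomials and $M_1$ via \eqref{lowerobundzerobisbis} gives the two-sided equivalence
$$\Thetam(u)\asymp\Gamma(\alpha_\gamma)\Gamma(1-\alpha_\gamma)K^{\nu_b+3}\Bigl(\tfrac{u}{1-u}\Bigr)^{K-1}u^{\alpha_\gamma}.$$
This gives \eqref{jvheoeonenoenenv}, and the upper bound $u^*<\frac12$ comes from the earlier existence argument. If $u^*$ lies in $[b/(2\delta),b]$, I use \eqref{lowerobundzerobis} instead, at the cost of a $\delta$-dependent constant.

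The main obstacle is the intermediate regime, where I need uniform two-sided control of $\Thetam$ rather than just its leading monomial. I would first try to get it directly from \eqref{lowerobundzerobisbis} applied to the tail $\Thetam-M_1$. Failing that, I would use the Beta-integral representation
$$\sum_j\frac{\Gamma(K+j+\nu_b+2)}{\Gamma(j+1-\alpha_\gamma)}u^j\propto\int_0^1 t^{-\alpha_\gamma}\cdots,$$
which makes the $(u/(1-u))^{K-1}u^{\alpha_\gamma}$ shape transparent.
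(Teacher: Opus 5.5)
Your argument is correct and follows essentially the paper's route (the paper defers to Lemma 7.6 of \cite{MRRSprofile}): positivity and monotonicity come from the positive coefficients in \eqref{deftehtemain}, and each regime is then handled through the decomposition \eqref{cneiovnenvenenne} with $M_1$ controlled by \eqref{lowerobundzerobis}--\eqref{lowerobundzerobisbis}; the explicit integral representation is already encoded in \eqref{lowerobundzerobisbis}, so you do not need a separate Beta-integral fallback. The step you flag as the main obstacle is immediate: for $u\ge b$ you have $\Thetam\ge M_1$, and the two explicit monomials are bounded by the target $\Gamma(\alpha_\gamma)\Gamma(1-\alpha_\gamma)K^{\nu_b+3}\left(\frac{u}{1-u}\right)^{K-1}u^{\alpha_\gamma}$, since their ratios to it are $(1+o(1))$ times $(Ku)^{-\alpha_\gamma}(1-u)^{K-1}/\Gamma(1-\alpha_\gamma)\lesssim 1$ (as $Kb\sim a_\infty$) and $(Ku)^{1-\alpha_\gamma}(1-u)^{K-1}/\Gamma(2-\alpha_\gamma)\lesssim 1$; one small slip: at $u=b/(2\delta)$ you dropped a factor $(1-u)^{-(K-1)}=e^{O(1/\delta)}$, which is harmless because $2^{-(K-1)}$ absorbs it for $b<b^*(\delta)$.
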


\begin{proof}[Proof of Lemma \ref{lemmanekvneneoneon}]
The proof is verbatim the same as that of Lemma 7.6 in \cite{MRRSprofile}.
\end{proof}

\subsection{Bilinear estimate for $\mathcal T$}

We now develop the set of nonlinear estimates to control the fixed point equation \eqref{fineioneoineeogn}.

\begin{lemma}[Pointwise bilinear estimate]
\label{lemmabilinear}
Let $$F=\sum_{k=0}^{K-1}f_ku^k+r_F, \ \ G=\sum_{k=0}^{K-1}g_ku^k+r_G.$$ 
and $$A_F=\sup_{0\le k\le K-1}\frac{|f_k|}{|w_{\gamma-1,\nu_b+1}(k)|}, \ \ A_G=\sup_{0\le k\le K-1}\frac{|g_k|}{|w_{\gamma-1,\nu_b+1}(k)|},$$ then we have the following pointwise bounds for $0\leq u\leq \frac 12$:\\
\begin{enumerate}
\item{Bound for $\T r$}. Let $1\le j\le 5$, then
\bea
\label{pointwiseboudnproftu}
&&\left|\frac{\mathcal T(r_{u^jFG})}{M_0}(u)\right|\leq   c_{\nu,a}A_FA_G\left[1+u^{K+j-2}\Gamma(\alpha_\gamma)K^{\nu_b+3-\alpha_\gamma}\right]\\
\nonumber & + &  \frac{c_{\nu,a}}{b}\left(1+u^{K-1}\Gamma(\alpha_\gamma)K^{\nu_b+3-\alpha_\gamma}\right)\left[A_F\left\|\frac{r_G}{M_0}\right\|_{L^\infty(v\le u)}+A_G\left\|\frac{r_F}{M_0}\right\|_{L^\infty(v\le u)}\right]\\
\nonumber &+&\frac{c_{\nu,a}}{b} \left\|\frac{r_F}{M_0}\right\|_{L^\infty(v\le u)}\left\|\frac{r_G}{M_0}\right\|_{L^\infty(v\le u)}\|M_0\|_{L^\infty(v\le u)}.
\eea

\item{Bound for $r$}. Let $0\le j\le 5$,
\bea
\label{pointwiseboudnproftubis}
\nonumber&&\left|\frac{r_{u^jFG}}{M_0}(u)\right|\leq   c_{\nu,a}A_FA_G\left[1 {+u^{K-2}\Gamma(\alpha_\gamma)K^{\nu_b+2-\alpha_\gamma}}+u^{K-1}\Gamma(\alpha_\gamma)K^{\nu_b+3-\alpha_\gamma}\right]\\
\nonumber & + &  c_{\nu,a}\left(1+u^{K-1}\Gamma(\alpha_\gamma)K^{\nu_b+3-\alpha_\gamma}\right)\left[A_F\left\|\frac{r_G}{M_0}\right\|_{L^\infty(v\le u)}+A_G\left\|\frac{r_F}{M_0}\right\|_{L^\infty(v\le u)}\right]\\
 &+&c_{\nu,a} \left\|\frac{r_F}{M_0}\right\|_{L^\infty(v\le u)}\left\|\frac{r_G}{M_0}\right\|_{L^\infty(v\le u)}\|M_0\|_{L^\infty(v\le u)}.
\eea
\end{enumerate}
\end{lemma}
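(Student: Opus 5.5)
The plan is to reduce everything to the coefficient-wise structure of the product and to the explicit kernel of $\mathcal T$. Write $FG=\sum_{k\le 2K-2}(\cdot)u^k+\dots$ and split
$$u^jFG=u^j\Big(\sum_{k<K}f_ku^k\Big)\Big(\sum_{k<K}g_ku^k\Big)+u^j\Big(\sum_{k<K}f_ku^k\Big)r_G+u^j\Big(\sum_{k<K}g_ku^k\Big)r_F+u^jr_Fr_G .$$
This gives three contributions, which match the three lines of \eqref{pointwiseboudnproftu} and \eqref{pointwiseboudnproftubis}.

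\textbf{Polynomial$\times$polynomial.} The coefficient of $u^m$ is a convolution $\sum f_ig_{m-j-i}$. First I would prove a discrete convolution bound for the weight,
$$\sum_{i+i'=m}|w_{\gamma-1,\nu_b+1}(i)||w_{\gamma-1,\nu_b+1}(i')|\lesssim |w_{\gamma-1,\nu_b+1}(m)|, \quad m\le K-1,$$
and, for $K\le m\le 2K-2+j$, a bound by $|w(K-1)|$ times geometric factors. This is the analogue of Lemma \ref{lemmamultiplication}, using the log-convexity of $\Gamma(\gamma-2-k)\Gamma(k+\nu_b+3)$ in $k$ and the fact that $\gamma\sim a/b$. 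The part of degree $<K$ is removed by $r$. For the bound on $r$, the surviving monomials $u^m$, $m\ge K$, are compared with $M_0$ through \eqref{lowerobundzerobis} and \eqref{lowerobundzerobisbis}. For small $u$, one has $M_0\gtrsim \Gamma(\alpha_\gamma)\Gamma(1-\alpha_\gamma)K^{\nu_b+4-\alpha_\gamma}u^K$. For larger $u$, the top-degree excess produces the $u^{K-1}\Gamma(\alpha_\gamma)K^{\nu_b+3-\alpha_\gamma}$ and $u^{K-2}$ terms. For $\mathcal T$, I would use the explicit formula $\mathcal T(bu^{K+i})=M_i/\big((K+i+\nu_b+2)(-1)^iw(K-1+i)\big)$ together with the comparison $M_i\lesssim M_0$ type bounds from \eqref{lowerobundzerobis}. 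The extra $u^{j}$ then gives the $u^{K+j-2}$ term.

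\textbf{Mixed terms.} These are $u^jP_F\,r_G$ with $|P_F(v)|\le A_F\sum_{k<K}|w(k)|v^k$. I would show that for $v\le\frac12$ this sum is $\lesssim A_F(1+v^{K-1}\Gamma(\alpha_\gamma)K^{\nu_b+3-\alpha_\gamma})$. The sum is dominated by its last terms, since the ratio $w(k)/w(k-1)=(k+\nu_b+2)/(\gamma-k-2)$ exceeds $1$ only near $k\sim K$. Then $|r_{u^jP_Fr_G}|\le |u^jP_Fr_G|+|\text{Taylor part}|$; since $r_G=O(u^K)$, the Taylor part below degree $K$ vanishes, so $r_{u^jP_Fr_G}=u^jP_Fr_G$. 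This gives line two of \eqref{pointwiseboudnproftubis} directly, after bounding $|r_G|\le \|r_G/M_0\|M_0$. For $\mathcal T$, I would use positivity of the kernel and monotonicity of $M_0$ to get $|\mathcal T(hM_0)|\le \|h\|_\infty\,\mathcal T(M_0)$ for $v\le u$. The iterate bound \eqref{bounditerate} then gives $\mathcal T(u^jM_0)\le c_\nu M_0/b$, which produces the $1/b$.

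\textbf{Remainder$\times$remainder.} Here $|u^jr_Fr_G|\le \|r_F/M_0\|\|r_G/M_0\|\,\|M_0\|_{L^\infty(v\le u)}\,M_0$. The same positivity argument together with \eqref{bounditerate} gives the third lines.

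The main obstacle is the polynomial convolution estimate uniformly in $K\sim 1/b$, in particular controlling the degrees $K\le m\le 2K-2$, where the weight $w_{\gamma-1,\nu_b+1}$ changes sign and its modulus is governed by $\Gamma(\alpha_\gamma)$. I would handle it by writing these contributions as explicit combinations of $M_i$ via \eqref{formualmj}, and by using \eqref{cnineneonevnev:0} with Stirling asymptotics to bound the ratios.
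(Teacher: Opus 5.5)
Your proposal is correct and follows the same route as the proof the paper defers to (Lemma 7.7 of \cite{MRRSprofile}). That route splits $u^jFG$ into polynomial$\times$polynomial, mixed and remainder$\times$remainder pieces. It controls the polynomial coefficients by weight convolution bounds and by $\sum_{k\le K-1}w_{\gamma-1,\nu_b+1}(k)u^k\lesssim 1+w_{\gamma-1,\nu_b+1}(K-1)u^{K-1}$, compares with $M_0$ through \eqref{lowerobundzerobis}--\eqref{lowerobundzerobisbis}, and handles every term containing $r_F$ or $r_G$ by positivity of the kernel of $\mathcal T$ together with \eqref{bounditerate}.

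The high-degree range you single out as the main obstacle needs no information on $M_i$ beyond $i\le 5$. The same positivity gives $0\le \mathcal T(v^{K+i})(u)\le u^i\,\mathcal T(v^K)(u)=\frac{u^iM_0(u)}{b(K+\nu_b+2)w_{\gamma-1,\nu_b+1}(K-1)}$ for every $i\ge 0$. It is this factor $u^i$ that produces the term $u^{K+j-2}\Gamma(\alpha_\gamma)K^{\nu_b+3-\alpha_\gamma}$; a bare bound $M_i\lesssim M_0$ would not do, since it fails at small $u$ when $\alpha_\gamma$ lies in the boundary layers.
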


\begin{proof}[Proof of Lemma \ref{lemmabilinear}] 
The proof is verbatim the same as that of Lemma 7.7 in \cite{MRRSprofile}.
\end{proof}

\subsection{Controlling the final remainder}

We are now in position to prove the exit condition for the $C^\infty$ solution for a large enough range of parameters.

\begin{lemma}[Uniform control of the final remainder]
\label{propositionfundamental}
Pick universal constants $\frac{1}{\delta},\Theta^*\gg 1$, then for all $0<b<b^*(\delta,\Theta^*)\ll1$ small enough, the following holds. Let 
\be
\label{boudnaryleyeralphag}
\frac{K^{\nu_b+3}\left(b\delta\right)^{K-1}}{\Theta^*}<\alpha_\gamma<1-\frac{K^{\nu_b+3}\left(b\delta\right)^{K-1}}{\Theta^*}
\ee
and  let $u^*(\alpha_\gamma)$ be the solution to \eqref{neknvonenneudatsgeeaa} described by Lemma \ref{lemmanekvneneoneon}. Then the $C^\infty$ solution to \eqref{fineioneoineeogn} satisfies the bound:
\be
\label{neionenoenven}
\frac{|r_{\mathcal G}|}{M_0}+\frac{|\T r_{\mathcal G}|}{M_0}<\sqrt{b}.
\ee
\end{lemma}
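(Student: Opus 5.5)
The bound will come from a bootstrap (continuity) argument in $u$ on $[0,u^*(\alpha_\gamma)]$ for the fixed point equation \eqref{fineioneoineeogn}, in the spirit of Lemma 7.8 of \cite{MRRSprofile}. Introduce the bootstrap quantity
$$
\mathcal N(u)=\left\|\frac{r_\Theta}{M_0}\right\|_{L^\infty(v\le u)}+\left\|\frac{u\,r_\Theta'}{M_0}\right\|_{L^\infty(v\le u)}.
$$
By \eqref{fineioneoineeogn} and Lemma \ref{lenkenopoeje}, $\mathcal N$ should be dominated by $|S_{K-1}|\lesssim |S_\infty|+1$, plus the contribution of $\T(r_{\mathcal G})/M_0$. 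For the derivative part I will use \eqref{vniovnioneneneo}, which costs a factor $c_\nu/b$. Since $u\Theta'$ enters $\mathcal G$ only with additional factors $b^2x$, $bx^2$ or $x^{j+1}$, this loss should be absorbed. Near $u=0$ we have $r_\Theta=O(u^K)$ and $M_0\sim u^K$ by \eqref{lowerobundzerobis}, so $\mathcal N$ is finite and continuous. I will assume $\mathcal N(u)\le C^*$ on $[0,u_1]$ for a large constant $C^*$ and show that in fact $\mathcal N(u)\le C^*/2$ for $u_1\le u^*$.

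The key step is to expand $\mathcal G$ using the structure \eqref{fialfrmaulg}. Each term is $\mathcal G_0$, or a coefficient with holomorphic bound \eqref{nveioneionoenv|} times a product of at most four factors $\Theta$ or $u\Theta'$, always multiplied by $x^{m}=b^mu^m$ with $m\ge1$ and total power in $b$ at least $2$. I will write $\Theta=P_\Theta+r_\Theta$ with $P_\Theta$ the truncated Taylor polynomial. Lemma \ref{lemmaboundthetak} then gives $A_\Theta\le c_{\nu,a}$, and Lemma \ref{lemmamultiplication} shows that multiplication by the coefficients preserves the weighted norms. Products are handled by iterating the bilinear estimate of Lemma \ref{lemmabilinear}: \eqref{pointwiseboudnproftubis} controls the intermediate remainders $r$ of the products, and \eqref{pointwiseboudnproftu} controls the final $\T r$. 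For $\mathcal G_0$, $|(\mathcal G_0)_k|\le C^kb^k$ compared with $w_{\gamma,\nu_b}(k)\gtrsim \Gamma(k+\nu+2)b^k/a^k$ gives a tail that is superexponentially small, and \eqref{bounditerate} controls $\T$ acting on it.

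The main point is to track the powers of $b$ against the losses $1/b$ in \eqref{pointwiseboudnproftu}. Every nonlinear or small-linear term carries at least $b^2$ (from $b^2x$, $bx^2$, or $x^{j+1}$ with $j\ge1$, where $x=bu$), so after the $1/b$ loss we gain at least $b$. The dangerous factors are $u^{K+j-2}\Gamma(\alpha_\gamma)K^{\nu_b+3-\alpha_\gamma}$ and $\|M_0\|_{L^\infty}$. The range \eqref{boudnaryleyeralphag} is used exactly here: the condition $u\le u^*(\alpha_\gamma)$ together with \eqref{valueapprocimate}, \eqref{jvheoeonenoenenv}, \eqref{vnieneoneonevnonen} gives
$$
\Gamma(\alpha_\gamma)\Gamma(1-\alpha_\gamma)K^{\nu_b+3-\alpha_\gamma}u^{K-1}\lesssim\Theta^*,\qquad \|M_0\|_{L^\infty(v\le u^*)}\lesssim \Theta^*,
$$
using that $\Thetam$ is increasing by Lemma \ref{lemmanekvneneoneon}. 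All these factors are therefore bounded by powers of $\Theta^*$ and $\delta^{-1}$. This yields
$$
\frac{|r_{\mathcal G}|}{M_0}+\frac{|\T r_{\mathcal G}|}{M_0}\le C(\Theta^*,\delta)\, b\,(1+C^*)^4 .
$$
For $b<b^*(\delta,\Theta^*)$ this is at most $\sqrt b$, which both closes the bootstrap and proves \eqref{neionenoenven}.

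I expect the main obstacle to be the boundary-layer ends of \eqref{boudnaryleyeralphag}, where $\Gamma(\alpha_\gamma)$ or $\Gamma(1-\alpha_\gamma)$ is huge. There I would replace the crude bounds by the sharper small-$u$ asymptotics \eqref{lowerobundzerobis}, valid for $u\le b$, using $u^*\sim\sigma b$ with $\sigma\ge\delta$. I would also check that the $\Gamma$ factors from the bilinear estimate appear only in the combination fixed by \eqref{valueapprocimate} and \eqref{valueapprocimate:2ndcase}, so that they are controlled by $\Theta^*$.
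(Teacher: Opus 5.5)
Your architecture matches the paper's: a continuity argument on $[0,u^*(\alpha_\gamma)]$, splitting $\mathcal G$ as in \eqref{fialfrmaulg}, iterating Lemma \ref{lemmabilinear}, and controlling $\|M_0\|_{L^\infty}$ and the $\Gamma$-factors through $u\le u^*(\alpha_\gamma)$ and the monotonicity of $\Thetam$. However, the bookkeeping of the derivative terms, which is the real content of this lemma, does not close as written.

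First, your bootstrap hypothesis $\mathcal N\le C^*$ with $C^*$ independent of $b$ is already false as $u\to 0$. Since $uM_0'/M_0\to K$ and $\mathcal T(u^K)$ is a constant multiple of $M_0$ by \eqref{Pdefmj}, one gets $u r_\Theta'/M_0\to K\lim_{u\to 0}r_\Theta/M_0$, which is of order $K|S_{K-1}|\sim|S_\infty|/b$. Likewise the Taylor part only satisfies $A_{u\Theta'}\le K A_\Theta\lesssim 1/b$. So $u\Theta'$ genuinely costs $1/b$, in its coefficients and in its remainder, on top of the $1/b$ lost by $\mathcal T$ in \eqref{pointwiseboudnproftu}. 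With your count (``at least $b^2$'', and $x^{j+1}$ for the nonlinear derivative terms), a derivative term contributes $b^2\cdot b^{-1}\cdot b^{-1}=O(1)$ to $\mathcal T r_{\mathcal G}/M_0$. If you instead let $C^*\sim 1/b$, your final bound $Cb(1+C^*)^4$ diverges.

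Second, \eqref{vniovnioneneneo} only handles the $S_{K-1}M_0$ piece of $r_\Theta$. You give no way to control $u(\mathcal T r_{\mathcal G})'$.

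The missing ingredients are the following.
\begin{itemize}
\item Every term of \eqref{fialfrmaulg} in fact carries $b^3$: $b^2x=b^3u$, $bx^2=b^3u^2$, $x^{j+1}$ and $bx^{j}$ with $j\ge 2$, and $x^{j+2}$ and $bx^{j+1}$ with $j\ge 1$. Moreover $u\Theta'$ appears at most linearly. This is exactly enough to absorb two losses of $1/b$.
\item You should not bootstrap the derivative at all. Rearranging the equation \eqref{vneoinvenvoe} satisfied by $\mathcal T(\mathcal G)$ gives $u[\mathcal T(\mathcal G)]'=\frac{1}{1-u}\big[\frac{\mathcal G}{b}+(\gamma-2+(\nu_b+3)u)\mathcal T(\mathcal G)\big]$. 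Combined with \eqref{vniovnioneneneo}, this yields \eqref{estiamtieboudnary}: $|r_{u\Theta'}|/M_0\le \frac{c_\nu}{b}\big[1+|r_{\mathcal G}|/M_0+|\mathcal T r_{\mathcal G}|/M_0\big]$.
\end{itemize}
Bootstrapping only $|r_\Theta|/M_0<\Theta^*$, the term-by-term estimates then combine into the linear inequality $b\|\mathcal T r_{\mathcal G}/M_0\|+\|r_{\mathcal G}/M_0\|\le b^2C_{\Theta^*,\delta}\big[1+\|r_{\mathcal G}/M_0\|+\|\mathcal T r_{\mathcal G}/M_0\|\big]$. This closes by absorption and gives \eqref{neionenoenven}.

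Finally, to start the continuity argument you need a $b$-uniform bound on $r_\Theta/M_0$ near $u=0$, not just finiteness. The paper obtains it from \eqref{boundednessgk} and \eqref{cneioneineonoen}.
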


\begin{proof}[Proof of Lemma \ref{propositionfundamental}]  We recall the fixed point formulation \eqref{expressionnonlnieanrtmer} of the $C^\infty$ solution and the expression \eqref{fialfrmaulg} for the nonlinear term:
\bee
 &&\mathcal G= \mathcal G_0+\left[b^2x\th_1+bx^2\th_2\right]\Theta+\left[b^2x\th_3+bx^2\th_4\right]u\Theta'\\
 \nonumber & + &  \sum_{j=2}^4x^{j+1}\mt^{(1)}_j\Theta^j+b\sum_{j=2}^4m^{(2)}_jx^j\Theta^j+  \left[\sum_{j=1}^3\mt^{(3)}_jx^{j+2}\Theta^j+b\sum_{j={1}}^{{3}}\mt^{(4)}_jx^{j{+1}}\Theta^j\right](u\Theta').
 \eee
 We now bootstrap the bound
 \be
 \label{bootstrap}
 \frac{|r_\Theta(u)|}{M_0}<\Theta^*.
 \ee
 {Let us first check that \eqref{bootstrap} holds for $u$ small enough. From  \eqref{fineioneoineeogn}, we have
\bee
 \frac{|r_\Theta(u)|}{M_0} &\leq& c_{\nu,a}+ \frac{|\TT(r_{\mathcal{G}})(u)|}{M_0}\leq c_{\nu,a}+ \left(\sup_{v\leq u}\frac{r_{\mathcal{G}}(v)|}{v^K}\right)\frac{|\TT(u^K)|}{M_0}\\
 &\leq& c_{\nu,a}+ \left(\sup_{v\leq u}\frac{|r_{\mathcal{G}}(v)|}{v^K}\right)\frac{1}{b(K+\nu_b+2)w_{\gamma-1,\nu_b+1}(K-1)}  
\eee
where we have used \eqref{Pdefmj} in the last inequality. Also, we have, using \eqref{boundednessgk} and \eqref{cneioneineonoen},
\bee
\lim_{u\to 0}\sup_{v\leq u}\frac{|r_{\mathcal{G}}(v)|}{v^K} = |g_K| \leq c_{\nu,a}\frac{|w_{\gamma,\nu_ b}(K)|}{1+K}\leq c_{\nu,a}\frac{|w_{\gamma-1,\nu_ b+1}(K-1)|}{(1+K)(\gamma-2)}.
\eee
We infer for $u$ small enough that
\bee
\frac{|r_\Theta(u)|}{M_0} \leq c_{\nu,a}<\Theta_*
\eee
so that \eqref{bootstrap} indeed holds for $u$ small enough}. We therefore work on the interval ${u}\in[0,u_{\rm boot}]$ with ${0<}u_{\rm boot}\le u^*(\alpha_\gamma)$ where \eqref{bootstrap} holds, and aim at improving \eqref{bootstrap}.\\

\noindent{\bf step 1} Uniform bounds for $0\le u\le u^*(\alpha_\gamma)$. By definition \eqref{deftehtemain}: $$\Thetam(u)=w_{\gamma-1,\nu_b+1}(K-1)u^{K-1}+M_0(u)$$ and hence, since $\Thetam$ is non decreasing: 
\be
\label{uniformbound}
\forall u\in [0,u^*(\alpha_\gamma)], \ \ 0\le M_0(u)\le \Thetam(u)\le \Thetam(u^*(\alpha_\gamma))=\Theta^*.
\ee
Observe that in the regime \eqref{valueapprocimate}:
$$
\Gamma(\alpha_\gamma)\Gamma(1-\alpha_\gamma)K^{\nu_b+3-\alpha_\gamma}(u^*(\alpha_\gamma))^{K-1}=C_{\Theta^*,\delta},$$
{ in the regime \eqref{valueapprocimate:2ndcase}, recalling \eqref{firsteataimteboundary}:
\bee
\Gamma(\alpha_\gamma)\Gamma(1-\alpha_\gamma)K^{\nu_b+3-\alpha_\gamma}(u^*(\alpha_\gamma))^{K-1}\leq \frac{\Gamma(2-\alpha_\gamma)\Theta^*e^{O_{\nu}(\sigma)}}{u^*(\alpha_\gamma)(K+\nu_b+2)}\leq \frac{\Theta^*e^{O_{\nu}(\sigma)}}{\de}\leq C_{\Theta^*,\delta},
\eee}
and in the regime \eqref{jvheoeonenoenenv}, recalling \eqref{vnieneoneonevnonen}:
$$\Gamma(\alpha_\gamma)\Gamma(1-\alpha_\gamma)K^{\nu_b+3-\alpha_\gamma}(u^*(\alpha_\gamma))^{K-1}\leq  \frac{(1-u^*(\alpha_\gamma))^{K-1}}{(Ku^*(\alpha_\gamma))^{\alpha_\gamma}}\Theta^*e^{O_\nu(1)}\leq C_{\Theta^*,\delta}
$$
In {all three} cases 
\be
\label{noennoeoenvoen}
\forall u\in[0,u^*(\alpha_\gamma)], \ \  \Gamma(\alpha_\gamma)\Gamma(1-\alpha_\gamma)K^{\nu_b+3-\alpha_\gamma}u^{K-1}\le C_{\Theta^*,\delta}.
\ee
{Also, we have, using \eqref{noennoeoenvoen},
\bee
\Gamma(\alpha_\gamma)\Gamma(1-\alpha_\gamma)K^{\nu_b+2-\alpha_\gamma}u^{K-2}\le \Gamma(\alpha_\gamma)\Gamma(1-\alpha_\gamma)K^{\nu_b+2-\alpha_\gamma}(u^*(\alpha_\gamma))^{K-2}\le \frac{C_{\Theta^*,\delta}}{Ku^*(\alpha_\gamma)}
\eee
and thus, using \eqref{firsteataimteboundary} \eqref{vnieneoneonevnonen}, we deduce
\be
\label{noennoeoenvoen:0:0}
\forall u\in[0,u^*(\alpha_\gamma)], \ \  \Gamma(\alpha_\gamma)\Gamma(1-\alpha_\gamma)K^{\nu_b+2-\alpha_\gamma}u^{K-2}\le C_{\Theta^*,\delta}.
\ee
}
We now {decompose $r_{\mathcal{G}}$ according to the decomposition \eqref{fialfrmaulg} of $\mathcal{G}$} and estimate all the terms.\\

\noindent{\bf step 2} Source term. For any holomorphic function $H(x)$, we have from {\eqref{definitonioperatorramineder}}:
$$|r_{H}|=\left|\sum_{k=K}^{+\infty}h_ku^{k}\right|\leq \sum_{k=K}^{+\infty}(bC_hu)^{k}\le (bC_h)^Ku^K.$$ We therefore estimate for $u\le b$ from \eqref{lowerobundzerobis}:
$$\frac{|r_{H}|}{M_0}\leq  \frac{(bC_H)^Ku^K}{\Gamma(\alpha_\gamma)\Gamma(1-\alpha_\gamma)K^{\nu_b+4-\alpha_\gamma}u^{K}}\le    \frac{(bC_H)^K}{\Gamma(\alpha_\gamma)\Gamma(1-\alpha_\gamma)K^{\nu_b+3}}$$
and for $b\le u\le \frac 12$ from {\eqref{lowerobundzerobisbis}}:
\bee
\frac{|r_H|}{M_0}\le \frac{(bC_H)^Ku^K}{\Gamma(\alpha_\gamma)\Gamma(1-\alpha_\gamma)K^{\nu_b+3}\left(\frac{u}{1-u}\right)^{K-1}u^{\alpha_\gamma}}\leq \frac{(bC_H)^K}{\Gamma(\alpha_\gamma)\Gamma(1-\alpha_\gamma)K^{\nu_b+3}}
\eee
It implies the rough bound
\be
\label{vebivebibebeibv}
\left\|\frac{r_{H}}{M_0}\right\|_{L^\infty(0\le u\le \frac 12)}\leq b^4.
\ee 
In view of \eqref{nveioneionoenv|} this bound can be applied to the source term $\mathcal G_0$.\\

\noindent{\bf step 3} Derivative term. From \eqref{cnekoneonveonoevn}, we have
\be
\label{firstrkatjoijit}
\left|\begin{array}{l}
(u\Theta')_k=k\theta_k, \ \  0\le k\le K-1\\
r_{u\Theta'}=ur_\Theta'.
\end{array}\right.
\ee
Moreover, from \eqref{fineioneoineeogn},
$$ur'_\Theta=(-1)^{K-1}S_{K-1}uM'_0(u)-u\left[\T(r_\mathcal G)\right]'.$$ 
 By taking derivative of \eqref{vneoinvenvoe},
$$u\left[\T(\mathcal G)\right]'=\frac{1}{1-u}\left[\frac{\mathcal G}{b}+[(\gamma-2)+(\nu_b+3)u]\mathcal T(\mathcal G)\right]$$
which yields 
$$
r_{u\Theta'}=(-1)^{K-1}S_{K-1}uM'_0(u)-\frac{1}{1-u}\left[\frac{r_\mathcal G}{b}+[(\gamma-2)+(\nu_b+3)u]\mathcal T(r_\mathcal G)\right]
$$ 
We obtain the estimate,  using \eqref{vniovnioneneneo} {and \eqref{esitmaitmitot}}:
\be
\label{estiamtieboudnary}
\left|\begin{array}{l}
\frac{|r_{u\Theta'}|}{M_0}\leq \frac{c_\nu}{b}\left[1+\frac{|r_\mathcal G|}{M_0}+\frac{|\T(r_\mathcal G)|}{M_0}\right]\\
\sup_{0\le k\le K-1}\frac{|(u\Theta')_k|}{w_{\gamma-1,\nu_b+1}(k)}\leq \frac{c_{\nu}}{b}.
\end{array}\right.
\ee

\noindent{\bf step 4} Linear term. Recall from \eqref{eoneonveonoe} that for a holomorphic function: $$\forall 0\le k\le K-1, \ \ |(H \Theta)_k|\le c_H w_{\gamma-1,\nu_b+1}(k).$$
\noindent\underline{no derivative term}. We apply \eqref{pointwiseboudnproftu} with the bounds \eqref{vebivebibebeibv}, \eqref{uniformbound}, \eqref{noennoeoenvoen} and \eqref{esitmaitmitot} to derive for $ 1\le j\le 5$:
\bee
&&\left|\frac{\mathcal T(r_{u^jH\Theta})}{M_0}(u)\right|\leq   c_{\nu,a}A_\Theta A_H\left[1+u^{K+j-2}\Gamma(\alpha_\gamma)K^{\nu_b+3-\alpha_\gamma}\right]\\
\nonumber & + &  \frac{c_{\nu,a}}{b}\left(1+u^{K-1}\Gamma(\alpha_\gamma)K^{\nu_b+3-\alpha_\gamma}\right)\left[A_\Theta\left\|\frac{r_H}{M_0}\right\|_{L^\infty(v\le u)}+A_H\left\|\frac{r_\Theta}{M_0}\right\|_{L^\infty(v\le u)}\right]\\
\nonumber &+&\frac{c_{\nu,a}}{b} \left\|\frac{r_\Theta}{M_0}\right\|_{L^\infty(v\le u)}\left\|\frac{r_H}{M_0}\right\|_{L^\infty(v\le u)}\|M_0\|_{L^\infty(v\le u)}\leq c_{\nu,a}+\frac{C_{\Theta^*,\delta}}{b}
\eee
where we used the bootstrap bound \eqref{bootstrap} in the last step.
Now, writing $$\left|\begin{array}{l}
b^2x\th_1\Theta =b^3u(\th_1\Theta)\\
bx^2\th_2\Theta=b^3u^2(\th_2\Theta)
\end{array}\right.
$$
gives
$$\left|\frac{\T r_{\left[b^2x\th_1+bx^2\th_2\right]\Theta}}{M_0}\right|\leq b^3\left[c_{\nu,a}+\frac{C_{\Theta^*,\delta}}{b}\right]\le b^2C_{\Theta^*,\delta}.$$
Similarly, from \eqref{pointwiseboudnproftubis}, \eqref{bootstrap}, {\eqref{vebivebibebeibv}, \eqref{uniformbound}, \eqref{noennoeoenvoen} \eqref{noennoeoenvoen:0:0} and \eqref{esitmaitmitot}}:
$$\left|\frac{r_{u^jH\Theta}}{M_0}(u)\right|\leq   C_{\Theta^*,\delta}.$$
Therefore,
$$\left|\frac{r_{\left[b^2x\th_1+bx^2\th_2\right]\Theta}}{M_0}\right|\le b^3C_{\Theta^*,\delta}.$$

\noindent\underline{derivative term}. We first use \eqref{estiamtieboudnary}, \eqref{pointwiseboudnproftubis} to estimate for a holomorphic function $H$:
\bee
&&\left|\frac{r_{u^jH(u\Theta')}}{M_0}(u)\right|\leq   \frac{c_{\nu,a}}{b}\left[1{+u^{K-1}\Gamma(\alpha_\gamma)K^{\nu_b+3-\alpha_\gamma}
+u^{K-2}\Gamma(\alpha_\gamma)K^{\nu_b+2-\alpha_\gamma}}\right]\\
\nonumber & + &  c_{\nu,a}\left(1+u^{K-1}\Gamma(\alpha_\gamma)K^{\nu_b+3-\alpha_\gamma}\right)\left[\frac{1}{b}+\left\|\frac{r_{u\Theta'}}{M_0}\right\|_{L^\infty(v\le u)}\right]\\
\nonumber &+&C_{\Theta^*,\delta} \left\|\frac{r_{u\Theta'}}{M_0}\right\|_{L^\infty(v\le u)}\leq  C_{\Theta^*,\delta}\left[\frac{1}{b}+\left\|\frac{r_{u\Theta'}}{M_0}\right\|_{L^\infty(v\le u)}\right]
\eee
and from {\eqref{estiamtieboudnary}}, \eqref{pointwiseboudnproftu}:
$$\left|\frac{\mathcal T(r_{u^jH(u\Theta')})}{M_0}(u)\right|\leq   C_{\Theta^*,\delta}\left[\frac{1}{b^2}+\frac{1}{b}\left\|\frac{r_{u\Theta'}}{M_0}\right\|_{L^\infty(v\le u)}\right].$$
We conclude, using \eqref{estiamtieboudnary}:
\bee
&&\left|\frac{\T r_{\left[b^2x\th_3+bx^2\th_4\right]u\Theta'}}{M_0}\right|\leq b^3C_{\Theta^*,\delta}\left[\frac{1}{b^2}+\frac{1}{b}\left\|\frac{r_{u\Theta'}}{M_0}\right\|_{L^\infty(v\le u)}\right]\leq  C_{\Theta^*,\delta}\left[b+b^2\left\|\frac{r_{u\Theta'}}{M_0}\right\|_{L^\infty(v\le u)}\right]\\
& \leq & C_{\Theta^*,\delta}\left[b+b\left\|\frac{r_\mathcal G}{M_0}\right\|_{L^\infty(v\le u)}+b\left\|\frac{\mathcal T(r_\mathcal G)}{M_0}\right\|_{L^\infty(v\le u)}\right]
\eee
and
\bee
&&\left|\frac{r_{\left[b^2x\th_3+bx^2\th_4\right]u\Theta'}}{M_0}\right|\le b^3C_{\Theta^*,\delta}\left[\frac{1}{b}+\left\|\frac{r_{u\Theta'}}{M_0}\right\|_{L^\infty(v\le u)}\right]\\
& \leq & C_{\Theta^*,\delta}\left[b^2+b^2\left\|\frac{r_\mathcal G}{M_0}\right\|_{L^\infty(v\le u)}+b^2\left\|\frac{\mathcal T(r_\mathcal G)}{M_0}\right\|_{L^\infty(v\le u)}\right]
\eee

\noindent{\bf step 5} Nonlinear term.\\

\noindent\underline{no derivative term}. First, {we have in view of \eqref{esitmaitmitot}}  \eqref{tobeprovoeonorbibib} that for $1\le m\le 5$ and $0\le k\le K-1$: 
\bee
|(\Theta^m)_k|&\le& {\sum_{k_1+\cdots k_m=k}|\Theta_{k_1}|\cdots |\Theta_{k_m}|\le c_{\nu,a}\sum_{k_1+\cdots k_m=k}w_{\gamma-1, \nu_b+1}(k_1)\cdots w_{\gamma-1, \nu_b+1}(k_m)}\\
&\le& c_{\nu,a}w_{\gamma-1,\nu_b+1}{(k)}
\eee
We then estimate, {using} \eqref{pointwiseboudnproftubis} {iteratively in $m$ }for $2\le m\le 5$, $0\le j\le 5$, {and using also \eqref{bootstrap}, \eqref{vebivebibebeibv}, \eqref{uniformbound}, \eqref{noennoeoenvoen} \eqref{noennoeoenvoen:0:0}}:
\bee
\left|\frac{r_{u^j\Theta^m}}{M_0}(u)\right|&\leq& c_{\nu,a}\left(A_{\Theta}+\left\|\frac{r_\Theta}{M_0}\right\|_{L^\infty(v\le u)}\right)\left(\sum_{\ell=1}^mA_{\Theta^\ell}+\left(A_{\Theta}+\left\|\frac{r_\Theta}{M_0}\right\|_{L^\infty(v\le u)}\right)^{m-2}\right)\\
&&\times\left[1+u^{K-1}\Gamma(\alpha_\gamma)K^{\nu_b+3-\alpha_\gamma}{+u^{K-2}\Gamma(\alpha_\gamma)K^{\nu_b+2-\alpha_\gamma}}\right]^{{m-1}}\leq C_{\Theta^*,\delta}
\eee
Similarly,  for a holomorphic function $H$:
\be
\label{neevnnel;m;ldeoen}
\left|\frac{r_{u^jH\Theta^m}}{M_0}(u)\right|   {\leq C_{\Theta^*,\delta}}
\ee
which implies that
$$
 \left|\frac{r_{\sum_{j=2}^4x^{j+1}\mt^{(1)}_j\Theta^j+b\sum_{j=2}^4m^{(2)}_jx^j\Theta^j}}{M_0}\right|\le C_{\Theta^*,\delta}b^3.$$
Similarly, for $1\le j\le 5$ from {\eqref{bootstrap}} \eqref{vebivebibebeibv}, \eqref{uniformbound}, \eqref{noennoeoenvoen}:
$$
\left|\frac{\mathcal T(r_{u^jH\Theta^j})}{M_0}(u)\right|\le \frac{C_{\Theta^*,\delta}}{b}
$$
gives
\bee
 \left|\frac{\T r_{\sum_{j=2}^4x^{j+1}\mt^{(1)}_j\Theta^j+b\sum_{j=2}^4m^{(2)}_jx^j\Theta^j}}{M_0}\right|\le b^3\frac{C_{\Theta^*,\delta}}{b}\le C_{\Theta^*,\delta}b^2.
 \eee
 
 \noindent\underline{derivative term}. We estimate from \eqref{neevnnel;m;ldeoen}, \eqref{pointwiseboudnproftubis}, \eqref{estiamtieboudnary}, {\eqref{uniformbound}, \eqref{noennoeoenvoen} \eqref{noennoeoenvoen:0:0}}:
 \bee
&& b\left|\frac{\T r_{u^jH\Theta^m(u\Theta')}}{M_0}(u)\right|+ \left|\frac{r_{u^jH\Theta^m(u\Theta')}}{M_0}(u)\right|\leq C_{\Theta^*,\delta}\left[\frac{1}{b}+\left\|\frac{r_{u\Theta'}}{M_0}\right\|_{L^\infty(v\le u)}\right]\\
& \leq & \frac{C_{\Theta^*,\delta}}{b}\left[1+{\left\|\frac{r_\mathcal G}{M_0}\right\|_{L^\infty(v\le u)}+\left\|\frac{\mathcal T(r_\mathcal G)}{M_0}\right\|_{L^\infty(v\le u)}}\right].
 \eee
 Therefore, collecting all the nonlinear terms above,
 \bee
 b\frac{|\T r_{\NL(\Phi)}|}{M_0}+\frac{|r_{\NL(\Phi)}|}{M_0}\leq b^2 C_{\Theta^*,\delta}\left[1+{\left\|\frac{r_\mathcal G}{M_0}\right\|_{L^\infty(v\le u)}+\left\|\frac{\mathcal T(r_\mathcal G)}{M_0}\right\|_{L^\infty(v\le u)}}\right].
 \eee
for $\NL(\Phi)$ as in \eqref{formaulnonlienaterm}.\\\\
\noindent{\bf step 6} Conclusion. The collection of the above bounds yields:
\bee
b\frac{|\T r_{\mathcal G}|}{M_0}+\frac{|r_{\mathcal G}|}{M_0}\le b^2C_{\Theta^*,\delta}{\left[1+\left\|\frac{r_\mathcal G}{M_0}\right\|_{L^\infty(v\le u)}+\left\|\frac{\mathcal T(r_\mathcal G)}{M_0}\right\|_{L^\infty(v\le u)}\right]}
\eee
which imply for $0<b<b^*(\Theta*,\delta)$ small enough $$\frac{|r_{\mathcal G}|}{M_0}+\frac{|\T r_{\mathcal G}|}{M_0}<\sqrt{b}$$ which, reinserted into \eqref{fineioneoineeogn}, yields:
$$\frac{|r_\Theta|}{M_0}\le c_{\nu,a}+\sqrt{b}<\frac{\Theta^*}{2},$$ and \eqref{bootstrap}, \eqref{neionenoenven} are proved.
\end{proof}

\subsection{{Exit on the right of $P_3$}}

We are now in position to establish the fundamental exit property of the $C^\infty$ solution to the right of $P_3$. We assume without loss of generality that 
\be
\label{neinenvoneonve}
S_\infty(d,\ell)>0
\ee and need only to reverse the parity of $K$ in the following Lemma if $S_\infty(d,\ell)<0$.

\begin{lemma}[Exit on the right]
\label{lemmaexitleft}
Pick universal constants $\frac{1}{\delta},\Theta^*\gg1 $ large enough as in Lemma \ref{propositionfundamental}, then  for all $0<b<b^*(\Theta^*,\delta)$ small enough and $\alpha_\gamma$ in the range \eqref{boudnaryleyeralphag}, the $C^\infty$ solution exits on the right of $P_3$ { at $u=U_*$, where $0<U_*<\frac{3}{4}$}, by crossing $\Delta_1=0$ for $K$ odd, and by crossing $\Delta_2=0$ for $K$ even.
\end{lemma}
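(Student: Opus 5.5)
We evaluate the $C^\infty$ solution at $u=u^*(\alpha_\gamma)$ using the decomposition \eqref{expressionnonlnieanrtmer},
$$\Theta(u)=\sum_{k=0}^{K-2}\theta_k u^k+(-1)^{K-1}S_\infty\left[1+o_{b\to 0}(1)\right]\Thetam(u)-\T(r_\mathcal G),$$
and show that the middle term dominates. By Lemma \ref{propositionfundamental}, $|\T(r_\mathcal G)|\le \sqrt b M_0\le \sqrt b\,\Thetam\le \sqrt b\,\Theta^*$ on $[0,u^*]$ by \eqref{uniformbound}. For the truncated Taylor polynomial we use \eqref{esitmaitmitot}: $|\theta_k|\le c_{\nu,a}|w_{\gamma-1,\nu_b+1}(k)|$. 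For $k\le K-2$ the weights are small relative to the top coefficient, since $w_{\gamma-1,\nu_b+1}(k)\sim \Gamma(k+\nu_b+3)\gamma^{-k}$. Since $u^*<\frac12$, the sum is therefore bounded by $c_{\nu,a}(1+o(1))$, uniformly on $[0,u^*]$. This uses that $u\lesssim 1/2$ while the weights behave like $(k/\gamma)^k$; the borderline $k$ near $K$ is handled by comparison with $w(K-1)u^{K-1}\le \Thetam$, which gives at most an $O(1)\cdot\Theta^*/K$ contribution.

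It follows that
$$\Theta(u^*)=(-1)^{K-1}S_\infty\Theta^*(1+o(1))+O_{\nu,a}(1),$$
which has a large modulus and sign $(-1)^{K-1}$ once $\Theta^*\gg c_{\nu,a}/S_\infty$. On $[0,u^*]$ the whole solution also stays within $O(\Theta^*)$, so it does not exit before $u^*$ through a singular point. Next we translate $\Theta$ back to the phase plane through \eqref{suihfoenioeneoi}, \eqref{defphi} and \eqref{eq: Quasilinear Variables}. We have $\Psi=M_b x\Theta=O(b\Theta^*)$ and $\phi=u+(1-u)\Psi$. The $\Delta_1=0$ and $\Delta_2=0$ curves in the $(\wt,\sigmat)$ variables correspond, to leading order in $b$ and by \eqref{eq: G_1 reexpressed} and \eqref{eq: G_2 reexpressed}, to
$$\Phit\approx -u(1-u)bH_1/(1+G_1)=O(b), \qquad \Phit\approx -(1-u)(1+H_2)/G_2,$$
respectively. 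More precisely, we read off the signs of $\mathcal G_1,\mathcal G_2$: the trajectory lies on the side where $\Delta_1$ (respectively $\Delta_2$) has changed sign according to the sign of $\Theta$. A large positive $\Theta$, meaning $\Phit\gg b$, puts the point on one side of the $P_1$–$P_3$ separatrix region, and a large negative $\Theta$ puts it on the other. Checking signs against Lemma \ref{lem: Estimate Slopes} yields crossing of $\Delta_1=0$ for $K$ odd and of $\Delta_2=0$ for $K$ even.

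After $u^*$ we continue the orbit using only the monotonicity of the flow \eqref{eq: Autonomous System} in the eye region between $w_2$ and $w_2^-$, which is bounded by $\sigma_3<\sigma<\sigma_2$, i.e. $u\in(0,1)$. In the region bounded by the relevant root curve, $d\sigma/dw$ has a fixed sign, and the orbit cannot return toward the $P_3$ separatrix. It must therefore hit the corresponding root branch before $u=\frac34$, because the vector field pushes it transversally with speed of order $|\Theta^*|b$ in the renormalized variables. A crude comparison ODE for $\Psi$ from \eqref{eq: Quasilinear Equation} then gives the exit time $U_*<\frac34$.

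The main obstacle is this last continuation step. The sharp estimates of Lemma \ref{propositionfundamental} only cover $u\le u^*<\frac12$, so crossing before $\frac34$ must come from a sign/barrier argument on \eqref{eq: Quasilinear Equation}. The plan there is to show that $\Psi$ stays monotone in the direction dictated by its sign at $u^*$, with the linear coefficient $-\gamma\Psi$ dominating.
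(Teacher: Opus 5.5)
You are right that $\Theta(u^*)\approx(-1)^{K-1}S_\infty\Theta^*$; this is the paper's Step 1. Your borderline-index argument does not work as stated, though. By \eqref{inucnoitnwrmwjaggmamk}, $w_{\gamma-1,\nu_b+1}(K-2)u^{K-2}=\frac{\alpha_\gamma}{(K+\nu_b+1)u}\,w_{\gamma-1,\nu_b+1}(K-1)u^{K-1}$. When $u\sim b\delta$ this is of size $\Theta^*/\delta$, not $\Theta^*/K$. The correct argument is that these weights are only polynomial in $K$ by \eqref{weightnenoe}, while $u^k\le 2^{-k}$.

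The genuine gap is your translation back to the phase plane. The loci you write down, $\Phit\approx-u(1-u)bH_1/(1+G_1)$ and $\Phit\approx-(1-u)(1+H_2)/G_2$, are the zero sets of $\mathcal G_2=\Delta_1-c_-\Delta_2$ and $\mathcal G_1=-\Delta_1+c_+\Delta_2$. Those are the nullclines of $\Sigmat$ and $\Wt$, not the curves $\Delta_1=0$ and $\Delta_2=0$. With your identification, the curve you call $\Delta_2=0$ sits at $|\Phit|\gtrsim 1/b$. That is never reached while $|\Psi|\le 2\Theta^*$, which is the window where $\NLt_1,\NLt_2$ are controlled, so the $K$ even case cannot come out. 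The parity rule itself is only asserted (\emph{checking signs against Lemma \ref{lem: Estimate Slopes}}), not derived.

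The missing idea is to invert the diagonalization: $(c_+-c_-)\Delta_2=\mathcal G_1+\mathcal G_2$ and $(c_+-c_-)\Delta_1=c_-\mathcal G_1+c_+\mathcal G_2$. In the renormalized regime, $\mathcal G_1\approx-\dt_{20}\wte^2b^2u(1-u)$ has a fixed sign independent of the solution. Meanwhile $\mathcal G_2\approx-\et_{20}\wte^2b^2u(1-u)\Phi$ with $\Phi=bu\Theta$, up to relative errors $O(1/\Theta+b)$. This yields \eqref{cnonwinwnwno}. Using $\dt^\infty_{20},\et^\infty_{20}>0$ from Lemma \ref{lem: Limiting Values of d, e}:
$\Delta_1$ changes sign only when $\Phi$ reaches the positive $O(1)$ threshold $|c_-|\dt_{20}/(c_+\et_{20})$; $\Delta_2$ changes sign only when $\Phi$ reaches the negative threshold $-\dt_{20}/\et_{20}$.

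This is what links the sign $(-1)^{K-1}S_\infty$ of $\Theta(u^*)$ to the curve crossed. It also shows that nothing has been crossed yet at $u^*$, since $|\Phi(u^*)|\lesssim b\,\Theta^*\ll 1$.

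Your last paragraph identifies the right amplification mechanism, but it needs this $O(1)$ target to become quantitative. Take $K$ odd; the $K$ even case is symmetric. On the window $\frac{|S_\infty|bu^*\Theta^*}{4}<\Psi\le 2\Theta^*$, the quasilinear equation gives $u\Psi'\ge\frac{\gamma}{4}\Psi$. Hence $\Psi\ge\Psi(u^*)(u/u^*)^{\gamma/4}$ reaches $\Theta^*$, beyond both thresholds, by $u=u^*(1+O(b|\log b|))<\frac34$. The lower bound $u^*\ge b\delta$ from Lemma \ref{lemmanekvneneoneon} is what controls the logarithm here. A transversal speed of order $|\Theta^*|b$ does not by itself give this.
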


\begin{proof}[Proof of Lemma \ref{lemmaexitleft}]  
\noindent{\bf step 1} Reaching $\Theta^*$. Recall {\eqref{expressionnonlnieanrtmer}}
$$
\Theta(u) =\sum_{k=0}^{K-2}\theta_k u^k+(-1)^{K-1}S_{\infty}\left[1+o_{b\to 0}(1)\right]\Thetam(u)-\T(r_\mathcal G).
$$ 
We now claim the uniform bound for the Taylor expansion term above for $u\le \frac 12$:
\be
\label{estraminderwrror}
\sum_{k=0}^{K-2}w_{\gamma-1,\nu_b+1}(k)u^k\le c_{\nu,a}
\ee
From {\eqref{cneioneineonoen} and} \eqref{summationphi}, for some large enough $K_\nu$:
$$\sum_{k=0}^{K-K_\nu}w_{\gamma-1,\nu_b+1}(k)u^k\le \sum_{k=0}^{K-K_\nu}w_{\gamma-1,\nu_b+1}(k){=(\gamma-2)\sum_{k=1}^{K+1-K_\nu}w_{\gamma,\nu_b}(k)}\le c_\nu$$ 
and from {\eqref{cneioneineonoen} and \eqref{weightnenoe}}:
\bee
&&\sum_{k=K-K_\nu+1}^{K-2}w_{\gamma-1,\nu_b+1}(k)u^k = {\sum_{k=K-K_\nu+1}^{K-2}(\gamma-2)w_{\gamma,\nu_b}(k+1)u^k}\\
&\le& c_\nu\sum_{k=K-K_\nu+1}^{K-2}{(\gamma-2)}\Gamma(\gamma-{2}-k)\gamma^{\nu_b+2-(\gamma-{2}-k)}\frac{1}{2^k}\le \frac{K^{c_\nu}}{2^K}\le c_\nu
\eee
and \eqref{estraminderwrror} is proved.\\\\
Then, provided $\Theta^*$ has been chosen large enough, we conclude from \eqref{estraminderwrror}, \eqref{neionenoenven}, \eqref{uniformbound} that for $0<b<b^*(\Theta^*,\delta)$ small enough, for all $u\in [0,u^*(\alpha_\gamma)]$:
\be\label{cenoenneeo:bbb}
|\Theta(u)-(-1)^{K-1}S_{\infty}\Thetam(u)|\leq \frac{S_\infty\Theta^*}{10}.
\ee
Therefore,
\be
\label{cenoenneeo}
{\frac{\Theta^*}{2}\leq \frac{(-1)^{K-1}\Theta(u^*(\alpha_\gamma))}{S_\infty}\le 2\Theta^*.}
\ee

\noindent{\bf step 2} Computation of $\Delta_1,\Delta_2$. 
We now unfold our changes of variables and show that, depending on the sign of $(-1)^{K-1}S_\infty$,  
we must have passed through either the green or the red curves. {To this end, we examine $\Delta_1$ and $\Delta_2$ under the assumption 
\bea\label{eq:aprioriestimateonPsiwhichisusedtoestimateNL1and2andholdsintheend}
|u|\leq \frac{3}{4}, \ \ |\Psi(u)|\leq 2\Theta_*.
\eea}
From {\eqref{suihfoenioeneoi}, \eqref{defphi}, \eqref{eq: Psi}} 
$$\left|\begin{array}{l}
\Phit=(1-u)\Psi\\
\Psi(u)=M_b(x)\Phi(u)\\
\Phi=bu\Theta\\
\end{array}\right.
$$
where $M_b$ is bounded and given by \eqref{deflkenrnal}. Moreover, from \eqref{eq: G_2 reexpressed} and \eqref{eq: Nonlinear Terms in F_2},
\bee
&& \Delta_1-c_-\Delta_2=\mathcal G_2\\
 & = & -b^2|\l_-|\psite\wte(c_+-c_-)u\left[u(1-u)bH_1(b,u)+(1+G_1(bu))\Phit+\NL_1(u,\Phit)\right]\\
 & = & -{b^2\wte^2\et_{20}}{(1+O(b))}u\left[u(1-u)bH_1(b,u)+(1+G_1(bu))bu(1-u)M_b\Theta+\NL_1(u,\Phit)\right]\\
 & = & -{b^3\wte^2\et_{20}}{(1+O(b))}u^2(1-u)\left[H_1(b,u)+(1+G_1(bu))M_b\Theta+\frac{\NL_1(u,\Phit)}{bu(1-u)}\right]
 \eee
 and from \eqref{eq: G_1 reexpressed} and \eqref{eq: Nonlinear Terms in F_1},
\bee
&&-\Delta_1+c_+\Delta_2= \mathcal G_1\\
& = & b^2\wte(c_+-c_-)|\mu_+|u\left[(1-u)\left[1+H_2(b,u)\right]+G_2(bu)\Phit+\NL_2(u,\Phit)\right]\\
& = & {-b^2\dt_{20}\wte^2}{(1+O(b))}u(1-u)\left[1+H_2(b,u)+G_2(bu)M_b\Phi+\frac{\NL_2(u,\Phit)}{1-u}\right]
\eee
From \eqref{deflkenrnal}, for $x=bu$ and $u\leq 1$,
\bea\label{eq:controlofMbforulessthan1}
M_b(x)=1+O(b),
\eea
and from \eqref{eq: Nonlinear Terms in F_1}, \eqref{eq: Nonlinear Terms in F_2}, and \eqref{eq:aprioriestimateonPsiwhichisusedtoestimateNL1and2andholdsintheend}
$$\left|\begin{array}{l}
H_1(b,u)=-(\Et_{11}+\Et_{30})+O(b)\\
G_1=b\Et_{11}u+O(b^2)\\
\NL_1=O({b^2u})
\end{array}\right.
,\quad
\left|\begin{array}{l}
H_2(b,u)=O(b)\\
G_2(x)=O(b)\\
\NL_2=O({b^2u})
\end{array}\right.
$$
It implies that, as long as $\Theta(u)\neq 0$, we have:
$$\left|\begin{array}{l}
\Delta_1-c_-\Delta_2= -\Theta(u)bu\left\{\wte^2{\et_{20}}b^2u(1-u)\left(1{+O\left(\frac{1}{\Theta(u)}\right)}+O(b)\right)\right\}\\
-\Delta_1+c_+\Delta_2=-\dt_{20}\wte^2{b^2}u(1-u)(1+O(b))
\end{array}\right.
$$
i.e.,
\be
\label{cnonwinwnwno}
\left|\begin{array}{l}
\Delta_1=\frac1{c_+-c_-}\wte^2{b^2}u(1-u)\left(1{+O\left(\frac{1}{\Theta(u)}+b\right)}\right)\left[-c_+\et_{20}\Phi(u)+|c_-||\dt_{20}|\right]\\[2mm]
\Delta_2={\frac{1}{c_+-c_-}}\wte^2{b^2}u(1-u)\left(1{+O\left(\frac{1}{\Theta(u)}+b\right)}\right)\left[{-}\et_{20}\Phi(u)-|\dt_{20}|\right]
\end{array}\right.
\ee
Note that we have used the signs, valid for all $d\geq 2$ and $0\leq d\leq \ell$:
$$\dt_{20}>0, \ \ \et_{20}>0,$$
see \eqref{signofdt20andet20bis}.\\

\noindent{\bf step 3} Touching {$\Delta_1=0$ or $\Delta_2=0$}. At $u=u^*(\alpha_\gamma)$, 
{by \eqref{cenoenneeo}} we have
$$\Phi(u^*)=bu^*{\Theta(u^*), \ \ \frac{(-1)^{K-1}\Theta(u^*)}{S_\infty}\geq \frac{\Theta^*}{2}}.$$ 
We now claim that there exists 
$$u^*\le U^*\le \frac34, \ \ \Phi(U^*)={(-1)^{K-1}}\Theta^*$$ 
Then, from \eqref{cnonwinwnwno}, {since $\dt_{20}\neq 0$ and $\et_{20}> 0$ by \eqref{signofdt20andet20bis}, and since $\Phi(0)=0$,} we must have crossed $\Delta_1=0$ or $\Delta_2=0$, depending on {wether $K$ is even or odd}.\\
{Assume $K$ odd, so that from \eqref{cenoenneeo} $\Theta(u_*)>0$. The case $K$ even can be treated similarly.} Since $u\le \frac 34$ and $M_b(u)=1+{O(b)}$:
$$ \Psi(u^*)=bu^*\Theta(u^*){(1+O(b))}.$$ 
Then, on the interval $u\in[u^*, {U^*}]$ with $U^*\le \frac 34$ and 
\be
\label{botboudnpsishuert}
\frac{{|S_{\infty}|}bu^{{*}}\Theta^*}{{4}}<\Psi(u)\le 2\Theta^*
\ee 
we have for $b<b^*(\Theta^*)$ from {\eqref{eq: H_i}}, \eqref{eq: NL_i}, \eqref{eq: Nonlinear Terms in F_2}, {\eqref{eq: Nonlinear Terms in F_1}}:
$$\left|\begin{array}{l}
\left|H_1(b,u)+(\Et_{11}+\Et_{30})\right|+|H_2|+|G_2|+|G_1|\le C_{\Theta^*}b\\
|\NLt_2|+|\frac{\NLt_1}{x}|\le C_{\Theta^*}b\\
\end{array}\right.
$$
We insert this into \eqref{eq: Quasilinear Equation} and conclude from \eqref{botboudnpsishuert}, provided $\Theta^*>0$ has been chosen large enough,
\bea\label{eq:ODEinequalityforPsithatshowsinparitcularnochangeofsign}
u\Psi'\geq \frac{\gamma}{4}\Psi.
\eea
Therefore,
$$\Psi(u)\ge \Psi(u^*)\left(\frac{u}{u^*}\right)^{\frac{\gamma}{4}}\geq \frac{{|S_\infty|}b{u^*}\Theta^*}{{4}}\left(\frac{u}{u^*}\right)^{\frac{\gamma}{4}}\ge \Theta^*$$ 
for 
$$\left(\frac{u}{u^*}\right)^{\frac{\gamma}{4}}\ge {\frac{4}{|S_\infty|bu^*\Theta^*}}, \ \ u\ge u^*\left({\frac{4}{|S_\infty|bu^*\Theta^*}}\right)^{\frac{4}{{\gamma}}}=u^*\left(1+O({b|\log b|})\right)$$ 
{where we have used the fact that $u^*\geq b\delta$ in view of \eqref{firsteataimteboundary} \eqref{vnieneoneonevnonen}}.
Since $u^*\le \frac 12$, we established that the contact happens before $u=\frac 34$.
\end{proof}

\subsection{{Exit on the left}}

{Below, we obtain an analog of Lemma \ref{lemmaexitleft} on the left, albeit in a significantly more restricted range of $\alpha_\gamma$ in $(0,1)$.} We assume \eqref{neinenvoneonve}.

\begin{lemma}[Exit on the left]
\label{lemmaexitright}
Pick universal constants $\frac{1}{\delta},\Theta^*\gg1 $ large enough as in Lemma \ref{propositionfundamental}, then  for all $0<b<b^*(\Theta^*,\delta)$ small enough and $\alpha_\gamma$ given by 
\be
\label{boudnaryleyeralphagright}
\alpha_\gamma=\frac{K^{\nu_b+3}\big(b\sqrt{\delta}\big)^{K-1}}{\Theta^*}
\ee
or 
\be
\label{boudnaryleyeralphagrightbis}
\alpha_\gamma=1-\frac{K^{\nu_b+3}\big(b\sqrt{\delta}\big)^{K-1}}{\Theta^*},
\ee
the $C^\infty$ solution exits on the left of $P_3$ at $u=U_*$, where $-\frac{3}{4}<U_*<0$, by crossing $\Delta_2=0$ in the case \eqref{boudnaryleyeralphagright} and by crossing $\Delta_1=0$ in the case \eqref{boudnaryleyeralphagrightbis}.
\end{lemma}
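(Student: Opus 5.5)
We mirror the proof of Lemma \ref{lemmaexitleft}, now working on $u<0$, where the linear problem \eqref{thetaeqaiotjoihs} and the fixed point formulation \eqref{fineioneoineeogn} still hold (with $\T$ defined by integrating from $0$ to $u<0$). The first step is to transpose the analysis of $\Thetam$ and $M_j$ to negative $u$. Writing $u=-v$ with $0<v\ll 1$, the series \eqref{deftehtemain} has all coefficients of the same sign (by \eqref{cnineneonevnev:0}, since $\Gamma(j+1-\alpha_\gamma)>0$). Hence
$$\Thetam(-v)=(-1)^{K-1}v^{K-1}\sum_{j\ge0}(-1)^j\Gamma(\alpha_\gamma)\Gamma(1-\alpha_\gamma)\frac{\Gamma(K+j+\nu_b+2)}{\Gamma(K-1+\alpha_\gamma)\Gamma(j+1-\alpha_\gamma)}v^j$$
is alternating. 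We therefore only use the regime $v\sim b$, where the first two terms of \eqref{cneiovnenvenenne} dominate and $M_1$ is a controlled correction by \eqref{lowerobundzerobis} (transposed to $u<0$: for $|u|\le b$ the estimate is a power series bound and carries over verbatim). In this regime
$$\Thetam(u)\approx\Gamma(\alpha_\gamma)\Gamma(1-\alpha_\gamma)K^{\nu_b+3-\alpha_\gamma}u^{K-1}\Big[\frac{1}{\Gamma(1-\alpha_\gamma)}+\frac{K+\nu_b+2}{\Gamma(2-\alpha_\gamma)}u\Big].$$
This is why $\alpha_\gamma$ is restricted to \eqref{boudnaryleyeralphagright} or \eqref{boudnaryleyeralphagrightbis}.

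In case \eqref{boudnaryleyeralphagright} ($\alpha_\gamma\to0$), the first bracket term dominates for $|u|\lesssim b\sqrt\delta$. This gives $\Thetam(u)\approx \Theta^*(u/(b\sqrt\delta))^{K-1}$, so at $u^*=-b\sqrt\delta(1+o(1))$ we reach $|\Thetam|=\Theta^*$ with sign $(-1)^{K-1}$. Combined with the prefactor $(-1)^{K-1}S_\infty$, this gives $\Theta(u^*)\approx S_\infty\Theta^*>0$.

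In case \eqref{boudnaryleyeralphagrightbis} ($1-\alpha_\gamma\to0$), $\Gamma(1-\alpha_\gamma)\to\infty$ kills the first term. The second term $\propto u^K$ dominates, and the same computation as \eqref{valueapprocimate:2ndcase} produces $|\Thetam(u^*)|=\Theta^*$ at $u^*\approx -b\sqrt\delta$ with sign $(-1)^K$. Hence $\Theta(u^*)\approx -S_\infty\Theta^*<0$, and the sign is flipped relative to the first case. We need to check that on $[u^*,0]$ no earlier crossing of $|\Theta|=\Theta^*$ spoils this, and that the remaining terms are subdominant: the Taylor part $\sum_{k\le K-2}\theta_k u^k$ is $O(1)$ by \eqref{estraminderwrror}, and $\T(r_{\mathcal G})$ is small by the next step.

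The main step, and expected obstacle, is re-running the bootstrap of Lemma \ref{propositionfundamental} on $u\in[u^*,0]$. Here $M_0$ is no longer positive, so we will normalize by $|\Thetam|$, or by the explicit bound $\Gamma(\alpha_\gamma)\Gamma(1-\alpha_\gamma)K^{\nu_b+3-\alpha_\gamma}|u|^{K-1}(1+K|u|)$, instead of $M_0$. Since $|u|\lesssim b$, everything reduces to the power-series bounds \eqref{lowerobundzerobis} and the bilinear estimates of Lemma \ref{lemmabilinear}, whose proofs only use coefficient bounds and $|u|\le b$. Those estimates should transfer with absolute values, giving $|\T r_{\mathcal G}|+|r_{\mathcal G}|\le\sqrt b\,|\Thetam|$ up to constants. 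The delicate point is that $\Thetam$ could vanish for $u<0$ between the two terms; we avoid this by staying in the range where a single term dominates, which the choice $\sqrt\delta$ with $\delta\ll1$ guarantees.

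Finally we unfold the variables exactly as in step 2 of Lemma \ref{lemmaexitleft}, using \eqref{cnonwinwnwno} with $u<0$, so that $u(1-u)<0$. We propagate $|\Psi|$ up to $\Theta^*$ via the differential inequality $u\Psi'\ge\frac\gamma4\Psi$, valid for $|u|\le\frac34$ in the same way as before, integrated toward negative $u$. Since $|\Psi|$ grows like $|u|^{\gamma/4}$, it reaches $\Theta^*$ before $u=-\frac34$. The sign of $\Phi=bu\Theta$ is then the opposite of that of $\Theta$. In case \eqref{boudnaryleyeralphagright}, $\Phi\to-\Theta^*$ makes the bracket $-\et_{20}\Phi-|\dt_{20}|$ vanish, i.e. the curve crosses $\Delta_2=0$. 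In case \eqref{boudnaryleyeralphagrightbis}, $\Phi$ becomes large positive and the bracket $-c_+\et_{20}\Phi+|c_-||\dt_{20}|$ vanishes, i.e. the curve crosses $\Delta_1=0$, as claimed.
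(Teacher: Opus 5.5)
Your overall route is the paper's: restrict to $|u|\lesssim b$ because the series for $\Thetam$ alternates when $u<0$, and locate $u^*\approx-b\sqrt\delta$. At $u^*$ the $u^{K-1}$ term of $\Thetam$ dominates in the layer \eqref{boudnaryleyeralphagright}, and the $u^K$ term dominates in \eqref{boudnaryleyeralphagrightbis}. You then rerun the bootstrap of Lemma \ref{propositionfundamental} on $[u^*,0]$ and unfold through \eqref{cnonwinwnwno}. Your sign bookkeeping ($\Theta(u^*)\approx\pm S_\infty\Theta^*$, $\Phi=bu\Theta$ of opposite sign, which of $\Delta_1,\Delta_2$ is crossed) agrees with the paper.

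The step that does not work as written is the weight you propose for the bootstrap. Normalizing by $|\Thetam|$ fails in the layer \eqref{boudnaryleyeralphagrightbis}. There $\Thetam(u)\approx\Gamma(\alpha_\gamma)\Gamma(1-\alpha_\gamma)K^{\nu_b+3-\alpha_\gamma}u^{K-1}\big[\frac{1}{\Gamma(1-\alpha_\gamma)}+\frac{K+\nu_b+2}{\Gamma(2-\alpha_\gamma)}u\big]$ vanishes at $u\approx-(1-\alpha_\gamma)/(K+\nu_b+2)$, which lies inside $(u^*,0)$. The choice $\sqrt\delta$ only makes $K|u^*|$ small, so that $M_1$ is a correction. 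It does not prevent the $u^{K-1}$ term from dominating very close to $0$, so your claim that a single term dominates on all of $[u^*,0]$ is false in this layer. Moreover, both $|\Thetam|$ and your explicit weight behave like $|u|^{K-1}$ as $u\to0$, one order weaker than the true vanishing of $r_\Theta$. Insert a bootstrap bound in such a weight into $\mathcal T$ for a term $u\,r_\Theta$: you get $\int_0^{|u|}v^{-\alpha_\gamma}dv=|u|^{1-\alpha_\gamma}/(1-\alpha_\gamma)$, a loss of $1/(1-\alpha_\gamma)$ that is astronomically large in \eqref{boudnaryleyeralphagrightbis}. The estimates \eqref{bounditerate} and Lemma \ref{lemmabilinear} avoid this precisely because they are relative to $M_0\sim u^K$.

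The fix is to keep $M_0$. It is not positive for $u<0$, but on $[-b,0]$ it has the constant sign $(-1)^K$ and the two-sided size $|M_0(u)|\asymp\Gamma(\alpha_\gamma)\Gamma(1-\alpha_\gamma)K^{\nu_b+4-\alpha_\gamma}|u|^K$. Indeed $\mathcal T(bu^K)=\frac{u^K}{(1-u)^{\gamma+\nu_b+1}}\int_0^1(1-ut)^{\gamma+\nu_b}t^{-\alpha_\gamma}dt$ with $(1\pm b)^{\gamma}=e^{O(1)}$. This is exactly the paper's Step 1, the $j=0$ case of \eqref{lowerobundzerobis} on the left.

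Your justification that this is a power series bound that carries over verbatim is only valid where $K|u|\ll1$. For $u<0$ the series alternates with consecutive ratios $\approx K|u|/(m+1-\alpha_\gamma)$, and $Kb\approx a_\infty$ is not small. This suffices on $[u^*,0]$, but the integral representation is what covers all of $[-b,0]$.

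With $|M_0|$ as the weight, the estimates of Lemma \ref{propositionfundamental} transfer with $u\mapsto|u|$, and you get $|M_0|\le c|M_0(u^*)|\le C_{\Theta^*,\delta}$ on $[u^*,0]$. The comparison $|\Theta-(-1)^{K-1}S_\infty\Thetam|\le S_\infty\Theta^*/10$ is an absolute bound, so it is insensitive to the zero of $\Thetam$. One minor point: in the layer \eqref{boudnaryleyeralphagright} one has $\Psi<0$, so the growth inequality must be read as $u\Psi'/\Psi\ge\gamma/4$, not $u\Psi'\ge\frac\gamma4\Psi$.
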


\begin{proof}[Proof of Lemma \ref{lemmaexitright}] For $\alpha_\gamma$ given by \eqref{boudnaryleyeralphagright} or \eqref{boudnaryleyeralphagrightbis} we only need to consider $u$ in the the range $-b\leq u\leq 0$. In that range, the proof of  Lemma \ref{lemmaexitright} follows very closely the one of Lemma \ref{lemmaexitleft} for the case $0\leq u\leq b$. The $C^\infty$ regularity at the left of $P_3$ all the way to $u=0$ together with the property \eqref{neoneneonovenoev} follow again from the explicit integral representation of the remainder function. We focus on the exit behavior.\\

{\noindent{\bf step 1} Bounds on $M_j$. For $-b\leq u\leq b$, we have 
$$(1-u)^K=e^{K\log(1-u)}=e^{O(1)}$$
so that the cases $0\leq u\leq b$ and $-b\leq u\leq 0$ can be treated similarly in  the definition of $\TT$ and, as a consequence, 
in $M_j$. In particular, the proof of \eqref{lowerobundzerobis}, \eqref{bounditerate}, \eqref{vniovnioneneneo} and \eqref{lowerobundzerobis} obtained for $0\leq u\leq b$ immediately extends to the case $-b\le u\le  0$, i.e., we have for $1\le j\le 5$ and $-b\le u\le  0$, 
\be
\label{bounditerate:right}
\left\|\frac{\T (u^jM_0)}{M_0}\right\|_{L^\infty(-b\le u\le  0)}\le \frac{c_\nu}{b},\quad \frac{|uM_0'|}{M_0}\leq \frac{c_\nu}b. 
\ee 
and for  $0\le j\le 5$ and $-b\le u\le  0$
\be
\label{lowerobundzerobis:right}
c_{\nu,1}\le \frac{M_j(u)}{\Gamma(\alpha_\gamma)\Gamma(1-\alpha_\gamma)K^{\nu_b+j+4-\alpha_\gamma}u^{K+j}}\le c_{\nu,2}.
\ee}

\noindent{\bf step 2} Estimate on $\Thetam$. Recall from \eqref{cneiovnenvenenne} that for $-b\de^{\frac{1}{3}}\leq u\leq -b\delta$,
\bea
\label{cneiovnenvenenne:right}
&&\frac{\Thetam(u)}{ \Gamma(\alpha_\gamma)\Gamma(1-\alpha_\gamma)K^{\nu_b+3-\alpha_\gamma}u^{K-1}}\\
\nonumber &= & \left[1+o_{b\to 0}(1)\right]\left[\frac{1}{\Gamma(1-\alpha_\gamma)}+\frac{K+\nu_b+2}{\Gamma(2-\alpha_\gamma)}u\right]+(u K)^{\alpha_\gamma}\frac{M_1(u)}{u^{K-1}\Gamma(\alpha_\gamma)\Gamma(1-\alpha_\gamma)K^{\nu_b+3}u^{\alpha_\gamma}}\\
\nonumber &= & \left[1+o_{b\to 0}(1)\right]\left[\frac{1}{\Gamma(1-\alpha_\gamma)}+\frac{K+\nu_b+2}{\Gamma(2-\alpha_\gamma)}u\right]+O\left(c_\nu\de^{\frac{2}{3}}\right).
\eea
where the last equality follows from \eqref{lowerobundzerobis:right}.\\

\noindent{\bf step 3} Boundary layer. In view of \eqref{cneiovnenvenenne:right}, we easily obtain the following analog of the first two cases of Lemma \ref{lemmanekvneneoneon} for $\alpha_\gamma$ given by  \eqref{boudnaryleyeralphagright} or \eqref{boudnaryleyeralphagrightbis}. Pick universal constants $\frac{1}{\delta},\Theta^*\gg1$ then  for all $0<b<b^*(\Theta^*,\delta)$, we have:\\
\noindent\underline{first layer}: if $\alpha_\gamma$ is given by  \eqref{boudnaryleyeralphagright}, then there exist a  solution to 
\be
\label{neknvonenneudatsgeeaa:right}
\Thetam(u^*(\alpha_\gamma))=(-1)^{K-1}\Theta^*,  
\ee 
satisfying the following bounds
\be
\label{valueapprocimate:right}
\Gamma(\alpha_\gamma)\Gamma(1-\alpha_\gamma)K^{\nu_b+3-\alpha_\gamma}(u^*(\alpha_\gamma))^{K-1}=\Theta^*e^{O_{\nu}(\delta^{-1})}
\ee
and
\be
\label{firsteataimteboundary:right}
u^*(\alpha_\gamma)=-\sqrt{\delta} b(1+o_{b\to 0}(1)).
\ee
\noindent\underline{second layer}: if $\alpha_\gamma$ is given by  \eqref{boudnaryleyeralphagrightbis}, then there exist a  solution to 
\be
\label{neknvonenneudatsgeeaa:right:bis}
\Thetam(u^*(\alpha_\gamma))=(-1)^{K}\Theta^*,  
\ee 
satisfying  \eqref{firsteataimteboundary:right} and 
 \be
\label{valueapprocimate:2ndcase:right}
\Gamma(\alpha_\gamma)\Gamma(1-\alpha_\gamma)K^{\nu_b+3-\alpha_\gamma}\frac{K+\nu_b+2}{\Gamma(2-\alpha_\gamma)}(u^*(\alpha_\gamma))^{K}=\Theta^*e^{O_{\nu}(\delta^{-1})}.
\ee
 
{\noindent{\bf step 4} Estimate on $r_\mathcal G$. In view of \eqref{lowerobundzerobis:right} and \eqref{firsteataimteboundary:right}, we have for $u^*(\alpha_\gamma)$ defined in step 3
\be\label{uniformbound:right}
\forall u\in [u^*(\alpha_\gamma),0], \ \ 0\le |M_0(u)|\le c_\nu |M_0(u^*(\alpha_\gamma))|\leq \frac{c_\nu\Theta^*}{\sqrt{\delta}}=C_{\Theta^*,\delta}.
\ee
Also, proceeding as in the proof of \eqref{noennoeoenvoen} and \eqref{noennoeoenvoen:0:0}, we obtain the following analogs
\be
\label{noennoeoenvoen:right}
\forall u\in[u^*(\alpha_\gamma),0], \ \  \Gamma(\alpha_\gamma)\Gamma(1-\alpha_\gamma)K^{\nu_b+3-\alpha_\gamma}|u|^{K-1}\le C_{\Theta^*,\delta}.
\ee
and
\be
\label{noennoeoenvoen:0:0:right}
\forall u\in[u^*(\alpha_\gamma),0], \ \  \Gamma(\alpha_\gamma)\Gamma(1-\alpha_\gamma)K^{\nu_b+2-\alpha_\gamma}|u|^{K-2}\le C_{\Theta^*,\delta}.
\ee}
The estimate \eqref{neionenoenven} holds for $-b\leq u\leq 0$, i.e. for  $-b\leq u\leq 0$, and we now claim:
 \be
\label{neionenoenven:right}
\frac{|r_{\mathcal G}|}{M_0}+\frac{|\T r_{\mathcal G}|}{M_0}<\sqrt{b}.
\ee
Indeed, the proof of \eqref{neionenoenven} for the range $0\leq u\leq b$ does not use the sign of $u$, and all estimates hold by replacing everywhere $u$ with $|u|$. Using also \eqref{uniformbound:right} \eqref{noennoeoenvoen:right} \eqref{noennoeoenvoen:0:0:right}, the proof immediately extends to the range $-b\leq u\leq 0$. Thus, \eqref{neionenoenven:right} holds for $-b\leq u\leq 0$.\\

\noindent{\bf step 6} Conclusion. Recall \eqref{expressionnonlnieanrtmer}. Then, provided $\Theta^*$ has been chosen large enough, we conclude from \eqref{estraminderwrror} (with $u$ replaced by $|u|$), \eqref{neionenoenven:right}, \eqref{uniformbound:right} that for $0<b<b^*(\Theta^*,\delta)$ small enough, for all $u\in [u^*(\alpha_\gamma),0]$:
$$|\Theta(u)-(-1)^{K-1}S_{\infty}\Thetam(u)|\leq \frac{S_\infty\Theta^*}{10}.$$ 
Therefore, if $\alpha_\gamma$ is given by  \eqref{boudnaryleyeralphagright}
\be
\label{cenoenneeo:right}
\frac{\Theta^*}{2}\leq \frac{\Theta(u^*(\alpha_\gamma))}{S_\infty}\le 2\Theta^*,
\ee
and, if $\alpha_\gamma$ is given by  \eqref{boudnaryleyeralphagrightbis},
\be
\label{cenoenneeo:bis:right}
\frac{\Theta^*}{2}\leq \frac{-\Theta(u^*(\alpha_\gamma))}{S_\infty}\le 2\Theta^*.
\ee
We now note that \eqref{cnonwinwnwno} holds independently of the sign of $u\in (-1,1)$. If $\alpha_\gamma$ is given by  \eqref{boudnaryleyeralphagright}, then, by \eqref{cenoenneeo:right},
$$\Phi(u^*)=bu^*\Theta(u^*), \ \ \frac{\Theta(u^*)}{S_\infty}\geq \frac{\Theta^*}{2},$$ 
 and, if $\alpha_\gamma$ is given by  \eqref{boudnaryleyeralphagrightbis}, then, by \eqref{cenoenneeo:bis:right},
$$\Phi(u^*)=bu^*\Theta(u^*), \ \ \frac{-\Theta(u^*)}{S_\infty}\geq \frac{\Theta^*}{2}.$$ 
The rest of the argument of step 3 in the proof of Lemma \ref{lemmaexitleft} extends to the case $u<0$. Therefore,\\
\noindent\underline{first layer}: if $\alpha_\gamma$ is given by  \eqref{boudnaryleyeralphagright}, there exists $U^*(\alpha_\gamma)$ such that
$$-\frac34\le U^*\le u^*, \ \ \Phi(U^*)=-\Theta^*,$$
and the smooth solution crosses $\Delta_2=0$ for $u\ge -3/4$.\\
\noindent\underline{second layer}: if $\alpha_\gamma$  is given by  \eqref{boudnaryleyeralphagrightbis}, there exists $U^*(\alpha_\gamma)$ such that
$$-\frac34\le U^*\le u^*, \ \ \Phi(U^*)=\Theta^*,$$
and the smooth solution crosses $\Delta_1=0$ for $u\ge -3/4$.
This concludes the proof of Lemma \ref{lemmaexitright}.
\end{proof}

\begin{corollary}[Behaviour at the first boundary layer]
\label{cor: First Boundary Layer}
Choose $\frac{1}{\delta}$, $\Theta^*\gg1$ and $0<b<b^*(\Theta^*,\delta)$ as in Lemma \ref{lemmaexitright}. Recall from Lemma \ref{lemmaexitright} that for $\alpha_\gamma$ as in \eqref{boudnaryleyeralphagright}, there exists $0<U_*<\frac{3}{4}$ such that the $C^\infty$ solution crosses $\Delta_2=0$ at $u=U_*$. Then the $P_1-P_3$ trajectory is in the left of the unique $C^\infty$ at $P_3$ solution.
\end{corollary}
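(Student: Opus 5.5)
The plan is a non-crossing argument in the $(\sigma,w)$ plane, carried out on a thin vertical strip to the left of $P_3$. In the first boundary layer \eqref{boudnaryleyeralphagright}, Lemma \ref{lemmaexitright} gives the exit of the $C^\infty$ solution on the left of $P_3$. This happens at some $u=U_*$ with $|U_*|<\frac34$ (so at $\sigma_*<\sigma_3$ with $\sigma_3-\sigma_*=O(b^2)$ after unfolding \eqref{eq: Quasilinear Variables}), where it crosses $\Delta_2=0$, i.e.\ the branch $w_2^-$. The $P_1-P_3$ trajectory of Lemma \ref{lem: Solution from P_1} is another integral curve ending at $P_3$. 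Two distinct integral curves cannot meet at an ordinary point, where $\Delta\neq0$, so their relative position is fixed on the whole strip $\sigma_*\le\sigma<\sigma_3$. It therefore suffices to compare the two curves at the single abscissa $\sigma=\sigma_*$.

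\textbf{Step 1: ordering of the root curves left of $P_3$.} Both root branches pass through $P_3$, with slopes $-c_4/c_2$ for $w_2^-$ and $-c_3/c_1$ for $w_2$. By Lemma \ref{lem: Estimate Slopes},
$$-\frac{c_4}{c_2}<c_-<-\frac{c_3}{c_1}<-1 .$$
Since $\sigma-\sigma_3<0$ on the left, Taylor expansion at $P_3$ gives, for $0<\sigma_3-\sigma\ll1$,
$$w_2(\sigma)<w_3+c_-(\sigma-\sigma_3)<w_2^-(\sigma).$$
This shows that the curves of slope $c_-$ entering $P_3$ from the left lie in the wedge between $w_2$ (below) and $w_2^-$ (above). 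At the scale $\sigma_3-\sigma\sim b^2$ relevant here, I would confirm this ordering directly from \eqref{cnonwinwnwno}, using $\dt_{20},\et_{20}>0$. In that formula the sign of $\Delta_2$ is the sign of $-\et_{20}\Phi-|\dt_{20}|$ and the sign of $\Delta_1$ that of $-c_+\et_{20}\Phi+|c_-||\dt_{20}|$. So the wedge corresponds to a bounded window of $\Phi$ values, and leaving it through $\Delta_2=0$ means $\Phi$ has moved below the window (the $\Phi=-\Theta^*$ case of Lemma \ref{lemmaexitright}).

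\textbf{Step 2: locate the $P_1-P_3$ curve inside the wedge.} By Lemma \ref{lem: Solution from P_1}, the $P_1-P_3$ trajectory lies in $\{\sigma\in[w_2^{-1}(w),\sigma_3]\}$, hence on or above the $\Delta_1=0$ branch and in the region where $\Delta_1<0$. I will show that it also stays strictly on the side $\Delta_2$ has near $P_1$ (with $\Delta_2>0$ along the curve, as in step 2 of that lemma), all the way to $P_3$. In the renormalized variables this says that the $P_1-P_3$ curve corresponds to a $\Phi$ of size $O(1)$, in fact $|\Theta|\lesssim_{d,\ell}1$ as asserted in step 4 of the strategy section. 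At $\sigma=\sigma_*$ it thus sits strictly inside the wedge, with $w<w_2^-(\sigma_*)$. The smooth curve, by contrast, sits exactly on $w_2^-(\sigma_*)$, having reached $|\Phi|=\Theta^*\gg1$. Non-crossing on $[\sigma_*,\sigma_3)$ then gives the strict ordering at each abscissa: the $C^\infty$ curve is the upper one (closer to $w_2^-$) and the $P_1-P_3$ curve the lower one. Because all these curves have negative slope, the lower curve is the one on the left at each level $w$, which is the claim of the corollary.

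\textbf{Main obstacle.} The delicate point is Step 2: showing rigorously that the $P_1-P_3$ separatrix has $\Theta=O(1)$, equivalently that it stays a definite distance (in renormalized units) from $w_2^-$ on the $b^2$-neighbourhood of $P_3$. Lemma \ref{lem: Solution from P_1} only yields the coarse confinement $w_2^{-1}(w)\le\sigma\le\sigma_3$. I would first try to obtain the sharper statement from the quasilinear equation \eqref{eq: Quasilinear Equation} for $u<0$: the $\gamma\Psi$ term dominates, and this traps $\Psi$ in a bounded invariant region, the same mechanism as \eqref{eq:ODEinequalityforPsithatshowsinparitcularnochangeofsign} run backwards. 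If that fails, I would fall back on a purely qualitative argument: the sign of $\Delta_2$ along the separatrix cannot change before $P_3$, since a crossing of $w_2^-$ would produce a vertical tangency incompatible with monotonicity of $\sigma(w)$.
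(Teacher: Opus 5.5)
There is a genuine gap, and it sits exactly where you put the ``main obstacle'': Step 2 is not a technical refinement, it is the entire content of the corollary. Inside the wedge $\{w_2<w<w_2^-\}$ both curves are decreasing graphs over $\sigma$. Which one lies on the left is decided precisely by which of them leaves the wedge through $w_2^-$ first. If the $P_1-P_3$ curve met $w_2^-$ at some $\sigma_a\in(\sigma_*,\sigma_3)$, your own non-crossing argument would give the opposite ordering.

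Neither of your two proposed routes settles this.

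\textbf{The local route.} Working through \eqref{eq: Quasilinear Equation} is circular. Near the node $P_3$, every curve entering with slope $c_-$ differs from the $C^\infty$ one, to leading order, by $C|u|^{\gamma-2}(1-u)^{-(\gamma+\nu_b+1)}$ with an arbitrary constant $C$. So whether the $P_1-P_3$ curve has $\Theta=O(1)$ at $|u|\sim b$ is global information, which no invariant-region argument for the local ODE can produce. The bound $|\Theta|\lesssim 1$ in step 4 of the strategy concerns the separatrix between $P_3$ and $P_2$ on the right, not the $P_1-P_3$ curve.

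\textbf{The qualitative fallback.} This appeals to monotonicity of $\sigma(w)$ along the $P_1-P_3$ curve, which Lemma \ref{lem: Solution from P_1} does not give. That lemma provides smoothness and $w_2^{-1}(w)\le\sigma(w)<\sigma_3$, and $\Delta_2>0$ only near $\sigma=0$. A transversal crossing of $w_2^-$ (where the curve is vertical) merely produces a local minimum of $\sigma(w)$, which contradicts nothing by itself.

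\textbf{The missing ingredient.} It is \eqref{eq: P_3^hat}, $w_2^+(\sigma_3)>1$, combined with the sign structure in the region $\{\Delta_1<0,\ \Delta_2<0\}$, where $d\sigma/dw=\Delta_2/\Delta_1>0$. Suppose the $P_1-P_3$ curve crossed $w_2^-$ at some $\sigma_a\in[\sigma_*,\sigma_3)$. Afterwards (increasing $w$) it would move up and to the right inside this region. It stays there with $\sigma\in[\sigma_a,\sigma_3)$, recalling $\sigma<\sigma_3$ along that curve. On that range the upper boundary $w_2^+$ lies above $w=1$ once $b$ is small, since $w_2^+(\sigma_a)=w_2^+(\sigma_3)+O(b)$. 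Hence the curve would reach the level $w=1$ with $\sigma\ge\sigma_a>0$, contradicting $\sigma(w)\to0$ as $w\uparrow1$. With this, your Step 2 and the rest of your argument go through.

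\textbf{The paper's route.} The paper runs the same mechanism on the other curve, which is more economical. It follows the $C^\infty$ solution beyond its crossing point $(\sigma_*,w_*)$ and shows that it reaches $w=1$ at positive $\sigma$. It then compares the two graphs $\sigma(w)$ at the level $w=1$, where the $P_1-P_3$ curve is at $\sigma=0$. Non-crossing then gives the ordering, without needing any information on where the $P_1-P_3$ curve sits near $P_3$.
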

\begin{proof}
If we denote by $(\sigma_*,w_*)=(\sigma_3+ \Sigma_*,w_3+W_*)$ the point corresponding to $u=U_*$, then from \eqref{eq: Quasilinear Variables} and \eqref{defphi}, we infer
\be
\label{eq: Original Variables}
\left |\begin{array}{l} \Wt=-b\wt_{\hskip -.1pc\peye}u\\\Sigmat=b^2\wt_{\hskip -.1pc\peye}\psit_{\hskip -.1pc\peye} u(u+(1-u)M_b(x)\Phi)\end{array}\right.
\ee
when evaluated at $u=U_*$, 
\be
\label{eq: Point of contact}
\left|\begin{array}{l}
\Sigma_*=\Wt+\Sigmat=-b\wt_{\hskip -.1pc\peye}U_*(1+\mathcal O(b)),\\
W_*=c_- \Wt+c_+\Sigmat= -bc_-\wt_{\hskip -.1pc\peye}U_*(1+\mathcal O(b)).
\end{array}\right.
\ee
Let $x=x_*$ at $u=U_*$. Then, in view of the sign of $\Delta_1$, $\Delta_2$, we infer
$$
\frac{d\sigma}{dx}\bigg|_{x=x_*}=0,\quad \frac{dw}{dx}\bigg|_{x=x_*}<0,
$$
one can parameterize the solution curve $\sigma(w)$ near $w=w_*$. Let 
$$
w_{**}=\min \Big\{w> w_*\Big|\Delta_2(\sigma,w)=0\Big\} \in (w_*,\infty].
$$
Then, by uniqueness of the solution, it suffices to prove that $\sigma_{P_1-P_3}(w)<\sigma(w)$ for some $w_*<w<w_{**}$ where $\sigma_{P_1-P_3}(w)$ is the parameterization of the $P_1-P_3$ trajectory. Note that if $w_{**}=\infty$, we're done since $\sigma(w=1)>0$ while $\sigma_{P_1-P_3}$ terminates at $P_1=(0,1)$. Otherwise, from \eqref{eq: Point of contact},
$$
\forall w_*<w<w_{**} ,\quad \frac{d\sigma}{dw}>0 \quad \Rightarrow \quad \sigma(w_{**})>\sigma(w_*)= \sigma_3+\mathcal O(b).
$$
In particular, the trajectory of the solution for $w_*<w<w_{**}$ stays on the right of the curve $w_2^{\pm}(\sigma)$ where $\Delta_2=0$. Thus,  
$$
w_{**}>w_2^+(\sigma_*)=w_2^+(\sigma_3)+\mathcal O(b).
$$
In view of \eqref{eq: P_3^hat} we have the limit 
$$
\lim_{b\rightarrow 0} w_2^+(\sigma_3)>1.
$$
Thus, $w_{**}>1$ for $b\ll 1$ so the point $(\sigma(w=1), 1)$ is on the right of the curve $w_2^{\pm}(\sigma)$ which is on the right of $P_1$.
\end{proof}

\subsection{Proof of Theorem \ref{thmmain}}

We are now in position to conclude the proof of Theorem \ref{thmmain}.\\

\noindent{\bf step 1} Continuous deformation of $\alpha_\gamma$. Let $K$ be even and large enough, we claim that there exists $\alpha^K_\gamma$ in $(0,1)$ such that the $C^\infty$ solution 
$\Phi[K, \alpha^K_\gamma](u)$ coincides with the unique $P_1-P_3$ solution (to the left of $P_3$)
and exits to the right of $P_3$ by crossing $\Delta_2=0$ before reaching $P_2$ (and, as a result, extends to $P_4$.).\\

\noindent Indeed, let $\frac{1}{\delta},\Theta^*\gg1 $ large enough and $0<b<b^*(\Theta^*,\delta)$ small enough as in Lemma \ref{lemmaexitleft}, and $\alpha_\gamma$ in the range \eqref{boudnaryleyeralphag}. Assume also that $K$ is even. Then, in view of Lemma \ref{lemmaexitleft}, 
 the $C^\infty$ solution exits on the right of $P_3$ by crossing $\Delta_2=0$ before $u=\frac{3}{4}$. Consider then the $C^\infty$ solution $\Phi[K, \alpha_\gamma](u)$ on the left of $P_3$. Let also $\Phi_{P_1-P_3}[K, \alpha_\gamma](u)$ denote the unique solution, constructed earlier, which corresponds to the $P_1-P_3$ trajectory. 
 Then, let
\bea
F(\alpha_\gamma) &:=& \Phi[K, \alpha_\gamma]\left(-\frac{3}{4}\right) - \Phi_{P_1-P_3}[K, \alpha_\gamma]\left(-\frac{3}{4}\right). 
\eea
Then, for $\alpha_\gamma$ given by \eqref{boudnaryleyeralphagright}, we have from  Corollary \ref{cor: First Boundary Layer} that for any $u<0$, $\Phi_{P_1-P_3}[K, \alpha_\gamma](u)$ is located to the left of the curve $\Phi[K, \alpha_\gamma](u)$ the $C^\infty$-solution crossing $\Sigma=0$. Hence, in $[U_*,0]$, the curve $\Phi_{P_1-P_3}[K, \alpha_\gamma](u)$ is located below the curve $\Phi[K, \alpha_\gamma](u)$. Since the line $u=$ constant has positive gradient on $(\sigma,w)$-plane, when evaluated at $u\in[U_*,0]$, in view of \eqref{eq: Original Variables},
$$
0< \Sigma-\Sigma_{P_1-P_3}=b^2\wt_{\hskip -.1pc\peye}\psit_{\hskip -.1pc\peye} u(1-u)M_b(x)(\Phi-\Phi_{P_1-P_3}).
$$
By uniqueness of the solution, this extends to $u\in[-\frac{3}{4},0]$. Evaluate at $u=-\frac{3}{4}$, we infer
\bea
F\bigg(\frac{K^{\nu_b+3}\big(b\sqrt{\delta}\big)^{K-1}}{\Theta^*}\bigg)<0.
\eea
Also, for $\alpha_\gamma$ given by \eqref{boudnaryleyeralphagrightbis}, since $\Phi_{P_1-P_3}[K, \alpha_\gamma](u)$ lies in the set $\{\Delta_1\le0\}$ for any $u<0$, and, since $\Phi[K, \alpha_\gamma](u)$ has crossed $\Delta_1=0$ before $u=-3/4$ and cannot cross $\Delta_1=0$ twice, in view of \eqref{cnonwinwnwno} we deduce
\bea
F\bigg(1-\frac{K^{\nu_b+3}\big(b\sqrt{\delta}\big)^{K-1}}{\Theta^*}\bigg) >0.
\eea
Continuous dependence of the ODE on the parameter $\alpha_\gamma\in (0,1)$ implies the continuity of $F$. We then 
infer by the mean value theorem the existence of  $\alpha^K_\gamma$ such that
\bea
F\left(\alpha^K_\gamma\right)=0, \ \ \alpha^K_\gamma\in \bigg(\frac{K^{\nu_b+3}\big(b\sqrt{\delta}\big)^{K-1}}{\Theta^*},  1-\frac{K^{\nu_b+3}\big(b\sqrt{\delta}\big)^{K-1}}{\Theta^*}\bigg).
\eea
Then, by the uniqueness of solutions to the ODE at $u=3/4$, $\Phi[K, \alpha^K_\gamma]$ must coincide with $\Phi_{P_1-P_3}[K, \alpha^K_\gamma]$. 

To the right of $P_3$, with $\alpha^K_\gamma$ in the range \eqref{boudnaryleyeralphag}, we have that $\Phi[K, \alpha^K_\gamma]$ crosses $\Delta_2=0$ before reaching $P_2$. 

Thus, we have obtained for any even $K$ large enough the existence of $\alpha^K_\gamma$ in $(0,1)$ such that the smooth profile  $\Phi[K, \alpha^K_\gamma](u)$ coincides with the $P_1-P_3$ solution to the left of $P_3$ and exits on the left of $P_3$ by crossing $\Delta_2=0$ before reaching $P_2$. The constructed solution is $C^\infty$ to the right and the left of $P_3$, with derivatives satisfying \eqref{neoneneonovenoev} on both sides.\\

\noindent{\bf step 2} Conclusion. Since the curve crosses $\Delta_2=0$ for $\sigma_3<\sigma<\sigma_2$, it is attracted to $P_4$ by Lemma \ref{lem: Solutions Near P_4}. It remains to show that in the $(\sigma(x),w(x))$ parametrization, the point $P_3$ is reached in finite time. Indeed, $$\lim_{\Sigma \to 0}\frac{W}{\Sigma}=c_-$$ and thus, from \eqref{eq: Delta_1 in W, Sigma}, \eqref{eq: lambda_pm in c}, \eqref{eq: Slopes at P_3}:
$$\frac{d\Sigma}{dx}=\frac{d\sigma}{dx}=-\frac{\Delta_2}{\Delta}=-\frac{(c_2c_-+c_4)\Sigma(1+o(1))}{-2\sigma_3(1+c_-)\Sigma(1+o(1))}=\frac{\l_+}{2\sigma_3(1+c_-)}(1+o(1))$$
which proves the claim. The resulting $C^\infty$ solution corresponds to the global $P_1-P_3-P_4$ trajectory.

\begin{appendix}
\section{Facts related to the $\Gamma$ function}

\noindent\underline{Asymptotics}. We recall Stirling's formula
\be
\label{striling}
\Gamma(x+1)=(1+o_{x\to +\infty}(1))\left(\frac{x}{e}\right)^x\sqrt{2\pi x}.
\ee
Let $$|x|\lesssim 1\ll \gamma,$$ this yields:
\bea
\label{aymptoticratio}
\nonumber &&\frac{\Gamma(\gamma+x+1)}{\Gamma(\gamma+1)}=(1+o_{\gamma\to +\infty}(1))\frac{\left(\frac{\gamma+x}{e}\right)^{\gamma+x}\sqrt{2\pi (\gamma+x)}}{\left(\frac{\gamma}{e}\right)^\gamma\sqrt{2\pi \gamma}}\\
\nonumber & = & (1+o_{\gamma\to +\infty}(1))\frac{1}{e^{x}}e^{(\gamma+x)\left[\log \gamma +\frac{x}{\gamma}+O\left(\frac{1}{\gamma^2}\right)\right]-\gamma\log \gamma}\\
& = & (1+o_{\gamma\to +\infty}(1))\frac{1}{e^{x}}e^{x\log \gamma+x+O_x\left(\frac{1}{\gamma}\right)}=(1+o_{\gamma\to +\infty}(1))\gamma^x.
\eea

\noindent\underline{Value on $\Bbb R\backslash \Bbb N_-$}.  

\begin{lemma}[Value of $\Gamma(x)$ for $x\in \Bbb R\backslash \Bbb N_-$]
Let $$x=-K_x+\alpha_x, \ \ K_x\in \Bbb N^*, \ \ 0<\alpha_x<1$$ then
\be
\label{formulafdebasenegatif}
\Gamma(x)=(-1)^{K_x}\frac{\Gamma(\alpha_x)\Gamma(1-\alpha_x)}{\Gamma(1-x)}.
\ee
\end{lemma}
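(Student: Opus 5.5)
The plan is to derive the formula from Euler's reflection formula $\Gamma(z)\Gamma(1-z)=\frac{\pi}{\sin(\pi z)}$, valid for $z\notin\Bbb Z$, together with the elementary periodicity of the sine. Since $x=-K_x+\alpha_x$ with $0<\alpha_x<1$, $x$ is not an integer, so the reflection formula applies both at $z=x$ and at $z=\alpha_x$.

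First, write
$$\Gamma(x)=\frac{\pi}{\sin(\pi x)\,\Gamma(1-x)},$$
which is legitimate since $1-x=1+K_x-\alpha_x>0$, so $\Gamma(1-x)$ is finite and nonzero. Next, compute
$$\sin(\pi x)=\sin(\pi\alpha_x-\pi K_x)=(-1)^{K_x}\sin(\pi\alpha_x),$$
using $\sin(\theta-\pi K)=(-1)^K\sin\theta$. Then apply reflection at $\alpha_x\in(0,1)$:
$$\frac{\pi}{\sin(\pi\alpha_x)}=\Gamma(\alpha_x)\Gamma(1-\alpha_x).$$
Combining these, and noting $(-1)^{-K_x}=(-1)^{K_x}$, gives
$$\Gamma(x)=(-1)^{K_x}\frac{\Gamma(\alpha_x)\Gamma(1-\alpha_x)}{\Gamma(1-x)}.$$

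As a cross-check, one can avoid the reflection formula at $x$ altogether. Iterating $\Gamma(z+1)=z\Gamma(z)$ gives
$$\Gamma(x)=\frac{\Gamma(\alpha_x)}{\prod_{j=1}^{K_x}(\alpha_x-j)}=(-1)^{K_x}\frac{\Gamma(\alpha_x)}{\prod_{j=1}^{K_x}(j-\alpha_x)}.$$
Similarly, $\Gamma(1-x)=\Gamma(1-\alpha_x+K_x)=\Gamma(1-\alpha_x)\prod_{j=1}^{K_x}(j-\alpha_x)$. Substituting this product into the previous expression yields the same identity with no trigonometry at all.

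There is no real obstacle here. The only points needing care are the sign bookkeeping $(-1)^{K_x}$ and checking that every Gamma value involved is at a non-pole, which holds since $\alpha_x$, $1-\alpha_x$ and $1-x$ are all positive. I would present the second, purely recursive argument as the proof, since it is self-contained.
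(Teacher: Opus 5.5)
Your proposal is correct, and the recursive argument you say you would present is exactly the paper's proof. Iterate $\Gamma(z+1)=z\Gamma(z)$ to get $\Gamma(x)=(-1)^{K_x}\Gamma(\alpha_x)/\prod_{m=1}^{K_x}(m-\alpha_x)$, then identify the product as $\Gamma(1-x)/\Gamma(1-\alpha_x)$ (your reflection-formula derivation is also valid, just less self-contained).
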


\begin{proof} By definition
$$\Gamma(x)=\frac{\Gamma(x+1)}{x}=\frac{\Gamma(x+J+1)}{\Pi_{j=0}^J(x+j)}$$ and thus, with $J=K_x-1$,
\bee
\Gamma(x)&=&\frac{\Gamma(-K_x+\alpha_x+K_x-1+1)}{\Pi_{j=0}^{K_x-1}\Gamma(\alpha_x-K_x+j)}=\frac{\Gamma(\alpha_x)}{\Pi_{m=1}^{K_x}(\alpha_x-m)}=(-1)^{K_x}\frac{\Gamma(\alpha_x)}{\Pi_{m=1}^{K_x}(-\alpha_x+m)}\\
& = & (-1)^{K_x}\frac{\Gamma(\alpha_x)}{\frac{\Gamma(-\alpha_x+K_x+1)}{\Gamma(-\alpha_x+1)}}=(-1)^{K_x}\frac{\Gamma(\alpha_x)\Gamma(1-\alpha_x)}{\Gamma(1-x)}.
\eee
\end{proof}

\section{Study of the weight $w_{\gamma,\nu_b}$ for $k\le K$}

 We derive estimates and convolution bounds for the weight $w_{\gamma,\nu_b}(k)$ given by \eqref{defweight}. Proof of the results in this section can be found in \cite{MRRSprofile}.

\begin{lemma}[Summation bound]
\label{lemmasummationbound}
For some $c_{\nu,a}>0$, $K_\nu\gg1$ and all $0<b<b^*(\nu)$ we have the following uniform bounds:
\be
\label{summationphi}
\sum_{k=1}^{K-K_{\nu}}w_{\gamma,\nu_b}(k)\le c_{\nu,a} b.
\ee
and for ${K}-K_\nu{+1}\le k\le K{-1}$:
\be
\label{weightnenoe}
w_{\gamma,\nu_b}(k)=e^{O_\nu(1)}\Gamma(\gamma-1-k)\gamma^{\nu_b+2-(\gamma-1-k)}.
\ee
\end{lemma}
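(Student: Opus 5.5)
We prove the two bounds of the summation lemma directly from the closed form \eqref{defweight}, $w_{\gamma,\nu_b}(k)=\Gamma(\gamma-1-k)\Gamma(\nu_b+k+2)/\Gamma(\gamma-1)$. Throughout, $\gamma-1=K+\alpha_\gamma$ with $K\sim c_\infty/b$. We also have $\nu_b=\nu_\infty+o(1)$ bounded, and $k\le K-1$ ensures $\gamma-1-k\ge 1+\alpha_\gamma>1$, so every Gamma factor is positive and finite.

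\textbf{The near-critical regime, \eqref{weightnenoe}.} Set $m=\gamma-1-k\in[1+\alpha_\gamma,K_\nu+1]$, which is bounded. Then
$$\frac{\Gamma(\nu_b+k+2)}{\Gamma(\gamma-1)}=\frac{\Gamma(\gamma+1+\nu_b-m)}{\Gamma(\gamma+1)}\cdot\gamma(\gamma-1).$$
Apply the Stirling ratio \eqref{aymptoticratio} with $x=\nu_b-m$, which satisfies $|x|\lesssim_\nu 1$. This gives $\gamma^{\nu_b-m}(1+o(1))$ for the first factor. The second factor is $\gamma(\gamma-1)=\gamma^2e^{O(b)}$. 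Multiplying, $w_{\gamma,\nu_b}(k)=e^{O_\nu(1)}\Gamma(m)\gamma^{\nu_b+2-m}$, which is \eqref{weightnenoe}. The only thing to check is that the $o(1)$ in \eqref{aymptoticratio} is uniform over the finitely many values of $x$. This holds because $x$ ranges over a compact set depending only on $\nu$ and $K_\nu$.

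\textbf{The summation bound, \eqref{summationphi}.} Write $q_k=w_{\gamma,\nu_b}(k+1)/w_{\gamma,\nu_b}(k)=(k+\nu_b+2)/(\gamma-2-k)$. Start from $w_{\gamma,\nu_b}(1)=\Gamma(\nu_b+3)/(\gamma-2)\lesssim_\nu b$.

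\emph{Range $k\le\gamma/2-\nu_b$.} Here $q_k\le 1$, so the sequence is non-increasing in this region. For $k\le k_0$, with $k_0$ a large constant, each term is $O_\nu(b)$, and there are boundedly many of them. For $k_0\le k\le\gamma/4$, a crude bound suffices: $w_{\gamma,\nu_b}(k)\le\Gamma(\nu_b+k+2)(\gamma-1-k)^{-k}\lesssim \Gamma(\nu_b+k+2)(4/(3\gamma))^{k}$. The tail sum is therefore $O(b^{k_0})\le O(b)$ once $k_0\ge 2$; for $k$ of order $\gamma$ one uses the log-convexity argument below.

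\emph{Middle range.} This is the main obstacle, since $q_k>1$ for $k$ past $\gamma/2$ and the weight grows again. Use log-convexity of $k\mapsto\log w_{\gamma,\nu_b}(k)$, which follows from log-convexity of $\Gamma$ in each factor. Convexity implies that on $[1,K-K_\nu]$ the maximum of $w_{\gamma,\nu_b}$ is attained at an endpoint. The value at the right endpoint is given by \eqref{weightnenoe} with $m\approx K_\nu$, namely $\Gamma(K_\nu)\gamma^{\nu_b+2-K_\nu}\le \gamma^{-K_\nu/2}$ once $K_\nu\gg\nu$.

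To turn this into a bound on the sum, split the interval at the minimum point. Near the left end the terms decay geometrically, since $q_k\le 2k/\gamma$ there. Near the right end the terms decay geometrically going leftward, since $q_k^{-1}\le (\gamma-k)/k$ is small there. In the bulk, Stirling gives $w_{\gamma,\nu_b}(k)\lesssim \gamma^{\nu_b+3/2}\binom{\gamma}{k}^{-1}$, which is exponentially small in $\gamma$. Summing the three pieces yields $O(b)+\gamma\cdot\gamma^{-K_\nu/2}+e^{-c\gamma}\le c_{\nu,a}b$, provided $K_\nu$ is chosen large depending on $\nu$. This establishes \eqref{summationphi}. The delicate point is the uniformity in $\alpha_\gamma\in(0,1)$ near the right endpoint, where $\Gamma(m)$ with $m$ close to 1 must be handled; this is where \eqref{weightnenoe} is used rather than the ratio argument.
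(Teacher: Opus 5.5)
Your proposal is correct. The paper gives no argument of its own for this lemma: it states the result and refers to \cite{MRRSprofile}. So there is no in-paper route to compare against, and what you have written is a valid self-contained proof. For \eqref{weightnenoe}, writing $m=\gamma-1-k$ and reducing to \eqref{aymptoticratio} with $x=\nu_b-m$ is exactly right. It even gives the sharper factor $1+o(1)$ in place of $e^{O_\nu(1)}$.

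For \eqref{summationphi}, every piece checks. However, the log-convexity you invoke already closes the argument on its own, so the geometric-decay and bulk-Stirling pieces are unnecessary:
\begin{itemize}
\item $w_{\gamma,\nu_b}(1)=\Gamma(\nu_b+3)/(\gamma-2)\lesssim b$.
\item $w_{\gamma,\nu_b}(2)=\Gamma(\nu_b+4)/((\gamma-2)(\gamma-3))\lesssim b^2$.
\item Convexity of $k\mapsto\log w_{\gamma,\nu_b}(k)$ on $[2,K-K_\nu]$ gives $w_{\gamma,\nu_b}(k)\le w_{\gamma,\nu_b}(2)+w_{\gamma,\nu_b}(K-K_\nu)$ there.
\end{itemize}
Counting at most $\gamma$ terms, the sum is $\lesssim b+\gamma\,(b^2+C_{K_\nu}\gamma^{\nu_b+2-K_\nu})\lesssim b$ as soon as $K_\nu\ge\nu_b+4$.

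There are two small inaccuracies, neither affecting the proof.
\begin{itemize}
\item The ratio $q_k$ is $\le 1$ exactly for $k\le(\gamma-4-\nu_b)/2$, not for $k\le\gamma/2-\nu_b$. These thresholds differ by $O(1)$ when $\nu_b<4$, and you only use the range $k\le\gamma/4$ anyway.
\item The ``delicate point'' you flag at the end is not actually there. On the range of \eqref{weightnenoe} one has $m\in[1+\alpha_\gamma,K_\nu-1+\alpha_\gamma]\subset(1,K_\nu)$, where $\Gamma$ is bounded above and below by constants depending only on $K_\nu$, uniformly in $\alpha_\gamma$. At the right end of the summation range, $m=K_\nu+\alpha_\gamma$. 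The only value at which $\Gamma(m)$ degenerates, $m=\alpha_\gamma$, corresponds to $k=K$, which is excluded from both statements.
\end{itemize}
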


We now turn to the convolution type estimate on the weight.

\begin{lemma}[Convolution estimate]
\label{lemamconvoutitio}
There exist universal constants $c_{\nu,a}>0$, $0<b^*(\nu,a)\ll 1$ such that the following holds: for all $0<b<b^*$, for all $0\le k\le K$, 
\be
\label{tobeprovoeonorbibib}
\sum_{k_1+\dots+k_j=k}w_{\gamma,\nu}(k_1)\dots w_{\gamma,\nu}(k_j)\leq c^j_{\nu,a}w_{\gamma,\nu}(k). 
\ee
\end{lemma}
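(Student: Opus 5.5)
The statement to prove is the convolution estimate \eqref{tobeprovoeonorbibib}. I would argue by induction on $j$, so everything reduces to the bilinear case
\[
\sum_{k_1+k_2=k}w_{\gamma,\nu}(k_1)w_{\gamma,\nu}(k_2)\le c_{\nu,a}w_{\gamma,\nu}(k),\qquad 0\le k\le K .
\]
Granting this, the $j$-fold bound follows. Write the $j$-fold sum as $\sum_{m}w(m)\sum_{k_1+\dots+k_{j-1}=k-m}\prod w(k_i)$, apply the induction hypothesis to the inner sum, and then apply the bilinear bound. The constant only gets multiplied by $c_{\nu,a}$ at each step, which gives $c^j_{\nu,a}$.

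For the bilinear bound, I would use the explicit form $w(k)=\Gamma(\gamma-1-k)\Gamma(\nu+k+2)/\Gamma(\gamma-1)$ with $\gamma\sim a/b$ large and $k\le K<\gamma-1$. By symmetry it suffices to sum over $k_1\le k/2$, and I would bound the ratio
\[
R(k_1,k)=\frac{w(k_1)w(k-k_1)}{w(k)}.
\]
In that ratio, the $\Gamma(\nu+\cdot+2)$ factors contribute $\Gamma(\nu+k_1+2)\Gamma(\nu+k-k_1+2)/\Gamma(\nu+k+2)$. The $\Gamma(\gamma-1-\cdot)$ factors contribute $\Gamma(\gamma-1-k_1)\Gamma(\gamma-1-k+k_1)/(\Gamma(\gamma-1)\Gamma(\gamma-1-k))$.

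Now use monotonicity of ratios of Gamma functions: $\Gamma(\gamma-1-k+k_1)/\Gamma(\gamma-1-k)\le(\gamma-1-k+k_1)^{k_1}$ and $\Gamma(\gamma-1-k_1)/\Gamma(\gamma-1)\le(\gamma-1-k_1)^{-k_1}$. Hence the second factor is at most $1$. The first factor is $\lesssim \Gamma(\nu+k_1+2)(k-k_1)^{-k_1}$ up to constants by \eqref{aymptoticratio}. So for $k_1\le k/2$,
\[
R(k_1,k)\lesssim \frac{\Gamma(\nu+k_1+2)}{(k/2)^{k_1}},
\]
and the sum over $1\le k_1\le k/2$ is bounded by a convergent series dominated by its first few terms. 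The $k_1=0$ term is $w(0)=\Gamma(\gamma-1)\Gamma(\nu+2)/\Gamma(\gamma-1)=\Gamma(\nu+2)$, which is a constant. This gives the uniform bound.

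The main obstacle is the regime $k$ close to $K$, where $\gamma-1-k$ is of order $1$. There the Gamma ratio in $\gamma-1-\cdot$ is not monotone in a clean way, because $\Gamma$ is near its minimum and $\alpha_\gamma$ can be close to $0$. Here I would use \eqref{weightnenoe} directly, $w(k)=e^{O(1)}\Gamma(\gamma-1-k)\gamma^{\nu+2-(\gamma-1-k)}$, to check that the worst terms $k_1$ near $0$ or near $k$ remain comparable to $w(k)$. For the middle range I would use the summation bound \eqref{summationphi} to absorb the contributions. I expect the bookkeeping of the $\Gamma(\alpha_\gamma)$ factor near $k=K$ to be the delicate point.
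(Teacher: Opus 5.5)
Your proof is correct, and your first two paragraphs already form a complete argument. The paper gives no proof of this lemma and only refers to \cite{MRRSprofile}, so your proposal works as a self-contained replacement. Two points should be fixed.

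\textbf{The near-$K$ regime is not an obstacle.} Your bound on the $\Gamma(\gamma-1-\cdot)$ factor uses only $\gamma-1-k>0$. This holds for every $k\le K$, including $k=K$ with $\alpha_\gamma$ arbitrarily small. In fact, for every $0\le k_1\le k\le K$ one has exactly
\[
\frac{\Gamma(\gamma-1-k_1)\,\Gamma(\gamma-1-k+k_1)}{\Gamma(\gamma-1)\,\Gamma(\gamma-1-k)}=\prod_{i=0}^{k_1-1}\frac{\gamma-1-k+i}{\gamma-1-k_1+i}\le 1 .
\]
This is just log-convexity of $\Gamma$, and you do not even need the restriction $k_1\le k/2$ for it. Near $k=K$ the mixed terms $1\le k_1\le k-1$ are smaller still: when $k=K$ the $i=0$ factor equals $\alpha_\gamma/(K+\alpha_\gamma-k_1)$. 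So there is no $\Gamma(\alpha_\gamma)$ bookkeeping, and no appeal to \eqref{weightnenoe} or \eqref{summationphi} is needed. Your last paragraph misdiagnoses the difficulty and should simply be dropped.

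\textbf{The citation for the $\Gamma(\nu+\cdot)$ factor is wrong.} \eqref{aymptoticratio} only covers bounded shifts, so it does not justify the bound for $k_1$ up to $k/2$. Use the exact identity instead:
\[
\frac{\Gamma(\nu+k-k_1+2)}{\Gamma(\nu+k+2)}=\prod_{i=1}^{k_1}(\nu+k-k_1+1+i)^{-1}\le (k/2)^{-k_1},\qquad k_1\le k/2 .
\]
For summability, combine $\Gamma(\nu+k_1+2)\le C_\nu k_1^{\nu+1}k_1!$ with $k_1!\le e\sqrt{k_1}(k_1/e)^{k_1}$. This gives $\Gamma(\nu+k_1+2)(k/2)^{-k_1}\le C_\nu k_1^{\nu+3/2}\bigl(2k_1/(ek)\bigr)^{k_1}\le C_\nu k_1^{\nu+3/2}e^{-k_1}$ for $1\le k_1\le k/2$, which is summable uniformly in $k$. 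Your vague phrase ``dominated by its first few terms'' can then be stated precisely.

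With these two adjustments, your route yields a constant depending only on $\nu$, locally uniformly for $\nu$ in compact subsets of $(-2,\infty)$, so it applies to $\nu=\nu_b$. It uses neither smallness of $b$ nor any asymptotic description of the weight. That is simpler, and slightly stronger, than an argument through the two-regime asymptotics of $w_{\gamma,\nu}$ (Stirling away from $K$, \eqref{weightnenoe} near $K$), which is what your last paragraph was reaching for.
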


\section{Coefficients and eigenvalues of \eqref{eq: Autonomous System} at $P_3$}
\label{app: Slopes and Eigenvalues}
For $1<r<r_+(d,\ell)$,
\be
\label{eq: Coefficients of Autonomous System}
\left|\begin{array}{l}
d_{20}=3w_3-(r+1)\\
d_{11}=-2d\sigma_3\\
d_{02}=\ell(r-1)-dw_3
\end{array}\right.
,\quad
\left|\begin{array}{l}
e_{20}=\frac{\sigma_3(\ell+d-1)}{\ell}\\
e_{11}=\frac{2w_3(\ell+d-1)-(\ell+d+\ell r-r)}{\ell}\\
e_{02}=-3\sigma_3\\
e_{21}=\frac{\ell+d-1}{\ell}.
\end{array}\right.
\ee

\begin{lemma}[Critical values of the slopes at $P_3$]
\label{lem: Slopes at P_3}
 Let $$\ell<d, \ \ r=r_+(d,\ell)=1+\frac{d-1}{(1+\sqrt{\ell})^2},$$ then at $P_3$:
 \be
\label{eq: Slopes at P_3}
\left|\begin{array}{l}
\sigma^\infty_3=\frac{1}{1+\sqrt{\ell}}\\
w^\infty_3=\frac{\sqrt{\ell}}{1+\sqrt{\ell}}
\end{array}\right.,\quad
\left|\begin{array}{l}
c^\infty_1=-\frac{2\sqrt{\ell}(d+\sqrt{\ell})}{(1+\sqrt{\ell})^3}\\
c^\infty_2=-\frac{2}{(1+\sqrt{\ell})^2}\\
c^\infty_3=-\frac{2\sqrt{\ell}(d+\sqrt{\ell})}{(1+\sqrt{\ell})^3}\\
c^\infty_4=-\frac{2}{(1+\sqrt{\ell})^2}
\end{array}\right.,\quad
\left|\begin{array}{l}
c^\infty_-=-1\\
c^\infty_+=\frac{\sqrt{\ell}(d+\sqrt{\ell})}{1+\sqrt{\ell}}\\
\l^\infty_+=0\\
\l^\infty_-=-\frac{2\left[\ell+(d+1)\sqrt{\ell}+1\right]}{(1+\sqrt{\ell})^3}.
\end{array}\right.
\ee
Moreover,
\be
\label{eq: mu_+^infty}
\mu^\infty_+=(\pa_b\l_+)_{b=0}=\frac{2\sqrt{d-1}(\ell-d)}{(1+\sqrt{\ell})\ell^{\frac{1}{4}}(1+(d+1)\sqrt{\ell}+\ell)}<0.
\ee
\end{lemma}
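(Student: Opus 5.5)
The plan is to evaluate everything at $r=r_+(d,\ell)$, using the vanishing of $J$ there, and then to obtain $\mu_+^\infty$ by a first-order expansion in $b=\sqrt{r_+-r}$.

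\textbf{Step 1: location of $P_3$.} At $r=r_+$ we have $w_e=\frac{\ell(d-1)}{d(1+\sqrt\ell)^2}$. Since $J(w_e(r_+))=0$, as used in the proof of Lemma \ref{phasperotrai}, formula \eqref{eq: w_pm} collapses to
$$w_\pm=\frac{1}{2(d-1)}\Big(d w_e\big(1-\tfrac1\ell\big)+d-1\Big).$$
Substituting $d w_e(1-\frac1\ell)=\frac{(\ell-1)(d-1)}{(1+\sqrt\ell)^2}$ gives
$$w_3^\infty=\frac{(\ell-1)+(1+\sqrt\ell)^2}{2(1+\sqrt\ell)^2}=\frac{\sqrt\ell}{1+\sqrt\ell}.$$
Since $P_3$ lies on $w+\sigma=1$, we get $\sigma_3^\infty=\frac{1}{1+\sqrt\ell}$.

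\textbf{Step 2: the slopes $c_i^\infty$.} Insert $\sigma_3^\infty,w_3^\infty,r_+$ into \eqref{eq: Slopes}. This is routine algebra: expand over the common denominator $(1+\sqrt\ell)^3$ and use $r_+-1=\frac{d-1}{(1+\sqrt\ell)^2}$. The structural outcome to check is $c_1^\infty=c_3^\infty$ and $c_2^\infty=c_4^\infty$.

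\textbf{Step 3: consequences of the degeneracy.} Because $c_1^\infty=c_3^\infty$ and $c_2^\infty=c_4^\infty$, the matrix $\mathcal A(P_3)$ has rank one. Hence one eigenvalue is $0$ and the other equals the trace, so
$$\l_+^\infty=0,\qquad \l_-^\infty=c_1^\infty+c_4^\infty=-\frac{2[\ell+(d+1)\sqrt\ell+1]}{(1+\sqrt\ell)^3},$$
and the sign conventions of Lemma \ref{lem: Estimate Slopes} fix which is which. The slope equation \eqref{eq: c_pm 2} becomes
$$c=\frac{c_1^\infty(c+1)}{c_2^\infty(c+1)},$$
whose roots are $c=-1$ and $c=c_1^\infty/c_2^\infty=\frac{\sqrt\ell(d+\sqrt\ell)}{1+\sqrt\ell}$. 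The ordering $c_-<0<c_+$ then gives $c_-^\infty=-1$ and $c_+^\infty=\frac{\sqrt\ell(d+\sqrt\ell)}{1+\sqrt\ell}$.

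\textbf{Step 4: $\mu_+^\infty$, the main obstacle.} I would proceed through the determinant. Differentiate $\l_+\l_-=c_1c_4-c_2c_3$ in $b$ and use $\l_+^\infty=0$ to get
$$\mu_+^\infty=\frac{\partial_b(c_1c_4-c_2c_3)|_{b=0}}{\l_-^\infty}.$$
Now $r=r_+-b^2$, so the explicit $r$-dependence of the $c_i$ contributes only at order $b^2$. The order-$b$ contribution comes solely from $(\sigma_3,w_3)$, through the term $\sqrt{J}$ in \eqref{eq: w_pm}. Since $r_+$ is a simple root of $J$ viewed as a function of $r$, we have
$$\sqrt{J}=b\sqrt{-\partial_rJ(r_+)}+O(b^2),$$
and therefore
$$\partial_b w_3=-\partial_b\sigma_3=\frac{\sqrt{-\partial_rJ(r_+)}}{2(d-1)},$$
taking the $+$ branch for $P_3$.

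Then I would compute $\nabla_{(w,\sigma)}(c_1c_4-c_2c_3)$ at $P_3^\infty$ and contract it with this direction, using $c_1=c_3$ and $c_2=c_4$ to simplify. Evaluating $\partial_rJ(r_+)$ from \eqref{eq: I and J} should produce the factor $\sqrt{d-1}\,\ell^{-1/4}$. The factor $(\ell-d)$ should then emerge after simplification, and $\ell<d$ gives the negative sign. This is the heaviest computation, and the sign bookkeeping is where errors are most likely. As a consistency check on the sign, I would compare with $\l_+<0$ from \eqref{eq: Sign of Eigenvalues}.
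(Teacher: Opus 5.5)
Your proposal is correct. For $\sigma_3^\infty,w_3^\infty,c_i^\infty,c_\pm^\infty,\lambda_\pm^\infty$ it is the same direct evaluation the paper invokes without details, and all the values you state check out. Your rank-one observation ($c_1^\infty=c_3^\infty$, $c_2^\infty=c_4^\infty$) is a clean way to organize that evaluation. One small point in Step 3: clear denominators, i.e. write the slope equation as $(c_2^\infty c-c_1^\infty)(c+1)=0$, rather than as the quotient, which is $0/0$ at $c=-1$.

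The genuine difference is $\mu_+^\infty$. The paper just reports a Mathematica output. You instead reduce it to one directional derivative, using $\lambda_+\lambda_-=c_1c_4-c_2c_3$ and the first-order motion of $P_3$ along the sonic line. This closes by hand. First, $-\partial_rJ(r_+)=4(d-1)\sqrt{\ell}$, so $\partial_bw_3|_{b=0}=-\partial_b\sigma_3|_{b=0}=\ell^{1/4}/\sqrt{d-1}$. Next, with $\delta=\partial_w-\partial_\sigma$, at $P_3^\infty$ one finds $\delta(c_1-c_3)=\frac{2(d-1)}{1+\sqrt{\ell}}$ and $\delta(c_4-c_2)=-\frac{2(d-1)}{\ell(1+\sqrt{\ell})}$. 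Hence
\[
\delta(c_1c_4-c_2c_3)=c_2^\infty\,\delta(c_1-c_3)+c_1^\infty\,\delta(c_4-c_2)=\frac{4(d-1)(d-\ell)}{\sqrt{\ell}\,(1+\sqrt{\ell})^4}.
\]
Multiplying by $\partial_bw_3=\ell^{1/4}/\sqrt{d-1}$ and dividing by $\lambda_-^\infty$ gives exactly the stated $\mu_+^\infty$.

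There is one bookkeeping slip in your prediction. $\partial_rJ(r_+)$ supplies $\sqrt{d-1}\,\ell^{1/4}$, not $\ell^{-1/4}$; it is the $\sqrt{\ell}$ in the denominator of the gradient term that turns it into $\sqrt{d-1}\,\ell^{-1/4}$.

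What your route buys over the paper's is a checkable derivation of the sign. The factor $d-\ell$ appears transparently as $d+\sqrt{\ell}-\sqrt{\ell}(1+\sqrt{\ell})$, rather than being read off a computer-algebra result.
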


\begin{proof}
This follows from a direct computation. \eqref{eq: mu_+^infty} has been computed with Mathematica.
\end{proof}

We compute explicitly the sign of the coefficients $\dt_{20}$, $\et_{30}$ and $\nu$ at $r^+(d,\ell)$.

\begin{lemma}
\label{lem: Limiting Values of d, e}
For all $d\geq 2$ and $\ell<d$, we have
\be\label{signofdt20andet20bis}
\dt^\infty_{20}>0, \quad \et^\infty_{20}>0, \quad \nu_\infty>0.
\ee
\end{lemma}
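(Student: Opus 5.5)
The plan is a direct algebraic computation at $b=0$: every quantity involved is an explicit rational function of $d$ and $s:=\sqrt{\ell}$ once we substitute the limiting values of Lemma \ref{lem: Slopes at P_3}. We then reduce each sign claim to positivity of an explicit polynomial in $(d,s)$ on the range $d\ge 2$, $0<s<\sqrt d$.

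\textbf{Step 1 (the quadratic coefficients).} From the formula for $\Delta_1(X)$ and $\Delta_2(X)$ in the $\Wt,\Sigmat$ variables (proof of Lemma \ref{lem: Diagonalization}), the $\Wt^2$ coefficients are
$$
Q_1=d_{20}c_-^2+d_{11}c_-+d_{02},\qquad Q_2=e_{20}c_-^2+e_{11}c_-+e_{02}.
$$
Hence
$$
\dt_{20}=-Q_1+c_+Q_2,\qquad \et_{20}=Q_1-c_-Q_2 .
$$
At $r=r_+$ we have $c_-^\infty=-1$, so $Q_i^\infty$ are obtained from \eqref{eq: Coefficients of Autonomous System} by substituting
$$
w_3^\infty=\frac{s}{1+s},\qquad \sigma_3^\infty=\frac{1}{1+s},\qquad r_+-1=\frac{d-1}{(1+s)^2}.
$$
For instance,
$$
Q_1^\infty=(3-d)w_3^\infty-(r_++1)+2d\sigma_3^\infty+\ell(r_+-1),
$$
and similarly for $Q_2^\infty=e_{20}-e_{11}+e_{02}$. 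I would clear the common denominators $(1+s)^k$ and $s^2$, and then check the following:
\begin{itemize}
\item $\et_{20}^\infty=Q_1^\infty+Q_2^\infty$ is a polynomial in $(d,s)$ with positive coefficients after grouping in powers of $d-1$;
\item $\dt_{20}^\infty=-Q_1^\infty+c_+^\infty Q_2^\infty$, with $c_+^\infty=s(d+s)/(1+s)$, has a sign that can be read off after factoring.
\end{itemize}
A useful consistency check: since $\Wte=-b(c_+-c_-)\mu_+/\dt_{20}+O(b^2)$ and $\mu_+^\infty<0$, the sign of $\dt_{20}^\infty$ must match the geometric fact that $P_2$ lies on a definite side of $P_3$ along the $\Wt$ axis. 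This gives an independent confirmation of $\dt_{20}^\infty>0$.

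\textbf{Step 2 (the cubic coefficients and $\nu_\infty$).} By \eqref{eq: a and nu},
$$
\nu_\infty=-a_\infty\bigl(\Dt_{11}^\infty+\Dt_{30}^\infty-\Et_{11}^\infty\bigr),\qquad a_\infty=\frac{|\l_-^\infty|}{|\mu_+^\infty|}>0 .
$$
Here $\Dt_{ij},\Et_{ij}$ are the coefficients $\dt_{ij},\et_{ij}$ rescaled by the renormalization \eqref{eq: Quasilinear Variables}: $\Wt=-b\wt_{\hskip -.1pc\peye}u$ and $\Sigmat=b^2\wt_{\hskip -.1pc\peye}\psit_{\hskip -.1pc\peye}\sigma$. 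Concretely, they are the quantities obtained by dividing by the prefactors $b^2\wte(c_+-c_-)|\mu_+|$ and $b^2|\l_-|\psite\wte(c_+-c_-)$ in \eqref{eq: G_1 reexpressed}–\eqref{eq: G_2 reexpressed}. So I would:
\begin{itemize}
\item compute $\dt_{11},\dt_{30},\et_{11}$ from the cross and cubic terms listed in the proof of Lemma \ref{lem: Diagonalization}, namely
$$
\dt_{11}=-(2c_-c_+d_{20}+(c_-+c_+)d_{11}+2d_{02})+c_+(2e_{20}c_-c_++e_{11}(c_-+c_+)+2e_{02}),
$$
$\dt_{30}=-(c_-^3-dc_-)+c_+(e_{21}c_-^2-1)$, and analogously $\et_{11}$;
\item insert the limiting values, together with \eqref{eq: w, psi at P_3} for $\wte^\infty,\psite^\infty$;
\item reduce $-(\Dt_{11}^\infty+\Dt_{30}^\infty-\Et_{11}^\infty)$ to a single rational function of $(d,s)$.
\end{itemize}

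\textbf{Main obstacle.} This last reduction is where I expect the difficulty. The expression is long, $\mu_+^\infty$ carries the irrational factor $\sqrt{d-1}\,\ell^{-1/4}$, and the normalizations mix several limiting constants. My first attempt would be to check that the irrational factors cancel between the numerator ($\wte^\infty$ contains $\mu_+^\infty$) and $a_\infty$, leaving a rational function. I would then factor its numerator, expecting a factor $(d-s^2)=(d-\ell)$ that is positive for $\ell<d$, times a polynomial whose coefficients are all positive when written in the variables $s>0$ and $d-1>0$.

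If a clean factorization does not emerge, the fallback is the following. Show positivity at $s\to 0$ and $s\to\sqrt d$, and show that the numerator has no root in $(0,\sqrt d)$ for each $d\ge 2$, using Sturm sequences or a resultant in $s$ with symbolic $d$. This computation would be done with computer algebra, as was already done for $\mu_+^\infty$.
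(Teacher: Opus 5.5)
Your proposal is correct and is the same direct evaluation at $r=r_+$ that the paper carries out: your Step~1 yields $Q_1^\infty=\frac{d-1}{1+\sqrt\ell}$ and $Q_2^\infty=\frac{d-1}{\ell(1+\sqrt\ell)}$, hence exactly the paper's values $\dt_{20}^\infty=\frac{(d-1)(d-\ell)}{\sqrt\ell(1+\sqrt\ell)^2}>0$ and $\et_{20}^\infty=\frac{(d-1)(\ell+1)}{\ell(1+\sqrt\ell)}>0$. For $\nu_\infty$ the paper only states the final rational expression, in which $(d-\ell)$ enters squared in the denominator rather than as a numerator factor, and the bracket has positive coefficients when expanded in $\sqrt\ell$ and $d-1$, as you anticipated; note also that reading off $\Et_{11}=\frac{\et_{11}\wte}{(c_+-c_-)|\l_-|}$ from the prefactor of $\mathcal G_2$, as you propose, is what reproduces that expression (e.g.\ $\nu_\infty=154$ at $d=2$, $\ell=1$), whereas inserting the normalization $\wte^{i+2}$ of \eqref{eq: D and E} literally would not.
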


\begin{proof} We collect the values
\be
\label{valuedijbis}
\left|\begin{array}{l}
d^\infty_{20}=\frac{\ell-\sqrt{\ell}-d-1}{(1+\sqrt{\ell})^2}\\
d^\infty_{11}=\frac{-2d}{1+\sqrt{\ell}}\\
d^\infty_{02}=\frac{-\ell-d\sqrt{\ell}}{(1+\sqrt{\ell})^2}\\
\end{array}\right.
,\quad
\left|\begin{array}{l}
e^\infty_{20}=\frac{\ell+d-1}{\ell(1+\sqrt{\ell})}\\
e^\infty_{11}=\frac{-2}{1+\sqrt{\ell}}\\
e^\infty_{02}=-\frac{3}{1+\sqrt{\ell}}\\
e^\infty_{21}=\frac{\ell+d-1}{\ell}.
\end{array}\right.
\ee

We compute directly
$$
\left|\begin{array}{ll}
\dt^\infty_{20}=-\frac{(d-1)(\ell-d)}{\sqrt{\ell}(1+\sqrt{\ell})^2}\\
\et^\infty_{20}=\frac{(d-1)(\ell+1)}{\ell(1+\sqrt{\ell})}.
\end{array}\right.
$$
and 
\bea
\label{eq: nu_infty at r^+}
 \nu_\infty &=& \frac{2}{(d-1)(\ell-d)^2}\Bigg[d^3(\ell+2)+d^2(\ell^2+2\ell^{\frac{3}{2}}-2\ell+2\sqrt{\ell}-2)\\
\nonumber&&+d(2\ell^{\frac{5}{2}}+5\ell^2+4\ell^{\frac{3}{2}}+7\ell+2\sqrt{\ell}+1)+\ell(\ell^2+2\ell^{\frac{3}{2}}+\ell+2\sqrt{\ell}+1)\Bigg]>0
\eea
and the claim is proved.
\end{proof}

\section{Expansion of the functionals in \eqref{eq: Quasilinear Equation}}

In this Appendix we collect all the formulas for all the terms appearing in \eqref{eq: Quasilinear Equation}. We recall $x=bu$.\\

\noindent\underline{$\Dt_{ij},\Et_{ij}$ coefficients} Recall the definition \eqref{eq: Coefficients of Autonomous System 2} of $\dt_{ij}$ and $\et_{ij}$. We define
\be
\label{eq: D and E}
\Dt_{ij}=\frac{\dt_{ij}\wte^{i-1}\sigmate^j}{|\mu_+|(c_+-c_-)},\quad \Et_{ij}=\frac{\et_{ij}\wte^{i+2}\sigmate^{j-1}}{|\l_-|(c_+-c_-)}
\ee
 Note that the coefficients $\Dt_{ij},\Et_{ij}$ have a well defined limit $\Dt^\infty_{ij},\Et^\infty_{ij}$ as $b\to 0$ from \eqref{eq: w, psi at P_3}.

\noindent\underline{Polynomials $H_1,H_2$}
\be
\label{eq: H_i}
\left|\begin{array}{l}
H_1(b,u)=\sum_{j=0}^3b^jH_{1,j}(x)\\
H_2(b,u)=\sum_{j=0}^3 b^jH_{2,j}(x)
\end{array}\right.
\ee
with
\be
\label{eq: H_ij}
\left|\begin{array}{l}
H_{1,0}(x)=-(\Et_{11}+\Et_{30})+(\Et_{02}+\Et_{21})x-\Et_{12}x^2+\Et_{03}x^3\\
H_{1,1}(x)=(\Et_{02}+\Et_{21})-\Et_{12}x+\Et_{03}x^2\\
H_{1,2}(x)=-\Et_{12}+\Et_{03}x\\
H_{1,3}(x)=\Et_{03},\\
H_{2,0}(x)=(\Dt_{11}+\Dt_{30})x-(\Dt_{02}+\Dt_{21})x^2+\Dt_{12}x^3-\Dt_{03}x^4\\
H_{2,1}(x)=-(\Dt_{02}+\Dt_{21})x+\Dt_{12}x^2-\Dt_{03}x^3\\
H_{2,2}(x)=\Dt_{12}x-\Dt_{03}x^2\\
H_{2,3}=-\Dt_{03}x.
\end{array}\right.
\ee
\noindent\underline{Polynomials $G_1,G_2$}
\be
\label{eq: G_i}
\left|\begin{array}{l}
G_1(b,u)=\Et_{11}x-(2\Et_{02}+\Et_{21})x^2+2\Et_{12}x^3-3\Et_{03}x^4\\
G_2(b,u)=-\Dt_{11}x+(2\Dt_{02}+\Dt_{21})x^2-2\Dt_{12}x^3+3\Dt_{03}x^4
\end{array}\right.
\ee
\noindent\underline{Nonlinear terms}
\be
\label{eq: NL_i}
\left|\begin{array}{l}
\NLt_1=\sum_{j=0}^2b^j\NLt_{1j}\\
\NLt_2=\sum_{j=0}^2b^j\NLt_{2j}
\end{array}\right.
\ee
with
$$\left|\begin{array}{l}
\NLt_{10}=-xM_{11}\Psi^2+x^2M_{12}\Psi^3\\
\NLt_{11}=M_{11}\Psi^2-2xM_{12}\Psi^3\\
\NLt_{12}=M_{12}\Psi^3
\end{array}\right., \ \  \left|\begin{array}{l}
M_{11}= -\Et_{02}x+\Et_{12}x^2-3\Et_{03}x^3\\
M_{12}=-\Et_{03}x^2
\end{array}\right.
$$
$$\left|\begin{array}{l}
\NLt_{20}=-xM_{21}\Psi^2+x^2M_{22}\Psi^3\\
\NLt_{21}=M_{21}\Psi^2-2xM_{22}\Psi^3\\
\NLt_{22}=M_{22}\Psi^3
\end{array}\right., \ \ \left|\begin{array}{l}
M_{21}=\Dt_{02}x-\Dt_{12}x^2+3\Dt_{03}x^3\\
M_{22}=\Dt_{03}x^2.
\end{array}\right.
$$
\end{appendix}

%%%%%%%%%%%%%%%%%%%%%%%%%%%%%%%%
%%%%%%%%%% BIBLIOGRAPHY%%%%%%%%%%%%%%%%%%%%%%%%%%%%%%%%%%%%%%%%%%%%%%

\bibliographystyle{acm}
\bibliography{bibtex}

\end{document}